\newtheorem{theorem}{Theorem}[section]
\newtheorem{lemma}{Lemma}[section]
\newtheorem{corollary}{Corollary}[section]
\theoremstyle{definition}
\newtheorem{definition}{Definition}[section]
\newtheorem{conjecture}[theorem]{Conjecture}
\theoremstyle{remark}
\newtheorem{remark}{Remark}[section]
\numberwithin{equation}{section}
\newcommand{\ubar}[1]{\underaccent{\bar}{#1}}
\newcommand{\absmod}[1]{\left|#1\right|}
\author{Javed Hazarika}
\author{Debashis Paul}
\address{Statistical Sciences Division, Indian Statistical Institute, Kolkata}
\email{javarika@gmail.com; debpaul.isi@gmail.com}
\date{03/02/2026}
\title{Limiting Spectral Distribution of the Commutator of two data Matrices}
\keywords{Commutator matrix; Limiting spectral distribution; Random matrix theory; Stieltjes Transform}
\begin{document}

\begin{abstract}
    We study the spectral properties of random matrices of the form $S_n^{-} = n^{-1}(X_1 X_2^* - X_2 X_1^*)$, where
$X_k = \Sigma_k^{1/2}Z_k$, 
$Z_k$'s are independent $p\times n$ complex-valued random matrices, and $\Sigma_k$ are $p\times p$ positive semi-definite matrices that commute and are independent of the $Z_k$'s for $k=1,2$. We assume that $Z_k$'s have independent entries with zero mean and unit variance. The skew-symmetric/skew-Hermitian matrix $S_n^{-}$ will be referred to as a random commutator matrix associated with the data matrices $X_1$ and $X_2$. We show that, when the dimension $p$ and sample size $n$ increase simultaneously, so that $p/n \to c \in (0,\infty)$, there exists a limiting spectral distribution (LSD) for $S_n^{-}$, supported on the imaginary axis, under the assumptions that the joint spectral 
distribution of $\Sigma_1, \Sigma_2$ converges weakly. 
This nonrandom LSD can be 
described through its Stieltjes transform, which satisfies a system of Mar\v{c}enko-Pastur-type functional equations. Moreover, we show that the 
companion matrix $S_n^{+} = n^{-1}(X_1X_2^* + X_2X_1^*)$, under identical assumptions, has an LSD supported on the real line, which can be similarly characterized.
\end{abstract}

\maketitle
\section{Introduction}

Since the seminal works on the behavior of the 
empirical distribution of eigenvalues of large-dimensional symmetric matrices and sample 
covariance matrices by Wigner 
\cite{Wigner1958}  and  Mar\v{c}enko and Pastur \cite{MarcenkoPastur1967} respectively,
there have been extensive studies on establishing limiting behavior of
various classes of random matrices. With the traditional definitions of sample size and dimension for multivariate observations, one may refer to the high-dimensional asymptotic regime where these quantities are proportional as the random matrix regime. In the random matrix regime, there have been discoveries of nonrandom limits for the empirical distribution of sample eigenvalues of various classes of symmetric or hermitian matrices. Notable classes of examples include matrices known as Fisher matrices (or ``ratios'' of independent sample covariance matrices (\cite{CLTFMatrix1}, \cite{CLTFMatrix2}), signal-plus-noise matrices (\cite{SignalNoise}) arising in signal processing, sample covariance corresponding to data with separable population covariance structure (\cite{Lixin06}, \cite{SepCovar}), with a given variance profile (\cite{varProfile}, symmetrized sample autocovariance matrices associated with stationary linear processes (\cite{Staionary1}, \cite{Staionary2}, \cite{Staionary3}), sample cross covariance matrix (\cite{CrossCov}), etc. Studies of the spectra of these classes of random matrices mentioned above are often motivated by various statistical inference problems. 

Commutators play an important role in quantum mechanics, for example in describing Heisenberg’s uncertainty principle.
Using combinatorial techniques, \cite{NicaSpeicher} derived the spectral distribution of the commutator of two free random variables. \cite{Tetilla} established the Tetilla Law, namely, the
law of the commutator of two free semicircular random variables,
which is absolutely continuous with a density having a closed form expression. \cite{Palheta} investigated the statistical properties of multiplicative commutators, i.e. matrices of the type $C = uvu^{-1} v^{-1}$, when $u$ and $v$ are
independent random matrices, uniformly distributed with respect to the Haar measure of the groups $U(N)$ and $O(N)$. \cite{Perales} analyzed the distribution of the anti-commutator of two free Poisson random variables. \cite{Vasilchuk} proved the existence of limiting spectral distributions for the commutator and the anti-commutator of two Hermitian random matrices, rotated independently with respect to one another, as the dimension grows to infinity.

Partially motivated by these, we look at a different class of ``commutator/ anti-commutator matrices", namely that of two independent rectangular data matrices under certain regularity conditions. In this paper, we study the asymptotic behavior of the spectra of random commutator matrices under the random matrix regime and discuss a potential application to
an inference problem involving covariance matrices.

As the setup for introducing these random matrices, suppose we have $p$-variate independent samples of the same size $n$ 
(expressed as $p\times n$ matrices) denoted by
$X_k = [X_{k,1}:\cdots:X_{k,n}]$, for $k = 1,2$,
from two populations with zero mean and variances $\Sigma_1$ and $\Sigma_2$ respectively. We shall study the spectral 
properties of the matrix $S_n^{-}$ defined as 
$$S_n^{-} := n^{-1}[X_1, X_2] := n^{-1}(X_1 X_2^* - X_2 X_1^*),$$
where $X_k^*$ denotes the Hermitian conjugate of $X_k$.
Given the analogy with a \textit{commutator matrix}, we shall refer to $S_n^{-}$ as a 
``sample commutator matrix'' associated with the data $(X_1,X_2)$.
A distinctive feature of $S_n^{-}$ is that it is skew-Hermitian, so that the eigenvalues of  $S_n^{-}$ are purely imaginary numbers. Analogously, we also study the properties of the Hermitian companion matrix $S_n^+$, which we shall refer to as the \textit{anti-commutator matrix}:
$$S_n^{+} :=n^{-1}\{X_1, X_2\} := n^{-1}(X_1 X_2^* + X_2 X_1^*).$$

As a primary contribution, in this paper we establish the existence of limits for the empirical spectral distribution (ESD) of $S_n^{-}$, when $p,n \to \infty$ such that $p/n \to c \in (0,\infty)$, and describe the limiting spectral distribution (LSD) through its Stieltjes transform, under additional technical assumptions on the statistical model. This LSD can be described as a unique solution of a 
pair of functional equations describing its Stieltjes transform. We also derived results related to continuity of the solution as a function of the limiting population spectrum of $\Sigma_1, \Sigma_2$. The proof techniques are largely based on the matrix decomposition based approach popularized by \cite{BaiSilv09}. Furthermore, in the special case when $\Sigma_1 = I_p = \Sigma_2$, we completely describe 
the LSD of $S_n^{-}$ as a mixture distribution on the imaginary axis with a point mass at zero (only if $c > 2$),
and a symmetric distribution with a density. Establishment of 
this result requires a very careful analysis of the Stieltjes transform
of the LSD of $S_n^{-}$, since the latter satisfies a cubic equation 
for each complex argument. The density function 
of the continuous component of the LSD can be derived in a closed (albeit
complicated) functional form that depends only on the value of $c$.

As a further contribution, we are able to derive the asymptotic
behavior of the spectrum of the companion matrix $S_n^{+}$. 
The results follow a similar pattern, which is why 
we state these results in parallel with our main results (about 
the spectral distribution of $S_n^{-}$).

The rest of the manuscript is organized as follows. Section \ref{sec:ModelPreliminary} describes the preliminaries and the model setup. Section \ref{sec:MeasuresOnImaginaryAxis} introduces new definitions to handle distributions over the imaginary axis. This is important since we will be working with skew-Hermitian matrices. As such, existing results related to metrics and convergence of measures over the real line are tweaked to handle measures over the imaginary axis.  The main result of this paper is Theorem \ref{mainTheorem} in Section \ref{sec:general_covariance} that covers the most general case with arbitrary pairs of commuting variance matrices. In Section \ref{sec:equal_covariance}, we present the special case when $\Sigma_1 = \Sigma_2$ and in Section \ref{sec:identity_covariance}, we analyze the case when $\Sigma_1 = I_p = \Sigma_2$. Finally, results regarding the anti-commutator matrix are derived in Section \ref{sec:AntiCommutator}. Whereas the results of Sections \ref{sec:general_covariance}, \ref{sec:equal_covariance} and \ref{sec:AntiCommutator} are derived under the requirement of commutativity between $\Sigma_1$ and $\Sigma_2$, Section \ref{sec:RelaxingCommutativity} relaxes this condition to some extent. Finally, Section \ref{sec:Application} introduces a hypothesis testing framework by making use of the properties of the LSD of the commutator and discusses some potential applications.

\section{Model and preliminaries}\label{sec:ModelPreliminary}
\textbf{Notations:} $\mathbbm{i}$ denotes $\sqrt{-1}$. $\mathbb{R}$ and $\mathbbm{i}\mathbb{R}$ denote the real and the imaginary axes of the complex plane, respectively. $\mathbb{C}^+$ and $\mathbb{C}^-$ denote the upper and the lower halves (excluding the real axis) of the complex plane, respectively, i.e. $\mathbb{C}^\pm = \{u \pm \mathbbm{i}v: u \in \mathbb{R}, v > 0\}$. Similarly, $\mathbb{C}_L := \{-u + \mathbbm{i}v : u > 0, v \in \mathbb{R}\}$ and $\mathbb{C}_R := \{u + \mathbbm{i}v : u > 0, v \in \mathbb{R}\}$ denote the left and right halves (excluding the imaginary axis) of the complex plane, respectively. $\Re(z)$ and $\Im(z)$ denote the real and imaginary parts respectively of the complex number $z$. The norm of a vector $x$ will be denoted as $||x||$ and the operator and Frobenius norms of a matrix $A$ will be denoted by $||A||_{op}$ and $||A||_F$, respectively.

\begin{definition}
    For a skew-Hermitian matrix $S \in \mathbb{C}^{p \times p}$ with eigenvalues $\{\mathbbm{i}\lambda_j\}_{j=1}^p$, we define the empirical spectral distribution (ESD) $F$ of $S$ as
    \begin{align}\label{ESD_of_skHerm}
F^S:\mathbbm{i}\mathbb{R} \rightarrow [0, 1]; \hspace{3mm} F^S(\mathbbm{i}x) = \frac{1}{p}\sum_{j=1}^p \mathbbm{1}_{\{\lambda_j \leq x\}}.
    \end{align}
\end{definition}
\begin{remark}
    Note that $-\mathbbm{i}S$ is Hermitian with real eigenvalues $\{\lambda_j\}_{j=1}^p$. Reconciling (\ref{ESD_of_skHerm}) with the \textit{standard} definition of ESD for Hermitian matrices (e.g. Section 2 of \cite{BaiSilv95}), we thus have 
\begin{align}\label{ESD_sksym_equal_sym}
F^S(\mathbbm{i}x) = F^{-\mathbbm{i}S}(x),  \hspace{2mm} \forall\, x \in \mathbb{R}.
\end{align}
In (\ref{ESD_sksym_equal_sym}), we have used the same notation, i.e. $F^A$ to denote the ESD of Hermitian and skew-Hermitian matrices alike. It is to be understood that the argument of the function  will be real or imaginary depending on whether the matrix in the superscript is Hermitian or skew-Hermitian, respectively.
\end{remark}

\begin{definition}\label{def:defining_JESD}
For commuting p.s.d. matrices $M_1, M_2 \in \mathbb{C}^{p \times p}$, let $P$ be \textit{a} unitary matrix such that $M_k = P D_k P^*$ where $D_k = \operatorname{diag}(\lambda_1^{(k)}, \ldots, \lambda_p^{(k)})$. For $j \in [p]$, let $\boldsymbol{\lambda}_j := \{\lambda_{j}^{(1)}, \lambda_{j}^{(2)}\}_{j=1}^p$, i.e. $\boldsymbol{\lambda}_j$ is the pair consisting of the $j^{th}$ eigenvalue (see Remark \ref{rem:JESD_is_well_defined}) from both the coordinates. Let $\textbf{M} := (M_1, M_2)$. The Joint Empirical Spectral Distribution (JESD) of $\textbf{M}$ is the probability measure on $\mathbb{R}_+^2$ that assigns equal mass to $\boldsymbol{\lambda}_j; j \in [p]$, i.e. 
\begin{align}\label{defining_JESD}
    \operatorname{JESD}(\textbf{M}) = \frac{1}{p}\sum_{j=1}^p \delta_{\boldsymbol{\lambda}_j}.
\end{align}

\end{definition}
\begin{remark}\label{rem:JESD_is_well_defined}
    Note that the choice of the unitary matrix $P$ in the spectral decomposition of both matrices is not unique. However, once we fix a $P$, the order of the $p$ eigenvalues within each of the two coordinate gets fixed. But we observe that $\operatorname{JESD}(\textbf{M})$ is independent of the choice of $P$ and is therefore well-defined.
\end{remark}

Suppose $\{Z_{1}^{(n)},Z_{2}^{(n)}\}_{n=1}^{\infty}$ are sequences of complex valued random matrices, each having dimension $p \times n$ such that $p/n \rightarrow c \in (0, \infty)$. The entries of $Z_k;k=1,2$ (denoted by $z_{ij}^{(k)}$) are independent, have zero mean, unit variance, and they satisfy some moment conditions to be stated later. These entries will be referred to as innovations. Let $\Sigma_{1n}, \Sigma_{2n} \in \mathbb{C}^{p \times p}$ be a sequence of pairs of random positive semi-definite matrices that commute (i.e. for each $n$, $\Sigma_{1n}\Sigma_{2n}=\Sigma_{2n}\Sigma_{1n}$). Henceforth, $Z_k^{(n)}$ shall be denoted by $Z_k$. We are interested in the limiting behavior (as $p, n \rightarrow \infty$) of the ESDs of matrices of the type:
\begin{align}\label{defining_Xk}
&S_n^\pm := \frac{1}{n}\bigg(X_1X_2^* \pm X_2X_1^*\bigg) \text{, where } X_k := \Sigma_{kn}^{\frac{1}{2}}Z_k.
\end{align}

We define the following central objects associated with our work.
\begin{definition}\label{defining_Sn}
    $S_n := \frac{1}{n}(X_1X_2^* - X_2X_1^*) = \frac{1}{n}\sum_{r=1}^n(X_{1r}X_{2r}^* - X_{2r}X_{1r}^*).$
\end{definition}
\begin{definition}\label{defining_Snj}
    $S_{nj} := \frac{1}{n}\sum_{r \neq j}(X_{1r}X_{2r}^* - X_{2r}X_{1r}^*)$ for $1 \leq j \leq n.$
\end{definition}
Additionally for $z \in \mathbb{C}_L$, we define the following. 
\begin{definition}\label{defining_Qz}
    $Q(z) := (S_n - zI_p)^{-1}$ is the resolvent of $S_n$.
\end{definition}
\begin{definition}\label{defining_Qzj}
    $Q_{-j}(z) := (S_{nj} - zI_p)^{-1}$ is the resolvent of $S_{nj}$ where $1 \leq j \leq n$.
\end{definition}
\begin{remark}\label{remark_norm_of_resolvent}
    For $z \in \mathbb{C}_L$, it is easy to see that any eigenvalue $\lambda$ of $(S_n-zI_p)$ satisfies $|\lambda| \geq |\Re(z)|$. Thus we have $||Q(z)||_{op} \leq 1/|\Re(z)|$. Similarly, we also have $||Q_{-j}(z)||_{op} \leq 1/|\Re(z)|$.
\end{remark}
\begin{definition}
Let $\boldsymbol{\Sigma}_n := (\Sigma_{1n}, \Sigma_{2n})$. Since these matrices commute, similar to (\ref{defining_JESD}), we represent their JESD as follows: 
\begin{align}\label{defining_Hn}
    H_n := \operatorname{JESD}(\boldsymbol{\Sigma}_n).
\end{align} 
\end{definition}

\section{Stieltjes Transforms of Measures on the imaginary axis}\label{sec:MeasuresOnImaginaryAxis}
The existing definition of Stieltjes transform and basic results deal with the weak convergence of probability measures supported on (subsets of) the real line. Since we will be dealing with skew-Hermitian matrices which have purely imaginary (or zero) eigenvalues, we modify/ develop existing definitions/ results related to convergence of measures. We will start by defining a distribution function over the imaginary axis.

Let $X$ be a purely imaginary random variable. We give the most natural definition for the distribution function $F$ of $X$. Let $\overline{F}$ be the distribution function of $-\mathbbm{i}X$, the real counterpart of $X$. Then, $F$ is defined as
\begin{align}\label{CDF_imag_equal_real}
F(\mathbbm{i}x) := \overline{F}(x)  \text{ for } x \in \mathbb{R}. 
\end{align}
It is clear that $F$ is the clockwise rotated version of $\overline{F}$. The analogous Levy metric between distribution functions $F, G$ on the imaginary axis can be defined as 
\begin{align}\label{defining_Levy}
    L_{im}(F, G) := L(\overline{F}, \overline{G}),
\end{align}
where $L(\overline{F}, \overline{G})$ is the ``standard" Levy distance between distributions $\overline{F}, \overline{G}$ over the real line. Similarly, we define the uniform distance between $F$ and $G$ as 
\begin{align}\label{defining_uniform}
    ||F - G||_{im} := ||\overline{F} - \overline{G}||,
\end{align}
where $||\overline{F} - \overline{G}||$ represents the ``standard" uniform metric between distributions over the real line. Therefore, using Lemma B.18 of \cite{BaiSilv09} leads to the following analogous inequality between Levy and uniform metrics:
\begin{align}\label{Levy_vs_uniform}
    L_{im}(F, G) = L(\overline{F}, \overline{G}) \leq ||\overline{F} - \overline{G}|| = ||F - G||_{im}.
\end{align}
This will be important specifically in establishing the weak convergence of measures over the imaginary axis.

\begin{definition}
\textbf{(Stieltjes Transform)} For a measure (not necessarily probability) $\mu$ supported on the imaginary axis, we define the Stieltjes Transform as
\begin{align}\label{defining_StieltjesTransform}
 s_{\mu}: \mathbb{C} \backslash \operatorname{supp}(\mu) \rightarrow \mathbb{C}, \hspace{3mm} \displaystyle s_{\mu}(z) = \int_{\mathbb{R}} \dfrac{\mu(dt)}{\mathbbm{i}t - z}.
\end{align}
\end{definition}

With this definition, we immediately observe the following properties. The proofs are exactly similar to those of the corresponding properties for Stieltjes Transforms of probability measures on the real line (for instance, Section 2.1.2 of \cite{Couillet}). 
\begin{description}
    \item[1] $s_\mu(.)$ is analytic on its domain and 
    \begin{align}\label{Property1}
        s_{\mu}(\mathbb{C}_L) \subset \mathbb{C}_R \text{ and } s_{\mu}(\mathbb{C}_R) \subset \mathbb{C}_L. 
    \end{align}
       \item[2] Let the total mass of $\mu$ be denoted by $M_\mu \geq 0$. Then a bound for the value of the transform at the point $z$ is given by
    \begin{align}\label{bound_Stieltjes_Transform}
        |s_{\mu}(z)| \leq M_\mu/|\Re(z)|.
    \end{align}
    \item[3] If a probability measure $\mu$ has a density at $\mathbbm{i}x$ where $x \in \mathbb{R}$, then 
    \begin{align}\label{inversion_density}
    f_{\mu}(x) = \dfrac{1}{\pi} \underset{\epsilon \downarrow 0}{\lim}\hspace{1mm}\, \Re  (s_{\mu}(-\epsilon + \mathbbm{i} x)).
    \end{align}
    \item[4] If a probability measure $\mu$ has a point mass at $\mathbbm{i}x$ where $x \in \mathbb{R}$, then 
    \begin{align}\label{inversion_pointMass}
    \mu(\{x\}) = \underset{\epsilon \downarrow 0}{\lim}\hspace{1mm} \,\epsilon s_{\mu}(-\epsilon + \mathbbm{i} x).   
    \end{align}
    \item[5] For $\mathbbm{i}a,\mathbbm{i}b$ continuity points of a probability measure $\mu$, we have
    \begin{align}\label{measureOfInterval}
        \mu([\mathbbm{i}a, \mathbbm{i}b]) = \dfrac{1}{\pi} \,\underset{\epsilon \downarrow 0}{\lim}\hspace{1mm}\int_a^b\Re(s_\mu(-\epsilon + \mathbbm{i}x)dx.
    \end{align}
\end{description}

Recall the definition of $S_n$ from (\ref{defining_Sn}). Let $S_n = P\Lambda P^*$ be a spectral decomposition of $S_n$ with $\{\mathbbm{i}\lambda_j\}_{j=1}^p$ being the $p$ purely imaginary (or zero) eigenvalues of $S_n$. In light of Definition (\ref{defining_StieltjesTransform}), we have the following expression for the Stieltjes Transform of $F^{S_n}$, the ESD of $S_n$.
\begin{definition}\label{defining_s_n}
$s_n(z) := \dfrac{1}{p}\operatorname{trace}(Q(z)) = \dfrac{1}{p}\sum_{j=1}^p\dfrac{1}{\mathbbm{i}\lambda_j - z}.$
\end{definition}

Let $P^*\Sigma_{kn}P := A^{(k)} = (a_{ij}^{(k)})$ for $k=1,2$. With this notation, we define another quantity that will play a key role in our work. 
\begin{definition}\label{defining_hn}
$\textbf{h}_{n}(z) := (h_{1n}(z), h_{2n}(z))^T$, where $h_{kn}(z) := \dfrac{1}{p}\operatorname{trace}(\Sigma_{kn}Q(z)) = \dfrac{1}{p}\sum_{j=1}^p\dfrac{a_{jj}^{(k)}}{\mathbbm{i}\lambda_j - z}.$

It is easy to see that $h_{kn}(\cdot)$ is the Stieltjes Transform of the discrete measure (say $\mu_{kn}$) that allocates a mass of $a_{jj}^{(k)}/p$ at the point $\mathbbm{i}\lambda_j$ for $1 \leq j \leq p$. At this point, we make a note of the total variation norm of the underlying measure ($\mu_{kn}$) which will be used later:
\begin{align}\label{totalVariation}
 TV(\mu_{kn}) = \frac{1}{p}\sum_{j=1}^p a_{jj}^{(k)} = \frac{1}{p}\operatorname{trace}(A^{(k)}) = \frac{1}{p}\operatorname{trace}(\Sigma_{kn}). 
\end{align}
\end{definition}

\begin{lemma}\label{StieltjesTransformRotated}
    For a probability distribution $F$ over the imaginary axis, let $s_{F}$ be the Stieltjes Transform (in the sense of Definition \ref{defining_StieltjesTransform}). For any random variable $X \sim F$, let $\overline{F}$ represent the distribution of the real-valued random variable $-\mathbbm{i}X$. Then, the Stieltjes Transform (in the standard sense) of $\overline{F}$ at $z \in \mathbb{C}^+$ is given by
    \begin{align}
        s_{\overline{F}}(z) = \mathbbm{i}s_{F}(\mathbbm{i}z).
    \end{align}
\end{lemma}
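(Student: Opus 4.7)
The plan is a direct computation from the definition of the Stieltjes transform for imaginary-axis measures (Definition \ref{defining_StieltjesTransform}), combined with the identification (\ref{CDF_imag_equal_real}) of $F$ with $\overline{F}$ through the parametrization $t \mapsto \mathbbm{i}t$. Before writing anything, I would verify that the argument $\mathbbm{i}z$ lies in the domain of $s_F$: if $z = u + \mathbbm{i}v \in \mathbb{C}^+$, i.e.\ $v > 0$, then $\mathbbm{i}z = -v + \mathbbm{i}u$ has strictly negative real part, so $\mathbbm{i}z \in \mathbb{C}_L$, which is exactly the half-plane where $s_F$ is bounded in the sense of (\ref{bound_Stieltjes_Transform}).

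Next I would evaluate $s_F(\mathbbm{i}z)$ by substituting into Definition \ref{defining_StieltjesTransform} and viewing the integration variable $t$ as ranging over $\mathbb{R}$, since the measure $F$ on $\mathbbm{i}\mathbb{R}$ corresponds under the parametrization $t \mapsto \mathbbm{i}t$ to the measure $\overline{F}$ on $\mathbb{R}$ by (\ref{CDF_imag_equal_real}):
\begin{align*}
s_F(\mathbbm{i}z) \;=\; \int_{\mathbb{R}} \frac{\overline{F}(dt)}{\mathbbm{i}t - \mathbbm{i}z} \;=\; \frac{1}{\mathbbm{i}}\int_{\mathbb{R}} \frac{\overline{F}(dt)}{t - z} \;=\; -\mathbbm{i}\, s_{\overline{F}}(z).
\end{align*}
Multiplying both sides by $\mathbbm{i}$ delivers the claim $s_{\overline{F}}(z) = \mathbbm{i}\, s_F(\mathbbm{i}z)$.

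There is essentially no obstacle to overcome: the lemma is a bookkeeping identity matching up the standard Stieltjes transform with the paper's modified transform for measures on the imaginary axis. The only point requiring slight care is the correspondence of half-planes $\mathbb{C}^+ \leftrightarrow \mathbbm{i}\,\mathbb{C}^+ = \mathbb{C}_L$ under multiplication by $\mathbbm{i}$, which is precisely the analytic counterpart of the clockwise rotation taking $F$ to $\overline{F}$ on the distribution side.
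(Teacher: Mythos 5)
Your computation is correct and is essentially the paper's proof run in the opposite direction: the paper expands $s_{\overline{F}}(z)$ and arrives at $\mathbbm{i}s_F(\mathbbm{i}z)$, whereas you expand $s_F(\mathbbm{i}z)$ and arrive at $-\mathbbm{i}s_{\overline{F}}(z)$, using the same identification $dF(\mathbbm{i}t)=d\overline{F}(t)$ and the same domain check $\mathbb{C}^+\leftrightarrow\mathbb{C}_L$. No substantive difference.
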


\begin{proof}
    For $x \in \mathbb{R}$, it is clear that $dF(\mathbbm{i}x) = d\overline{F}(x)$. Note that $z \in \mathbb{C}^+$ implies that $\mathbbm{i}z \in \mathbb{C}_L$. Thus, we have
    \begin{align}
        s_{\overline{F}}(z) =\int \frac{d\overline{F}(x)}{x - z}= \int \frac{dF(\mathbbm{i}x)}{-\mathbbm{i}(\mathbbm{i}x -\mathbbm{i}z)}
        = \mathbbm{i}\int \frac{dF(y)}{y - \mathbbm{i}z} = \mathbbm{i}s_{F}(\mathbbm{i}z).
    \end{align}
\end{proof}

The following is an analog of a result linking convergence of Stieltjes transforms to the weak convergence of measures on the real axis.
\begin{theorem}\label{GeroHill}
    For $n \in \mathbb{N}$, let $s_n(\cdot)$ be the Stieltjes transform of $F_n$, a probability distribution over the imaginary axis. If $s_n(z) \xrightarrow{} s(z)$ for $z \in \mathbb{C}_L$ and $\underset{y \rightarrow +\infty}{\lim}ys(-y) = 1$, then $F_n \xrightarrow{d} F$ where $s(\cdot)$ is the Stieltjes transform of $F$, a probability distribution over the imaginary axis.
\end{theorem}
\begin{proof}
The proof can be adapted with similar arguments from Theorem 1 of \cite{GeroHill03} which is stated below.

``Suppose that ($P_n$) are real Borel probability measures with Stieltjes transforms ($S_n$) respectively. If $\underset{n\rightarrow\infty}{\lim}S_n(z)=S(z)$ for all z with $\Im(z) > 0$, then there exists a Borel probability measure $P$ with Stieltjes transform $S_P=S$ if and only if 
$$\underset{y\rightarrow\infty}{\lim}\mathbbm{i}yS(\mathbbm{i}y)=-1,$$ in which case $P_n\rightarrow P$ in distribution."
\end{proof}

\begin{theorem}\label{SilvChoiResult1}
    Let $m_{G}(.)$ be the Stieltjes Transform of a probability measure $G$ on the imaginary axis. Then $G$ is differentiable at $\mathbbm{i}x_0$, if $m^*(\mathbbm{i}x_0) \equiv \underset{z \in \mathbb{C}_L \rightarrow \mathbbm{i}x_0}{\lim}\Re(m_{G}(z))$ exists and its derivative at $\mathbbm{i}x_0$ is $({1}/{\pi})m^*(\mathbbm{i}x_0)$.
\end{theorem}
\begin{proof}
The proof is similar to that of Theorem 2.1 of \cite{SilvChoi} which is stated below.

``Let G be a p.d.f. and $x_0 \in \mathbb{R}$. Suppose $\Im(m_G(x_0)) \equiv \underset{z \in \mathbb{C}^+ \rightarrow x_0}{\lim}\Im(m_G(z))$ exists. Then G is differentiable at $x_0$, and its derivative is $({1}/{\pi})\Im(m_G(x_0))$."    
\end{proof}

We mention the \textbf{Vitali-Porter Theorem} (Section 2.4, \cite{joelschiff}) below without proof.
\begin{theorem}\label{VitaliPorter}
Let $\{f_n\}_{n=1}^\infty$ be a locally uniformly bounded sequence of analytic functions in a domain $\Omega$ such that $\underset{n \rightarrow \infty}{\lim}f_n(z)$ exists for each $z$ belonging to a set $E\subset \Omega$ which has an accumulation point in $\Omega$. Then $\{f_n\}_{n=1}^\infty$ converges uniformly on compact subsets of $\Omega$ to an analytic function.    
\end{theorem}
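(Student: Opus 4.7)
The plan is to combine the normal families machinery (Montel's theorem) with the identity theorem for holomorphic functions, via a standard ``two subsequences'' argument. No result earlier in the excerpt is needed; the proof is purely complex-analytic.

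First, I would invoke Montel's theorem: since $\{f_n\}$ is locally uniformly bounded on the domain $\Omega$, the family is normal. Hence every subsequence $\{f_{n_k}\}$ admits a further subsequence converging locally uniformly on $\Omega$ to some function $g$, which is automatically holomorphic on $\Omega$ by Weierstrass's theorem on uniform limits of holomorphic functions.

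Next I would establish uniqueness of the subsequential limit. Suppose two such subsequences converge locally uniformly to holomorphic limits $g$ and $h$. By hypothesis, for every $z\in E$ the full sequence $\{f_n(z)\}$ has a limit, call it $f(z)$; therefore $g(z)=h(z)=f(z)$ on $E$. Because $\Omega$ is connected (being a domain) and $E$ has an accumulation point in $\Omega$, the identity theorem gives $g\equiv h$ throughout $\Omega$. In particular, every locally uniformly convergent subsequence of $\{f_n\}$ has the same holomorphic limit, which we again denote $f$.

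Finally I would upgrade subsequential convergence to convergence of the whole sequence by the usual ``if every subsequence has a sub-subsequence converging to the same limit, then the original sequence converges to that limit'' principle, applied on an exhaustion of $\Omega$ by compact sets. Concretely, if $\{f_n\}$ failed to converge locally uniformly to $f$, there would exist a compact $K\subset\Omega$, an $\varepsilon>0$, and a subsequence with $\sup_{z\in K}|f_{n_k}(z)-f(z)|\ge\varepsilon$; but by Montel's theorem this subsequence has a further subsequence converging locally uniformly, and by the uniqueness step its limit must be $f$, contradicting the inequality on $K$. The only real point to take care of is the interplay between connectedness of $\Omega$ and the accumulation-point hypothesis on $E$, which is precisely what powers the identity theorem and hence the uniqueness of the subsequential limit; this is the conceptual crux rather than a computational obstacle.
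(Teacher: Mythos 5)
Your proof is correct and is the standard argument for the Vitali--Porter theorem, which the paper states without proof (citing Schiff's \emph{Normal Families}). The three ingredients you use --- Montel's theorem to extract locally uniformly convergent subsequences, the identity theorem (together with connectedness of the domain and the accumulation point of $E$) to force all subsequential limits to coincide, and the ``every subsequence has a sub-subsequence converging to the same limit'' principle to upgrade to convergence of the whole sequence --- are exactly the ones a textbook proof of this result uses, and each step is deployed correctly. One small point worth making explicit when you invoke the identity theorem: you need the accumulation point of $E$ to lie \emph{in} $\Omega$ (not merely in $\overline{\Omega}$), which the hypothesis supplies; you state this, so the argument is complete.
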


We state the \textbf{Grommer-Hamburger Theorem} (page 104-105 of \cite{awintner}) below without proof.
\begin{theorem}\label{Grommer}
    Let $\{\mu_n\}_{n=1}^\infty$ be a sequence of measures in $\mathbb{R}$ for which the total variation is uniformly bounded.
\begin{enumerate}
    \item If $\mu_n \xrightarrow{d} \mu$, then $S(\mu_n;z) \rightarrow S(\mu;z)$ uniformly on compact subsets of $\mathbb{C}\backslash\mathbb{R}$.
    \item If $S(\mu_n;z) \rightarrow  S(z)$ uniformly on compact subsets of $\mathbb{C}\backslash\mathbb{R}$, then S(z) is the Stieltjes transform of a measure on $\mathbb{R}$ and $\mu_n \xrightarrow{d} \mu$.
\end{enumerate}
\end{theorem}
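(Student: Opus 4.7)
The plan is to prove both directions by exploiting the classical duality between Stieltjes transforms and their underlying measures, with Theorem \ref{VitaliPorter} (Vitali--Porter) bridging pointwise and locally uniform convergence, and Helly's selection theorem supplying relative compactness of $\{\mu_n\}$ for the converse direction.

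For the forward direction (1), I would fix $z \in \mathbb{C}\setminus\mathbb{R}$ and observe that $t \mapsto (t-z)^{-1}$ is bounded and continuous on $\mathbb{R}$, with modulus at most $1/|\Im(z)|$, and vanishes at infinity. Hence $\mu_n \xrightarrow{d} \mu$ yields pointwise convergence $S(\mu_n;z) \to S(\mu;z)$ by applying the defining property of convergence in distribution to the real and imaginary parts of this test function. To upgrade to uniform convergence on a compact $K \subset \mathbb{C}\setminus\mathbb{R}$, I would bound $|S(\mu_n;z)| \leq M / \mathrm{dist}(K,\mathbb{R})$, where $M$ is the assumed uniform bound on the total variations of $\{\mu_n\}$, so that the analytic family $\{S(\mu_n;\cdot)\}$ is locally uniformly bounded on $\mathbb{C}\setminus\mathbb{R}$. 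Pointwise convergence on this set together with local uniform boundedness then permits an invocation of Theorem \ref{VitaliPorter} to obtain local uniform convergence.

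For the converse (2), my strategy is the standard ``every subsequence admits a further subsequence with the same limit'' argument. Since $\{\mu_n\}$ has uniformly bounded total variation, Helly's selection theorem guarantees that any subsequence contains a further subsequence $\{\mu_{n_k}\}$ converging vaguely to some finite Borel measure $\mu^*$ on $\mathbb{R}$. Applying the forward direction (extended from weak to vague convergence using the fact that the kernel $(t-z)^{-1}$ vanishes at infinity, together with the uniform mass bound) yields $S(\mu_{n_k};z) \to S(\mu^*;z)$ locally uniformly on $\mathbb{C}\setminus\mathbb{R}$; but by hypothesis these also converge to $S(z)$, forcing $S = S(\mu^*;\cdot)$. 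In particular, $S$ is itself the Stieltjes transform of a finite Borel measure on $\mathbb{R}$. The classical Stieltjes inversion formula then uniquely determines $\mu^*$ from its transform, so the subsequential limit $\mu^* =: \mu$ is the same for every subsequence chosen, forcing the full sequence to satisfy $\mu_n \xrightarrow{d} \mu$.

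The main obstacle I anticipate is the subtle distinction between vague and weak convergence in the converse direction: Helly's theorem a priori produces only vague limits, so one must rule out loss of mass at infinity to ensure $\mu^*$ is a genuine finite measure with the correct total variation. This will be handled by analyzing the asymptotics of $\mathbbm{i}y\, S(\mu_n; \mathbbm{i}y)$ as $y\to\infty$, which (up to sign) encodes the total mass $\mu_n(\mathbb{R})$ via the standard large-$|z|$ expansion $S(\mu; z) = -\mu(\mathbb{R})/z + O(|z|^{-2})$; the assumed locally uniform convergence of Stieltjes transforms then pins down the limiting total mass and excludes escape of mass to infinity.
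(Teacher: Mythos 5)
The paper does not prove this theorem; it is stated ``without proof,'' citing Wintner (\cite{awintner}, pp.~104--105). So there is no paper proof to compare your proposal against, and you are supplying a proof for a cited black box.

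Your overall strategy is the standard one and it works. Forward direction: $(t-z)^{-1}$ is continuous, bounded by $1/|\Im z|$, and vanishes at infinity, so either vague or weak convergence of $\mu_n$ gives pointwise convergence of the transforms; the uniform bound $|S(\mu_n;z)|\leq M/\operatorname{dist}(z,\mathbb{R})$ plus Theorem \ref{VitaliPorter} upgrades this to locally uniform convergence. One technicality: Vitali--Porter requires a domain (connected open set), and $\mathbb{C}\setminus\mathbb{R}$ is disconnected, so apply it separately on $\mathbb{C}^+$ and $\mathbb{C}^-$. Converse direction: Helly's selection theorem plus Stieltjes inversion to identify a unique vague subsequential limit is correct, and gives that $S$ is the transform of a finite measure and that $\mu_n$ converges vaguely to it.

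Your closing paragraph, however, is wrong. You propose to exclude escape of mass to infinity by examining $\mathbbm{i}y\,S(\mu_n;\mathbbm{i}y)$ as $y\to\infty$ and claim that locally uniform convergence ``pins down the limiting total mass.'' It does not: the limits in $n$ and in $y$ do not commute under that hypothesis. Take $\mu_n=\delta_n$, so that $S(\mu_n;z)=(n-z)^{-1}\to 0$ uniformly on compact subsets of $\mathbb{C}\setminus\mathbb{R}$, yet each $\mu_n$ has total mass $1$ while the vague limit is the zero measure. Mass escape is real and undetectable from locally uniform convergence on compacts alone. Consequently ``$\mu_n\xrightarrow{d}\mu$'' in part (2) must be read as vague convergence (against continuous functions vanishing at infinity), not weak convergence against all bounded continuous functions; otherwise the theorem is false as stated. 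Preservation of total mass is a genuinely extra hypothesis --- exactly the role of the normalization condition $\lim_{y\to\infty}\mathbbm{i}yS(\mathbbm{i}y)=-1$ that the paper records separately in Proposition \ref{GeroHill}. Delete the final paragraph of your proposal; once you conclude vague convergence to the measure recovered from $S$ by Stieltjes inversion, the proof is finished.
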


\section{LSD under arbitrary commuting pair of scaling matrices}\label{sec:general_covariance}
Before stating the main result of the paper, we first define a few functions.
\begin{definition}\label{defining_rho}
$\boldsymbol{\rho}:\mathbb{C}^2\backslash \{(z_1, z_2) \in \mathbb{C}^2:z_1z_2 \neq -1\} \rightarrow \mathbb{C}^2$ such that 
    $\boldsymbol{\rho}(z_1, z_2) = \bigg(\dfrac{z_2}{1 + z_1z_2}, \dfrac{z_1}{1 + z_1z_2}\bigg)^T.$
\end{definition}   

Letting $\boldsymbol{\rho}(z_1, z_2) = (\rho_1(z_1,z_2), \rho_2(z_1, z_2))^T$, we have the following relationships.
\begin{align}
    \Re(\rho_1(z_1, z_2)) = \frac{\Re(z_2(1+\bar{z}_1\bar{z}_2))}{|1+z_1z_2|^2} = \frac{\Re(z_2)+
    \Re(z_1)|z_2|^2}{|1+z_1z_2|^2}, \label{real_of_rho1}\\
    \Re(\rho_2(z_1, z_2)) = \frac{\Re(z_1(1+\bar{z}_1\bar{z}_2))}{|1+z_1z_2|^2} = \frac{\Re(z_1)+
    \Re(z_2)|z_1|^2}{|1+z_1z_2|^2}.\label{real_of_rho2}    
\end{align}

\begin{remark}\label{remark_real_of_rho}
It is clear that for $\textbf{z} = (z_1, z_2) \in \mathbb{C}_R^2$, we have $\boldsymbol{\rho}(\textbf{z}) := (\rho_1(\textbf{z}), \rho_2(\textbf{z}))^T \in \mathbb{C}_R^2$ or $\boldsymbol{\rho}(\mathbb{C}_R^2) \subset \mathbb{C}_R^2$.
\end{remark}

\begin{theorem}\label{mainTheorem}
    \textbf{Main Theorem:} Suppose the following conditions hold.
    \begin{enumerate}
        \item[$\boldsymbol{T}_1$:] $c_n := \dfrac{p}{n} \rightarrow c \in (0, \infty)$.
        \item[$\boldsymbol{T}_2$:] Entries of $Z_1, Z_2$ are independent with zero mean and unit variance and for some $\eta_0 > 0$, they satisfy
        \begin{align}\label{T_2}
            \underset{k=1,2}{\max} \,\sup_{i,j}\mathbb{E}|z_{ij}^{(k)}|^{2+\eta_0} < \infty.
        \end{align}
        \item[$\boldsymbol{T}_3:$] $\Sigma_{1n}\Sigma_{2n} = \Sigma_{2n}\Sigma_{1n}$ for all $n \in \mathbb{N}$.
        \item[$\boldsymbol{T}_4$:] $H_n \xrightarrow{d} H$ a.s. where $H$ is a non-random bi-variate probability distribution on $\mathbb{R}_+^2$ with support not contained entirely in the real or the imaginary axis.
        \item[$\boldsymbol{T}_5$:] There exists a constant $D_0>0$ such that 
        \begin{align}\label{secondSpectralBound}
         \underset{k=1,2}{\max}\,\underset{n \rightarrow \infty}{\limsup} \,\bigg\{\frac{1}{p}\operatorname{trace}(\Sigma_{kn}^2)\bigg\} < D_0.
        \end{align}
    \end{enumerate}
    Then, $F^{S_n} \xrightarrow{d} F$ a.s. where the Stieltjes Transform of $F$ at $z \in \mathbb{C}_L$ is characterized as follows:
    \begin{align}\label{s_main_eqn}
        s_{F}(z) &= \displaystyle \int_{\mathbb{R}_+^2} \frac{dH(\boldsymbol{\lambda})}{-z + \boldsymbol{\lambda}^T\boldsymbol{\rho}(c\textbf{h}(z))},
    \end{align}
    where $\textbf{h}(z) = (h_1(z), h_2(z)) \in \mathbb{C}_R^2$ are unique numbers such that
    \begin{align}\label{h_main_eqn}
         \textbf{h}(z) &= \displaystyle \int_{\mathbb{R}_+^2} \frac{\boldsymbol{\lambda} dH(\boldsymbol{\lambda})}{-z + \boldsymbol{\lambda}^T\boldsymbol{\rho}(c\textbf{h}(z))} \text{, and } \boldsymbol{\lambda} = (\lambda_1, \lambda_2)^T.
    \end{align}
    Moreover, $h_1, h_2$ themselves are Stieltjes Transforms of measures (not necessarily probability measures) over the imaginary axis and continuous as functions of $H$.
    An equivalent characterization of the Stieltjes Transform is given by 
    \begin{align}\label{s_main_eqn_1}
        s_F(z) &= \frac{1}{z}\bigg(\frac{2}{c}-1\bigg) - \frac{2}{cz}\bigg(\frac{1}{1+c^2h_1(z)h_2(z)}\bigg)\text{, for all } z \in \mathbb{C}_L.
    \end{align}
\end{theorem}
NOTE: Throughout the paper we will be using bold symbols such as \textbf{h}, $\boldsymbol{\lambda}$, $\boldsymbol{\rho},\textbf{0}$ to denote vector quantities.

\begin{remark}
    The alternate characterization (\ref{s_main_eqn_1}) of the Stieltjes Transform is useful when investigating the presence of point mass at $0$ in the LSD. The detailed proofs of Theorem \ref{mainTheorem} presented later on will thus focus only on proving (\ref{s_main_eqn}) and (\ref{h_main_eqn}).
\end{remark}

\begin{remark}
    For a fixed $z \in \mathbb{C}_L$, Theorem \ref{ContinuityGeneral} states a result regarding the continuity of the solution to (\ref{h_main_eqn}), i.e. $\textbf{h}(z)$ w.r.t. $H$ under a certain technical condition (\ref{TechnicalCondition_continuity}) and Condition $\boldsymbol{T}_5$. In the special case covered in Section \ref{sec:equal_covariance}, we prove a stronger result (Theorem \ref{Continuity_Special}) without requiring this technical condition or any assumption on spectral moment bounds.

Due to the conditions (i.e., $\boldsymbol{T}_5$) imposed on $\Sigma_{1n}, \Sigma_{2n}$ in Theorem \ref{mainTheorem}, there exists $0 < C_0 < \infty$ such that
\begin{align}\label{firstSpectralBound}
    \underset{k=1,2}{\max}\,\bigg\{\underset{n \rightarrow \infty}{\limsup}\frac{1}{p}\operatorname{trace}(\Sigma_{kn})\bigg\} < C_0,
\end{align}
and by Skorohod Representation Theorem and Fatou's Lemma,
\begin{align} \label{defining_C0}
    \underset{k=1,2}{\max}\,\int \lambda_k dH(\boldsymbol{\lambda}) \leq C_0.
\end{align}
\end{remark}

\begin{remark}
    The proof for existence of some solution of (\ref{h_main_eqn}) can be done using a uniform bound only on the first spectral moments of $\Sigma_{1n}, \Sigma_{2n}$ (i.e. \ref{firstSpectralBound}). The proofs of uniqueness and continuity of the solution requires the second moments to be bounded, i.e. Condition $\boldsymbol{T}_5$.
\end{remark}

\begin{remark}
    The assumptions on $\Sigma_{kn};k=1,2$ hold in an almost sure sense. Moreover, $H_n$ (defined in (\ref{defining_Hn})) converges weakly to a non-random $H$ almost surely. By the end of this Section, we show that conditioning on $\Sigma_{kn}$, $F^{S_n}$ converges weakly to a non-random limit $F$ that depends on $\Sigma_{kn}$ only through their non-random limit $H$. This result holds irrespective of whether $\Sigma_{kn}$ is random or not. Therefore, we will henceforth treat $\{\Sigma_{kn}\}_{n=1}^\infty$ as a non-random sequence.
\end{remark}

\subsubsection{Proof of the equivalent characterization, i.e. (\ref{s_main_eqn_1}) in Theorem \ref{mainTheorem}}
\begin{proof}
    The second expression for the Stieltjes Transform of the LSD $F$ in Theorem \ref{mainTheorem}, i.e. (\ref{s_main_eqn_1}) follows easily from (\ref{s_main_eqn}) and (\ref{h_main_eqn}). For $k=1,2$, we have 
\begin{align}
    &h_k(z) = \int \frac{\lambda_k dH(\boldsymbol{\lambda})}{-z + \boldsymbol{\lambda}^T\boldsymbol{\rho}(c\textbf{h})}\\ \notag
    \implies& h_k(z)\rho_k(c\textbf{h}(z)) = \int \frac{\lambda_k\rho_k(c\textbf{h}(z))dH(\boldsymbol{\lambda})}{-z + \boldsymbol{\lambda}^T\boldsymbol{\rho}(c\textbf{h}(z))} \\ \notag
    \implies& h_1(z)\rho_1(c\textbf{h}(z))+h_2(z)\rho_2(c\textbf{h}(z)) = \int \frac{z - z + \lambda_1\rho_1(c\textbf{h}(z)) + \lambda_2\rho_2(c\textbf{h}(z))}{- z + \lambda_1\rho_1(c\textbf{h}(z)) + \lambda_2\rho_2(c\textbf{h}(z))}dH(\boldsymbol{\lambda})\\ \notag
    \implies& \frac{2ch_1(z)h_2(z)}{1+c^2h_1(z)h_2(z)} = zs_{F}(z) + 1\\ \notag
    \implies& s_{F}(z) = \frac{1}{z}\bigg(\frac{2}{c}-1\bigg) -\frac{2}{cz}\bigg(\frac{1}{1+c^2h_1(z)h_2(z)}\bigg).
\end{align}
\end{proof}

\begin{lemma}\label{tightness}
    Under the conditions of Theorem \ref{mainTheorem}, if we instead had $H = \delta_{(0,0)}$, we have $F^{S_n} \xrightarrow{d} \delta_{(0,0)}$ a.s. For any probability distribution $H$ supported on $\mathbb{R}_+^2$, $\{F^{S_n}\}_{n=1}^\infty$ is a tight sequence.
\end{lemma}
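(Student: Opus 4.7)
The plan is to work with the second spectral moment of $S_n$, which equals
\[
\int x^2\, d\overline{F^{S_n}}(x) \;=\; \frac{1}{p}\operatorname{trace}(-S_n^2) \;=\; \frac{1}{p}\operatorname{trace}(S_n S_n^*),
\]
since $S_n^* = -S_n$. Both claims will follow from controlling this single quantity: showing it tends to $0$ a.s.\ under $H=\delta_{(0,0)}$ yields weak convergence of $F^{S_n}$ to $\delta_0$ on the imaginary axis (via Chebyshev applied to the real counterpart $\overline{F^{S_n}}$), and showing it is a.s.\ bounded uniformly in $n$ under the general hypotheses of Theorem \ref{mainTheorem} yields tightness via Markov's inequality.

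The first step is an expectation computation. Expanding
\[
n^2 S_n S_n^* \;=\; -\,(X_1 X_2^* - X_2 X_1^*)^2
\]
and invoking the cyclicity of trace together with the independence of $X_1$ and $X_2$, I would reduce $\mathbb{E}[\operatorname{trace}(S_n S_n^*)]$ to a linear combination of $\operatorname{trace}(\Sigma_{1n})\operatorname{trace}(\Sigma_{2n})$ and $\operatorname{trace}(\Sigma_{1n}\Sigma_{2n})$ (using $\mathbb{E}[X_k^* X_k] = \operatorname{trace}(\Sigma_{kn}) I_n$ and the analogous identity for $\mathbb{E}[X_k X_k^*]$). Normalizing by $p$, each summand carries a factor $c_n$ together with products of $\frac{1}{p}\operatorname{trace}(\Sigma_{kn})$ or $\frac{1}{p^2}\operatorname{trace}(\Sigma_{1n}\Sigma_{2n})$. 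Under $T_5$, both $\frac{1}{p}\operatorname{trace}(\Sigma_{kn})$ (by Cauchy--Schwarz) and $\frac{1}{p}\operatorname{trace}(\Sigma_{1n}\Sigma_{2n})$ are bounded, giving $\mathbb{E}\bigl[\tfrac{1}{p}\operatorname{trace}(S_n S_n^*)\bigr] = O(1)$ for the tightness claim. Under $H=\delta_{(0,0)}$ combined with $T_5$, Skorohod representation and Fatou force $\frac{1}{p}\operatorname{trace}(\Sigma_{kn}) \to 0$ a.s., and hence this expectation vanishes.

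The main obstacle is the a.s.\ upgrade. The quantity $\frac{1}{p}\operatorname{trace}(S_n S_n^*)$ is a polynomial of degree four in the entries of $Z_1, Z_2$, so concentration around its mean requires a variance (or better, higher-moment) estimate. The plan is to write $\tfrac{1}{p}\operatorname{trace}(S_nS_n^*) - \mathbb{E}[\tfrac{1}{p}\operatorname{trace}(S_nS_n^*)]$ as a sum of martingale differences, one for each column $X_{1,r}, X_{2,r}$ (using the decomposition $S_n = \tfrac{1}{n}\sum_r (X_{1,r}X_{2,r}^* - X_{2,r}X_{1,r}^*)$ from Definition \ref{defining_Sn}), and then apply Burkholder's inequality together with the uniform $(4+\eta_0)$-moment bound in $T_2$ to obtain a decay of order $n^{-\alpha}$ for some $\alpha > 0$, in a sufficiently high moment. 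A Borel--Cantelli argument then yields a.s.\ convergence for the first claim and an a.s.\ uniform bound for tightness.

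To conclude: once $\frac{1}{p}\operatorname{trace}(S_n S_n^*) \to 0$ a.s., Markov gives $\overline{F^{S_n}}([-K,K]^c) \to 0$ for any $K>0$ a.s., which together with the fact that the second moment upper-bounds the spread forces $\overline{F^{S_n}} \xrightarrow{d} \delta_0$, hence $F^{S_n} \xrightarrow{d} \delta_{(0,0)}$ on $\mathbbm{i}\mathbb{R}$ through the identification (\ref{CDF_imag_equal_real}). For the tightness claim, an a.s.\ uniform bound $\tfrac{1}{p}\operatorname{trace}(S_nS_n^*) \leq M$ yields $\overline{F^{S_n}}([-K,K]^c) \leq M/K^2$ for all $n$ simultaneously, which is the defining property of a.s.\ tightness for $\{F^{S_n}\}_{n=1}^\infty$.
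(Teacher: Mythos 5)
Your overall strategy is genuinely different from the paper's. The paper writes $S_n = P_n A_n B_n Q_n$ with block factors built from $\Sigma_{kn}^{1/2}$ and $n^{-1/2}Z_k$, and then applies the Bai--Silverstein singular-value (rank) inequality (their Lemma 2.3) to bound the tail of $F^{\sqrt{S_nS_n^*}}$ by the tails of the factor ESDs. Tightness of $\{F^{P_nP_n^*}\}$ and $\{F^{A_nA_n^*}\}$ is then immediate from tightness of $\{F^{\Sigma_{kn}}\}$ (automatic under weak convergence of $H_n$) and the Mar\v{c}enko--Pastur behaviour of $\{F^{\frac{1}{n}Z_kZ_k^*}\}$. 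No moment bounds on $\Sigma_{kn}$ enter at all. Your second-moment route, by contrast, relies heavily on $T_5$ even in expectation.

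The genuine gap in your plan is the almost-sure upgrade. The martingale increment coming from column $j$ contains the term
\begin{equation*}
\frac{2}{n}\bigl(X_{2j}^*S_{nj}X_{1j} - X_{1j}^*S_{nj}X_{2j}\bigr),
\end{equation*}
whose conditional second moment (conditioning on $S_{nj}$ and integrating out $X_{1j},X_{2j}$) equals $\operatorname{trace}(\Sigma_{1n}S_{nj}^*\Sigma_{2n}S_{nj})$ up to constants. Controlling this quantity requires bounding something like $\|\Sigma_{kn}\|_{op}$ or $\|S_{nj}\|_{op}$, neither of which is available under $T_5$ alone --- $T_5$ bounds $\frac{1}{p}\operatorname{trace}(\Sigma_{kn}^2)$, not the operator norm, and $\Sigma_{kn}$ may have eigenvalues of order $\sqrt{p}$ while still satisfying $T_5$. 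The paper's own martingale-Burkholder concentration lemma (Lemma \ref{ConcetrationLemma}) is proved only under Assumptions \ref{A12}, precisely because it needs $\|\Sigma_{kn}\|_{op}\leq\tau$ and truncated entries. You would therefore have to first truncate (as in the proof of Theorem \ref{Existence_General}) and then transfer back via a rank argument --- at which point you have essentially reproduced the machinery your proposal was trying to bypass. The paper's rank-inequality proof avoids the issue entirely and yields the almost-sure claim directly from the almost-sure tightness of the factor ESDs, which is why it is the cleaner route here. Your expectation computation and the conclusion via Markov/Chebyshev once the a.s.\ bound is in hand are both fine; the missing piece is the concentration itself.
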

The proof is given in Section \ref{sec:proofOfTightness}. Below, we present an overview of the steps to prove Theorem \ref{mainTheorem}.

\subsection{Sketch of the proof}\label{ProofSketch}
First, we show that for all $z \in \mathbb{C}_L$, equation (\ref{h_main_eqn}) can have at most one solution within the class of analytic functions mapping $\mathbb{C}_L$ to $\mathbb{C}_R^2$. This is established in Theorem \ref{Uniqueness}. After this, we impose a set of assumptions on $\Sigma_k,z_{ij}^{(k)};k=1,2$ similar to \cite{Lixin06}, \cite{BaiSilv09}. This will act as a stepping stone to prove the result under general conditions of Theorem \ref{mainTheorem}. The assumptions are as follows.
\subsubsection{Assumptions}\label{A12}
\begin{itemize}
    \item \textbf{A1} There exists a constant $\tau > 0$ such that $\underset{k=1,2}{\max}\bigg(\underset{n \in \mathbb{N}}{\operatorname{sup}} ||\Sigma_{kn}||_{op}\bigg) \leq \tau$.
    \item \textbf{A2} $\mathbb{E}z_{ij}^{(k)} = 0$, 
    $\mathbb{E}|z_{ij}^{(k)}|^2 =1$, $|z_{ij}^{(k)}| = O(n^b)$, where $b \in (\frac{1}{2+\eta_0}, \frac{1}{2})$ and $\eta_0>0$ is the same as in $\boldsymbol{T}_2$.
\end{itemize}
\color{black}
Under these assumptions, the proof of Theorem \ref{mainTheorem} is done in the following steps.
\begin{itemize}
    \item[1] For $k=1,2$, the sequences $\{h_{kn}(z)\}_{n=1}^\infty$ have at least one subsequential limit by Theorem \ref{CompactConvergence}. Every subsequential limit of $\{h_{kn}(z)\}_{n=1}^\infty$ satisfies (\ref{h_main_eqn}) and moreover, they are Stieltjes Transforms of measures over the imaginary axis. This is done in Theorem \ref{Existence_A12}. Thus, (\ref{h_main_eqn}) has a unique solution.
    \item[2] Next, we establish a deterministic equivalent for $Q(z)$ in terms of $\Sigma_k,\mathbb{E}h_{kn}(z)$. This is done in Theorem \ref{DeterministicEquivalent}.
    \item[3] Finally, we show that $s_n(z) \xrightarrow{a.s.} s_{F}(z)$ and $s_{F}$ satisfies the condition in Proposition \ref{GeroHill}. Therefore, $F^{S_n} \xrightarrow{d} F$ a.s., where $F$ is the LSD of interest. This is done in Theorem \ref{Existence_A12}.
\end{itemize}

Next, we show items (1--3) under the general conditions of Theorem \ref{mainTheorem}. The idea is to construct sequences of matrices whose ESDs are \textit{close} (in uniform metric) to that of $S_n$ but satisfy Assumptions \ref{A12}. This allows us to use the above results. The outline of this part of the proof is provided in Theorem \ref{Existence_General}.

\begin{definition}
      For $0 < b$, we define the bounded sector of $\mathbb{C}_R$ denoted by $\mathcal{S}(b)$ as follows.
      \begin{align}\label{defining_sector}
          \mathcal{S}(b):= \{z \in \mathbb{C}_R: |\Im(z)| \leq \Re(z), |z| \leq b\}.
      \end{align}
\end{definition}

\begin{lemma}\label{location_of_solution_General}
(\textbf{Location of Solution:}) Let $z = -u+\mathbbm{i}v \in \mathbb{C}_L$ and $\textbf{h}=(h_1, h_2) \in \mathbb{C}_R^2$ be a solution to (\ref{h_main_eqn}). Then for $k=1,2$,
    \begin{enumerate}
        \item[(1)] Under the conditions of Theorem \ref{mainTheorem}, we have $|h_k(z)| \leq C_0/u$ where $C_0$ is defined in (\ref{defining_C0}).
        \item[(2)] If $|v| \leq u$ and $u$ is sufficiently large, then $|\Im(h_k(z))| \leq \Re(h_k(z))$.
    \end{enumerate}
\end{lemma}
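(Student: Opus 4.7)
The two parts will be handled by very different arguments: Part (1) is a direct bound from the fixed-point equation using the sign structure imposed by $\mathbf{h}(z) \in \mathbb{C}_R^2$; Part (2) relies on a first-order expansion of the integrand in $1/z$, with the correction treated as lower order.

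For Part (1), the plan is to exploit that $\mathbf{h}(z) \in \mathbb{C}_R^2$ forces $c\mathbf{h}(z) \in \mathbb{C}_R^2$, whence Remark~\ref{remark_real_of_rho} gives $\boldsymbol{\rho}(c\mathbf{h}(z)) \in \mathbb{C}_R^2$; in particular $\Re(\rho_j(c\mathbf{h}(z))) > 0$ for $j=1,2$. Because $H$ is supported on $\mathbb{R}_+^2$, the denominator appearing in the integral (\ref{h_main_eqn}) satisfies
$$\Re\bigl(-z + \boldsymbol{\lambda}^T\boldsymbol{\rho}(c\mathbf{h}(z))\bigr) \;\geq\; \Re(-z) \;=\; u$$
for $dH$-a.e.\ $\boldsymbol{\lambda}$, so its modulus is at least $u$. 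Bounding the integrand in (\ref{h_main_eqn}) in modulus then yields $|h_k(z)| \leq u^{-1}\int \lambda_k\, dH(\boldsymbol{\lambda}) \leq C_0/u$ by (\ref{defining_C0}).

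For Part (2), I would decompose via the algebraic identity
$$\frac{1}{-z + \alpha} \;=\; \frac{1}{-z} \;-\; \frac{\alpha}{(-z)(-z+\alpha)}$$
applied with $\alpha = \boldsymbol{\lambda}^T\boldsymbol{\rho}(c\mathbf{h}(z))$. Setting $m_k := \int \lambda_k\, dH(\boldsymbol{\lambda})$, this gives $h_k(z) = m_k/(-z) + R_k(z)$, where $R_k(z)$ is a remainder whose numerator carries an extra factor of $\boldsymbol{\lambda}^T\boldsymbol{\rho}(c\mathbf{h}(z))$. To control $R_k$, I would use Part (1) to obtain $|c^2 h_1 h_2| \leq (cC_0/u)^2 < 1/2$ for $u$ large, so $|1 + c^2 h_1 h_2| \geq 1/2$ and $|\rho_j(c\mathbf{h}(z))| \leq 2c|h_{3-j}(z)| \leq 2cC_0/u$. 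Combined with both denominator factors having real parts at least $u$, and with the cross-moment bound $\int \lambda_k\lambda_j\,dH \leq D_0$ (from Assumption~$T_5$ and Cauchy--Schwarz), this gives $|R_k(z)| = O(u^{-3})$.

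The proof concludes by noting that the leading term
$$\frac{m_k}{-z} \;=\; \frac{m_k(u+\mathbbm{i}v)}{u^2+v^2}$$
satisfies $\Re(m_k/(-z)) - |\Im(m_k/(-z))| = m_k(u-|v|)/(u^2+v^2)$, which is strictly positive when $|v| < u$; here $m_k > 0$ because Assumption~$T_4$ forbids $H$ being supported on $\{\lambda_k = 0\}$. For $u$ sufficiently large that the $O(u^{-3})$ remainder $R_k(z)$ is dominated by this strictly positive gap of order $m_k(u-|v|)/u^2$, one concludes $\Re(h_k(z)) \geq |\Im(h_k(z))|$. The main obstacle will be this final error-dominance step: the gap $(u-|v|)/u^2$ becomes tiny as $|v|/u \uparrow 1$, so ``sufficiently large $u$'' must be interpreted as permitting the threshold to depend on the ratio $|v|/u$ (i.e.\ on the particular $z$ in question). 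The other delicate point is verifying $m_k > 0$, for which the non-degeneracy clause in $T_4$ is essential.
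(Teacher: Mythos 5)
Your Part~(1) coincides with the paper's own argument: both exploit $\boldsymbol{\rho}(c\textbf{h}(z)) \in \mathbb{C}_R^2$ (Remark~\ref{remark_real_of_rho}) to conclude $|{-z} + \boldsymbol{\lambda}^T\boldsymbol{\rho}(c\textbf{h}(z))| \geq u$ and then bound the integrand of~(\ref{h_main_eqn}) in modulus.

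For Part~(2) you take a genuinely different but structurally equivalent route. The paper works directly with the exact real/imaginary decomposition $\Re(h_1) = uI_{1,0} + \Re(\rho_1)I_{2,0} + \Re(\rho_2)I_{1,1}$ and $\Im(h_1) = vI_{1,0} - \Im(\rho_1)I_{2,0} - \Im(\rho_2)I_{1,1}$ (the $I_{r,s}$ of Definition~\ref{defining_Irs}), bounds $I_{2,0}, I_{0,2}, I_{1,1} \leq D_0/u^2$ via~$T_5$, makes $|\rho_k(c\textbf{h})| < \epsilon$ for $u$ large using Part~(1), and then derives a lower bound $\Re(h_k) \geq E_0/(24u)$ by truncating the integral to a box of side $1/\epsilon$. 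You instead peel the leading resolvent term $m_k/(-z)$ out of the integrand and push the entire nonlinear contribution into a remainder $R_k = O(u^{-3})$. Both approaches land on the same first-order picture $h_k(z) = m_k/(-z) + (\text{lower order})$, and both require $m_k > 0$, which you and the paper alike deduce from the non-degeneracy clause in~$T_4$ (the paper's $E_0 > 0$). Your version has a modest clarity advantage because it isolates the term $m_k/(-z)$ whose sign pattern is transparent, rather than reconstructing it from the $I_{r,s}$.

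The obstacle you flag at the end is real, and it is also present, unremarked, in the paper's proof. The paper's inequality $|\Im(h_1(z))| \leq I_{1,0} + |\Im(\rho_1(c\textbf{h}))|I_{2,0} + |\Im(\rho_2(c\textbf{h}))|I_{1,1}$ drops the factor $|v|$ that its own identity~(\ref{im_h1}) places in front of $I_{1,0}$; the correct bound is $|v|\,I_{1,0} + \cdots$, which is of the same size as $\Re(h_1) \geq uI_{1,0}$ whenever $|v|$ is close to $u$. Once that factor is restored, the $\epsilon$-sized correction terms can only be absorbed when $u - |v|$ is bounded away from zero, exactly as you predicted; the threshold $U_0$ of~(\ref{cutoff_for_u}) cannot actually be taken uniformly in $v$ over the entire wedge $|v| < u$. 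The gap is harmless downstream, since the uniqueness proof needs only some open subset of $\mathbb{C}_L$ (e.g.\ $\{|v| \le u/2,\ u > U_0\}$) before invoking the Identity Theorem, but your caveat is accurate and a correct statement of the lemma should restrict to $|v| \leq (1-\eta)u$ with threshold $U_0(\eta)$, or equivalently let the threshold depend on $z$.
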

The proof can be found in Section \ref{sec:ProofOflocation_of_solution_General}. 
\begin{theorem}\label{Uniqueness}
    \textbf{(Uniqueness)} For a bi-variate distribution $H$ supported on $\mathbb{R}_+^2$ and $c \in (0, \infty)$, there can be at most one solution to (\ref{h_main_eqn}) within the class of analytic functions that map $\mathbb{C}_L$ to $\mathbb{C}_R^2$.
\end{theorem}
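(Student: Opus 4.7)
The plan is to carry out a standard contraction argument adapted to the two-component Mar\v{c}enko-Pastur-type system (\ref{h_main_eqn}). Suppose $\textbf{h} = (h_1, h_2)^T$ and $\tilde{\textbf{h}} = (\tilde h_1, \tilde h_2)^T$ are two analytic maps $\mathbb{C}_L \to \mathbb{C}_R^2$ that both satisfy (\ref{h_main_eqn}), and set $\boldsymbol{\delta}(z) := \textbf{h}(z) - \tilde{\textbf{h}}(z)$. Since $\mathbb{C}_L$ is connected and open and $\boldsymbol{\delta}$ is analytic, by the identity theorem it suffices to show $\boldsymbol{\delta} \equiv \textbf{0}$ on some subset of $\mathbb{C}_L$ having an accumulation point in $\mathbb{C}_L$. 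I would take the negative real ray $\{-u : u \geq u_0\}$, with $u_0$ chosen large enough that the estimates below apply.

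Subtracting the two instances of (\ref{h_main_eqn}) and combining into a single fraction gives, for each $k \in \{1,2\}$,
\begin{equation*}
\delta_k(z) \;=\; \int_{\mathbb{R}_+^2} \frac{\lambda_k\, \boldsymbol{\lambda}^T\bigl[\boldsymbol{\rho}(c\tilde{\textbf{h}}(z)) - \boldsymbol{\rho}(c\textbf{h}(z))\bigr]}{\bigl(-z + \boldsymbol{\lambda}^T\boldsymbol{\rho}(c\textbf{h}(z))\bigr)\bigl(-z + \boldsymbol{\lambda}^T\boldsymbol{\rho}(c\tilde{\textbf{h}}(z))\bigr)}\, dH(\boldsymbol{\lambda}),
\end{equation*}
and a direct computation from Definition \ref{defining_rho} yields
\begin{equation*}
\rho_k(c\tilde{\textbf{h}}) - \rho_k(c\textbf{h}) \;=\; \frac{-c\,\delta_{3-k} \;+\; c^3\, h_{3-k}\tilde h_{3-k}\,\delta_k}{(1+c^2 h_1 h_2)(1+c^2 \tilde h_1 \tilde h_2)}.
\end{equation*}
Substituting this into the first display and collecting the coefficients of $\delta_1$ and $\delta_2$ transforms the identity into a linear equation $\boldsymbol{\delta}(z) = M(z)\boldsymbol{\delta}(z)$, whose $2 \times 2$ coefficient matrix $M(z)$ has entries that are explicit $H$-integrals of rational expressions in the $\lambda_i$'s, $h_j$'s, and $\tilde h_j$'s.

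The bulk of the work is then to show $\|M(z)\|_{op} < 1$ once $u = -\Re(z)$ is sufficiently large. On the real ray $z = -u$, each factor $-z + \boldsymbol{\lambda}^T\boldsymbol{\rho}(c\textbf{h})$ has real part at least $u$, because $\lambda_k \geq 0$ and $\boldsymbol{\rho}(c\textbf{h}), \boldsymbol{\rho}(c\tilde{\textbf{h}}) \in \overline{\mathbb{C}_R^2}$ by Remark \ref{remark_real_of_rho}; hence the product in the denominator has modulus at least $u^2$. By Lemma \ref{location_of_solution_General}(1), $|h_k|,|\tilde h_k| \leq C_0/u$, so both $|1 + c^2 h_1 h_2|$ and $|1 + c^2 \tilde h_1 \tilde h_2|$ exceed $1/2$ once $u$ is past a constant multiple of $cC_0$. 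Finally, Cauchy-Schwarz together with assumption $T_5$ bounds $\int \lambda_i \lambda_j\, dH \leq D_0$. Assembling these, every entry of $M(z)$ is $O(c D_0/u^2)$, whence $\|M(z)\|_{op} < 1$ for all $u \geq u_0$ with some $u_0$. This forces $\boldsymbol{\delta}(z) = \textbf{0}$ on the ray, and analyticity then lifts the conclusion to all of $\mathbb{C}_L$. The part needing most care is the $c^3$-residual in the expansion of $\boldsymbol{\rho}(c\tilde{\textbf{h}}) - \boldsymbol{\rho}(c\textbf{h})$: this term couples $\delta_k$ to itself rather than to $\delta_{3-k}$ and so contributes to the diagonal of $M(z)$, but the $O(1/u^2)$ decay of $|h_{3-k}\tilde h_{3-k}|$ from Lemma \ref{location_of_solution_General}(1) keeps its contribution of the same $O(cD_0/u^2)$ order as the leading one, so the contraction estimate still closes.
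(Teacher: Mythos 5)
Your proof is correct and reaches the conclusion by a genuinely different route than the paper's. The paper treats the difference $\textbf{h}-\textbf{g}$ via a one-sided contraction: it first restricts $z$ to the cone $\{-u+\mathbbm{i}v:|v|<u,\,u>R_0\}$ so that Lemma~\ref{location_of_solution_General}(2) places both solutions in the sector $\mathcal{S}(C_0/u)^2$, then invokes the Lipschitz estimate $|\rho_k(c\textbf{h})-\rho_k(c\textbf{g})|\le K_0\,\|c\textbf{h}-c\textbf{g}\|_1$ (Lemma~\ref{Lipschitz}, with $K_0=1$ on the sector) and combines H\"older's inequality with the $I_{r,s}$ moments to obtain $\|\textbf{h}-\textbf{g}\|_1 \le (4cD_0/u^2)\|\textbf{h}-\textbf{g}\|_1$. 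You instead \emph{compute} the increment of $\boldsymbol{\rho}$ exactly, rewriting $\boldsymbol{\rho}(c\tilde{\textbf{h}})-\boldsymbol{\rho}(c\textbf{h})$ as a linear map acting on $\boldsymbol{\delta}$, so the whole difference equation becomes a $2\times 2$ linear system $\boldsymbol{\delta}=M(z)\boldsymbol{\delta}$. The estimate $\|M(z)\|_{op}<1$ at large $u$ then needs only the magnitude bound $|h_k|\le C_0/u$ of Lemma~\ref{location_of_solution_General}(1), the denominator lower bound $|{-z}+\boldsymbol{\lambda}^T\boldsymbol{\rho}|\ge u$ along the negative real ray, and the second-moment bound from $T_5$; in particular you never need the sector localization or Lemma~\ref{location_of_solution_General}(2), since $|1+c^2h_1h_2|\ge 1-c^2C_0^2/u^2>1/2$ is immediate once $u$ is large. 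Both arguments then finish identically by the identity theorem. Your version is slightly leaner in hypotheses used (part (2) of Lemma~\ref{location_of_solution_General} is bypassed) at the cost of writing out the explicit coefficient matrix; your caveat about the $c^3 h_{3-k}\tilde h_{3-k}\delta_k$ term is well placed and is indeed $O(C_0^2/u^2)$-smaller than the leading $-c\delta_{3-k}$ contribution, so it does not disrupt the contraction.
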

The proof can be found in Section \ref{sec:ProofOfUniqueness}.

\subsection{Existence of Solution under Assumptions \ref{A12}}
\begin{theorem}\label{CompactConvergence}
\textbf{Compact Convergence:} For $k =1,2,$ $\mathcal{H}_k = \{h_{kn}\}_{n \in \mathbb{N}}$ are normal families\footnote{A class of functions where every sequence has a further subsequence that converges uniformly on compact subsets}.
\end{theorem}
\begin{proof}
By \textit{Montel's theorem} (Theorem 3.3 of \cite{SteinShaka}), it is sufficient to show that $s_n$, $h_{1n}$ and $h_{2n}$ are uniformly bounded on every compact subset of $\mathbb{C}_L$. Let $K \subset \mathbb{C}_L$ be an arbitrary compact subset. Define $u_0 := \inf\{|\Re(z)|: z \in K\}$. It is clear that $u_0 > 0$. Then for arbitrary $z \in K$, using (\ref{bound_Stieltjes_Transform}) we have 
$$|s_n(z)| = \frac{1}{p}|\operatorname{trace}(Q(z))| \leq \frac{1}{|\Re(z)|} \leq \frac{1}{u_0}.$$
Using (\ref{R5}), (\ref{firstSpectralBound}) and Remark \ref{remark_norm_of_resolvent}, for sufficiently large $n$, we have
    \begin{align}\label{h_nBound}
        |h_{kn}(z)| &= \frac{1}{p}|\operatorname{trace}(\Sigma_{kn}Q)| \leq \bigg(\frac{1}{p}\operatorname{trace}(\Sigma_{kn})\bigg) ||Q(z)||_{op} 
        \leq \frac{C_0}{|\Re(z)|} \leq \frac{C_0}{u_0}.
    \end{align}
\end{proof}
\begin{remark}
    Note that the proof relied simply on $\boldsymbol{T}_5$ of Theorem \ref{mainTheorem} and not on Assumptions \ref{A12}.
\end{remark}

\begin{theorem}\label{DeterministicEquivalent}
\textbf{Deterministic Equivalent:} Under Assumptions \ref{A12}, for $z \in \mathbb{C}_L$, a deterministic equivalent for $Q(z)$ is given by 
\begin{align}
\Bar{Q}(z) = \bigg(-zI_p + \rho_1(c_n\mathbb{E}{\textbf{h}}_{n}(z))\Sigma_{1n} + \rho_2(c_n\mathbb{E}{\textbf{h}}_{n}(z))\Sigma_{2n}\bigg)^{-1}.
\end{align}
\end{theorem}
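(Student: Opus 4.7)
My plan is to follow the Bai--Silverstein style deterministic equivalent argument, adapted to the rank-two resolvent perturbations that arise from the commutator structure. The governing identity is
$$\bar Q(z) - Q(z) = \bar Q(z)\bigl(S_n - \rho_1(c_n\mathbb{E}\textbf{h}_n(z))\Sigma_{1n} - \rho_2(c_n\mathbb{E}\textbf{h}_n(z))\Sigma_{2n}\bigr)Q(z),$$
which follows from $Q^{-1}=S_n-zI_p$, $\bar Q^{-1}=-zI_p+\rho_1\Sigma_{1n}+\rho_2\Sigma_{2n}$, and the resolvent identity $A^{-1}-B^{-1}=A^{-1}(B-A)B^{-1}$. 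Since $(S_n-zI_p)Q=I_p$ gives $-zQ=I_p-S_nQ$, the statement $\bar Q^{-1}Q\approx I_p$ is equivalent to $S_nQ\approx \rho_1\Sigma_{1n}Q+\rho_2\Sigma_{2n}Q$, and my plan is to establish this identity at the level of trace against an arbitrary deterministic test matrix $B$ with $\|B\|_{op}=O(1)$.

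Write $S_n=\frac{1}{n}\sum_{r=1}^n U_rJU_r^*$ with $U_r=[X_{1r},X_{2r}]\in\mathbb{C}^{p\times 2}$ and $J=\begin{pmatrix}0 & 1 \\ -1 & 0\end{pmatrix}$. The Woodbury identity applied to the rank-two update $(S_n-zI_p)=(S_{nr}-zI_p)+\frac{1}{n}U_rJU_r^*$, combined with the simplification $A(A+B)^{-1}=I-B(A+B)^{-1}$, produces the exact identity
$$\frac{1}{n}U_rJU_r^*Q = U_r N_r^{-1} U_r^* Q_{-r}, \qquad N_r := nJ^{-1}+U_r^*Q_{-r}U_r,$$
so that $\operatorname{trace}(BS_nQ)=\sum_r\operatorname{trace}(N_r^{-1}U_r^*Q_{-r}BU_r)$. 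The $2\times 2$ block $U_r^*Q_{-r}BU_r$ has $(k,l)$ entry $Z_{kr}^*\Sigma_{kn}^{1/2}Q_{-r}B\Sigma_{ln}^{1/2}Z_{lr}$. By the independence of $Z_{1r}$ and $Z_{2r}$ from each other and from $Q_{-r}$, and by standard quadratic-form concentration (a Burkholder-type inequality using the $4+\eta_0$ moment bound and the truncation $|z_{ij}^{(k)}|\le B_n$ from $A_2$), this block concentrates around $p\,\mathrm{diag}(h_{1n,-r}^B,h_{2n,-r}^B)$ with $h_{kn,-r}^B(z)=\frac{1}{p}\operatorname{trace}(\Sigma_{kn}Q_{-r}B)$; similarly $N_r$ concentrates around $nJ^{-1}+p\,\mathrm{diag}(h_{1n,-r},h_{2n,-r})$, whose inverse is explicitly $\frac{1}{n^2(1+c_n^2h_{1n}h_{2n})}\begin{pmatrix}ph_{2n} & n \\ -n & ph_{1n}\end{pmatrix}$. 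Dropping the $-r$ subscripts via the rank-two interlacing bound $|\operatorname{trace}(A(Q-Q_{-r}))|\le 2\|A\|_{op}/|\Re(z)|$ (a consequence of Remark \ref{remark_norm_of_resolvent}) and substituting into the sum yields
$$\operatorname{trace}(BS_nQ)\approx p\bigl(\rho_1(c_n\textbf{h}_n)h_{1n}^B+\rho_2(c_n\textbf{h}_n)h_{2n}^B\bigr)=\operatorname{trace}\bigl(B(\rho_1\Sigma_{1n}+\rho_2\Sigma_{2n})Q\bigr),$$
with $\rho_1,\rho_2$ exactly as in Definition \ref{defining_rho}. Taking expectations, and replacing $\rho_k(c_n\textbf{h}_n)$ by $\rho_k(c_n\mathbb{E}\textbf{h}_n)$ via the Lipschitz continuity of $\rho_k$ on the bounded region of $\mathbb{C}_R^2$ where $c_n\textbf{h}_n$ lives (cf. Lemma \ref{location_of_solution_General} and the bound (\ref{h_nBound})), completes the argument.

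I expect the main technical obstacle to be controlling the random $2\times 2$ inversion $N_r^{-1}$: showing that $N_r$ is uniformly well conditioned on $\mathbb{C}_L$ so that the bias of replacing $N_r$ by its deterministic surrogate is tractable, and showing that the off-diagonal entries of $U_r^*Q_{-r}BU_r$ (which vanish in expectation but need not in any realization) contribute only negligibly after summation over $r$. The commuting assumption $T_3$ is crucial here, because it lets the mixed traces $\operatorname{trace}(\Sigma_{1n}^{1/2}\Sigma_{2n}^{1/2}Q_{-r})$ and related expressions be expressed cleanly through $h_{1n}$ and $h_{2n}$ alone, collapsing the $2\times 2$ inversion into the scalar functions $\rho_1,\rho_2$. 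The remaining technical inputs — quadratic-form concentration under truncation, rank-two interlacing, and Lipschitz control of $\rho_k$ away from the singular set $z_1z_2=-1$ — are essentially standard once the algebraic skeleton above is in place.
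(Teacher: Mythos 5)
Your proposal follows essentially the same route as the paper's proof: the resolvent identity comparing $Q$ and $\bar Q$, a rank-two Woodbury step to remove the $j$-th sample, quadratic-form concentration of $X_{kr}^*Q_{-r}BX_{lr}$ to the diagonal trace quantities $h_{kn}$, rank-two interlacing to drop the $-r$ subscripts, and Lipschitz control of $\rho_1,\rho_2$ to pass from $\textbf{h}_n$ to $\mathbb{E}\textbf{h}_n$. The one cosmetic difference is that you keep the perturbation in $2\times 2$ matrix form via $N_r = nJ^{-1}+U_r^*Q_{-r}U_r$, while the paper's Lemma \ref{Rank2Woodbury} unpacks the same inversion into scalar coefficients $c_{1j},c_{2j},d_{1j},d_{2j}$ with $Den(j)^{-1}=\det\!\bigl(J^{-1}+\tfrac{1}{n}U_r^*Q_{-r}U_r\bigr)$; the concentration targets you identify ($v_n$ for $Den(j)$, $p\,\mathrm{diag}(h_{1n},h_{2n})$ for the sandwich block, well-conditioning of $1+c_n^2h_{1n}h_{2n}$) are exactly what the paper establishes in Lemmas \ref{boundedAwayfromZero}, \ref{bound_for_denominator}, and \ref{uniformConvergence}.
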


\begin{remark}
    By (\ref{Real_of_eigen_QBarz}), for large $n$, all the eigenvalues of $\bar{Q}(z)$ are non-zero for any $z \in \mathbb{C}_L$. In particular, this implies that $\bar{Q}(z)$ is invertible for sufficiently large $n$ depending on $z$. The proof is given in Section \ref{sec:ProofDeterministicEquivalent}.
\end{remark}

At this point, we define a few additional deterministic quantities that will serve as approximations to the random quantity $h_{kn}(z)$ for $z \in \mathbb{C}_L$ and $k=1,2$.
\begin{definition}\label{defining_hn_tilde}
    $\tilde{\textbf{h}}_n(z) := (\tilde{h}_{1n}(z), \tilde{h}_{2n}(z))$, where  $\Tilde{h}_{kn}(z) = \frac{1}{p}\operatorname{trace}\{\Sigma_{kn}\Bar{Q}(z)\}$, $k=1,2$, $z \in \mathbb{C}_L$.
\end{definition}
Under Assumptions \ref{A12}, a direct consequence of Theorem \ref{DeterministicEquivalent} is as follows. For $k=1,2$, $z \in \mathbb{C}_L$, we have
\begin{align}\label{Consequence_DetEqv}
    \absmod{\frac{1}{p}\operatorname{trace}(\Sigma_{kn}(Q(z)-\overline{Q}(z)))} \xrightarrow{a.s.} 0 \implies \absmod{h_{kn}(z) - \tilde{h}_{kn}(z)} \xrightarrow{a.s.} 0.
\end{align}

\begin{definition}\label{defining_QBarBar}
    $\Bar{\Bar{Q}}(z) = \bigg(-zI_p + \rho_1(c_n\Tilde{\textbf{h}}_{n}(z))\Sigma_{1n} + \rho_2(c_n\Tilde{\textbf{h}}_{n}(z))\Sigma_{2n}\bigg)^{-1}$.
\end{definition}
\begin{definition}\label{defining_hn_tilde2}
   $\tilde{\tilde{\textbf{h}}}_n(z) := (\tilde{\tilde{h}}_{1n}(z), \tilde{\tilde{h}}_{2n}(z))$ where $\Tilde{\Tilde{h}}_{kn}(z) = \frac{1}{p}\operatorname{trace}\{\Sigma_{kn}\Bar{\Bar{Q}}(z)\}$ , $k =1,2$.
\end{definition}

Using $\boldsymbol{T}_3$ of Theorem \ref{mainTheorem} and as per the notation $\boldsymbol{\lambda} = (\lambda_1, \lambda_2)$, we can simplify $\tilde{h}_{kn}(z)$ and $\tilde{\tilde{h}}_{kn}(z)$ as follows:
\begin{align}\label{simplifying_hn_tilde}
    {\tilde{h}}_{kn}(z) &= \int \frac{\lambda_k dH(\boldsymbol{\lambda})}{-z + \lambda_1\rho_1(c_n\mathbb{E}{\textbf{h}}_n(z)) + \lambda_2\rho_2(c_n\mathbb{E}{\textbf{h}}_n(z))}, \text{ and}\\ \notag
    \tilde{\tilde{h}}_{kn}(z) &= \int \frac{\lambda_k dH(\boldsymbol{\lambda})}{-z + \lambda_1\rho_1(c_n\tilde{\textbf{h}}_n(z)) + \lambda_2\rho_2(c_n\tilde{\textbf{h}}_n(z))}.
\end{align}

Note that, by Theorem \ref{DeterministicEquivalent} and Lemma \ref{hn_tilde_tilde2},  $\tilde{h}_{kn}(z), \tilde{\tilde{h}}_{kn}(z)$ are deterministic approximations to $h_{kn}(z)$. This serves as a critical step in the proof for the existence of the unique solution to (\ref{h_main_eqn}).

\begin{theorem}\label{Existence_A12}
    \textbf{Existence of solution:}
    Under Assumptions \ref{A12}, for $z \in \mathbb{C}_L$, we have 
    \begin{description}
        \item[1] For $k =1,2$, $h_{kn}(z) \xrightarrow{a.s.} h_k^\infty(z)$ where $h_k^\infty$ are Stieltjes transforms of measures over the imaginary axis,
        \item[2] $h_1^\infty(z), h_2^\infty(z)$ uniquely satisfy (\ref{h_main_eqn}),
        \item[3] $s_n(z) \xrightarrow{a.s.} s_{F}(z)$ where $s_{F}(\cdot)$ is as defined in (\ref{s_main_eqn}), and
        \item[4] $s_{F}(\cdot)$ satisfies $\underset{y \rightarrow +\infty}{\lim}ys_{F}(-y) = 1$.
    \end{description} 
\end{theorem}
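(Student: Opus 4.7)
The plan is to separate the randomness from the fixed-point analysis using the deterministic equivalent of Theorem~\ref{DeterministicEquivalent}, extract subsequential limits of the resulting deterministic sequences via Montel and Vitali--Porter, and then invoke Theorem~\ref{Uniqueness} to collapse all subsequential limits into a single analytic function $h_k^\infty$. Fix $z \in \mathbb{C}_L$. The combination of Theorem~\ref{DeterministicEquivalent} with Lemma~\ref{hn_tilde_tilde2} gives $|h_{kn}(z) - \tilde{\tilde h}_{kn}(z)| \to 0$ almost surely, so it suffices to identify the limit of the purely deterministic sequence $\tilde{\tilde h}_{kn}(z)$. By the same resolvent bound as in Theorem~\ref{CompactConvergence}, both $\tilde h_{kn}$ and $\tilde{\tilde h}_{kn}$ are locally uniformly bounded by $C_0/|\Re(z)|$ on $\mathbb{C}_L$, so they form normal families and every subsequence admits, by Theorem~\ref{VitaliPorter}, a locally uniformly convergent sub-subsequence with an analytic limit $h_k^\infty$.

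The next step is to pass to the limit in the identity \eqref{simplifying_hn_tilde}:
\begin{align*}
\tilde{\tilde h}_{kn}(z) = \int \frac{\lambda_k\, dH_n(\boldsymbol\lambda)}{-z + \boldsymbol\lambda^T \boldsymbol\rho(c_n\, \tilde h_n(z))}.
\end{align*}
The denominator has modulus at least $|\Re(z)|$, since $\Re(\boldsymbol\lambda^T \boldsymbol\rho(c_n \tilde h_n(z))) \geq 0$ whenever $\tilde h_n(z) \in \mathbb{C}_R^2$, as \eqref{real_of_rho1}--\eqref{real_of_rho2} make manifest for $\boldsymbol\lambda \in \mathbb{R}_+^2$. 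Combined with $H_n \Rightarrow H$ almost surely, $c_n \to c$, continuity of $\boldsymbol\rho$, and the uniform integrability of $\lambda_k$ against $H_n$ afforded by the second-moment bound \eqref{secondSpectralBound}, a dominated-convergence argument produces a function $h_k^\infty$ satisfying \eqref{h_main_eqn}. Theorem~\ref{Uniqueness} then forces all subsequential limits to coincide, so the full sequence $\tilde{\tilde h}_{kn}(z)$ converges to $h_k^\infty(z)$, and consequently $h_{kn}(z) \xrightarrow{a.s.} h_k^\infty(z)$ for every $z \in \mathbb{C}_L$. To identify $h_k^\infty$ as a Stieltjes transform, note that each $h_{kn}$ is the Stieltjes transform of the discrete measure $\mu_{kn}$ with total variation $p^{-1}\operatorname{trace}(\Sigma_{kn}) \leq C_0$ by \eqref{firstSpectralBound}; Lemma~\ref{StieltjesTransformRotated} moves the picture to the real axis, and Theorem~\ref{Grommer} (Grommer--Hamburger) then delivers $h_k^\infty$ as the Stieltjes transform of a measure on the imaginary axis.

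The convergence $s_n(z) \xrightarrow{a.s.} s_F(z)$ follows by substituting the almost-sure limits of $h_{1n}$ and $h_{2n}$ into the algebraic identity \eqref{s_main_eqn_1}, after noting that $s_n$ itself satisfies the analogous identity up to a vanishing deterministic-equivalent error: the same argument producing Theorem~\ref{DeterministicEquivalent}, applied with $A = I_p$, gives $|s_n(z) - p^{-1}\operatorname{trace}(\bar{Q}(z))| \to 0$ almost surely. Finally, Lemma~\ref{location_of_solution_General}(1) yields $|h_k^\infty(-y)| \leq C_0/y$, so $1 + c^2 h_1^\infty(-y) h_2^\infty(-y) \to 1$ as $y \to +\infty$, and \eqref{s_main_eqn_1} gives
\begin{align*}
y\, s_F(-y) \,\to\, -\Bigl(\tfrac{2}{c} - 1\Bigr) + \tfrac{2}{c} \,=\, 1,
\end{align*}
verifying the normalization required in Proposition~\ref{GeroHill}. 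The principal obstacle is the interchange of limits when passing from \eqref{simplifying_hn_tilde} to \eqref{h_main_eqn}: one must know that each subsequential limit takes values in $\mathbb{C}_R^2$ before Theorem~\ref{Uniqueness} applies and before the denominator of the integrand is safely bounded away from zero uniformly in $\boldsymbol\lambda$. This is exactly where Lemma~\ref{location_of_solution_General} and the non-negativity of $\Re(\boldsymbol\lambda^T\boldsymbol\rho)$ recorded in \eqref{real_of_rho1}--\eqref{real_of_rho2} become indispensable.
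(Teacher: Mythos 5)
Your overall scheme—deterministic equivalence, normal-family extraction, passage to the limit in the fixed-point identity, then uniqueness—is the same as the paper's, but the order in which you establish the key facts contains a genuine gap.

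You yourself flag the central obstacle: before invoking Theorem~\ref{Uniqueness} and before justifying the dominated-convergence passage (which requires continuity of $\boldsymbol\rho$ at $c\textbf{h}^\infty$, hence $1 + c^2 h_1^\infty h_2^\infty \neq 0$), one must know that the subsequential limit $\textbf{h}^\infty$ actually maps into $\mathbb{C}_R^2$ with strictly positive real part. But in your outline this fact is only produced \emph{after} those two steps, via Grommer--Hamburger at the very end. To plug the hole you appeal to Lemma~\ref{location_of_solution_General}, yet that lemma characterizes solutions of~\eqref{h_main_eqn}; since $\textbf{h}^\infty$ is only shown to satisfy~\eqref{h_main_eqn} by the DCT step that Lemma~\ref{location_of_solution_General} is supposed to license, the citation is circular (and even on its own terms the lemma yields only $\Re(h_k^\infty)\geq 0$, for large $|\Re(z)|$, not the strict positivity needed). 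The paper's proof avoids this by extracting the subsequential limit of the \emph{random} sequence $\textbf{h}_{n}$ (Theorem~\ref{CompactConvergence}), applying Grommer--Hamburger \emph{first} so that $h_k^\infty$ is immediately a Stieltjes transform with $\Re(h_k^\infty(z))>0$ by~\eqref{Property1}, and only then running the DCT with the $\lambda$-free bound $1/\Re(\rho_1(\cdot))$. If instead you insist on starting from the deterministic $\tilde{\tilde{h}}_{kn}$, you can still obtain $\Re(h_k^\infty)>0$ \emph{a priori} from Lemma~\ref{boundedAwayfromZero}, which gives a uniform lower bound $\Re(c_n\tilde h_{kn}(z))\geq s>0$ that survives the limit—but Lemma~\ref{location_of_solution_General} is the wrong tool.

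Two smaller points. First, extracting a locally uniformly convergent sub-subsequence from a normal family is Montel's theorem, not Theorem~\ref{VitaliPorter}; Vitali--Porter upgrades a pointwise limit already known on a set with an accumulation point to locally uniform convergence, which is a different input. Second, invoking ``uniform integrability from~\eqref{secondSpectralBound}'' for the DCT is unnecessary under Assumption~\ref{A12}: under A1, $H_n$ and $H$ are supported on $[0,\tau]^2$, so the integrand is bounded by $\tau/|\Re(z)|$ outright; the paper instead uses the cleaner $\lambda$-independent bound $1/\Re(\rho_k(\cdot))$, which it can only do after $\Re(h_k^\infty)>0$ is secured—again underscoring why the ordering matters.
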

The proof is given in Section \ref{sec:ProofExistence_A12}.
\begin{remark}\label{A2_dashed}
    Suppose $z_{ij}^{(k)}$ satisfy all conditions in \textbf{A2} except $\mathbb{E}|z_{ij}^{(k)}|^2 = 1$. However, they satisfy the following condition:
    \begin{align}\label{U1}
        \underset{i,j,k,n}{\sup}|\mathbb{E}|z_{ij}^{(k)}|^2 -1|\longrightarrow 0.
    \end{align}
    In other words, $\mathbb{E}|z_{ij}^{(k)}|^2$ converge uniformly to $1$. We remark that the conclusions of Theorem \ref{Existence_A12} will continue to hold even in this case. This is because the variance of the innovations are invoked at a stage (refer to Lemma \ref{quadraticForm}) which establishes uniform concentration of an array of quadratic forms around their respective means. In Corollary \ref{quadraticFormCorollary}, we observe that said result holds even when the variance terms converge uniformly to $1$ instead of exact equality.
\end{remark}

\color{black}

\subsection{Existence of Solution under General Conditions}
Theorem \ref{Existence_A12} proved the statement of Theorem \ref{mainTheorem} under Assumptions \ref{A12}. We now repeat this under the general conditions $\boldsymbol{T}_1 - \boldsymbol{T}_5$ of Theorem \ref{mainTheorem}.
\begin{theorem}\label{Existence_General}
    Under the conditions of Theorem \ref{mainTheorem}, for $z \in \mathbb{C}_L$, we have 
    \begin{description}
       \item[1] For $k =1,2$, $h_{k}^\tau(z) \xrightarrow{} h_k^\infty(z)$ where $h_k^\infty$ are Stieltjes transforms of measures over the imaginary axis,
        \item[2] $h_1^\infty(z), h_2^\infty(z)$ uniquely satisfy (\ref{h_main_eqn}),
        \item[3] $s_n(z) \xrightarrow{a.s.} s_{F}(z)$ where $s_{F}(\cdot)$ is as defined in (\ref{s_main_eqn}), and
        \item[4] $s_{F}(\cdot)$ satisfies $\underset{y \rightarrow +\infty}{\lim}ys_{F}(-y) = 1$.
    \end{description}
\end{theorem}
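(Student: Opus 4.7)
The plan is a two-stage truncation that bootstraps Theorem \ref{Existence_A12} to the general regime. For each $\tau>0$, I would define a truncated model $S_n^\tau$ by (i) using the common eigenbasis of $\Sigma_{1n},\Sigma_{2n}$ guaranteed by $T_3$ to zero out the joint eigenvalue pairs with $\max(\lambda_{1j}(n),\lambda_{2j}(n))>\tau$, obtaining $\Sigma_{kn}^\tau$, and (ii) independently truncating, recentering, and rescaling the entries of $Z_1,Z_2$ at level $B_n=n^a$ as in Assumption $A_2$. The resulting $S_n^\tau$ satisfies Assumptions \ref{A12} with joint scaling-matrix ESD converging a.s. to $H^\tau$, namely $H$ restricted to $[0,\tau]^2$ with the discarded mass shifted to the origin. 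Theorem \ref{Existence_A12} then yields $h_{kn}^\tau(z)\xrightarrow{a.s.} h_k^\tau(z)$ and $s_n^\tau(z)\xrightarrow{a.s.} s_F^\tau(z)$ for each fixed $\tau$ and $z\in\mathbb{C}_L$, where $(h_1^\tau,h_2^\tau)$ uniquely solves (\ref{h_main_eqn}) with $H^\tau$ in place of $H$.

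The second step is to establish that the truncation error is negligible uniformly in $n$. Writing $X_1X_2^*-X_1^\tau X_2^{\tau*}=(X_1-X_1^\tau)X_2^* + X_1^\tau(X_2-X_2^\tau)^*$ (and symmetrically for the anti-symmetric piece) shows $\mathrm{rank}(S_n-S_n^\tau)\leq 4\,d_n^\tau$ where $d_n^\tau:=\#\{j:\max(\lambda_{1j},\lambda_{2j})>\tau\}$. Applying the rank inequality to the Hermitian matrix $-\mathbbm{i}S_n$, we obtain
\begin{equation*}
\|F^{S_n}-F^{S_n^\tau}\|_{im}\leq \frac{4\,d_n^\tau}{p}\xrightarrow{a.s.} 4\,H(\{\boldsymbol{\lambda}:\max(\lambda_1,\lambda_2)>\tau\})\xrightarrow{\tau\to\infty} 0.
\end{equation*}
An analogous trace-perturbation bound for the weighted trace, using $\|Q(z)\|_{op}\leq 1/|\Re z|$ from Remark \ref{remark_norm_of_resolvent} together with the spectral-moment bound $T_5$, shows $|h_{kn}(z)-h_{kn}^\tau(z)|\to 0$ a.s.\ as $n\to\infty$ and then $\tau\to\infty$. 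The entry-truncation contribution vanishes a.s.\ by a standard Bai--Silverstein-type moment computation requiring only the $4+\eta_0$ moment bound from $T_2$.

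For the third step I would let $\tau\to\infty$ in the fixed-$\tau$ solutions. By Lemma \ref{location_of_solution_General}(1) the family $\{h_k^\tau\}_{\tau>0}$ is locally uniformly bounded on $\mathbb{C}_L$ by $C_0/|\Re z|$, so the Vitali--Porter theorem (Theorem \ref{VitaliPorter}) extracts a subsequence along which $h_k^{\tau_m}\to h_k^\infty$ uniformly on compacta; Lemma \ref{StieltjesTransformRotated} combined with Grommer--Hamburger (Theorem \ref{Grommer}) identifies each $h_k^\infty$ as the Stieltjes transform of a measure on $\mathbbm{i}\mathbb{R}$. Passing to the limit in the fixed-point equation (\ref{h_main_eqn}) for $(h_1^{\tau_m},h_2^{\tau_m})$, using $H^{\tau_m}\Rightarrow H$ and dominated convergence (the integrand is controlled by $\lambda_k/|\Re z|$ uniformly in $\tau$ via Remark \ref{remark_real_of_rho} and (\ref{real_of_rho1})--(\ref{real_of_rho2}), with $\int\lambda_k\,dH\leq C_0$), shows that $(h_1^\infty,h_2^\infty)$ solves (\ref{h_main_eqn}) for $H$. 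Theorem \ref{Uniqueness} pins down the limit, so the full family converges, giving (1) and (2). Combining with the truncation estimate of step two gives $s_n(z)\xrightarrow{a.s.} s_F(z)$, which is (3). For (4), formula (\ref{s_main_eqn_1}) yields $ys_F(-y)=1-(2/c)+(2/c)(1+c^2 h_1(-y)h_2(-y))^{-1}$; by Lemma \ref{location_of_solution_General}(1), $h_k(-y)=O(1/y)$, so $c^2 h_1(-y)h_2(-y)\to 0$ and $ys_F(-y)\to 1$.

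The principal obstacle is step two: propagating the uniform-in-$n$ control of the truncation error through the \emph{bilinear} form $X_1X_2^*-X_2X_1^*$. While the rank bound $\mathrm{rank}(S_n-S_n^\tau)\leq 4 d_n^\tau$ on the spectral-distribution level is clean, transferring it to the weighted trace $h_{kn}(z)$ requires splitting the perturbation into $\Sigma$-contribution and $Z$-contribution pieces, controlling the interaction with $T_5$ and $\|Q(z)\|_{op}\leq 1/|\Re z|$, and verifying that the entry-truncation error goes through the product structure cleanly; the skew-Hermitian setting forces one to work with $-\mathbbm{i}S_n$ but otherwise the bookkeeping is parallel to the Bai--Silverstein playbook.
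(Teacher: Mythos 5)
Your proposal follows essentially the same truncation-bootstrap strategy the paper uses: truncate the scaling matrices, truncate and center the entries of $Z_1,Z_2$ at $B_n=n^a$, invoke Theorem \ref{Existence_A12} for the truncated model, control the truncation error on the ESD level via rank bounds and $R_1$--$R_3$, then pass $\tau\to\infty$ via Montel/Vitali--Porter normality, Grommer--Hamburger identification, and Theorem \ref{Uniqueness} to pin down the limit, concluding with Proposition \ref{GeroHill} for (4).

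The differences are cosmetic rather than structural. You fold all truncation operations into a single matrix $S_n^\tau$ and write a single rank bound, whereas the paper maintains three separate intermediate matrices $T_n$ (covariance truncation), $U_n$ (entry truncation) and $\tilde U_n$ (centering) and telescopes $|s_n-t_n|+|t_n-u_n|+|u_n-\tilde u_n|+|\tilde u_n - s^\tau|+|s^\tau-s_F|$; be aware that as displayed your bound $\operatorname{rank}(S_n-S_n^\tau)\leq 4\,d_n^\tau$ is literally only the covariance-truncation piece ($S_n$ vs.\ the paper's $T_n$) --- the entry-truncation and centering errors are separate $O(1/p)$-rank perturbations as the paper handles in Step 10, and you correctly flag that in prose, but the inequality as written conflates the sources. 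You also truncate the common eigenbasis jointly on $\max(\lambda_{1j},\lambda_{2j})>\tau$, pushing discarded mass to the origin, whereas the paper truncates each $\Sigma_{kn}$ marginally by $e_k\mapsto e_k\mathbbm{1}\{e_k\le\tau\}$, so your $H^\tau$ differs slightly from the paper's; both yield $H^\tau\Rightarrow H$ and feed correctly into Steps 8--9. Your inclusion of a rescaling after entry truncation is a minor extra step the paper omits (the variance deficit $O(B_n^{-2-\eta_0})$ is negligible anyway), but it does no harm. No genuine gap; the route and key lemmas match the paper's.
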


\begin{proof}    
We construct a sequence of matrices \textit{similar} to $\{S_n\}_{n=1}^\infty$ but satisfying \textbf{A1-A2} of Assumptions \ref{A12}. The steps below give an outline of the proof, with the essential details shifted to individual modules wherever necessary. 
\begin{description}
    \item[Step1] Let $H$ be a bi-variate distribution supported on $\mathbb{R}_+^2$. Consider the random vector $\textbf{e} = (e_1, e_2) \sim H$. For $\tau > 0$, define $H^\tau$ as the joint distribution of $\textbf{e}^\tau := (e_1^\tau, e_2^\tau)$ where $e_k^\tau := e_k \mathbbm{1}_{\{e_k \leq \tau\}};k=1,2$.
    \item[Step2] For a p.s.d. matrix A and a fixed $\tau > 0$, let $A^\tau$ represent the matrix obtained by replacing all eigenvalues of $A$ greater than $\tau$ with 0 in its spectral decomposition. Recall the definition of $H_n$ from  (\ref{defining_Hn}). It is clear that for any fixed $\tau > 0$, as $n \rightarrow \infty$, we have
    $$H_n^\tau := F^{\Sigma_{1n}^\tau, \Sigma_{2n}^\tau} \xrightarrow{d} H^\tau.$$ 
However, we will choose $\tau > 0$ such that $(\tau, \tau)$ is a continuity point of $H$. This will be essential in Section \ref{Step10}.
    \item[Step3] For $k=1,2$, let $\Lambda_{kn} := \Sigma_{kn}^\frac{1}{2}$ and $\Lambda_{kn}^\tau := (\Sigma_{kn}^\tau)^\frac{1}{2}$. Then $S_n = \frac{1}{n}(\Lambda_{1n}Z_1Z_2^*\Lambda_{2n} - \Lambda_{2n}Z_2Z_1^*\Lambda_{1n})$.
    \item[Step4] Define 
    \begin{align}\label{defining_Tn}
        T_n := \frac{1}{n}(\Lambda_{1n}^\tau Z_1Z_2^*\Lambda_{2n}^\tau - \Lambda_{2n}^\tau Z_2Z_1^*\Lambda_{1n}^\tau).
    \end{align}
    \item[Step5] Recall that, we have $Z_k = (z_{ij}^{(k)}) \in \mathbb{C}^{p \times n}$. Define $\hat{Z}_k := (\hat{z}_{ij}^{(k)})$ with $\hat{z}_{ij}^{(k)} = z_{ij}^{(k)}\mathbbm{1}_{\{|z_{ij}^{(k)}| \leq n^b\}}$ where $b$ follows from \textbf{A2}. Now, let 
    \begin{align}\label{defining_Un}
        U_n := \frac{1}{n}(\Lambda_{1n}^\tau\hat{Z_1}\hat{Z_2}^*\Lambda_{2n}^\tau - \Lambda_{2n}^\tau\hat{Z_2}\hat{Z_1}^*\Lambda_{1n}^\tau).
    \end{align}
    \item[Step6] Let $\Tilde{Z}_k = \hat{Z}_k - \mathbb{E}\hat{Z}_k$. Then, define
    \begin{align}\label{defining_Un_tilde}
        \Tilde{U}_n:=\frac{1}{n}(\Lambda_{1n}^\tau\Tilde{Z_1}\Tilde{Z_2}^*\Lambda_{2n}^\tau - \Lambda_{2n}^\tau\Tilde{Z_2}\Tilde{Z_1}^*\Lambda_{1n}^\tau).
    \end{align}
    Let $s_n(\cdot), t_n(\cdot), u_n(\cdot), \Tilde{u}_n(\cdot)$ be the Stieltjes transforms of $F^{S_n}, F^{T_n}$, $F^{U_n}, F^{\Tilde{U}_n}$ respectively.
    \item[Step7] 
    By construction, $\Sigma_{kn}^\tau$ satisfies \textbf{A1}. Note that $\{|z_{ij}^{(k)}|^2\}_{i,j,n}$ is a uniformly integrable class due to $\boldsymbol{T}_2$ of Theorem \ref{mainTheorem}. As a result,
    \begin{align*}
        &\underset{i,j,k,n}{\sup}|\mathbb{E}|\hat{z}_{ij}^{(k)}|^2 - 1| = \underset{i,j,k,n}{\sup}\big|\mathbb{E}|z_{ij}^{(k)}|^2\mathbbm{1}_{\{|z_{ij}^{(k)}|\leq n^b\}} - \mathbb{E}|z_{ij}^{(k)}|^2\big| = \underset{i,j,k,n}{\sup} \mathbb{E}|z_{ij}^{(k)}|^2\mathbbm{1}_{\{|z_{ij}^{(k)}|> n^b\}} \longrightarrow 0.
    \end{align*}
    \text Thus, $Var(\tilde{z}_{ij}^{(k)}) = \mathbb{E}|\tilde{z}_{ij}^{(k)}|^2 = \mathbb{E}|\hat{z}_{ij}^{(k)}|^2 \rightarrow 1$ uniformly.  
    In view of Remark \ref{A2_dashed}, Theorem \ref{Existence_A12} implies that $F^{\Tilde{U}_n} \xrightarrow{a.s.} F^\tau$, where the limiting distribution is characterized by a pair $(\textbf{h}^\tau, s^\tau)$ satisfying (\ref{s_main_eqn}) and (\ref{h_main_eqn}) with $H^\tau$ instead of $H$. In particular, $|\Tilde{u}_n(z) - s^\tau(z)| \xrightarrow{a.s.} 0$ by the same theorem.
    \color{black}
    \item[Step8] 
    Next we show that $\textbf{h}^\tau$ converges to some limit as $\tau \rightarrow \infty$ through continuity points of $\tau$. Using Montel's Theorem, we are able to show that any arbitrary subsequence of $\{\textbf{h}^\tau\}$ has a further subsequence $\{\textbf{h}^{\tau_m}\}_{m=1}^\infty$ that converges uniformly on compact subsets (of $\mathbb{C}_L$) as $m \rightarrow \infty$. Each subsequential limit will be shown to belong to $\mathbb{C}_R$ and satisfy (\ref{h_main_eqn}). Moreover, by Theorem \ref{Uniqueness}, all these subsequential limits must be the same, which we denote by $\textbf{h}^\infty$. Therefore, $\textbf{h}^\tau \xrightarrow{} \textbf{h}^\infty$.
    \item[Step9] Next we show that $s^\tau(z) \xrightarrow{a.s.} s_F(z)$ with $s_F(\cdot)$ defined in (\ref{s_main_eqn}) and that $s_F(\cdot)$ satisfies the necessary and sufficient condition in Proposition \ref{GeroHill} for a Stieltjes transform of a measure over the imaginary axis. So, there exists some distribution $F$ corresponding to $s_F$ which is our LSD of interest. Therefore, it suffices to show that $s_n(z) \xrightarrow{a.s.} s_F(z)$. This is done in \textbf{Step10}. \textbf{Step8} and \textbf{Step9} are shown explicitly in Section \ref{Step_8_9}.
    \item[Step10]  Finally, we show that $|s_n(z) - s_F(z)| \rightarrow 0$. We have,
    \begin{align*}
        |s_n(z) - s_F(z)| &\leq |s_n(z) - t_n(z)| + |t_n(z)-u_n(z)| + |u_n(z)-\Tilde{u}_n(z)|\\ &+ |\tilde{u}_n(z)-s^\tau(z)| + |s^\tau(z) - s_F(z)|.
    \end{align*}
We will show that each term on the RHS goes to 0 as $n \rightarrow \infty$ and $
\tau \rightarrow \infty$ through continuity points of $H$. From Section \ref{Step10} and (\ref{Levy_vs_uniform}), we get the following inequalities: \begin{itemize}
    \item $L_{im}(F^{S_n}, F^{T_n}) \leq ||F^{S_n} - F^{T_n}||_{im} \xrightarrow{a.s.} 0,$
    \item $L_{im}(F^{T_n}, F^{U_n}) \leq ||F^{T_n} - F^{U_n}||_{im} \xrightarrow{a.s.} 0,$ 
    \item $L_{im}(F^{U_n}, F^{\Tilde{U}_n}) \leq ||F^{U_n} - F^{\Tilde{U}_n}||_{im} \xrightarrow{a.s.} 0.$
    
\end{itemize}
Application of Lemma \ref{Levy_Stieltjes} to the three items above implies $|s_n(z) - t_n(z)| \xrightarrow{a.s.} 0$, $|t_n(z) - u_n(z)| \xrightarrow{a.s.} 0$ and, $|u_n(z) - \Tilde{u}_n(z)| \xrightarrow{a.s.} 0$ respectively. From \textbf{Step7}, we already have $|\Tilde{u}_n(z) - s^\tau(z)| \xrightarrow{a.s.} 0$. From Section \ref{Step_8_9}, we have $|s^\tau(z) - s_F(z)| \rightarrow 0$.
\item[Step11] Hence, $s_n(z) \xrightarrow{a.s.} s_F(z)$ which is a Stieltjes transform. By Proposition \ref{GeroHill}, $F^{S_n} \xrightarrow{d} F$ a.s., where $F$ is characterized by $(\textbf{h}^\infty, s_F)$ which satisfy (\ref{s_main_eqn}) and (\ref{h_main_eqn}). This concludes the proof.
\end{description}
\end{proof}

\subsection{Properties of the LSD}
\begin{theorem}\label{symmetry}
    The LSD $F$ in Theorem \ref{mainTheorem} is symmetric about $0$.
\end{theorem}
\begin{proof}
    Note that
    \begin{align}
        \rho_1(\overline{z_1}, \overline{z_2}) = \frac{\overline{z_2}}{1+\overline{z_1}\overline{z_2}} = \overline{\bigg(\frac{z_2}{1+z_1z_2}\bigg)} = \overline{\rho_1(z_1, z_2)}.
    \end{align}

Similarly $\rho_2(\overline{z_1}, \overline{z_2}) = \overline{\rho_2(z_1, z_2)}$. Thus, we find that $h_k(\overline{z}) = \overline{h_k(z)}$ and $s_F(\overline{z}) = \overline{s_F(z)}$. The symmetry of the LSD is immediate upon using (\ref{measureOfInterval}).
\end{proof}
\begin{remark}
For real skew symmetric matrices, the ESDs ($F^{S_n}$) are exactly symmetric about $0$.
\end{remark}

\begin{theorem}\label{pointMass}
In Theorem \ref{mainTheorem}, let $H = (1-\beta) \delta_\textbf{0} + \beta H_1$ where $H_1$ is a probability distribution over $\mathbb{R}_+^2$ which has no point mass at $\textbf{0}=(0,0)$ and $0 < \beta \leq 1$. Then,
\begin{enumerate}
    \item[(1)] When $0 < c < 2/\beta$, the LSD $F$ has a point mass at $0$ equal to $1 - \beta$,
    \item[(2)] When $2/\beta \leq c$, the LSD $F$ has a point mass at $0$ equal to $1 - 2/c$.
\end{enumerate}    
\end{theorem}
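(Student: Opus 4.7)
The plan is to reduce the problem to determining $\ell := \lim_{\epsilon \downarrow 0} h_1(-\epsilon)\, h_2(-\epsilon)$ and then pin $\ell$ down using two complementary expressions for $1 + z\, s_F(z)$. By the inversion formula (\ref{inversion_pointMass}), $F(\{0\}) = \lim_{\epsilon \downarrow 0} \epsilon\, s_F(-\epsilon)$, and substituting $z = -\epsilon$ into (\ref{s_main_eqn_1}) rewrites the target as $F(\{0\}) = 1 - \frac{2}{c} + \frac{2}{c(1 + c^2 \ell)}$ (with the convention that the last term is $0$ when $\ell = \infty$). A routine analogue of Proposition \ref{symmetry} gives $h_k(\overline z) = \overline{h_k(z)}$, so $h_k(-\epsilon)$ is real, and since $h_k : \mathbb{C}_L \to \mathbb{C}_R$ it is strictly positive. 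Writing $\epsilon s_F(-\epsilon)$ via the symmetry of $F$ as $\int \epsilon^2/(\epsilon^2 + t^2)\, dF(\mathbbm{i}t)$ shows that $\epsilon s_F(-\epsilon)$ is monotone in $\epsilon$, hence $\epsilon s_F(-\epsilon) \downarrow F(\{0\})$, and consequently $h_1(-\epsilon) h_2(-\epsilon) \uparrow \ell \in (0, \infty]$.

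Two identities drive the case analysis. The first is already extracted in the proof of (\ref{s_main_eqn_1}):
\begin{equation*}
\Phi\bigl(h_1(z) h_2(z)\bigr) \;:=\; \frac{2c\, h_1(z) h_2(z)}{1 + c^2 h_1(z) h_2(z)} \;=\; 1 + z\, s_F(z),
\end{equation*}
where $\Phi$ is strictly increasing on $[0, \infty]$ with range $[0, 2/c]$. The second identity follows by substituting $H = (1-\beta)\delta_{\textbf{0}} + \beta H_1$ into (\ref{s_main_eqn}): the atom contributes $-(1-\beta)/z$, which combines with the leading $1$ to give, after simplification,
\begin{equation*}
1 + z\, s_F(z) \;=\; \beta \int \frac{\lambda_1 \rho_1(c\textbf{h}(z)) + \lambda_2 \rho_2(c\textbf{h}(z))}{-z + \lambda_1 \rho_1(c\textbf{h}(z)) + \lambda_2 \rho_2(c\textbf{h}(z))}\, dH_1(\boldsymbol{\lambda}).
\end{equation*}
The integrand takes values in $[0, 1)$, so at once $F(\{0\}) \geq 1 - \beta$.

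The dichotomy now falls out by comparing the two identities as $z = -\epsilon \downarrow 0$. In Case (1), $c < 2/\beta$, the first identity forces $\Phi(\ell) \leq \beta < 2/c$, so $\ell < \infty$. A short consistency check using (\ref{h_main_eqn}) rules out $h_k^0 = 0$ and asymmetric scenarios such as $h_2 \to \infty$ with product bounded (which would force $\rho_1 \to \infty$, and then the integral representation of $h_2$ in (\ref{h_main_eqn}) drives $h_2 \to 0$, a contradiction); hence both $h_k^0 \in (0, \infty)$, giving $\rho_k^0 > 0$. Since $H_1$ puts no mass at $\textbf{0}$, the integrand in the second identity converges pointwise $H_1$-a.e.\ to $1$, and dominated convergence yields $1 - F(\{0\}) = \beta$, i.e.\ $F(\{0\}) = 1 - \beta$. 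In Case (2), $c \geq 2/\beta$, suppose for contradiction $\ell < \infty$; the same dominated-convergence step would then give $\Phi(\ell) = \beta$, forcing $\ell = \beta / (c(2 - c\beta))$, which is negative or undefined whenever $c\beta \geq 2$. Therefore $\ell = \infty$, $\Phi(\ell) = 2/c$, and $F(\{0\}) = 1 - 2/c$. The main obstacle is the consistency step in Case (1) that pins down both $h_k^0 \in (0, \infty)$ individually rather than just the product; the boundary case $c = 2/\beta$ is automatic since the two claimed formulas $1-\beta$ and $1-2/c$ coincide there.
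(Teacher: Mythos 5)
Your reduction to $\ell := \lim_{\epsilon\downarrow 0} h_1(-\epsilon)h_2(-\epsilon)$ is the right quantity to focus on, and the monotonicity observation ($\epsilon s_F(-\epsilon)$ decreases, hence $h_1(-\epsilon)h_2(-\epsilon)$ increases, so $\ell$ exists as a genuine limit) is a sharpening of the paper, which only works with subsequential limits. The two identities $\Phi(h_1h_2) = 1+zs_F(z)$ and $1+zs_F(z)=\beta\int \boldsymbol{\lambda}^T\boldsymbol{\rho}/(-z+\boldsymbol{\lambda}^T\boldsymbol{\rho})\,dH_1$ are correct, as is the bound $F(\{0\})\geq 1-\beta$. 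But the ``consistency check'' you flag as the main obstacle is not merely incomplete; it asserts something false.

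You claim both $h_k^0\in(0,\infty)$, ruling out $h_1\to 0,\ h_2\to\infty$ via ``$\rho_1\to\infty$, so (\ref{h_main_eqn}) drives $h_2\to 0$.'' This fails when $H_1$ has mass on an axis $\{\lambda_1=0\}$, which $T_4$ permits. Take $c=1$, $\beta=1$, $H_1=\tfrac12\delta_{(1,1)}+\tfrac12\delta_{(0,1)}$. At $z=-\epsilon$ the system (\ref{h_main_eqn}) reads $h_1=\tfrac{1}{2(\epsilon+\rho_1+\rho_2)}$ and $h_2=h_1+\tfrac{1}{2(\epsilon+\rho_2)}$ with $\rho_1=\tfrac{h_2}{1+h_1h_2}$, $\rho_2=\tfrac{h_1}{1+h_1h_2}$, which forces $h_1\to 0$, $h_2\to\infty$, $h_1h_2\to 1$. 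The atom at $(0,1)$ contributes $\lambda_2/(\epsilon+\lambda_2\rho_2)$ to $h_2$, and $\rho_2\to 0$ concurrently, so $\rho_1\to\infty$ does \emph{not} drive $h_2\to 0$. Thus your conclusion ``both $h_k^0\in(0,\infty)$'' is wrong, and your dominated-convergence step (which relies on $\rho_k^0>0$ so the integrand $\to 1$ pointwise $H_1$-a.e.) is unjustified in both cases. In the example the conclusion $F(\{0\})=0$ survives because $\ell=1$ is finite and one can check $\epsilon=o(\rho_2)$, but that rate estimate is exactly the missing ingredient and does not follow from what you have.

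For comparison, the paper takes a different tack: it does not go through $\ell$ but instead applies Fatou's lemma directly to (\ref{h_main_eqn}) at a convergent subsequence of $\textbf{h}(-\epsilon)$, splitting into cases by which coordinates of the limit vanish, and for $c<2/\beta$ separately argues boundedness of $h_k(-\epsilon)$ via a geometric fixed-point argument involving the maps $G_k(\epsilon,\textbf{t})$. Whether the paper's boundedness argument fully handles mass on the axes is a separate question; what matters for your proposal is that the consistency check, as written, is both logically necessary for your DCT step and false in general, so a new idea is required there.
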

The proof is given in Section \ref{sec:ProofOfPointMass}.

\begin{figure}[ht]
    \centering
    \includegraphics[width=0.75\linewidth]{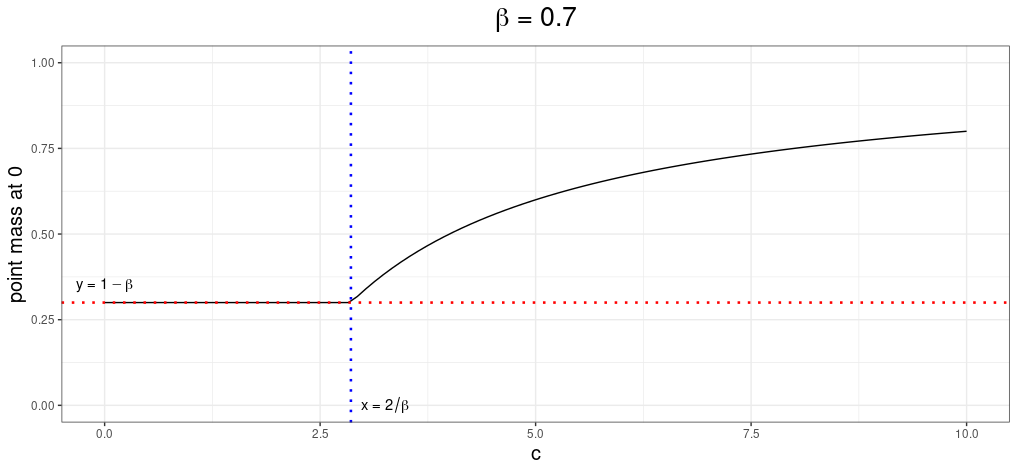}
    \caption{Illustration of the result of Theorem \ref{pointMass} as $c$ varies when $\beta = 0.7$}
    \label{fig:1}
\end{figure}

\begin{theorem}\label{ContinuityGeneral}
    Suppose $L(H_n,H) \rightarrow 0$ where $H_n, H$ are bi-variate distributions over $\mathbb{R}_+^2$ and $L(\cdot, \cdot)$ denotes the Levy distance. If there exists $K > 0$ such that 
    \begin{align}\label{TechnicalCondition_continuity}
        \underset{k=1,2}{\max}\bigg\{\underset{n \rightarrow \infty}{\limsup} \int_{\mathbb{R}_+^2} \lambda_k^2 dH_n(\lambda_1, \lambda_2)\bigg\} < K.
    \end{align}
     Then, $\underset{n \rightarrow \infty}{\lim}\textbf{h}(z, H_n) = \textbf{h}(z, H)$ for any $z \in \mathbb{C}_L$.
\end{theorem}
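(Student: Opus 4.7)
The plan is to prove continuity by a subsequential compactness argument combined with the uniqueness theorem: I will show that along any subsequence of $\{\textbf{h}(z, H_n)\}$, a further subsequence converges to $\textbf{h}(z, H)$, which forces the full sequence to converge. For boundedness, by Cauchy-Schwarz and (\ref{TechnicalCondition_continuity}), the first moments of $H_n$ are eventually bounded by $\sqrt{K}+o(1)$, so Lemma~\ref{location_of_solution_General}(1) yields $|h_k(z, H_n)| \leq \sqrt{K}/|\Re(z)|$ for all large $n$. Hence $\{\textbf{h}(z, H_n)\} \subset \overline{\mathbb{C}_R^2}$ is a bounded sequence, and Bolzano-Weierstrass supplies a convergent sub-subsequence $\textbf{h}(z, H_{n_j}) \to \textbf{h}^* \in \overline{\mathbb{C}_R^2}$ along any given subsequence.

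The main technical step, and the principal obstacle, is to pass to the limit in the fixed-point equation (\ref{h_main_eqn}) so as to show that $\textbf{h}^*$ satisfies the equation with $H$ on the right. Fix $M>0$ so that $\{\|\boldsymbol{\lambda}\|=M\}$ is an $H$-continuity set, and split the integral at radius $M$. On the bounded region $\{\|\boldsymbol{\lambda}\|\leq M\}$, continuity of $\boldsymbol{\rho}$ at $c\textbf{h}^*$ (which cannot hit the singularity $1+c^2h_1^*h_2^*=0$ without violating the limit equation itself, since the right-hand side would then vanish while the left-hand side is bounded away from $\textbf{0}$ whenever $H$ charges $\{\lambda_k>0\}$) together with $\textbf{h}(z,H_{n_j})\to\textbf{h}^*$ gives uniform convergence in $\boldsymbol{\lambda}$ of the integrand to its target; the integrand is continuous and uniformly bounded by $M/|\Re(z)|$ since $\Re(-z+\boldsymbol{\lambda}^T\boldsymbol{\rho}(c\textbf{h})) \geq |\Re(z)|$ whenever $\textbf{h}\in\mathbb{C}_R^2$ and $\boldsymbol{\lambda}\in\mathbb{R}_+^2$, so weak convergence $H_{n_j}\to H$ yields convergence of the truncated integrals. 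The tail is where (\ref{TechnicalCondition_continuity}) is essential: using the same real-part lower bound,
\begin{align*}
\left|\int_{\|\boldsymbol{\lambda}\|>M} \frac{\lambda_k\,dH_n(\boldsymbol{\lambda})}{-z + \boldsymbol{\lambda}^T\boldsymbol{\rho}(c\textbf{h}(z, H_n))}\right| \leq \frac{1}{|\Re(z)|}\int_{\|\boldsymbol{\lambda}\|>M} \lambda_k\,dH_n \leq \frac{K}{|\Re(z)|\,M},
\end{align*}
uniformly in $n$; the same estimate transfers to $H$ since $\int\lambda_k^2\,dH \leq \liminf\int\lambda_k^2\,dH_n \leq K$ by lower semicontinuity under weak convergence. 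Sending $j\to\infty$ and then $M\to\infty$ through $H$-continuity values, $\textbf{h}^*$ satisfies (\ref{h_main_eqn}) for $H$.

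To conclude via Theorem~\ref{Uniqueness}, I verify that $\textbf{h}^*\in\mathbb{C}_R^2$ strictly. Taking real parts of the limit equation and using $\Re(-z + \boldsymbol{\lambda}^T\boldsymbol{\rho}(c\textbf{h}^*))\geq|\Re(z)|>0$, the real part of the $k$th component integrand equals $\lambda_k\cdot\Re(\cdots)/|\cdots|^2$, which is nonnegative and strictly positive on $\{\lambda_k>0\}$; the non-degeneracy of $H$ (inherited from the hypotheses under which $\textbf{h}(z,H)$ is defined via Theorem~\ref{mainTheorem}) then forces $\Re(h_k^*)>0$. Theorem~\ref{Uniqueness} thus gives $\textbf{h}^*=\textbf{h}(z,H)$. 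Since every subsequence of $\{\textbf{h}(z,H_n)\}$ yields a sub-subsequence converging to this common limit, the entire sequence converges to $\textbf{h}(z,H)$, as required.
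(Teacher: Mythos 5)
There is a genuine gap in the final step. You invoke Theorem~\ref{Uniqueness} to conclude $\textbf{h}^* = \textbf{h}(z, H)$, but that theorem asserts uniqueness only within the class of \emph{analytic functions} $\mathbb{C}_L \to \mathbb{C}_R^2$ satisfying (\ref{h_main_eqn}), not pointwise uniqueness of a single vector $\textbf{h}^* \in \mathbb{C}_R^2$ solving the fixed-point equation at one fixed $z$. Inspecting the proof of Theorem~\ref{Uniqueness} makes this distinction matter: pointwise uniqueness is established there only for $z$ with $|\Re(z)| > R_0$ and $|\Im(z)| < |\Re(z)|$ (where the contraction constant $4cD_0/u^2$ is below one), and for the remaining $z$ uniqueness follows only by the Identity Theorem applied to two analytic families. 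Your $\textbf{h}^*$ is produced by Bolzano--Weierstrass at a single fixed $z$; it is not known to be the value at $z$ of any analytic solution family, so Theorem~\ref{Uniqueness} simply does not apply to it. For a generic $z$ with small real part, the paper's results do not rule out a second $\mathbb{C}_R^2$-valued solution of (\ref{h_main_eqn}) at that $z$, and your subsequential limit could a priori be that spurious solution.

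Two repairs are available, and they correspond roughly to the two techniques the paper does use. One is to run your subsequential argument only for $|\Re(z)| > R_0$ with $|\Im(z)| < |\Re(z)|$, where pointwise uniqueness does hold, and then extend to all of $\mathbb{C}_L$ via the Vitali--Porter theorem and the Identity Theorem exactly as in the paper's own Step~2 of Section~\ref{sec:ProofOfContinuityGeneral} (noting that $\textbf{h}(\cdot, H_n) - \textbf{h}(\cdot, H)$ are analytic and, by your first-moment bound, uniformly locally bounded). The other is to lift the compactness argument from the level of a fixed $z$ to the level of functions: by Montel's theorem and the bound $|h_k(\cdot, H_n)| \le \sqrt{K}/|\Re(\cdot)|$, any subsequence has a sub-subsequence converging uniformly on compacts to an analytic $\textbf{h}^\infty(\cdot)$, to which your passage-to-the-limit and strict-positivity arguments apply at each $z$, and to which Theorem~\ref{Uniqueness} \emph{does} then apply; this mirrors Step~8 of the paper's Section~\ref{Step_8_9}. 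Incidentally, your singularity dismissal ("the right-hand side would then vanish") is stated too tersely — one must argue, e.g., that if $1 + c^2 h_1^{(n_j)} h_2^{(n_j)} \to 0$ then both components of $\boldsymbol{\rho}$ blow up and the integrals tend to zero by the truncation/tail estimate, forcing $\textbf{h}^* = \mathbf{0}$ and hence $1 + c^2 h_1^* h_2^* = 1$, a contradiction — but that part can be made rigorous. The uniqueness application is the step that cannot stand as written.
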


For the proof, please refer to Section \ref{sec:ProofOfContinuityGeneral}.

\section{Special Case: Equal covariance matrices}\label{sec:equal_covariance}
 Now we consider the special case where $\Sigma_{1n} = \Sigma_n = \Sigma_{2n}$. Here, Theorem \ref{mainTheorem} reduces to a simpler form and holds under weaker conditions. In this case, we replace Assumptions $\boldsymbol{T}_4$ and $\boldsymbol{T}_5$ of Theorem \ref{mainTheorem} with $\boldsymbol{T}_4^{'}$ and, $\boldsymbol{T}_5^{'}$ respectively.
\begin{enumerate}
    \item[$\boldsymbol{T}_4^{'}$:] The ESD of $\Sigma_n$ converges weakly to a uni-variate probability distribution $H \neq \delta_0$ almost surely, i.e. $F^{\Sigma_n} \xrightarrow{d} H$ a.s. and $\operatorname{supp}(H) \subset \mathbb{R}_+$.
    \item[$\boldsymbol{T}_5^{'}$:] Further, there exists $C_0 > 0$ such that $\underset{n \rightarrow \infty}{\limsup} \frac{1}{p}\operatorname{trace}(\Sigma_{n}) < C_0$.
\end{enumerate}
It is clear that Assumption $\boldsymbol{T}_5^{'}$ follows from Assumption $\boldsymbol{T}_5$. To characterize the main result of this section, we need the uni-variate analog of the functions (\ref{defining_rho}) that were central to the main result of Section \ref{sec:general_covariance}. 
\begin{definition}
Define the complex-valued functions $\sigma(\cdot), \sigma_2(\cdot)$ as
\begin{align}
\sigma(z) :&= \frac{1}{\mathbbm{i} + z} + \frac{1}{-\mathbbm{i} + z} = \frac{2z}{1+z^2}, z \not \in \{\mathbbm{i}, -\mathbbm{i}\} \text{, and} \label{defining_sigma}\\
\sigma_2(z) :&= \frac{1}{|\mathbbm{i} + z|^2} + \frac{1}{|-\mathbbm{i} + z|^2}, z \not \in \{\mathbbm{i}, -\mathbbm{i}\}. \label{defining_sigma2}
\end{align}
Then for $z \not \in \{\mathbbm{i}, -\mathbbm{i}\}$, we have
\begin{align}\label{realOfSigma}
\Re(\sigma(z)) &= \frac{\Re(\overline{\mathbbm{i}+z})}{|\mathbbm{i}+z|^2} + \frac{\Re(\overline{-\mathbbm{i}+z})}{|-\mathbbm{i}+z|^2} = \sigma_2(z)\Re(z).
\end{align}    
\end{definition}

\begin{corollary}\label{mainCorollary}
    In Theorem \ref{mainTheorem}, suppose we have $\Sigma_{1n} = \Sigma_{2n} = \Sigma_n$ for $n \in \mathbb{N}$ such that $F^{\Sigma_n} \xrightarrow{d} H$ a.s. where $H \neq \delta_0$ is a non-random uni-variate distribution on $\mathbb{R}_+$. Then under $\boldsymbol{T}_1, \boldsymbol{T}_2, \boldsymbol{T}_4^{'}, \boldsymbol{T}_5^{'}$, we have $F^{S_n} \xrightarrow{d} F$ a.s. where $F$ is a non-random distribution with Stieltjes Transform at $z \in \mathbb{C}_L$ given by
\begin{align}\label{s_main_eqn1}
s(z) = \int \frac{dH(\lambda)}{-z + \lambda \sigma(ch(z))} = \dfrac{1}{z}\bigg(\dfrac{2}{c}-1\bigg) - \dfrac{2}{cz}\bigg(\dfrac{1}{1 + c^2h^2(z)}\bigg), 
\end{align}
where, $h(z) \in \mathbb{C}_R$ is the unique number such that 
\begin{align}\label{h_main_eqn1}
h(z) = \int \dfrac{\lambda dH(\lambda)}{-z + \lambda \sigma(ch(z)))}.
\end{align}
Further, $h$ is the Stieltjes Transform of a measure (not necessarily a probability) over the imaginary axis and has a continuous dependence on $H$.
\end{corollary}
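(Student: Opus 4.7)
The plan is to obtain this corollary as a specialization of Theorem \ref{mainTheorem}, first verifying its hypotheses in the symmetric setting (under the stronger assumption $T_5$), then exploiting symmetry to collapse the vector equation \eqref{h_main_eqn} to the scalar equation \eqref{h_main_eqn1}, and finally relaxing $T_5$ to $T_5'$ through a truncation argument analogous to Theorem \ref{Existence_General}.

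First I would verify that when $\Sigma_{1n} = \Sigma_{2n} = \Sigma_n$, the assumptions $T_3$ and $T_4$ of Theorem \ref{mainTheorem} are automatic: the two matrices trivially commute, and the JESD is the push-forward of $F^{\Sigma_n}$ under $\lambda \mapsto (\lambda,\lambda)$, whose weak limit is supported on the diagonal of $\mathbb{R}_+^2$ and hence off both coordinate axes as soon as $H \neq \delta_0$. Assuming temporarily that $T_5$ also holds, Theorem \ref{mainTheorem} delivers a unique analytic $\textbf{h}(z)=(h_1(z),h_2(z)) \in \mathbb{C}_R^2$ solving \eqref{h_main_eqn}. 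The integrand in \eqref{h_main_eqn} is invariant under the simultaneous swap $(\lambda_1,\lambda_2,h_1,h_2) \mapsto (\lambda_2,\lambda_1,h_2,h_1)$; since $H$ is supported on the diagonal, $(h_2,h_1)$ is also a solution, and Theorem \ref{Uniqueness} forces $h_1 \equiv h_2 =: h$. Writing $\rho_1(ch,ch) = \rho_2(ch,ch) = ch/(1+c^2h^2)$, the inner product $\boldsymbol{\lambda}^T\boldsymbol{\rho}(c\textbf{h}(z))$ collapses on the diagonal to $\lambda \cdot 2ch/(1+c^2h^2) = \lambda\,\sigma(ch)$, which yields \eqref{h_main_eqn1} and the first form of \eqref{s_main_eqn1}; the alternate form follows from \eqref{s_main_eqn_1} with $h_1 h_2 = h^2$.

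To drop $T_5$ in favor of $T_5'$, I would truncate: for each $\tau > 0$ at which $H$ is continuous, let $\Sigma_n^\tau$ denote $\Sigma_n$ with all eigenvalues exceeding $\tau$ set to $0$. Then $||\Sigma_n^\tau||_{op} \leq \tau$, so the previous step applies to $\Sigma_n^\tau$ and furnishes a pair $(h^\tau, s^\tau)$ satisfying the reduced equations with $H$ replaced by its truncation $H^\tau$. A Montel/normal-families argument in the spirit of Step 8 of the proof of Theorem \ref{Existence_General}, combined with the location estimate $|h^\tau(z)| \leq C_0/|\Re(z)|$ (uniform in $\tau$ and needing only the first-moment bound $T_5'$) from Lemma \ref{location_of_solution_General}, lets me extract a locally uniform subsequential limit of $\{h^\tau\}$ on $\mathbb{C}_L$ along any sequence $\tau_m \to \infty$ through continuity points of $H$. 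Each such limit is analytic with values in $\mathbb{C}_R$ and satisfies \eqref{h_main_eqn1}, so Theorem \ref{Uniqueness} identifies them all, proving $h^\tau \to h$ and $s^\tau \to s$. Checking $\lim_{y \to +\infty} y s(-y) = 1$ and invoking Proposition \ref{GeroHill} delivers a limiting distribution $F$, and the $L_{im}$-closeness chain of Step 10 of Theorem \ref{Existence_General}, applied with the $\Sigma_n^\tau$ truncation, finally gives $F^{S_n} \xrightarrow{d} F$ almost surely. Continuity of $h$ as a function of $H$ is exactly the content of the sharper Theorem \ref{Continuity_Special} advertised in the remark after Theorem \ref{mainTheorem}, and I would appeal to that result directly.

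The hardest part will be the uniform-in-$\tau$ control needed to pass $\tau \to \infty$ with only the first spectral moment available: because the second-moment estimate \eqref{secondSpectralBound} is missing, I must lean on the scalar structure ($h_1 = h_2$) to secure the uniform bound on $h^\tau$, the tightness of the associated measures whose Stieltjes transforms are the $h^\tau$, and the guarantee that no mass escapes to infinity. Verifying that each subsequential limit obtained through continuity points of $H$ satisfies the full untruncated equation \eqref{h_main_eqn1} rather than merely its truncated version, and that the uniqueness statement of Theorem \ref{Uniqueness} then collapses all subsequential limits to a common function, is the technical crux of the argument.
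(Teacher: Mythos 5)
Your overall route matches the paper's: reduce to the scalar equation by specializing Theorem~\ref{mainTheorem}, and then relax the hypotheses through a truncation argument in the spirit of Theorem~\ref{Existence_General}. The reduction $h_1\equiv h_2$ is sound (in fact it is even simpler than your swap argument: since $H$ is supported on the diagonal $\lambda_1=\lambda_2=\lambda$, the two integrals defining $h_1$ and $h_2$ in~\eqref{h_main_eqn} are literally identical), and your observations about $T_3$, $T_4$ and the location bound $|h^\tau(z)|\le C_0/|\Re(z)|$ under $T_5'$ alone are all correct.

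The gap is in how you identify the subsequential limits of $\{\textbf{h}^\tau\}$. Both in the body of the argument and again in your closing paragraph you invoke Theorem~\ref{Uniqueness} for this purpose. But Theorem~\ref{Uniqueness} is proved under the second spectral moment bound $T_5$ (its proof repeatedly uses the constant $D_0$ from~\eqref{secondSpectralBound}, via Lemma~\ref{location_of_solution_General} and the estimate~\eqref{I_20_bound}), and $T_5$ is precisely the hypothesis you have dropped in favor of $T_5'$. So at the critical step—collapsing all subsequential limits to a single function when only the first-moment bound is available—Theorem~\ref{Uniqueness} is unavailable, and the argument as written does not close. The result you need is Theorem~\ref{Uniqueness_Special}, which establishes at-most-one analytic solution $h:\mathbb{C}_L\to\mathbb{C}_R$ of the scalar equation~\eqref{h_main_eqn1} for \emph{any} $H\neq\delta_0$ on $\mathbb{R}_+$ with no moment hypotheses at all; its proof is a genuinely different (one-dimensional) contraction estimate that exploits $\sigma_2$ rather than the two-variable H\"older bound of Theorem~\ref{Uniqueness}. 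This is not a cosmetic swap of references: it is exactly the content the paper supplies in Section~\ref{sec:equal_covariance} to make the corollary work under $T_4'$, $T_5'$. You in fact recognize the parallel point for continuity—appealing to Theorem~\ref{Continuity_Special} rather than Theorem~\ref{ContinuityGeneral}—but you miss the companion substitution for uniqueness. Making that substitution (and noting explicitly that the subsequential limits obtained from the Montel/Grommer--Hamburger step are scalar Stieltjes transforms, so that Theorem~\ref{Uniqueness_Special} applies directly to them) repairs the argument and brings it in line with the paper's proof.
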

 
Unlike in Section \ref{sec:general_covariance}, when both covariance matrices are equal, the uniqueness and continuity (w.r.t the weak topology) of the solution of (\ref{h_main_eqn}) can be proved without requiring any spectral moment bounds (i.e., \ref{secondSpectralBound}, \ref{firstSpectralBound}) and/ or other technical conditions (\ref{TechnicalCondition_continuity}). Moreover, in the special case, the result regarding the continuity of the solution w.r.t. the weak topology is much stronger in the sense that it holds for any weakly converging sequence of distribution functions. Hence, to complete the proof of Corollary \ref{mainCorollary}, we will prove the uniqueness and continuity of the solution of (\ref{h_main_eqn1}) without these extra conditions.

\begin{theorem}\label{Uniqueness_Special}
\textbf{Uniqueness of solution when $\Sigma_{1n} =\Sigma_n = \Sigma_{2n}$:} There exists at most one solution to the following equation within the class of functions that map $\mathbb{C}_L$ to $\mathbb{C}_R$:
    $$h(z) = \int \dfrac{\lambda dH(\lambda)}{-z + \lambda \sigma(ch(z))},$$
where H is any probability distribution function such that $\operatorname{supp}(H) \subset \mathbb{R}_{+}$ and $H \neq \delta_0$.\end{theorem}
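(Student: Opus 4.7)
The plan is to suppose $h_1,h_2:\mathbb{C}_L \to \mathbb{C}_R$ are two solutions of the given equation, fix $z = -u+\mathbbm{i}v \in \mathbb{C}_L$, and show directly that $h_1(z) = h_2(z)$ without any appeal to moment bounds on $H$. First I would subtract the two defining equations and use the algebraic identity
\begin{equation*}
\sigma(w_1) - \sigma(w_2) \;=\; \frac{2(w_1-w_2)(1 - w_1 w_2)}{(1+w_1^2)(1+w_2^2)}
\end{equation*}
with $w_k = ch_k(z)$. Writing $\Delta := h_1(z) - h_2(z)$, this manipulation gives
\begin{equation*}
\Delta \;=\; -\,\Delta \cdot \frac{2c(1 - c^2 h_1 h_2)}{(1+c^2 h_1^2)(1+c^2 h_2^2)} \int \frac{\lambda^2\, dH(\lambda)}{(-z + \lambda\sigma(ch_1))(-z+\lambda\sigma(ch_2))} \;=:\; -\Delta\cdot A.
\end{equation*}
So uniqueness reduces to showing $|A| < 1$ (which then guarantees $1+A \neq 0$ and forces $\Delta=0$).

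The key step, and the place where the argument differs from the one in Section \ref{sec:general_covariance}, is to bound $\int \lambda^2\, dH/|-z+\lambda\sigma(ch)|^2$ \emph{without} assuming $\int \lambda^2 dH(\lambda) < \infty$. For this I would take the real part of the defining equation for $h$: since $u = -\Re(z) > 0$ and $\Re(\sigma(ch)) = c\sigma_2(ch)\Re(h)$ by (\ref{realOfSigma}), one obtains
\begin{equation*}
\Re(h) \;=\; \int \lambda\,\frac{u + \lambda\,\Re(\sigma(ch))}{|-z+\lambda\sigma(ch)|^2}\, dH(\lambda) \;\geq\; c\sigma_2(ch)\,\Re(h)\int \frac{\lambda^2\, dH(\lambda)}{|-z+\lambda\sigma(ch)|^2},
\end{equation*}
whence $\int \lambda^2\, dH/|-z+\lambda\sigma(ch)|^2 \leq 1/(c\sigma_2(ch))$. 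Cauchy--Schwarz applied to the integral in $A$ now gives
\begin{equation*}
|A| \;\leq\; \frac{2|1 - c^2 h_1 h_2|}{|1+c^2 h_1^2|\,|1+c^2 h_2^2|\sqrt{\sigma_2(ch_1)\sigma_2(ch_2)}}.
\end{equation*}

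The final step is to simplify using the identity $|1+c^2h_k^2|^2\,\sigma_2(ch_k) = |ch_k - \mathbbm{i}|^2 + |ch_k + \mathbbm{i}|^2 = 2(1+c^2|h_k|^2)$, which reduces the bound to
\begin{equation*}
|A| \;\leq\; \frac{|1 - c^2 h_1 h_2|}{\sqrt{(1+c^2|h_1|^2)(1+c^2|h_2|^2)}}.
\end{equation*}
A direct expansion shows
\begin{equation*}
(1+c^2|h_1|^2)(1+c^2|h_2|^2) - |1 - c^2 h_1 h_2|^2 \;=\; c^2\,|h_1 + \bar h_2|^2,
\end{equation*}
and since $\Re(h_1),\Re(h_2) > 0$ we have $\Re(h_1 + \bar h_2) = \Re(h_1) + \Re(h_2) > 0$, so $h_1 + \bar h_2 \neq 0$ and the inequality is strict. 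Hence $|A| < 1$, which gives $\Delta = 0$ as required. The main obstacle I anticipate is exactly the one just resolved: replacing the Cauchy--Schwarz/moment argument used for Theorem \ref{Uniqueness} by an identity that extracts $\int \lambda^2 dH/|\cdot|^2$ from $\Re(h)$ itself, and then identifying the perfect-square difference $c^2|h_1+\bar h_2|^2$ that forces strict inequality regardless of $H$.
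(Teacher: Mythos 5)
Your proof is correct. You share with the paper the crucial step: extracting the bound $c\sigma_2(ch)\,I_2(h,H) \leq 1$, where $I_2(h,H) = \int \lambda^2\,dH/|-z+\lambda\sigma(ch)|^2$, by taking the real part of the defining equation and using $\Re(h) > 0$, $u > 0$, and $\int\lambda\,dH/|\cdot|^2 > 0$. This avoids any second-moment assumption on $H$, which is exactly what the paper's own proof of this special-case theorem also does (the place it departs from the general Theorem~\ref{Uniqueness}). Where you diverge is in closing the contraction. The paper works with the partial-fraction form $\sigma(w) = (\mathbbm{i}+w)^{-1} + (-\mathbbm{i}+w)^{-1}$, splits $h_1 - h_2$ into two integrals, applies H\"older to each separately, and recombines with $\sqrt{wx}+\sqrt{yz}\leq\sqrt{w+y}\sqrt{x+z}$; the resulting coefficient is exactly $\sqrt{c\sigma_2(ch_1)I_2(h_1,H)}\cdot\sqrt{c\sigma_2(ch_2)I_2(h_2,H)}$, which is then $<1$ directly from the real-part bound (so the paper's strictness comes entirely from the strict inequality in that bound). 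You instead keep $\sigma(w) = 2w/(1+w^2)$ in rational form, combine everything into a single Cauchy--Schwarz, substitute the $I_2$ bound, simplify with the identity $|1+c^2h^2|^2\sigma_2(ch) = 2(1+c^2|h|^2)$, and close with the perfect-square difference $(1+c^2|h_1|^2)(1+c^2|h_2|^2) - |1-c^2h_1h_2|^2 = c^2|h_1+\bar h_2|^2$, so the strictness comes from the algebraic fact $\Re(h_1)+\Re(h_2)>0$ rather than from the defining equation. The paper's route produces a cleaner contraction constant; yours makes the geometric obstruction to $|A|=1$ explicit. Both are valid, and it is a nice observation that the strict inequality can be sourced purely from $h_1+\bar h_2\neq 0$ even if one only knows $c\sigma_2(ch)I_2\leq 1$ non-strictly.
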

The proof is given in Section \ref{sec:ProofOfUniquenessSpecial}.

\begin{theorem} \label{Continuity_Special}
    \textbf{Continuity of solution when $\Sigma_{1n} =\Sigma_n = \Sigma_{2n}$:} Let $H_n, H$ be uni-variate distribution functions 
    satisfying the conditions in Corollary \ref{mainCorollary} and $H_n \xrightarrow{d} H$. For a fixed $z \in \mathbb{C}_L$, denote the unique solutions to (\ref{h_main_eqn1}) corresponding to $H_n$ and $H$ as $h(z, H_n)$ and $h(z, H)$ respectively. Then $h(z,H_n) \xrightarrow{} h(z, H)$.
\end{theorem}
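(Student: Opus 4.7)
The plan is a tightness-and-uniqueness argument. Fix $z = -u + \mathbbm{i}v \in \mathbb{C}_L$ and write $h_n := h(z, H_n) \in \mathbb{C}_R$ for the unique solution to (\ref{h_main_eqn1}) under $H_n$; I will show that every subsequential limit of $\{h_n\}$ coincides with $h(z, H)$ and then conclude via Theorem~\ref{Uniqueness_Special}. For boundedness: since $\sigma(\mathbb{C}_R) \subset \mathbb{C}_R$, the denominator in (\ref{h_main_eqn1}) satisfies $|{-z}+\lambda\sigma(ch_n)| \ge \Re(-z + \lambda\sigma(ch_n)) = u + c\lambda\,\Re(h_n)\,\sigma_2(ch_n) \ge u$ for $\lambda \ge 0$, whence $|h_n| \le u^{-1}\int \lambda\, dH_n(\lambda)$. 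The first-moment control implicit in assumption $T_5'$ of Corollary~\ref{mainCorollary} keeps the right-hand side uniformly bounded, so I can extract an arbitrary convergent subsequence $h_{n_k} \to h^* \in \overline{\mathbb{C}_R}$.

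Next I would verify $h^* \in \mathbb{C}_R$ (i.e.\ $\Re(h^*) > 0$) via Fatou. Taking real parts in (\ref{h_main_eqn1}) and using (\ref{realOfSigma}),
\[
\Re(h_{n_k}) \;=\; \int \frac{\lambda u + c\lambda^2\,\Re(h_{n_k})\,\sigma_2(ch_{n_k})}{|{-z}+\lambda\sigma(ch_{n_k})|^2}\, dH_{n_k}(\lambda) \;\ge\; \int \frac{\lambda u}{|{-z}+\lambda\sigma(ch_{n_k})|^2}\, dH_{n_k}(\lambda).
\]
Realizing $\Lambda_{n_k} \sim H_{n_k}$ and $\Lambda \sim H$ on a common probability space with $\Lambda_{n_k} \to \Lambda$ almost surely (Skorohod), the nonnegative integrand converges pointwise to $\lambda u/|{-z}+\lambda\sigma(ch^*)|^2$, so Fatou's lemma yields $\Re(h^*) \ge \int \lambda u/|{-z}+\lambda\sigma(ch^*)|^2\, dH(\lambda)$. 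Since $H \ne \delta_0$ assigns positive mass to $(0,\infty)$ and the integrand there is strictly positive, this rules out $\Re(h^*) = 0$; the boundary cases $h^* = 0$ and $h^* = \pm\mathbbm{i}/c$ (where $\sigma$ has poles) are handled similarly, the latter by observing that $|\sigma(ch_{n_k})|\to\infty$ would force $h_{n_k}\to 0$ directly through the equation. Thus $h^* \in \mathbb{C}_R$ and $\sigma$ is continuous at $ch^*$.

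The main step is passing to the limit in the full equation. Writing
\[
\frac{\lambda}{-z+\lambda\sigma(ch_{n_k})} - \frac{\lambda}{-z+\lambda\sigma(ch^*)} \;=\; \frac{\lambda^2\,[\sigma(ch^*)-\sigma(ch_{n_k})]}{(-z+\lambda\sigma(ch_{n_k}))(-z+\lambda\sigma(ch^*))},
\]
and using the $\lambda$-uniform bound $\lambda/|{-z}+\lambda\sigma(ch)| \le 1/\Re(\sigma(ch)) = 1/(c\,\Re(h)\,\sigma_2(ch))$ (valid whenever $\Re(h) > 0$), the integrand of the difference is controlled uniformly in $\lambda$ by a constant multiple of $|\sigma(ch_{n_k})-\sigma(ch^*)|$, which tends to $0$ by continuity of $\sigma$ and $h_{n_k}\to h^*$. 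Combined with weak convergence $H_{n_k}\to H$ applied to the bounded continuous function $\lambda \mapsto \lambda/(-z+\lambda\sigma(ch^*))$, we obtain $h^* = \int \lambda\,dH(\lambda)/(-z + \lambda\sigma(ch^*))$. Theorem~\ref{Uniqueness_Special} then forces $h^* = h(z,H)$; since every convergent subsequence of the bounded sequence $\{h_n\}$ has this same limit, the full sequence converges.

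The hard part is the Fatou step establishing $\Re(h^*) > 0$: this both places $h^*$ in the domain where Theorem~\ref{Uniqueness_Special} applies and supplies the uniform-in-$\lambda$ bound $1/\Re(\sigma(ch_{n_k}))$ needed to justify the limit passage. Without such a strict lower bound on $\Re(h^*)$, the integrand $\lambda/|{-z}+\lambda\sigma(ch_{n_k})|$ is only pointwise bounded in $\lambda$ and one cannot dominate its contribution from the tails of $H_{n_k}$ despite weak convergence.
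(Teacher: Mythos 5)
Your strategy---uniform bound, extract a convergent subsequence, identify the limit via the fixed-point equation, conclude by uniqueness---is a legitimate general template and is genuinely different from what the paper does. The paper uses a direct contraction estimate: writing $h - \ubar{h} = T_1 + \gamma\,(h - \ubar{h})$ with $\gamma$ an integral, they show $|\gamma| < 1$ using the key structural inequality $c\,\sigma_2(ch)\,I_2(h,H) < 1$ (equation~(\ref{lessThanOne})), which is a consequence of the self-consistent equation for \emph{any} solution pair $(h,H)$. This route has a decisive advantage here: all integrand bounds in their argument depend only on the fixed quantity $h(z,H)$ (via $1/\Re(\sigma(ch))$), so weak convergence $\ubar H \to H$ alone suffices and no uniform moment control over the $\ubar H$'s is needed.

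That is exactly where your proof has a gap. The bound $|h_n| \le u^{-1}\int\lambda\,dH_n(\lambda)$ is correct, but to make it a \emph{uniform} bound you need $\limsup_n\int\lambda\,dH_n < \infty$. This is not given: weak convergence does not control first moments (e.g.\ $H_n = (1-1/n)\delta_1 + (1/n)\delta_{n^2} \to \delta_1$ has $\int\lambda\,dH_n \to \infty$), and $T_5'$ constrains only $\frac{1}{p}\operatorname{trace}(\Sigma_n)$ along one concrete sequence, not an arbitrary $\{H_n\}$. The remark preceding Theorem~\ref{Continuity_Special} states explicitly that in this special case continuity is claimed ``without requiring any spectral moment bounds'' and ``for any weakly converging sequence of distribution functions,'' so your proof establishes a strictly weaker statement. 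Without boundedness of $\{h_n\}$ the compactness step breaks, and hence so does the rest of the argument. A secondary, repairable issue: the handling of the boundary cases $h^* \in \{0,\pm\mathbbm{i}/c\}$ is stated but not carried out; the case $h^* = \pm\mathbbm{i}/c$ in particular requires an explicit estimate (e.g.\ splitting the integral at $\lambda = 2|z|/|\sigma(ch_{n_k})|$ to show $|\sigma(ch_{n_k})|\to\infty$ forces $h_{n_k}\to 0$), since the naive bound $\lambda/|{-z}+\lambda\sigma| \le 1/\Re(\sigma(ch_{n_k}))$ may blow up in that regime.
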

The proof is given in Section \ref{sec:ProofOfContinuitySpecial}.

\section{LSD when the common covariance is the Identity Matrix}\label{sec:identity_covariance}
When $\Sigma_n = I_p$ a.s., we have $F^{\Sigma_n} = \delta_1$ for all $ n \in \mathbb{N}$ and thus $F^{\Sigma_n} \xrightarrow{d} \delta_1$ a.s. So plugging in $H = \delta_1$ in Corollary \ref{mainCorollary}, there exists a probability distribution function $F$ on the imaginary axis such that $F^{S_n} \xrightarrow{d} F$. The LSD $F$ is characterized by $(h, s_{F})$ with $h$ satisfying (\ref{h_main_eqn1}) with $H = \delta_1$ and $(h,s_{F})$ satisfies (\ref{s_main_eqn1}). We will shortly see that $F$ in this case becomes an explicit function of $c$. Therefore, we will henceforth refer to the LSD as $F_c$. The goal of this section is to recover closed form expressions for the distribution $F_c$.

We first note that $h(z)$, the unique solution to (\ref{h_main_eqn1}) with positive real part is the same as $s_{F_c}(z)$ in this case. This is shown below. Writing $h(z)$ as $h$ for simplicity, we have from (\ref{h_main_eqn1}):
\begin{align}\label{h_equal_s}
     &\frac{1}{h} = -z + \frac{2ch}{1 +c^2h^2}\\ \notag
     \implies & c^2zh^3 + (c^2-2c)h^2 +zh + 1=0\\ \notag
     \implies & c^2zh^3 + zh = -1 - h^2(c^2-2c)\\ \notag
     \implies & c^3zh^3 +czh = -c - c^2h^2(c-2)\\ \notag
     \implies & czh(c^2h^2 + 1) = 2-c + c^2h^2(2-c) - 2 = (2-c)(c^2h^2+1) - 2\\ \notag
     \implies & czh = 2-c - \frac{2}{1 + c^2h^2} = 2-c + \frac{1}{\mathbbm{i}}\bigg(\frac{1}{\mathbbm{i} +ch} - \frac{1}{-\mathbbm{i} +ch}\bigg)\\ \notag
     \implies& h = \frac{1}{z}\bigg(\frac{2}{c}-1\bigg) - \frac{2}{cz}\bigg(\frac{1}{1 + c^2h^2}\bigg) = s_{F_c}(z) \text{, by } (\ref{s_main_eqn_1}). 
\end{align}

Therefore, the Stieltjes Transform ($s_{F_c}(z)$) of the LSD at $z \in \mathbb{C}_L$ can be recovered by finding the unique solution with positive real part (exactly one exists by Theorem \ref{mainTheorem}) to the following equation:
\begin{align}\label{5A}
     &\frac{1}{m(z)} = -z + \frac{2cm(z)}{1 +c^2m^2(z)}.
 \end{align}

We simplify (\ref{5A}) to an equivalent functional cubic equation which is more amenable for recovering the roots.
\begin{align}\label{5B}
         c^2zm^3(z) + (c^2 - 2c)m^2(z) + zm(z) + 1 = 0.
\end{align}

For $z \in \mathbb{C}_L$, we extract the functional roots $\{m_j(z)\}_{j=1}^3$ of (\ref{5B}) using \textit{Cardano's method} (subsection 3.8.2 of \cite{Abramowitz}) and select the one which has a positive real component.

\subsection{Deriving the functional roots}\label{Cardano}

We define the following quantities as functions of $c \in (0 , \infty)$.
\begin{align}\label{RQD}
    \left\{ \begin{aligned} 
    &q_0 = \frac{1}{3c^2}; \hspace{18mm} q_2 =-\frac{(c-2)^2}{9c^2}; \hspace{5mm} \Tilde{q} = (q_0, q_2),\\
    &r_1 = -\frac{c+1}{3c^3}; \hspace{12mm} r_3 = -\frac{(c-2)^3}{27c^3}; \hspace{5mm} \Tilde{r} = (r_1, r_3),\\
    &d_0 = \dfrac{1}{27c^6}; \hspace{17mm} d_2 = \dfrac{2c^2 + 10c - 1}{27c^6}; \hspace{5mm}d_4 = \dfrac{(c-2)^3}{27c^5}; \hspace{5mm} \Tilde{d} = (d_0,d_2, d_4),\\
    &Q(z) := q_0 + \frac{q_2}{z^2}; \hspace{7mm} R(z) := \frac{r_1}{z} + \frac{r_3}{z^3}; z \in \mathbb{C}\backslash\{0\}.
\end{aligned} \right.
\end{align}

By Cardano's method, the three roots of the cubic equation (\ref{5B}) are given as follows, where $1, \omega_1, \omega_2$ are the cube roots of unity.
\begin{align}\label{RootFormula}
\left\{ \begin{aligned} 
m_1(z) &= -\dfrac{1-2/c}{3z} + S_{0} + T_{0},\\
m_2(z) & = -\dfrac{1-2/c}{3z} + \omega_1S_{0} + \omega_2T_{0},\\
m_3(z) & = -\dfrac{1-2/c}{3z} + \omega_2S_{0} + \omega_1T_{0},
\end{aligned} \right.
\end{align} 
where, $S_0$ and $T_0$ satisfy
\begin{align}\label{S0T0}
S_0^3 + T_0^3 = 2R(z); \quad
S_0T_0 = -Q(z).
\end{align}

Note that if ($S_0, T_0$) satisfy (\ref{S0T0}), then so do ($\omega_1S_0, \omega_2T_0$) and ($\omega_2S_0, \omega_1T_0$). But exactly one of the functional roots of (\ref{5B}) is the Stieltjes Transform $s_{F}(\cdot)$. This ambiguity in the definition of $S_{0}$ and $T_{0}$ prevents us from pinpointing which one among $\{m_j(z)\}_{j=1}^3$ is the Stieltjes transform of $F$ at $z$ unless we explicitly solve for the roots. However, we will show in Theorem \ref{DensityDerivation} that at points arbitrarily close to the imaginary axis, it is possible to calculate the value of the Stieltjes transform thus allowing us to recover the distribution.

\subsection{Deriving the density of the LSD}

Certain properties of the LSD such as symmetry about $0$ and existence and value of point mass at $0$ have already been established in Proposition \ref{symmetry} and Theorem \ref{pointMass} respectively. Before deriving the density and support of the LSD $F_c$, we introduce a few quantities that parametrize said density.

\begin{definition}\label{supportParameters}
 For $c > 0$, let $\Tilde{d}, R(.), Q(.)$ be as in (\ref{RQD}). Then define
\begin{enumerate}
    \item $R_{\pm} := \dfrac{d_2 \pm \sqrt{d_2^2-4d_0d_4}}{2d_0}; \quad R_\pm$ are real numbers as shown in Lemma \ref{DistributionParameters}.
    \item $L_c := \sqrt{R_{-}\mathbbm{1}_{\{R_{-} > 0\}}}$; \hspace{3mm}  $U_c := \sqrt{R_{+}}$.
    \item $S_c := (-U_c, -L_c)\cup (L_c, U_c)$; It denotes the smallest open set excluding the point $0$\footnote{The point $0$ is treated separately in Theorem \ref{DensityDerivation} as the density at $0$ exists only when $0 < c < 2$.} where the density of the LSD is finite.
    \item For $x \neq 0$, let $r(x) := \underset{\epsilon \downarrow 0}{\lim}\hspace{1mm} R(-\epsilon + \mathbbm{i}x)$ and $q(x) := \underset{\epsilon \downarrow 0}{\lim}\hspace{1mm} Q(-\epsilon + \mathbbm{i}x)$.
    Results related to these limits are established in Lemma \ref{DistributionParameters}.
    \item For $x \neq 0$, $d(x) :=  d_0 - \dfrac{d_2}{x^2} + \dfrac{d_4}{x^4}$.
\end{enumerate}
\end{definition}

\begin{theorem}\label{DensityDerivation}
    $F_c$ is differentiable at $x \neq 0$ for any $c > 0$. Define $V_\pm(x) := |r(x)| \pm \sqrt{-d(x)}$. The functional form of the density is given by    
    $$f_c(x) = \dfrac{\sqrt{3}}{2\pi}\left((V_+(x))^{\frac{1}{3}} - (V_-(x))^{\frac{1}{3}}\right)\mathbbm{1}_{\{x \in S_c\}}.$$ 
    At $x=0$, the derivative exists only when $0 < c < 2$ and is given by
    $$f_c(0) = \dfrac{1}{\pi\sqrt{2c - c^2}}.$$
    The density is continuous wherever it exists.
    \end{theorem}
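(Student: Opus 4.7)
The plan is to apply the Stieltjes inversion formula of Proposition \ref{SilvChoiResult1} to write, for $x\neq 0$,
\begin{equation*}
f_c(x)=\frac{1}{\pi}\lim_{\epsilon\downarrow 0}\Re\bigl(s_{F_c}(-\epsilon+\mathbbm{i}x)\bigr),
\end{equation*}
and to identify $s_{F_c}$ as the unique Cardano root of the cubic (\ref{5B}) having positive real part. By (\ref{RootFormula}) this reduces the problem to two concrete tasks: determining which of $m_1,m_2,m_3$ tracks $s_{F_c}$ as $\epsilon\downarrow 0$, and computing its real part in closed form in terms of the coefficient data $(R,Q)$ at $z=\mathbbm{i}x$.

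The preparatory step is to locate the support via the discriminant. At $z=\mathbbm{i}x$ we have $R(\mathbbm{i}x)=\mathbbm{i}r^*(x)$ purely imaginary (writing $r^*(x)=-r_1/x+r_3/x^3$, so $r(x)=\mathbbm{i}r^*(x)$ and $|r(x)|=|r^*(x)|$) while $Q(\mathbbm{i}x)$ is real. The Cardano discriminant $R(\mathbbm{i}x)^2+Q(\mathbbm{i}x)^3$ is therefore real, and a direct expansion—relying on the algebraic identities $r_1^2+3q_0^2q_2=d_2$, $2r_1r_3+3q_0q_2^2=d_4$, and the crucial cancellation $r_3^2+q_2^3=0$—reduces it exactly to $d(x)$. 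Viewing $d(x)$ as a quadratic in $1/x^2$ then shows $d(x)<0$ iff $x\in S_c$.

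The heart of the proof is the case $x\in S_c$, where $\sqrt{d(x)}=\mathbbm{i}\sqrt{-d(x)}$ and
\begin{equation*}
S_0^3=\mathbbm{i}\bigl(r^*(x)+\sqrt{-d(x)}\bigr),\qquad T_0^3=\mathbbm{i}\bigl(r^*(x)-\sqrt{-d(x)}\bigr)
\end{equation*}
are purely imaginary, with $|S_0^3|=V_+(x)$, $|T_0^3|=|V_-(x)|$, and arguments in $\{\pi/2,-\pi/2\}$. The nine candidate cube-root pairs are cut down to exactly three by the constraint $S_0T_0=-Q(\mathbbm{i}x)\in\mathbb{R}$, combined with the identity $V_+(x)V_-(x)=Q(\mathbbm{i}x)^3$ that pins down the sign of $Q(\mathbbm{i}x)$; the three admissible pairs correspond, via (\ref{RootFormula}), to the three Cardano roots. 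Direct computation of $\Re(S_0+T_0)$, $\Re(\omega_1S_0+\omega_2T_0)$, $\Re(\omega_2S_0+\omega_1T_0)$, using the real cube-root convention $V_-^{1/3}=-|V_-|^{1/3}$ when $V_-<0$, yields the three values $\pm\tfrac{\sqrt{3}}{2}(V_+^{1/3}-V_-^{1/3})$ and $0$. Since the offset $-(1-2/c)/(3z)$ is purely imaginary at $z=\mathbbm{i}x$, continuity of simple roots in the coefficients identifies the limiting real part of $s_{F_c}$ as the strictly positive value, producing the asserted formula $f_c(x)=\tfrac{\sqrt{3}}{2\pi}(V_+^{1/3}-V_-^{1/3})$.

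Outside $S_c$ (but $x\neq 0$) one has $d(x)>0$; the parallel Cardano analysis—or equivalently, substituting $m=\mathbbm{i}v$ into (\ref{5B}) at $z=\mathbbm{i}x$ produces a \emph{real} cubic in $v$ whose discriminant has the same sign as $d(x)$—shows that all three roots of (\ref{5B}) are purely imaginary, so $\Re(s_{F_c}(\mathbbm{i}x))=0$ and $f_c(x)=0$. At $x=0$, substituting $z=-\epsilon$ directly into (\ref{5B}) gives the limiting quadratic $(c^2-2c)m^2+1=0$, whose positive real solution $m=1/\sqrt{2c-c^2}$ exists precisely for $0<c<2$, yielding $f_c(0)=1/(\pi\sqrt{2c-c^2})$; for $c\geq 2$ no positive real leading-order solution exists and $F_c$ is not differentiable at $0$. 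Continuity of $f_c$ wherever defined is then immediate from continuity of the cube root and of $V_\pm$. I expect the main obstacle to be the bookkeeping in the third paragraph: pairing the cube roots correctly across the three Cardano branches, handling the case $r^*(x)<0$ consistently with the $|r(x)|$ in the definition of $V_\pm$, and verifying that the real-cube-root convention makes $\tfrac{\sqrt{3}}{2\pi}(V_+^{1/3}-V_-^{1/3})$ uniformly correct across the different sign patterns of $r^*(x)$ and $V_-(x)$.
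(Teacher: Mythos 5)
Your proposal is correct and follows the paper's own route: Stieltjes inversion via Proposition \ref{SilvChoiResult1} (together with the boundary lemma \ref{SilvChoiResult2}), the discriminant identity $d(x)=r^2(x)+q^3(x)$ from Lemma \ref{DistributionParameters}, explicit Cardano branch computation, and selection of the unique branch with positive real part using Theorem \ref{Uniqueness_Special}. Your real cubic in $v=-\mathbbm{i}m$ for $x\in S_c^{c}$ is a tidy equivalent of the paper's direct Cardano check, and the degenerate limiting equation at $x=0$ matches the paper's manipulation of (\ref{5A}); the one point to flesh out is the final continuity claim, which requires matching the density formula across $\partial S_c$ and verifying that $\tfrac{\sqrt{3}}{2\pi}(V_+^{1/3}-V_-^{1/3})\to\tfrac{1}{\pi\sqrt{2c-c^2}}$ as $x\to 0$ when $0<c<2$ — the paper instead routes this through Lemma \ref{continuousOnClosure}.
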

The proof can be found in Section \ref{ProofDensityDerivation}.

\subsection{Simulation study}\label{sec:SimulationWithSigmaIdentity}
We ran simulations for different values of $c$ while keeping $p = 2000$. A random half of the innovation entries (i.e. $Z_1, Z_2$) were simulated from $\mathcal{N}(0, 1)$\footnote{$\mathcal{N}(\mu,\sigma^2)$ represents the Gaussian distribution with mean $\mu$ and variance $\sigma^2$.} and the other half from $\mathcal{U}(-\sqrt{3}, \sqrt{3}$)\footnote{$\mathcal{U}(a,b)$ represents the uniform distribution over the interval $(a,b)$.}. We proceed to construct the respective commutator matrices (i.e. $S_n = n^{-1}[Z_1, Z_2]$), chart a histogram of their eigen values and overlay the density curve as per Theorem \ref{DensityDerivation}. Figures \ref{fig:c=0.5}, \ref{fig:c=1}, \ref{fig:c=2}, \ref{fig:c=3} \ref{fig:c=4} and \ref{fig:c=5} below show the comparison of the ESDs of these matrices for different against the theoretical distribution (in red). We have also run these simulations for smaller values of $p$ such as $p = 750$. The results are visually similar to the ones provided below.
\begin{figure}[ht!]
 \begin{subfigure}{0.38\textwidth}
     \includegraphics[width=\textwidth]{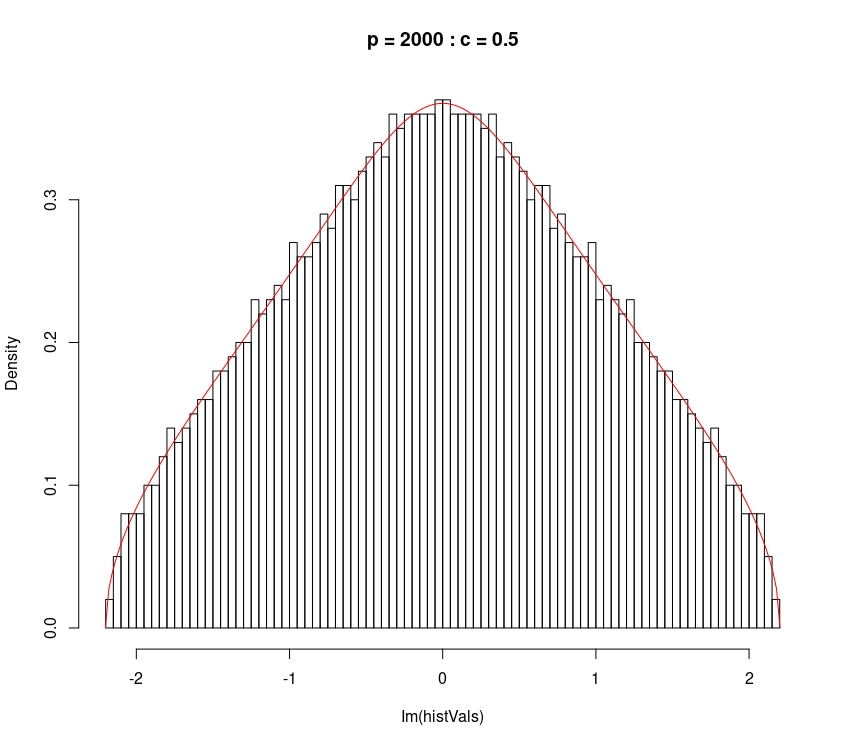}
     \caption{c=0.5}
     \label{fig:c=0.5}
 \end{subfigure}
 \hfill
 \begin{subfigure}{0.38\textwidth}
     \includegraphics[width=\textwidth]{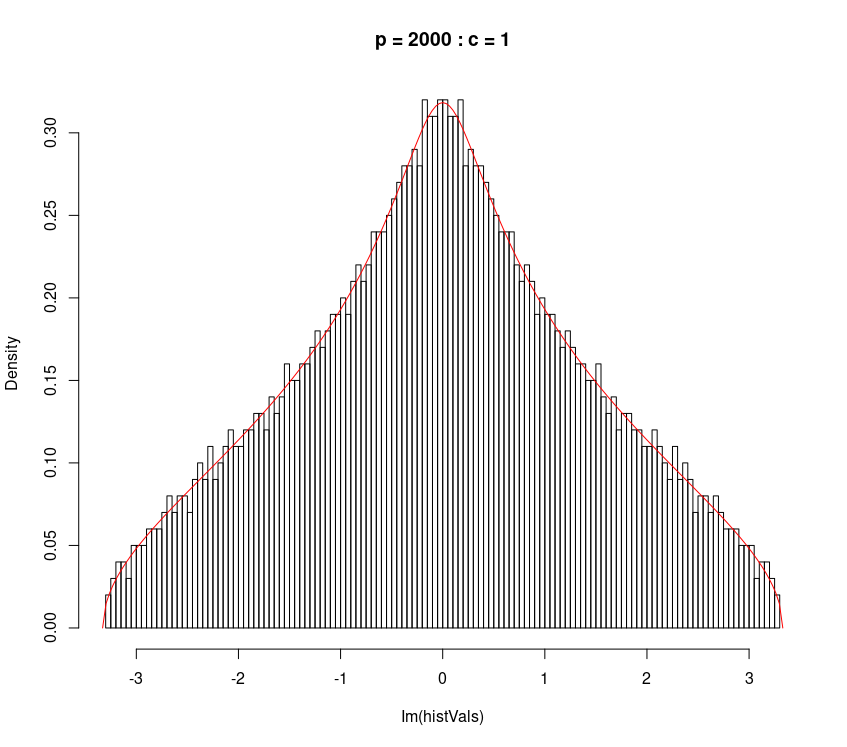}
     \caption{c=1}
     \label{fig:c=1}
 \end{subfigure}
  \medskip

 \begin{subfigure}{0.38\textwidth}
     \includegraphics[width=\textwidth]{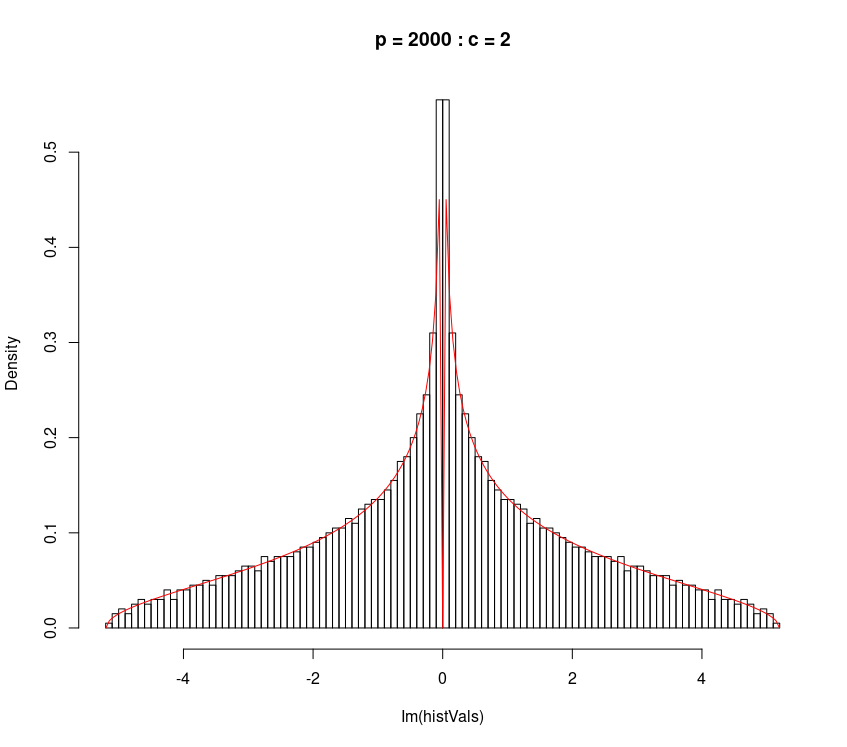}
     \caption{c=2}
     \label{fig:c=2}
 \end{subfigure}
 \hfill
 \begin{subfigure}{0.38\textwidth}
     \includegraphics[width=\textwidth]{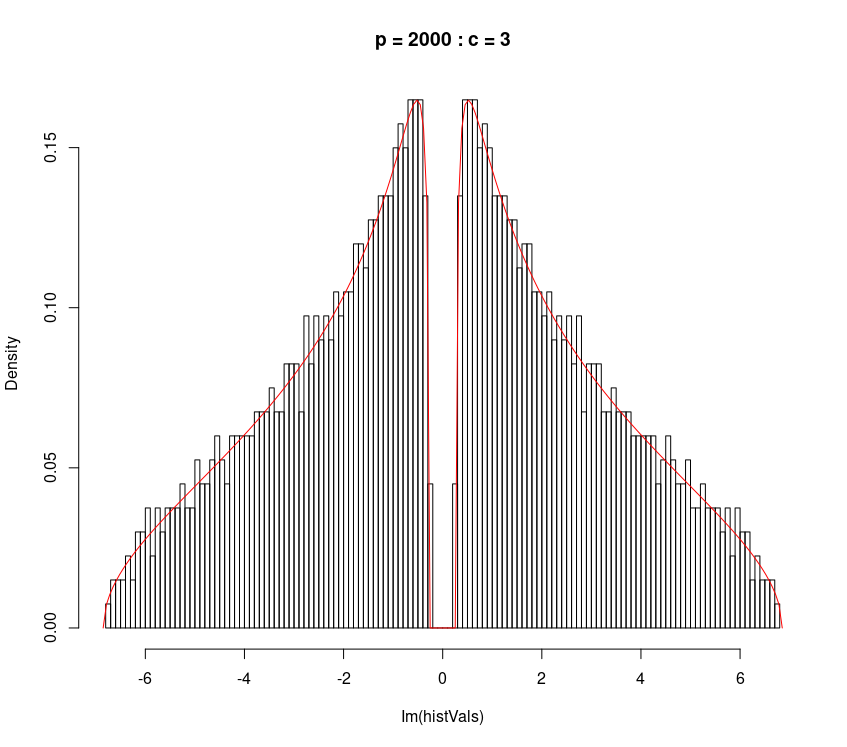}
     \caption{c=3}
     \label{fig:c=3}
 \end{subfigure}

 \medskip
 \begin{subfigure}{0.38\textwidth}
     \includegraphics[width=\textwidth]{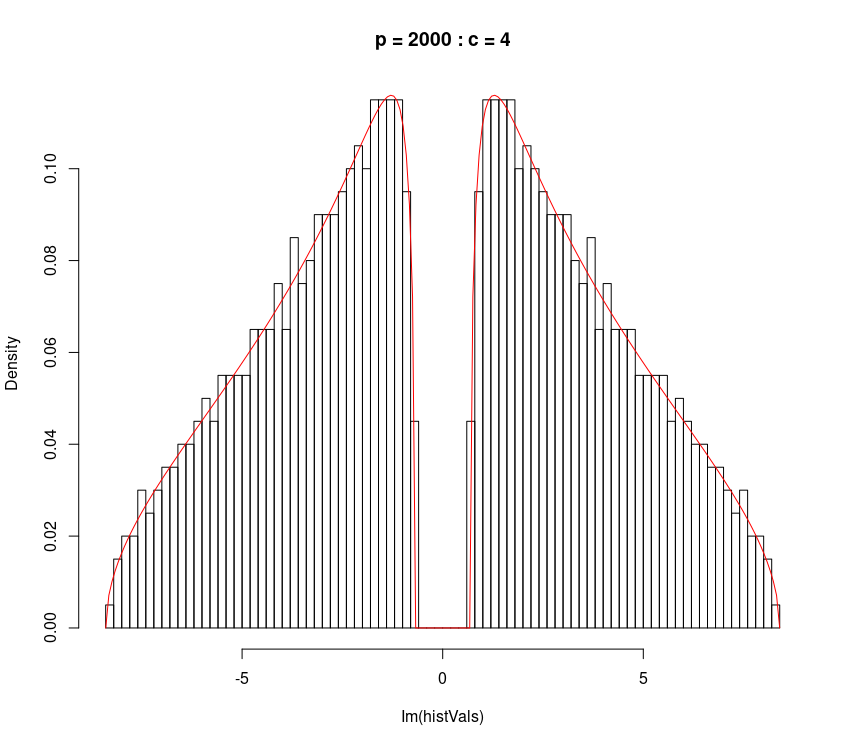}
     \caption{c=4}
     \label{fig:c=4}
 \end{subfigure}
 \hfill
 \begin{subfigure}{0.38\textwidth}
     \includegraphics[width=\textwidth]{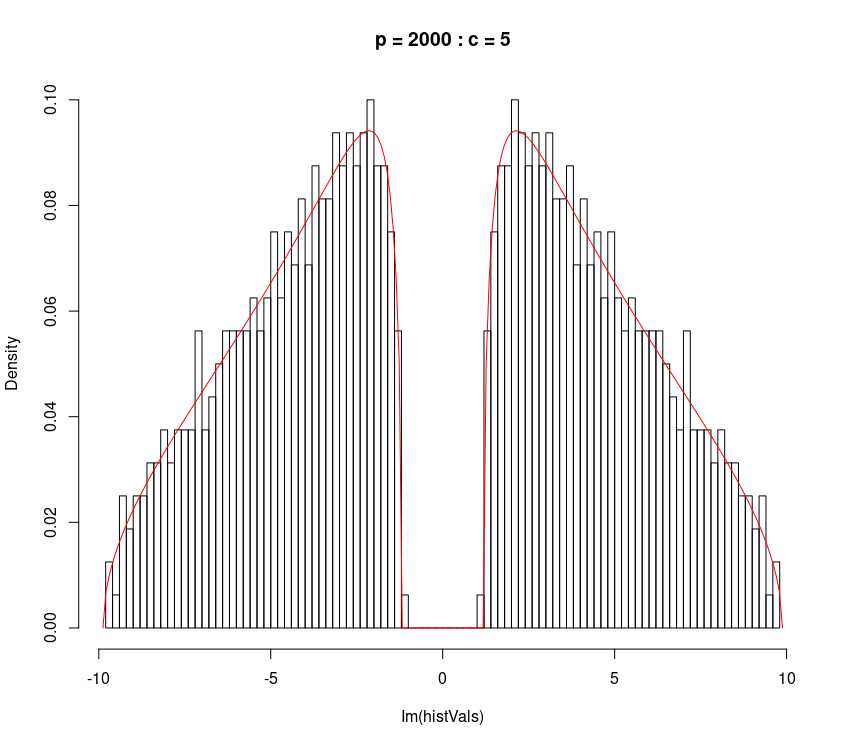}
     \caption{c=5}
     \label{fig:c=5}
 \end{subfigure}
 \caption{Simulated vs. \textcolor{red}{Theoretical} limit distributions at various levels of $c$ for $\Sigma_n = I_p$}
 \label{Label}
\end{figure}

\newpage
\section{The case of the Anti-Commutator Matrix}\label{sec:AntiCommutator}
We define the anti-commutator matrix of $X_1, X_2$ as 
\begin{align}\label{defining_SnPlus}
  S_n^+ = \frac{1}{n}(X_1X_2^* + X_2X_1^*) = \frac{1}{n}\{X_1,X_2\}.
\end{align}

Note that 
\begin{align}\label{anticomm_equivalence}
    \frac{1}{n}[X_1,\mathbbm{i}X_2] = \frac{1}{n}(X_1(\mathbbm{i}X_2)^* - \mathbbm{i}X_2X_1^*) = -\frac{1}{n}\mathbbm{i}(X_1X_2^*+X_2X_1^*) = -\frac{\mathbbm{i}}{n}\{X_1,X_2\}.
\end{align}
This in particular implies that the LSD of the anti-commutator of $X_1$ and $\mathbbm{i}X_2$ is the same as that of the commutator of $X_1$ and $X_2$ upon counter-clockwise rotation by $\pi/2$. Noting that $X_2$ and $\mathbbm{i}X_2$ both satisfy $\boldsymbol{T}_1$ of Theorem \ref{mainTheorem}, we have the following result.

\begin{corollary}\label{mainTheorem_plus}
    Under ($\boldsymbol{T}_1-\boldsymbol{T}_5$) of Theorem \ref{mainTheorem}, $F^{S_n^+} \xrightarrow{d} G$ a.s. where, the Stieltjes Transform of G at $z \in \mathbb{C}^+$ is characterized by the set of equations:
    \begin{align}\label{s_main_eqn_plus}
        s_G(z) = \displaystyle \int_{\mathbb{R}_+^2} \frac{dH(\boldsymbol{\lambda})}{-z -\mathbbm{i}\boldsymbol{\lambda}^T\boldsymbol{\rho}(c\textbf{h}(\mathbbm{i}z))} = \frac{1}{z}\bigg(\frac{2}{c}-1\bigg) - \frac{2}{cz}\bigg(\frac{1}{1+c^2h_1(\mathbbm{i}z)h_2(\mathbbm{i}z)}\bigg),
    \end{align}
    where, $\textbf{h}(\mathbbm{i}z) = (h_1(\mathbbm{i}z), h_2(\mathbbm{i}z))^T \in \mathbb{C}_R^2$ are unique numbers, such that
    \begin{align}\label{h_main_eqn_plus}
        \mathbbm{i} \textbf{h}(\mathbbm{i}z) &= \displaystyle \int_{\mathbb{R}_+^2} \frac{\boldsymbol{\lambda} dH(\boldsymbol{\lambda})}{-z -\mathbbm{i}\boldsymbol{\lambda}^T\boldsymbol{\rho} (c\textbf{h}(\mathbbm{i}z))}.
    \end{align}
    Moreover, $h_1, h_2$ themselves are Stieltjes Transforms of measures (not necessarily probability measures) over the imaginary axis and continuous as functions of $H$.
\end{corollary}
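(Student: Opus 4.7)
The plan is to reduce Corollary \ref{mainTheorem_plus} to Theorem \ref{mainTheorem} via the algebraic identity (\ref{anticomm_equivalence}). Define $T_n := \tfrac{1}{n}[X_1, \mathbbm{i}X_2]$; then (\ref{anticomm_equivalence}) gives $T_n = -\mathbbm{i}\, S_n^+$, equivalently $S_n^+ = \mathbbm{i}\, T_n$. First I would verify that $T_n$ is itself a sample commutator matrix satisfying the hypotheses of Theorem \ref{mainTheorem}: writing $\mathbbm{i}X_2 = \Sigma_{2n}^{1/2}(\mathbbm{i}Z_2)$, the matrix $\mathbbm{i}Z_2$ has independent entries of zero mean with $\mathbb{E}|\mathbbm{i}Z_{ij}^{(2)}|^2 = 1$ and the same uniform $(4+\eta_0)$-moment bound as $Z_2$, and remains independent of $Z_1$. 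The pair $(\Sigma_{1n},\Sigma_{2n})$ is unchanged, so assumptions $T_1, T_3, T_4, T_5$ transfer verbatim. Hence Theorem \ref{mainTheorem} yields $F^{T_n} \xrightarrow{d} F$ almost surely, with $s_F(\cdot)$ and $\mathbf{h}(\cdot)$ on $\mathbb{C}_L$ characterized by (\ref{s_main_eqn}) and (\ref{h_main_eqn}).

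Next I would transfer the limit to $S_n^+$. The eigenvalues of $S_n^+ = \mathbbm{i}T_n$ are real (if $\mathbbm{i}\mu_j$ are the eigenvalues of $T_n$, those of $S_n^+$ are $-\mu_j$), so $F^{S_n^+}$ is the pushforward of $F^{T_n}$ under the continuous bijection $\mathbbm{i}\mathbb{R} \to \mathbb{R}$, $w \mapsto \mathbbm{i}w$. By the continuous mapping theorem, $F^{S_n^+} \xrightarrow{d} G$ almost surely, where $G$ is the corresponding pushforward of $F$. For the functional equation for $s_G$, I would exploit the resolvent identity
\begin{align*}
(S_n^+ - zI_p)^{-1} \;=\; (\mathbbm{i}T_n - zI_p)^{-1} \;=\; -\mathbbm{i}\bigl(T_n - (-\mathbbm{i}z)I_p\bigr)^{-1},
\end{align*}
taking normalized trace to obtain $s_G(z) = -\mathbbm{i}\, s_F(-\mathbbm{i}z)$ for $z \in \mathbb{C}^+$. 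Since $-\mathbbm{i}z \in \mathbb{C}_R$, not $\mathbb{C}_L$, I would pass through Proposition \ref{symmetry}: the symmetry of $F$ about $0$ on the imaginary axis, combined with a change of variable $w \to -w$ in the Stieltjes integral, yields $s_F(-z) = -s_F(z)$, and hence $s_G(z) = \mathbbm{i}\, s_F(\mathbbm{i}z)$ with $\mathbbm{i}z \in \mathbb{C}_L$.

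To close, I would substitute Theorem \ref{mainTheorem}'s equations (\ref{s_main_eqn}) and (\ref{h_main_eqn}) at the point $\mathbbm{i}z \in \mathbb{C}_L$ and multiply through by $\mathbbm{i}$, using $1/\mathbbm{i} = -\mathbbm{i}$ to rearrange numerator and denominator; this immediately produces the integral expressions (\ref{s_main_eqn_plus}) and (\ref{h_main_eqn_plus}). The alternative characterization in (\ref{s_main_eqn_plus}) follows from the same algebraic manipulation already carried out for (\ref{s_main_eqn_1}), now evaluated at $\mathbbm{i}z$. Uniqueness of $\mathbf{h}(\mathbbm{i}z)$, its continuous dependence on $H$, and the fact that $h_1, h_2$ are Stieltjes transforms of (in general non-probability) measures on the imaginary axis are all inherited directly from the corresponding statements in Theorem \ref{mainTheorem}. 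No substantive new obstacle arises; the main care required is the bookkeeping of the $\mathbbm{i}$ factors when converting between $\mathbb{C}^+$, $\mathbb{C}_L$, and $\mathbb{C}_R$, and the one genuinely nontrivial input beyond Theorem \ref{mainTheorem} is Proposition \ref{symmetry}, which legitimizes the passage from $s_F(-\mathbbm{i}z)$ to $s_F(\mathbbm{i}z)$.
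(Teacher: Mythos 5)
Your proof is correct and follows essentially the same route as the paper: reduce $S_n^+$ to the commutator $\frac{1}{n}[X_1,\mathbbm{i}X_2]$ via (\ref{anticomm_equivalence}), verify that $(X_1,\mathbbm{i}X_2)$ satisfies $T_1$--$T_5$, invoke Theorem \ref{mainTheorem}, and then convert the Stieltjes transform from the imaginary to the real axis. The paper cites only Lemma \ref{StieltjesTransformRotated} and calls the rest ``immediate,'' but as you correctly note, the raw resolvent identity produces $s_G(z)=-\mathbbm{i}\,s_F(-\mathbbm{i}z)$ with $-\mathbbm{i}z \in \mathbb{C}_R$ rather than $\mathbb{C}_L$ (equivalently, $S_n^+=\mathbbm{i}T_n$ corresponds to $-\,(-\mathbbm{i}T_n)$, not $-\mathbbm{i}T_n$), and this sign must be absorbed using the antisymmetry $s_F(-z)=-s_F(z)$ coming from Proposition \ref{symmetry}. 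You make that step explicit, and also explicitly check that multiplying $Z_2$ by $\mathbbm{i}$ preserves the moment hypotheses of $T_2$; these are the two small gaps the paper's one-line proof leaves to the reader, and you fill both.
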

\begin{proof}
    The proof is immediate from (\ref{anticomm_equivalence}) and Lemma \ref{StieltjesTransformRotated}.
\end{proof}

\section{Relaxation of commutativity requirement}\label{sec:RelaxingCommutativity}

We present a set of conditions which are strictly weaker than $\boldsymbol{T}_3$ but which are sufficient for Theorem \ref{mainTheorem} to hold. For the result below, we denote the nuclear norm of a matrix $A$ by $||A||_*$.

\begin{theorem}\label{thm:RelaxingCommutativty}
Let $\Sigma_{jn} = P_{jn}D_{jn}P_{jn}^*$ denote a spectral decomposition of $\Sigma_{jn};j=1,2$. We construct $\Phi_{1n} := P_{2n}D_{1n}P_{2n}^*$ and $\Phi_{2n} := P_{1n}D_{2n}P_{1n}^*$. Instead of $\boldsymbol{T}_3$ of Theorem \ref{mainTheorem}, suppose any of the following conditions hold:
\begin{enumerate}
    \item[\textbf{C1}:] $\dfrac{1}{p}\operatorname{rank}(\Sigma_{1n} - \Phi_{1n})= o(1) \text{ or } \frac{1}{p}\operatorname{rank}(\Sigma_{2n} - \Phi_{2n})= o(1)$,
    \vspace{3mm}
    \item[\textbf{C2}:]
$\frac{1}{p}\operatorname{rank}(P_{1n} - P_{2n}) = o(1)$, 
\vspace{3mm}
    \item[\textbf{C3}:] 
$\frac{1}{p}||\Sigma_{1n} - \Phi_{1n}||_*= o(1) \text{ or } \frac{1}{p}||\Sigma_{2n} - \Phi_{2n}||_*= o(1).$
\end{enumerate}
Note that $\Sigma_{1n}$ and $\Phi_{2n}$ share the same eigen basis and so do $\Sigma_{2n}$ and $\Phi_{1n}$. Therefore, we can define $H_{1n} := \operatorname{JESD}(\Sigma_{1n}, \Phi_{2n})$ and $H_{2n}:= \operatorname{JESD}(\Phi_{1n}, \Sigma_{2n})$. Suppose either $\{H_{1n}\}$ or $\{H_{2n}\}$ converges weakly to some $H$ that satisfies \textbf{T4} of Theorem \ref{mainTheorem}. Then, the conclusion of Theorem \ref{mainTheorem} holds replacing $H_n$ appropriately with $H_{1n}$ or $H_{2n}$.
\end{theorem}

\begin{proof}
Analogous to $$S_n = \frac{1}{n}(\Sigma_{1n}^\frac{1}{2}Z_1Z_2^*\Sigma_{2n}^\frac{1}{2} - \Sigma_{2n}^\frac{1}{2}Z_2Z_1^*\Sigma_{1n}^\frac{1}{2}),$$
we consider the matrices,
$$M_{1n} = \frac{1}{n}(\Sigma_{1n}^\frac{1}{2}Z_1Z_2^*\Phi_{2n}^\frac{1}{2} - \Phi_{2n}^\frac{1}{2}Z_2Z_1^*\Sigma_{1n}^\frac{1}{2}),$$
and
$$M_{2n} = \frac{1}{n}(\Phi_{1n}^\frac{1}{2}Z_1Z_2^*\Sigma_{2n}^\frac{1}{2} - \Sigma_{2n}^\frac{1}{2}Z_2Z_1^*\Phi_{1n}^\frac{1}{2}).$$

Note that $M_{1n}, M_{2n}$ are random commutators whose components $(Z, \Sigma, \Phi)$ satisfy the conditions of the Theorem \ref{mainTheorem}. Observe that,
\begin{align*}
    ||F^{S_n} - F^{M_{1n}}||  \leq \frac{1}{p}\operatorname{rank}(S_n - M_{1n}) \leq \frac{2}{p}\operatorname{rank}(\Sigma_{2n}^\frac{1}{2} - \Phi_{2n}^\frac{1}{2}) \leq \frac{4}{p}\operatorname{rank}(P_{2n}-P_{1n}) \longrightarrow 0.
\end{align*}
Therefore, under conditions \textbf{C1} or \textbf{C2} of the theorem, $F^{S_n}$ almost surely shares the same weak limit as that of $F^{M_{1n}}$ and similarly also with $F^{M_{2n}}$.

To show sufficiency of \textbf{C3}, note that the deterministic equivalent for the resolvent $Q(z) = (S_n - zI_p)^{-1};z \in \mathbb{C}_L$ from Theorem \ref{DeterministicEquivalent} was as follows:
\begin{align}
\Bar{Q}(z) = \bigg(-zI_p + \rho_1(c_n\mathbb{E}{\textbf{h}}_{n}(z))\Sigma_{1n} + \rho_2(c_n\mathbb{E}{\textbf{h}}_{n}(z))\Sigma_{2n}\bigg)^{-1}.
\end{align}
Define
\begin{align*}
    R_1(z) &:= \bigg(-zI_p + \rho_1(c_n\mathbb{E}{\textbf{h}}_{n}(z))\Sigma_{1n} + \rho_2(c_n\mathbb{E}{\textbf{h}}_{n}(z))\Phi_{2n}\bigg)^{-1}, \text{ and}\\
    R_2(z) &:= \bigg(-zI_p + \rho_1(c_n\mathbb{E}{\textbf{h}}_{n}(z))\Phi_{1n} + \rho_2(c_n\mathbb{E}{\textbf{h}}_{n}(z))\Sigma_{2n}\bigg)^{-1}.
\end{align*}

Note that $R_1$ and $R_2$ are ideal candidates for a deterministic  equivalent of $Q(z)$ since, $(\Sigma_{1n},\Phi_{2n})$ and $(\Phi_{1n},\Sigma_{2n})$ commute. In particular, using $R_1$ or $R_2$ in place of $\bar{Q}$ in our work will lead exactly to the results of Theorem \ref{mainTheorem}. Hence, it suffices to show that 
$$\absmod{\frac{1}{p}\operatorname{trace}(\bar{Q}(z) - R_j(z))} \rightarrow 0.$$

By (\ref{R0}), we have
\begin{align*}
    &\absmod{\frac{1}{p}\operatorname{trace}(\bar{Q}(z)-R_1(z))} =\frac{1}{p}\absmod{\rho_2(c_n\mathbb{E}{\textbf{h}}_{n}(z))}\absmod{\operatorname{trace}(R_1\bar{Q}( \Sigma_{2n} - \Phi_{2n} ))}.
\end{align*}
We have the following observations.
\begin{enumerate}
    \item For fixed $z \in \mathbb{C}_L$, $\rho_2(c_n\mathbb{E}{\textbf{h}}_{n}(z))$ is bounded since $\mathbb{E}(\textbf{h}_n(z)) \rightarrow \textbf{h}(z) \in \mathbb{C}_R^2$.
    \item $ ||R_1(z)||, ||R_2(z)|| \leq 1/|\Re(z)|$ follow from standard results.
    \item $||\bar{Q}(z)|| \leq 1/|\Re(z)|$. This follows from the fact that $\textbf{h}_n(\mathbb{C}_L) \subset \mathbb{C}_R^2$ and $\rho_j(\mathbb{C}_R^2) \subset \mathbb{C}_R;j=1,2$ which for $z = -u+\mathbbm{i}v;u>0$ implies that 
    $$(\bar{Q}(z))^{-1} = (uI_p + A) + \mathbbm{i}(-v + B),$$
    where, A is a real p.s.d. matrix and B is some $p\times p$ matrix.
\end{enumerate}
For any $p \times p$ matrix $A$, by Cauchy Schwarz we have   
$$|\operatorname{trace}(A)| \leq \sqrt{p\operatorname{trace}(A^*A)},$$
and when B is p.s.d, we have 
$$|\operatorname{trace}(AB)| \leq ||A||_{op}\operatorname{trace}(B).$$
Using these, we observe that
\begin{align*}
    \frac{1}{p^2}\absmod{\operatorname{trace}\bigg(R_1\bar{Q}(\Sigma_{2n} - \Phi_{2n})\bigg)}^2
    &\leq \frac{1}{p^2} \times p\absmod{\operatorname{trace}\bigg(\bar{Q}^*R_1^*R_1\bar{Q}(\Sigma_{2n} - \Phi_{2n})(\Sigma_{2n} - \Phi_{2n})^*\bigg)}\\
    &\leq \frac{1}{p}||R_1\bar{Q}||_{op}^2 \times ||\Sigma_{2n} - \Phi_{2n}||_F^2\\
    &\leq \frac{1}{|\Re(z)|^4} \times  \frac{1}{p} ||\Sigma_{2n} - \Phi_{2n}||_F^2\\
    & \leq \frac{1}{|\Re(z)|^4} \times  \frac{1}{p} ||\Sigma_{2n} - \Phi_{2n}||_{op}||\Sigma_{2n} - \Phi_{2n}||_*\\
    &\leq \frac{2\tau}{|\Re(z)|^4} \times  \frac{1}{p} ||\Sigma_{2n} - \Phi_{2n}||_*\longrightarrow 0.
\end{align*}
Here, we used the fact that  $||A||_F^2 \leq ||A||_{op}||A||_*$. Therefore,
$$\frac{1}{p}\operatorname{trace}(\bar{Q}(z)- R_1(z)) \longrightarrow 0,$$
and similarly,
$$\frac{1}{p}\operatorname{trace}(\bar{Q}(z)- R_2(z)) \longrightarrow 0.$$
\end{proof}
\begin{remark}\label{SpecialCasesOfNonCommutativity}
As a generalization of the Householder construction for unitary matrices, let $P_{jn} = I_p - 2U_{jn}U_{jn}^*$, where $U_{jn}$ is a $p \times k$ matrix with orthonormal vectors. If $k = o(p)$, then $P_{jn}$ satisfy \textbf{C2} of Theorem \ref{thm:RelaxingCommutativty}. Therefore, if the eigen bases of $\Sigma_{1n},\Sigma_{2n}$ are constructed as above, the result of Theorem \ref{mainTheorem} holds even without commutativity of $\Sigma_{2n}$ and $\Sigma_{2n}$. 
\end{remark}

We now state a conjecture regarding a sufficient condition for our main result to hold.
\begin{conjecture}
    Suppose for any $n_1,n_2 \in \mathbb{N}$ and every selection of non-negative integers, $(k_1,\ldots,k_{n_1})$ and $(l_1,\ldots,l_{n_2})$, the following holds:

$$\frac{1}{p}\operatorname{trace}(\Sigma_1^{k_1}\Sigma_2^{l_1}\Sigma_1^{k_2}\Sigma_2^{l_2}\ldots) \longrightarrow \int \lambda_1^{\sum_i k_i}\lambda_2^{\sum_j l_j}dH(\lambda_1,\lambda_2).$$

\textit{In other words, the joint tracial moment converge to appropriate quantities which are functions of the joint limiting spectral distribution of ($\Sigma_1,\Sigma_2$).} Then, the conclusion of Theorem \ref{mainTheorem} holds. 
\end{conjecture}

This can be seen by carrying out a formal power series expansion of the trace of the deterministic equivalent
and matching the coefficients of powers of $z$ with those for a similar expansion of the Stieltjes transform of the LSD. However, a complete analysis of this would require adopting advanced combinatorial techniques. Such analysis is outside the scope of the current paper since we are utilizing the method of Stieltjes Transform.

\textbf{Numerical simulations: }
We now show the impact of non-commuting scaling matrices on the main result. Remark \ref{SpecialCasesOfNonCommutativity} already states classes of non-commuting scaling matrices under which Theorem \ref{mainTheorem} holds. So for the purpose of simulations, we will use non-commuting matrices generated by Haar-distributed orthogonal matrices. This is a class of matrices that go beyond Theorem \ref{thm:RelaxingCommutativty}.

Unlike in Section \ref{sec:SimulationWithSigmaIdentity}, here we do not know the exact functional form of the density. However, we can estimate it by numerically inverting the Stieltjes transform as given in Theorem \ref{mainTheorem} and compare against the observed eigen values. For various values of $c$, figures \ref{fig:c=0.5_nc}, \ref{fig:c=1_nc}, \ref{fig:c=2_nc}, \ref{fig:c=3_nc}, \ref{fig:c=4_nc} and \ref{fig:c=5_nc} below show the ESDs of the commutators against the numerically estimated theoretical density values. The exact steps that we have followed are as follows.
\begin{enumerate}
    \item Take $H=0.25\delta_{(1,1)} + 0.25\delta_{(1,2)} + 0.25\delta_{(2,1)} + 0.25\delta_{(2,2)}$ and $p=2000$.
    \item For $j=1,2$, $P_{jn} = (V_{jn}^*V_{jn})^{-\frac{1}{2}}V_{jn}^T$ where $V_{jn} \in \mathbb{R}^{p\times p}$ are independent matrices with i.i.d. standard Gaussian entries.
    \item Simulate $p$ pairs of eigen values from $H$ and denote it by $E = [E_{\cdot 1},E_{\cdot,2}]$ which is a $p \times 2$ matrix.
    \item Set $\Sigma_{jn} = P_{jn} \operatorname{Diag}(E_{\cdot j}) P_{jn}^T$. Then, $\Sigma_{1n}$ and $\Sigma_{2n}$ do not commute almost surely.
    \item $Z_1,Z_2$ were constructed exactly as in Section \ref{sec:SimulationWithSigmaIdentity}. 
    \item Let $X_j = \Sigma_{jn}^\frac{1}{2}Z_j$ and $S_n = n^{-1}[X_1,X_2]$ as earlier and plot the ESD. 
    \item Solving for equations \ref{h_main_eqn} and \ref{s_main_eqn} at $z \in \mathbb{C}_L$ close to the imaginary axis, we use (\ref{inversion_density}) to numerically estimate the density across the support and superimpose on top of the ESD. 
\end{enumerate}
This empirical evidence suggests that Theorem \ref{mainTheorem} may continue to hold even in a setting where $\Sigma_1$ and $\Sigma_2$ do not commute, even approximately (in the sense described in conditions \textbf{C1} -- \textbf{C3} above).

\begin{figure}[ht!]
 \begin{subfigure}{0.47\textwidth}
     \includegraphics[width=\textwidth]{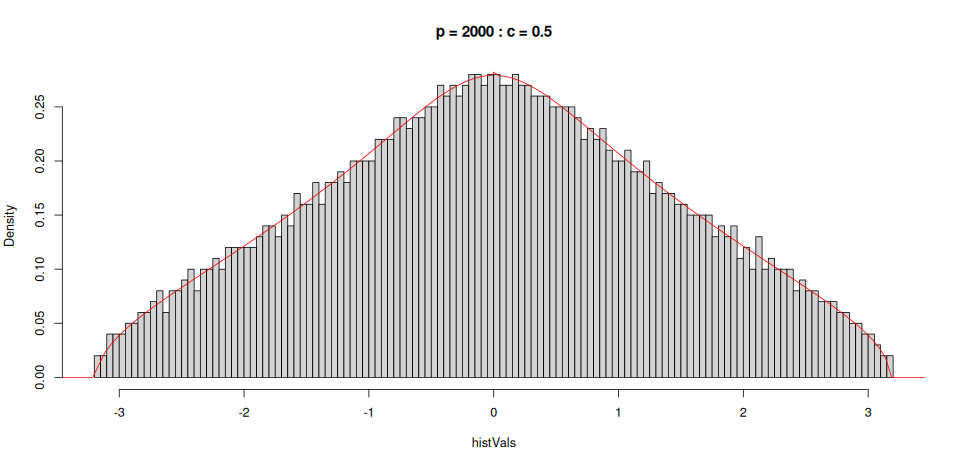}
     \caption{c=0.5}
     \label{fig:c=0.5_nc}
 \end{subfigure}
 \hfill
 \begin{subfigure}{0.47\textwidth}
     \includegraphics[width=\textwidth]{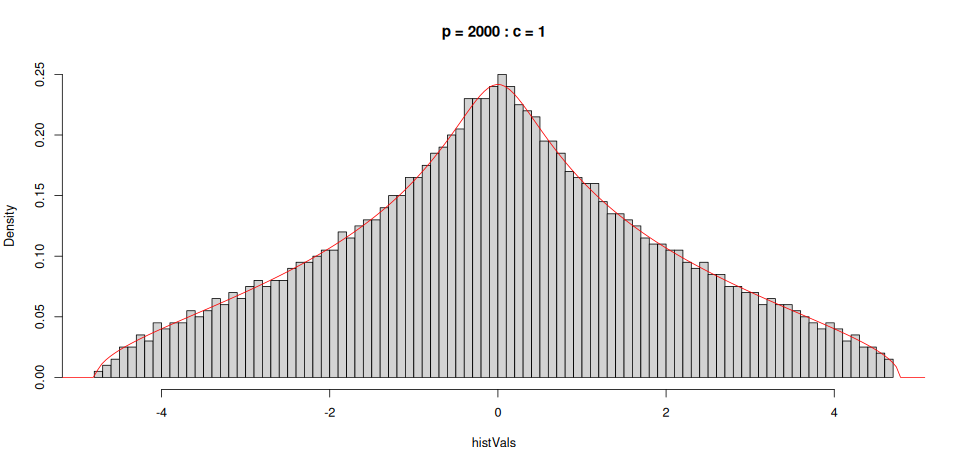}
     \caption{c=1}
     \label{fig:c=1_nc}
 \end{subfigure}
  \medskip

 \begin{subfigure}{0.47\textwidth}
     \includegraphics[width=\textwidth]{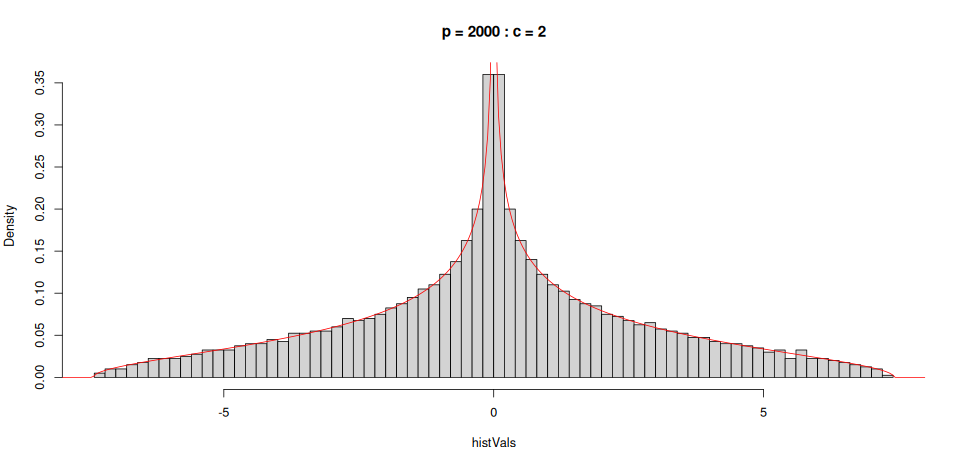}
     \caption{c=2}
     \label{fig:c=2_nc}
 \end{subfigure}
 \hfill
 \begin{subfigure}{0.47\textwidth}
     \includegraphics[width=\textwidth]{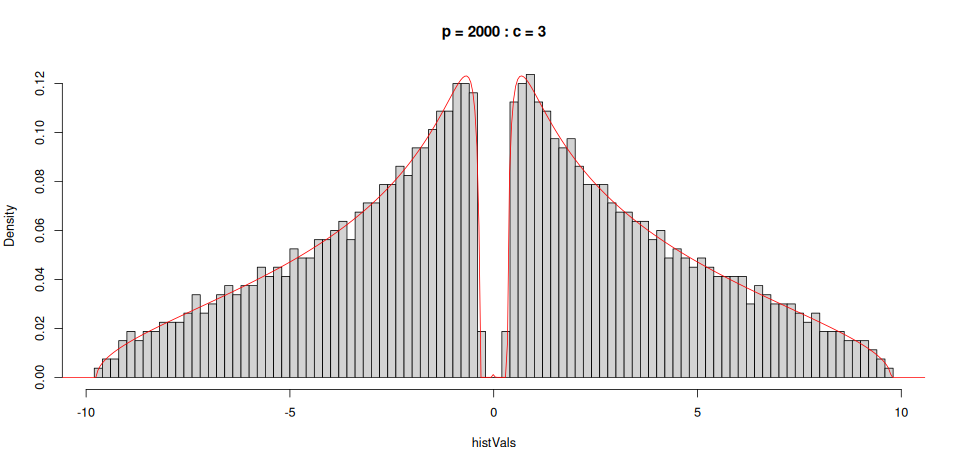}
     \caption{c=3}
     \label{fig:c=3_nc}
 \end{subfigure}

 \medskip
 \begin{subfigure}{0.47\textwidth}
     \includegraphics[width=\textwidth]{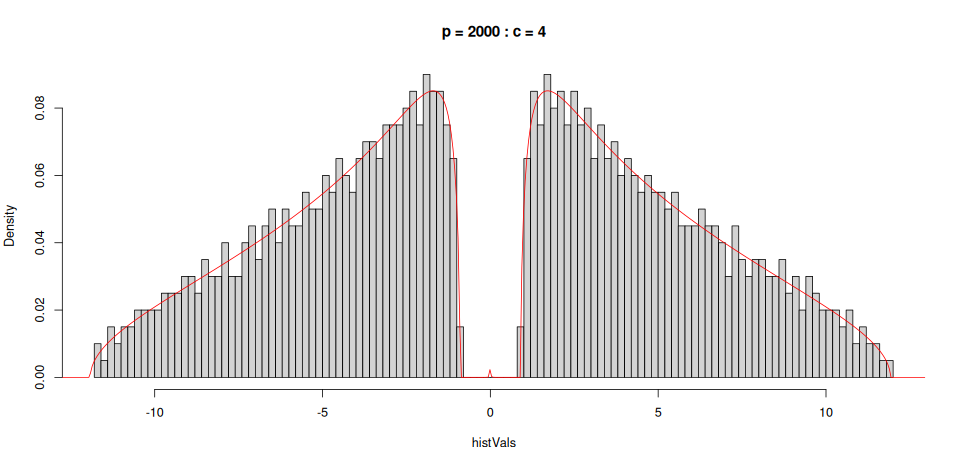}
     \caption{c=4}
     \label{fig:c=4_nc}
 \end{subfigure}
 \hfill
 \begin{subfigure}{0.47\textwidth}
     \includegraphics[width=\textwidth]{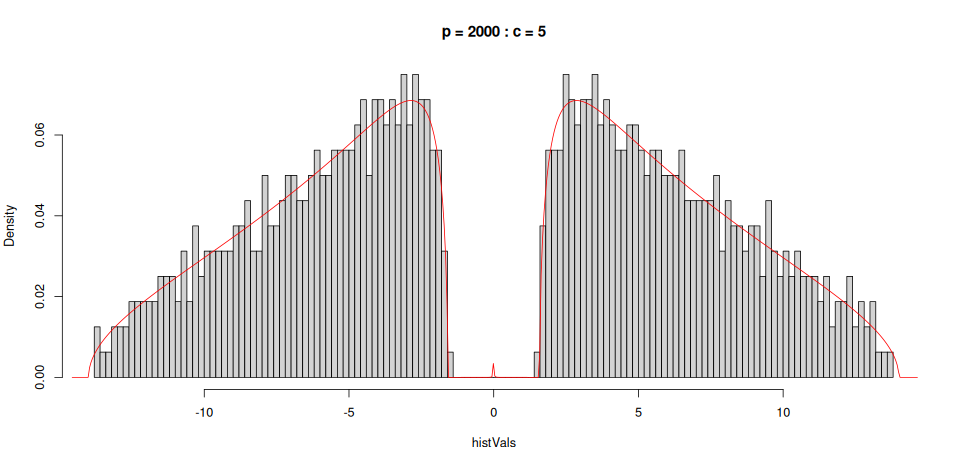}
     \caption{c=5}
     \label{fig:c=5_nc}
 \end{subfigure}
 \caption{Simulated vs. \textcolor{red}{Theoretical} limit distributions at various levels of $c$ for $H = 0.25\delta_{(1,1)} + 0.25\delta_{(1,2)} + 0.25\delta_{(2,1)} + 0.25\delta_{(2,2)}$ under non-commuting scaling matrices}
 \label{Label1}
\end{figure}

\section{An Inference Problem for Equi-correlated Paired Data}\label{sec:Application}

In this section, we propose a statistical model for paired, high-dimensional data, and show how the LSD of the commutator of the data matrices can be used for the purpose of determining independence between the paired populations.


Consider a set of $n$ \textit{paired} $p$-dimensional observations from jointly multivariate Gaussian distributions. Denote the two samples as $X_1 =\Sigma^\frac{1}{2}Z$ and $X_2 = \Sigma^\frac{1}{2}W$ where    $W = (W_{ij}), Z = (Z_{ij}) \in \mathbb{R}^{p \times n}$ are such that, $(W_{ij},Z_{ij})$ are i.i.d. bivariate normal, with zero mean, unit variance, and Corr$(W_{ij},Z_{ij})=\rho$. Thus, the parameter $\rho$ represents element-wise dependence in the underlying innovations. An investigator would like to test the hypothesis $H_0: \rho = 0$ against $H_1: \rho \neq 0$.

We can characterize this dependence in terms of another independent Gaussian random matrix $V = (V_{ij})$, with i.i.d. standard normal entries, as follows. Observe that, distributionally, we have the following representation:
$$
W_{ij} = \rho Z_{ij} + \sqrt{1 - \rho^2}V_{ij}, \qquad \mbox{for}~i=1,\ldots,p,~j=1,\ldots,n.
$$
We see that 
\begin{align}\label{test1}
n^{-1}[X_1, X_2] = n^{-1}\Sigma^\frac{1}{2}[Z, W]\Sigma^\frac{1}{2} &=n^{-1} \Sigma^\frac{1}{2}(ZW^* - WZ^*)\Sigma^\frac{1}{2}\\
       &= n^{-1}\Sigma^\frac{1}{2}\bigg(Z(\rho Z^* + \sqrt{1-\rho^2})V^*) - (\rho Z + \sqrt{1-\rho^2}V)Z^* \bigg)\Sigma^\frac{1}{2}\notag\\
       &= n^{-1}\sqrt{1-\rho^2}\Sigma^\frac{1}{2}(ZV^*-VZ^*)\Sigma^\frac{1}{2}\notag\\ 
       &= \sqrt{1-\rho^2}\bigg(n^{-1}\Sigma^\frac{1}{2}[Z, V]\Sigma^\frac{1}{2}\bigg). \notag
\end{align}
Note that under the null hypothesis, $Z$ and $W$ are independent, thus allowing us to derive the limiting spectral distribution of $n^{-1}[X_1, X_2]$ using Corollary \ref{mainCorollary}. Even under the alternative, (\ref{test1}) allows us to derive the limiting spectral distribution of $n^{-1}[X_1, X_2]$, by using the fact that $Z$ and $V$ are independent and applying the same corollary. Indeed, under the alternative, the only change in the form of the LSD is that the support shrinks by a factor of $\sqrt{1-\rho^2}$. This result can be helpful in deriving asymptotic properties of test statistics for testing $H_0:\rho=0$ vs. $H_1:\rho \neq 0$ if such statistics are derived from linear functionals of the eigenvalues of $n^{-1}[X_1, X_2]$.

\subsection{Potential applications}

Below, we present three real-life scenarios involving paired data where we can potentially use our result to find solutions.
\begin{enumerate}
    \item[1] The data arise from sib-ship studies involving $n$ pairs of siblings, where the $p$-dimensional observations are quantitative traits measured at different locations along the genome. Here,  $p$ denotes the number of measurement locations along the genome. The \textbf{underlying} assumption is that the measurements at different genetic locations are correlated (described by the matrix $\Sigma$), whereas the \textbf{underlying} innovations have the same correlation ($\rho$) between the sibling pairs across different coordinates. Our model is related to, but different from commonly used models for QTL (quantitative trait loci) mapping involving sibling pairs (cf. \cite{Risch}, \cite{Cardon2000}, \cite{Howe}).
    
    \item[2] Another application can be studies investigating the joint behavior of two pollutants. The data consist of $p$-dimensional observations measured at different spatial locations (e.g. observation centers) across $n$ time points. As before, we assume that the measurements at different spatial locations are correlated (described by $\Sigma$), with the underlying innovations sharing the same correlation ($\rho$) between the pollutant pairs across different coordinates. 
    
    \item[3] A further potential application involves EEG data of the brain in the resting state versus awake state, for $n$ individuals. The data consist of $p$-dimensional observations measured at the $p$ electrode locations in the scalp of $n$ individuals in the two states. As before, we assume that the measurements at different electrode locations are correlated (described by $\Sigma$), with the underlying innovations sharing the same correlation ($\rho$) between the pair of brain states across different coordinates.
\end{enumerate}

\subsection{Testing method}

As noted earlier, the key observation behind formulating a test for the hypothesis $H_0:\rho=0$ against $H_A: \rho\neq 0$ is that under the alternative, the only change in the form of the LSD is that the support shrinks by a factor of $\sqrt{1-\rho^2}$ compared to that in the independent (i.e., $H_0:\rho =0$) setting. Therefore, we focus on the behavior of the statistic $\sqrt{\int \lambda^2 dF_n(\lambda)}$, where $F_n$ denotes the ESD of the commutator matrix,  and use it to formulate the test procedure.

We demonstrate this idea with a numerical example. Taking $p = 1000,c = 2$ and $\Sigma_1 = I_p = \Sigma_2$, we simulated $X_1,X_2$ once under $\rho=0$ and then under $\rho = 0.7$. Denoting the commutator under the $\rho = 0$ scenario as $S_0$ and the one under the $\rho=0.7$ scenario as $S_1$, we calculate the following quantities:
$$\lambda_{1} := \sqrt{\int \lambda^2dF^{S_1}(\lambda)}, \qquad \qquad \lambda_{0} := \sqrt{\int \lambda^2dF^{S_0}(\lambda)}.$$

The observed values are $\lambda_{1} = 1.43$ and $\lambda_0 = 1.99$. The shrinkage ratio is $0.714$ which is very close to the theoretical estimate given by $\sqrt{1 - 0.7^2}$. The shrinkage effect is displayed in Figure \ref{fig:fig1}.

To further validate this point, we repeated this exercise $100$ times. Keeping $p$ fixed at $1000$, we randomly generated values of $c \sim \operatorname{Unif}(0.25, 3)$ and $\rho \sim \operatorname{Unif}(-1, 1)$, and then constructed the $S_1$ matrices under the actual value of $\rho$ and then constructed the $S_0$ matrices when $\rho=0$. Figure \ref{fig:fig2} plots the theoretical shrinkage factor (i.e., $\sqrt{1-\rho^2}$) on the X-axis and its observed counterpart (i.e., $\lambda_1/\lambda_0$) on the Y-axis. The observed shrinkage factors are extremely close to their theoretical counterparts.

If $\Sigma$ is known, several possible approaches can be used to test $H_0:\rho = 0$ by first de-correlating the data. The problem is significantly more challenging when $\Sigma$ is unknown. We propose a method based on the spectral statistics of the commutator of data matrices that involve the Population Spectral Distribution (PSD) of $\Sigma$. When $\Sigma$ is unknown, we first estimate the PSD and then use this as a plug-in estimate in the test statistic. We explain each case in detail along with some numerical results.

\subsection{\texorpdfstring{$\Sigma$}{} is known:}
The steps of the hypothesis testing procedure are as described below.
\begin{enumerate}
    \item 
    Let $X_1^{obs},X_2^{obs}$ be the two observed matrices of dimension $p \times n$ with $F_n$ denoting the ESD of the commutator matrix $S_n = \frac{1}{n}[X_1^{obs}, X_2^{obs}]$. Calculate $$T_{obs} = \sqrt{\int \lambda^2 dF_n(\lambda)}.$$ 
    \item 
    Let $B$ be a  large integer. For $b = 1 \text{ to } B$, we repeat the following operations.
\begin{enumerate}
    \item 
    Construct independent random matrices $Z_1^{(b)}$ and $Z_2^{(b)}$ of dimension $p \times n$ with i.i.d. standard Gaussian entries. 
    \item Using the known value of $\Sigma$, we  generate $$S_0^{(b)} := \frac{1}{n}\Sigma^\frac{1}{2}[Z_1^{(b)},Z_2^{(b)}]\Sigma^\frac{1}{2}.$$
    \item 
    Let $F_0^{(b)}$ denote the ESD of $S_0^{(b)}$. Calculate $$T_b := \sqrt{\int \lambda^2 dF_0^{(b)}(\lambda)}.$$
\end{enumerate}
    \item 
    Smaller values of $T_{obs}$ (with reference to the distribution of $T_b$) lead to rejection of the null hypothesis with a $p$-value ($PV$) which is derived from the sampling distribution of $T_b$ as follows:
    $$PV = \frac{1}{B}\sum_{b=1}^B \mathbbm{1}_{\{T_b \leq T_{obs}\}}.$$
\end{enumerate}

\textbf{Numerical Results:} We have tested the above algorithm under various settings of true population spectral distribution and true values of equi-correlation coefficients. Table \ref{tab:1} ($p = 50; n = 500; B = 1000$) and Table \ref{tab:2} ($p = 100; n=500; B = 1000$) below lists the p-values obtained under these combinations. We observe that the algorithm rejects the null hypothesis with high power as soon as $\rho$ exceeds $0.25$.

\begin{table}[ht!]
    \centering
    \begin{tabular}{|l||c|c|c|c|c|c|}
    \hline
         \textbf{True Spectral Distribution} & $\boldsymbol{\rho}=\textbf{0.1}$ & $\boldsymbol{\rho}=\textbf{0.25}$ & $\boldsymbol{\rho}=\textbf{0.5}$ & $\boldsymbol{\rho}=\textbf{0.75}$ & $\boldsymbol{\rho}=\textbf{0.8}$ & $\boldsymbol{\rho}=\textbf{0.95}$\\
         \hline
         \hline
          $H_p = \delta_1$ & 0.649 & 0.192 & 0 & 0 & 0 & 0 \\
          \hline
          $H_p = 0.4 \delta_0 + 0.6 \delta_1$ & 0.725 & 0.392  & 0 & 0 & 0 & 0 \\
          \hline
          $H_p = 0.5 \delta_1 + 0.5 \delta_2$ & 0.001 & 0 & 0 & 0 & 0 & 0 \\
          \hline
          $H_p = 0.3 \delta_0 + 0.4 \delta_1 + 0.3 \delta_2$ & 0.500 & 0.242 & 0.001 & 0 & 0 & 0 \\
      \hline
    \end{tabular}
    \caption{p-values under different true spectral distributions and equi-correlation coefficients under the setting $p=50;n=500$}
    \label{tab:1}
\end{table}

\begin{table}[ht!]
    \centering
    \begin{tabular}{|l||c|c|c|c|c|c|}
    \hline
         \textbf{True Spectral Distribution} & $\boldsymbol{\rho}=\textbf{0.1}$ & $\boldsymbol{\rho}=\textbf{0.25}$ & $\boldsymbol{\rho}=\textbf{0.5}$ & $\boldsymbol{\rho}=\textbf{0.75}$ & $\boldsymbol{\rho}=\textbf{0.8}$ & $\boldsymbol{\rho}=\textbf{0.95}$\\
         \hline
         \hline
          $H_p = \delta_1$ & 0.466 & 0.004 & 0 & 0 & 0 & 0\\
          \hline
          $H_p = 0.4 \delta_0 + 0.6 \delta_1$ & 0.710 & 0.206 & 0 & 0 & 0 & 0 \\
          \hline
          $H_p = 0.5 \delta_1 + 0.5 \delta_2$ & 0.025 & 0 & 0 & 0 & 0 & 0 \\
          \hline
          $H_p = 0.3 \delta_0 + 0.4 \delta_1 + 0.3 \delta_2$ & 0.392 & 0.049 & 0 & 0 & 0 & 0 \\
      \hline
    \end{tabular}
    \caption{p-values under different true spectral distributions and equi-correlation coefficients under the setting $p=100;n=500$}
    \label{tab:2}
\end{table}

\begin{figure}
    \centering
    \includegraphics[width=0.75\linewidth]{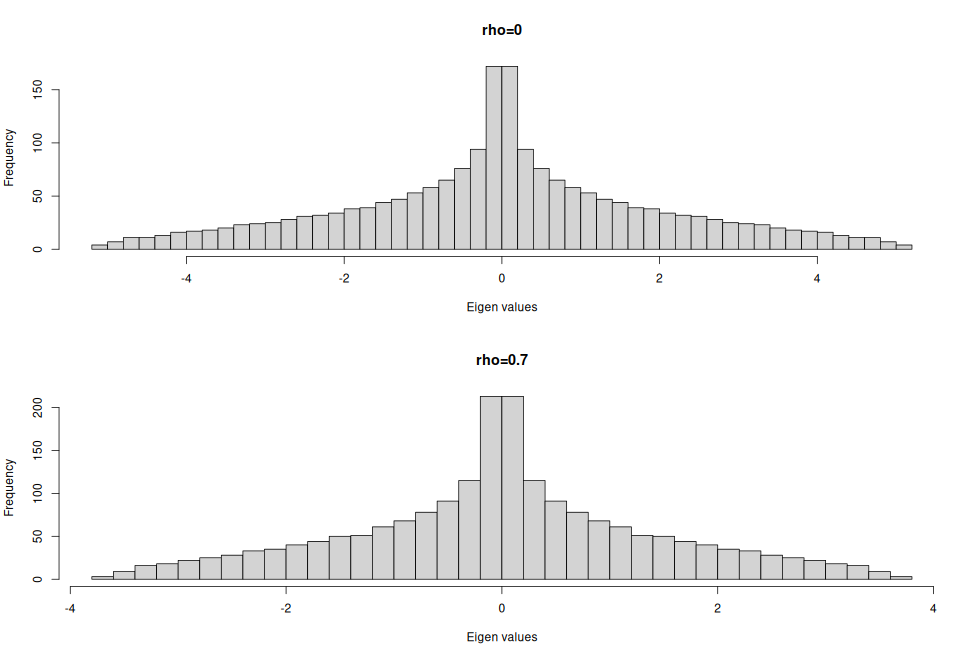}
    \caption{Shrinkage effect between $\rho=0$ (top) vs. $\rho=0.7$ (bottom)}
    \label{fig:fig1}
\end{figure}

\begin{figure}
    \centering
    \includegraphics[width=0.75\linewidth]{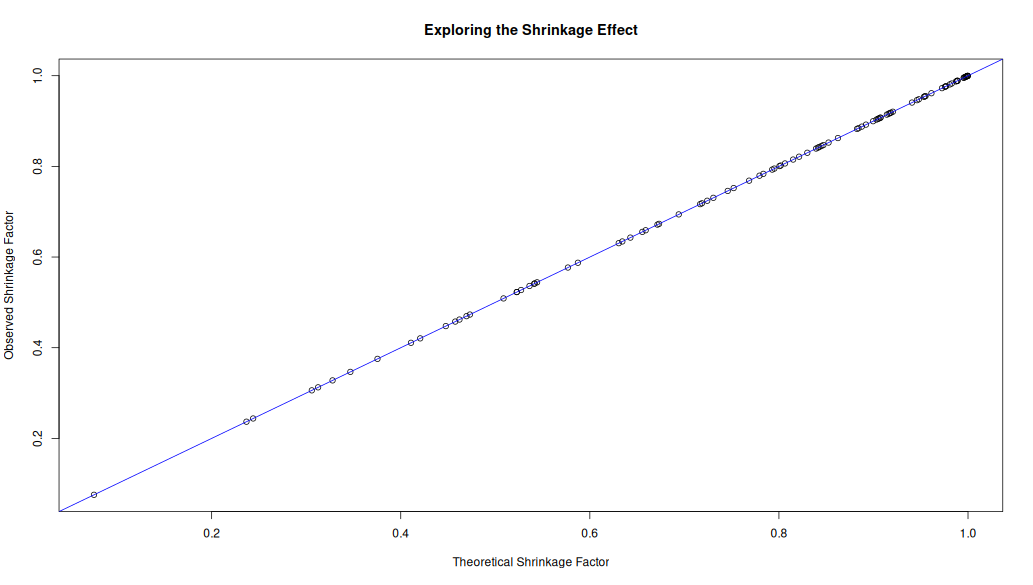}
    \caption{Observed vs. Theoretical Shrinkage factors}
    \label{fig:fig2}
\end{figure}

\subsection{\texorpdfstring{$\Sigma$}{} is unknown:}

We now propose a scheme to handle the case when $\Sigma$ is unknown, in which case $\Sigma$ is essentially a nuisance parameter
for inference on $\rho$. In this case, the
method proposed above needs to be modified. 
The key issue is obtaining the sampling distribution of $\lambda_0$ under $\rho=0$, since that involves the PSD of the true $\Sigma$ which is unknown. However, we can resolve this by first estimating the PSD $H_p$ of $\Sigma$ from the data. Specifically, from the sample covariance matrix of the observed data (either $X_1$ or $X_2$), we get an estimate of $H_p$  using El Karoui's discretization method (\cite{elkaroui}). We may also consider a pooled estimate obtained as the average of the estimates based on $X_1$ and $X_2$, separately. This method has been shown to be consistent in the $L^\infty$ norm sense. Since we have already shown continuity of the LSD with respect to the PSD (see Theorem \ref{ContinuityGeneral}), our method produces consistent estimates of the sampling distribution of $\lambda_0$. Using the sampling distribution of the estimated $\lambda_0$ under $H_0$, we can obtain an approximate $p$-value for the test which rejects the null for small values of $\lambda_{obs}$. The steps of the testing procedure are as described below.


\begin{enumerate}
    \item Let $X_1^{obs},X_2^{obs}$ be the two observed matrices of dimension $p \times n$ with $F_n$ denoting the ESD of the commutator matrix $S_n = \frac{1}{n}[X_1^{obs}, X_2^{obs}]$. Calculate $$T_{obs} = \sqrt{\int \lambda^2 dF_n(\lambda)}.$$ 
    \item From the sample covariance of $X_1^{obs}$ or $X_2^{obs}$, we derive an estimate $\hat{H}_p$ of the PSD of $\Sigma$ using El-Karoui's discretization algorithm.
    \item Let $B$ be a very large integer. For $b = 1 \text{ to } B$, we repeat the following operations.
\begin{enumerate}
    \item Construct a $p \times p$ p.s.d. matrix $\Sigma_b$ with eigen values distributed according to $\hat{H}_p$, where 
    the matrix of eigenvectors of $\Sigma_b$ are taken to be arbitrary orthogonal matrices. 
    
    \item Construct independent random matrices $Z_1^{(b)}$ and $Z_2^{(b)}$ of dimension $p \times n$ with i.i.d. standard Gaussian entries. 
    
    \item Generate $$S_0^{(b)} := \frac{1}{n}(\Sigma^{(b)})^\frac{1}{2}[Z_1^{(b)},Z_2^{(b)}](\Sigma^{(b)})^\frac{1}{2}.$$
    
    \item Let $F_0^{(b)}$ denote the ESD of $S_0^{(b)}$. Calculate $$T_b := \sqrt{\int \lambda^2 dF_0^{(b)}(\lambda)}.$$
\end{enumerate}
    \item Smaller values of $T_{obs}$ (with reference to the distribution of $T_b$) lead to rejection of the null hypothesis with a $p$-value ($PV$) which is derived from the sampling distribution of $T_b$ as follows:
    $$ PV = \frac{1}{B}\sum_{b=1}^B \mathbbm{1}_{\{T_b \leq T_{obs}\}}.$$
\end{enumerate}

\textbf{Numerical Results:} Similar to the known $\Sigma$ case, Table \ref{tab:3} ($p = 50; n = 500; B = 1000$) and Table \ref{tab:4} ($p = 100; n=500; B = 1000$) below lists the p-values obtained under these combinations.

\begin{table}[ht!]
    \centering
    \begin{tabular}{|l||c|c|c|c|c|c|}
    \hline
         \textbf{True Spectral Distribution} & $\boldsymbol{\rho}=\textbf{0.1}$ & $\boldsymbol{\rho}=\textbf{0.25}$ & $\boldsymbol{\rho}=\textbf{0.5}$ & $\boldsymbol{\rho}=\textbf{0.75}$ & $\boldsymbol{\rho}=\textbf{0.8}$ & $\boldsymbol{\rho}=\textbf{0.95}$\\
         \hline
         \hline
          $H_p = \delta_1$ & 0.366 & 0.053 & 0 & 0 & 0 & 0 \\
          \hline
          $H_p = 0.4 \delta_0 + 0.6 \delta_1$ & 0.439 & 0.354 & 0.113 & 0.006 & 0 & 0 \\
          \hline
          $H_p = 0.5 \delta_1 + 0.5 \delta_2$ & 0.316 & 0.239 & 0.041 & 0 & 0 & 0 \\
          \hline
          $H_p = 0.3 \delta_0 + 0.4 \delta_1 + 0.3 \delta_2$ & 0.377 & 0.323 & 0.127 & 0.015 & 0 & 0 \\
      \hline
    \end{tabular}
    \caption{p-values under different true spectral distributions and equi-correlation coefficients under the setting $p=50;n=500$}
    \label{tab:3}
\end{table}

\begin{table}[ht!]
    \centering
    \begin{tabular}{|l||c|c|c|c|c|c|}
    \hline
         \textbf{True Spectral Distribution} & $\boldsymbol{\rho}=\textbf{0.1}$ & $\boldsymbol{\rho}=\textbf{0.25}$ & $\boldsymbol{\rho}=\textbf{0.5}$ & $\boldsymbol{\rho}=\textbf{0.75}$ & $\boldsymbol{\rho}=\textbf{0.8}$ & $\boldsymbol{\rho}=\textbf{0.95}$\\
         \hline
         \hline
          $H_p = \delta_1$ & 0.273 & 0.001 & 0 & 0 & 0 & 0\\
          \hline
          $H_p = 0.4 \delta_0 + 0.6 \delta_1$ & 0.368 & 0.248 & 0.035 & 0 & 0 & 0 \\
          \hline
          $H_p = 0.5 \delta_1 + 0.5 \delta_2$ & 0.093 & 0.054 & 0.032 & 0 & 0 & 0 \\
          \hline
          $H_p = 0.3 \delta_0 + 0.4 \delta_1 + 0.3 \delta_2$ & 0.287 & 0.220 & 0.058 & 0.002 & 0.001 & 0 \\
      \hline
    \end{tabular}
    \caption{p-values under different true spectral distributions and equi-correlation coefficients under the setting $p=100;n=500$}
    \label{tab:4}
\end{table}

\begin{lstlisting}[language = R]

\end{lstlisting}

\color{black}
\textbf{Acknowledgments}: The authors would like to thank Professor Arup Bose for some insightful suggestions and discussions.

\bibliographystyle{plain}
\bibliography{bibli}
\newpage
\appendix

\section{A few general results}

\subsection{A few basic results related to matrices }\label{R123}
\begin{itemize}
    \item[$\boldsymbol{R}_0$:] \textbf{Resolvent identity}: For invertible matrices $A,B$ of same dimension, we have
    \begin{align}\label{R0}
        A^{-1} - B^{-1} = A^{-1}(B - A)B^{-1} = B^{-1}(B - A)A^{-1}.
    \end{align}
    \item[$\boldsymbol{R}_1$:] For skew-Hermitian matrices $A, B \in \mathbb{C}^{p \times p}$, we have
    \begin{align}\label{R1}
     ||F^A - F^B||_{im} = ||F^{-\mathbbm{i}A} - F^{-\mathbbm{i}B}|| \leq \frac{1}{p}\operatorname{rank}(A - B).
    \end{align}
    The first equality follows from (\ref{ESD_sksym_equal_sym}) and the last inequality follows from Lemma 2.4 of \cite{BaiSilv95}.
    \item[$\boldsymbol{R}_2$:]  From Lemma 2.1 of \cite{BaiSilv95}, for a rectangular matrix, we have 
    \begin{align}\label{R2}
      \operatorname{rank}(A) \leq \sum_{i,j} \mathbbm{1}_{\{a_{ij} \neq 0\}}.  
    \end{align}
    \item[$\boldsymbol{R}_3$:] 
    For rectangular matrices $A,B,P,Q, X$ of compatible dimensions, we have 
    \begin{align}\label{R3}
        \operatorname{rank}(AXB-PXQ) \leq \operatorname{rank}(A-P)+\operatorname{rank}(B-Q).
    \end{align}
    \item[$\boldsymbol{R}_4$:] Cauchy-Schwarz Inequality: 
    \begin{align}\label{R4}
      |a^*Xb| \leq ||X||_{op}\,||a||\,||b||.  
    \end{align}
    \item[$\boldsymbol{R}_5$:] For a p.s.d. matrix B and any square matrix A, we have 
    \begin{align}\label{R5}
        |\operatorname{trace}(AB)| \leq ||A||_{op} \operatorname{trace}(B).
    \end{align}
    \item[$\boldsymbol{R}_6$:] For $N \times N$ matrices $A, B$, we have
    \begin{align}\label{R6}
        |\operatorname{trace}(AB)| \leq N||A||_{op}\,||B||_{op}.
    \end{align}
\end{itemize}

\begin{lemma}\label{Levy_Stieltjes}
    Let $\{F_n, G_n\}_{n=1}^\infty$ be sequences of distribution functions on $\mathbbm{i}\mathbb{R}$ with $s_{F_n}(z), s_{G_n}(z)$ denoting their respective Stieltjes transforms at $z \in \mathbb{C}_L$. If $L_{im}(F_n, G_n) \rightarrow 0$, then $|s_{F_n}(z) - s_{G_n}(z)| \rightarrow 0$.
\end{lemma}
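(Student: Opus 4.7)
The strategy is to rotate back to the real line using Lemma~\ref{StieltjesTransformRotated}, reducing the statement to the standard fact that convergence in L\'evy distance on $\mathbb{R}$ forces pointwise convergence of Stieltjes transforms in $\mathbb{C}^+$, with a quantitative rate.

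First I would pass to the rotated distributions. Let $\overline{F}_n$, $\overline{G}_n$ denote the distributions of $-\mathbbm{i}X$ when $X\sim F_n$ and $X\sim G_n$, respectively. By definition~(\ref{defining_Levy}), $L(\overline{F}_n, \overline{G}_n) = L_{im}(F_n, G_n) \to 0$. Fix $z \in \mathbb{C}_L$ and set $w := -\mathbbm{i}z$, noting that $\Im(w) = -\Re(z) > 0$, so $w \in \mathbb{C}^+$. Applying Lemma~\ref{StieltjesTransformRotated} with its argument equal to $w$ yields $s_{\overline{F}_n}(w) = \mathbbm{i}\, s_{F_n}(\mathbbm{i}w) = \mathbbm{i}\, s_{F_n}(z)$, and similarly for $G_n$. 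Therefore $|s_{F_n}(z) - s_{G_n}(z)| = |s_{\overline{F}_n}(w) - s_{\overline{G}_n}(w)|$, so it suffices to prove the latter tends to zero.

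The main step is a quantitative estimate. Integration by parts (valid because distribution functions are bounded and $(x-w)^{-1}\to 0$ as $|x|\to\infty$, so the boundary terms vanish) yields
\[
s_{\overline{F}_n}(w) - s_{\overline{G}_n}(w) \;=\; \int_{-\infty}^{\infty} \frac{\overline{F}_n(x) - \overline{G}_n(x)}{(x-w)^2}\, dx.
\]
Writing $\delta_n := L(\overline{F}_n, \overline{G}_n)$, the defining inequality of L\'evy distance, $\overline{F}_n(x-\delta_n) - \delta_n \leq \overline{G}_n(x) \leq \overline{F}_n(x+\delta_n) + \delta_n$, produces the pointwise estimate $|\overline{F}_n(x) - \overline{G}_n(x)| \leq [\overline{F}_n(x+\delta_n) - \overline{F}_n(x-\delta_n)] + \delta_n$. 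Splitting the integral accordingly, the additive $\delta_n$ contributes at most $\delta_n \int_{\mathbb{R}} |x-w|^{-2}\, dx = \pi\delta_n/\Im(w)$, while for the other piece Fubini gives $\int_{\mathbb{R}} \frac{\overline{F}_n(x+\delta_n) - \overline{F}_n(x-\delta_n)}{|x-w|^2}\, dx = \int_{\mathbb{R}} \bigl(\int_{y-\delta_n}^{y+\delta_n} |x-w|^{-2}\, dx\bigr)\, d\overline{F}_n(y) \leq 2\delta_n/\Im(w)^2$, using $|x-w|^2 \geq \Im(w)^2$ inside the inner integral. Both contributions vanish as $\delta_n \to 0$, which completes the argument. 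The only points requiring care are the sign convention in the rotation $z \leftrightarrow -\mathbbm{i}z$ (to place $w$ in $\mathbb{C}^+$) and the justification of the vanishing boundary terms; no genuine obstacle is anticipated.
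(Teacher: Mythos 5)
Your proposal is correct, and it takes a genuinely different route from the paper. The paper works through the bounded Lipschitz metric: it cites the two-sided inequality relating $L$ and $\beta$ from Dudley, observes that $g_z(x) = (\mathbbm{i}x - z)^{-1}$ has finite BL-norm, and concludes since $\bigl|\int g_z\,d\overline{F}_n - \int g_z\,d\overline{G}_n\bigr| \leq \|g_z\|_{BL}\,\beta(\overline{F}_n,\overline{G}_n) \to 0$. You instead rotate to $\mathbb{C}^+$ via Lemma~\ref{StieltjesTransformRotated}, integrate by parts to write $s_{\overline{F}_n}(w) - s_{\overline{G}_n}(w) = \int (\overline{F}_n - \overline{G}_n)(x)\,(x-w)^{-2}\,dx$, and control the integrand directly from the defining inequalities of L\'evy distance plus Fubini. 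Your approach is more self-contained (no external metric-comparison theorem) and yields an explicit linear rate $|s_{F_n}(z) - s_{G_n}(z)| \lesssim \delta_n\bigl(\pi/|\Re z| + 2/\Re^2 z\bigr)$, whereas the paper's gives control only through $\beta$ (which can be as large as order $L$ but is only guaranteed $\leq 2L$, and the reverse bound is $L \leq 3\sqrt{\beta}$, so the rate is less transparent). The paper's approach is shorter once Dudley's inequalities are granted, and it generalizes immediately to any bounded Lipschitz test function, which could be useful elsewhere. One small technicality you should flag: the two-sided L\'evy inequality $\overline{F}_n(x-\epsilon)-\epsilon \leq \overline{G}_n(x) \leq \overline{F}_n(x+\epsilon)+\epsilon$ holds by definition for all $\epsilon > \delta_n$ and hence (by right-continuity) at $\epsilon = \delta_n$ itself, so using $\delta_n$ directly is fine; still worth a remark. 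Also confirm the sign bookkeeping in the rotation: for $z = u+\mathbbm{i}v \in \mathbb{C}_L$ ($u<0$), $w = -\mathbbm{i}z = v - \mathbbm{i}u$ so $\Im(w) = -u = |\Re(z)| > 0$, which is what feeds into your denominators.
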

\begin{proof}
As usual, for a distribution function $F$ on $\mathbbm{i}\mathbb{R}$, we denote its real counterpart as $\overline{F}$. Let $\mathcal{P}(\mathbb{R})$ represent the set of all probability distribution functions on $\mathbb{R}$. Then the bounded Lipschitz metric is defined as follows:
$$\beta: \mathcal{P}(\mathbb{R}) \times \mathcal{P}(\mathbb{R}) \rightarrow \mathbb{R}_{+} \text{, where } \beta(\overline{F}, \overline{G}) := \underset{}{\sup} \bigg\{\bigg|\int hd\overline{F} - \int hd\overline{G}\bigg|: ||h||_{BL} \leq 1\bigg\},$$

$$\text{and, }||h||_{BL} = \sup\{|h(x)|: x \in \mathbb{R}\} + \underset{x \neq y}{\sup}\dfrac{|h(x) - h(y)|}{|x - y|}.$$

From Corollary 18.4 and Theorem 8.3 of \cite{Dudley}, we have the following relationship between Levy (L) and bounded Lipschitz ($\beta$) metrics:
\begin{align}\label{Levy_B}
\frac{1}{2}\beta(\overline{F}, \overline{G}) \leq L(\overline{F}, \overline{G}) \leq 3\sqrt{\beta(\overline{F}, \overline{G})}.
\end{align}

Fix $z \in \mathbb{C}_L$ arbitrarily. Define $g_z(x) := (\mathbbm{i}x - z)^{-1}; x\in \mathbb{R}$. Note that, $|g_z(x)| \leq 1/|\Re(z)|$ for all $x \in \mathbb{R}$. Therefore, 
$$|g_z(x_1) - g_z(x_2)| = \bigg|\frac{1}{\mathbbm{i}x_1 - z} - \frac{1}{\mathbbm{i}x_2 - z}\bigg| = \frac{|x_1 - x_2|}{|\mathbbm{i}x_1 - z||\mathbbm{i}x_2-z|} \leq \frac{|x_1 - x_2|}{\Re^2(z)}.$$

Note that $||g_z||_{BL} \leq {1}/{|\Re(z)|} + {1}/{\Re ^2(z)} < \infty$. Then for $g := {g_z}/{||g_z||_{BL}}$, we have $||g||_{BL} = 1$.

By (\ref{Levy_B}) and (\ref{Levy_vs_uniform}) and using $dF_n(\mathbbm{i}x) = d\overline{F}_n(x)$ for $x \in \mathbb{R}$, we have:
\begin{align*}
L_{im}(F_n, G_n) \rightarrow 0
\iff &  L(\overline{F}_n, \overline{G}_n) \rightarrow 0 \\
\iff  & \beta(\overline{F}_n, \overline{G}_n) \rightarrow 0\\
\implies & \bigg|\int_\mathbb{R}g(x)d\overline{F}_n(x) - \int_\mathbb{R}g(x)d\overline{G}_n(x)\bigg| \rightarrow 0\\
\implies & \bigg|\int_\mathbb{R}\frac{1}{\mathbbm{i}x - z}dF_n(\mathbbm{i}x) - \int_\mathbb{R}\frac{1}{\mathbbm{i}x-z}dG_n(\mathbbm{i}x)\bigg| \rightarrow 0\\
\implies & |s_{F_n}(z) - s_{G_n}(z)| \longrightarrow 0.
\end{align*}
\end{proof}

\begin{lemma}\label{lA.2}
    Let $\{X_{jn}, Y_{jn}: 1 \leq j \leq n\}_{n=1}^\infty$ be triangular arrays of random variables. Suppose we have $\underset{1 \leq j \leq n}{\max}|X_{jn}| \xrightarrow{a.s.} 0$ and $\underset{1 \leq j \leq n}{\max}|Y_{jn}| \xrightarrow{a.s.} 0$. Then $\underset{1 \leq j \leq n}{\max}|X_{jn} + Y_{jn}| \xrightarrow{a.s.} 0$.
\end{lemma}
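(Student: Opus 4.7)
The plan is to reduce the statement to an elementary triangle inequality for the maximum, combined with the standard fact that a finite union of null sets is null. The key observation is that for each fixed $n$ and each $j \in \{1,\ldots,n\}$, the triangle inequality gives $|X_{jn} + Y_{jn}| \leq |X_{jn}| + |Y_{jn}|$. Taking the maximum over $j$ on both sides, and using the fact that $\max_j (a_j + b_j) \leq \max_j a_j + \max_j b_j$ for nonnegative reals, yields the deterministic pointwise bound
\begin{equation*}
\max_{1 \leq j \leq n}|X_{jn} + Y_{jn}| \;\leq\; \max_{1 \leq j \leq n}|X_{jn}| \;+\; \max_{1 \leq j \leq n}|Y_{jn}|.
\end{equation*}

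Next I would invoke the hypotheses: let $\Omega_X$ be the event on which $\max_j |X_{jn}| \to 0$ and $\Omega_Y$ the event on which $\max_j |Y_{jn}| \to 0$; by assumption both have probability one, hence so does $\Omega_X \cap \Omega_Y$. On this intersection, both terms on the right hand side of the displayed inequality converge to $0$, so by the squeeze principle $\max_j |X_{jn} + Y_{jn}| \to 0$ as well. Since this happens on a set of full probability, we conclude the almost sure convergence. There is no real obstacle here — the only thing to be careful about is that one is combining two a.s.\ statements on the same underlying probability space, which is implicit in the way the arrays are indexed together.
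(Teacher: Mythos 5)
Your proof is correct and follows essentially the same route as the paper: both rely on the pointwise triangle inequality, the resulting bound on the maximum, and the fact that the intersection of two full-measure events has full measure. You make the intermediate step $\max_j|X_{jn}+Y_{jn}| \leq \max_j|X_{jn}| + \max_j|Y_{jn}|$ slightly more explicit than the paper does, but this is a cosmetic difference only.
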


\begin{proof}
Let $A_x := \{\omega: \underset{n \rightarrow \infty}{\lim}\underset{1 \leq j \leq n}{\max}|X_{jn}(\omega)| = 0\}$, $A_y := \{\omega: \underset{n \rightarrow \infty}{\lim}\underset{1 \leq j \leq n}{\max}|Y_{jn}(\omega)| = 0\}$. Then $\mathbb{P}(A_x) = 1 = \mathbb{P}(A_y)$. Then for all $ \omega \in A_x \cap A_y$, we have $0 \leq |X_{jn}(\omega) + Y_{jn}(\omega)| \leq |X_{jn}(\omega)| + |Y_{jn}(\omega)|$. Hence, 
$\underset{n \rightarrow \infty}{\lim}\underset{1 \leq j \leq n}{\max}|X_{jn}(\omega) + Y_{jn}(\omega)| = 0$. But, $\mathbb{P}(A_x \cap A_y) = 1$. Therefore, the result follows.
\end{proof}

\begin{lemma}\label{lA.3}
     Let $\{A_{jn}, B_{jn}, C_{jn}, D_{jn}: 1 \leq j \leq n\}_{n=1}^\infty$ be triangular arrays of random variables. Suppose $\underset{1 \leq j \leq n}{\max}|A_{jn} - C_{jn}| \xrightarrow{a.s.} 0$ and $\underset{1 \leq j \leq n}{\max}|B_{jn} - D_{jn}| \xrightarrow{a.s.} 0$ and there exists $N_0 \in \mathbb{N}$ such that $|C_{jn}| \leq B_1$ a.s. and $|D_{jn}| \leq B_2$ a.s. when $n > N_0$ for some $B_1, B_2 \geq 0$. Then $\underset{1 \leq j \leq n}{\max}|A_{jn}B_{jn} - C_{jn}D_{jn}| \xrightarrow{a.s.} 0.$
\end{lemma}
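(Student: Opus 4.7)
The plan is to reduce this to the two given hypotheses via the standard algebraic identity
\[
A_{jn}B_{jn} - C_{jn}D_{jn} = (A_{jn}-C_{jn})B_{jn} + C_{jn}(B_{jn}-D_{jn}),
\]
and then control $|B_{jn}|$ by $|D_{jn}| + |B_{jn} - D_{jn}|$ so that it becomes almost surely bounded for large $n$. After taking absolute values and maxima over $j$, each of the two resulting terms factors as (a.s.\ bounded quantity) $\times$ (quantity whose $\max_j$ goes to $0$ a.s.), so the triangle inequality closes the argument.

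More concretely, first I would fix a null set off of which both convergence hypotheses hold and both bounds $|C_{jn}| \le B_1$, $|D_{jn}| \le B_2$ are valid for $n > N_0$. Working on its complement $\omega$-by-$\omega$, the hypothesis $\max_{j}|B_{jn} - D_{jn}| \to 0$ together with $|D_{jn}| \le B_2$ implies that there exists $N_1 = N_1(\omega) \ge N_0$ such that $\max_{1 \le j \le n}|B_{jn}| \le B_2 + 1$ for all $n > N_1$. Then for $n > N_1$,
\[
\max_{1 \le j \le n}|A_{jn}B_{jn} - C_{jn}D_{jn}| \le (B_2 + 1)\max_{1 \le j \le n}|A_{jn} - C_{jn}| + B_1\max_{1 \le j \le n}|B_{jn} - D_{jn}|,
\]
and the right-hand side tends to $0$ by the two hypotheses, establishing the conclusion almost surely.

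There is no real obstacle here; this is a routine triangular-array analogue of the product rule for limits. The only subtlety worth noting is the order of quantifiers: the almost-sure set on which everything works is the intersection of three almost-sure sets (the two convergence statements and the two eventual boundedness statements), which is still almost sure, and $N_1$ is allowed to depend on $\omega$ since we only need a deterministic bound past some (random) threshold to push the first term to $0$. This is exactly parallel in spirit to Lemma~\ref{lA.2}, with multiplication playing the role of addition.
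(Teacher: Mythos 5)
Your proposal is correct and follows essentially the same route as the paper: the identity $A_{jn}B_{jn} - C_{jn}D_{jn} = (A_{jn}-C_{jn})B_{jn} + C_{jn}(B_{jn}-D_{jn})$, almost-sure eventual boundedness of $\max_j|B_{jn}|$, and the triangle inequality pointwise on a full-measure event. Your write-up is in fact slightly more careful than the paper's, since you bound $\max_j|B_{jn}|$ by $B_2+1$ for large $n$ rather than by $B_2$, which is what the hypotheses actually deliver.
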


\begin{proof}

Let $\Omega_1 = \{\omega: \underset{n \rightarrow \infty}{\lim}\underset{1 \leq j \leq n}{\max}|A_{jn}(\omega) - C_{jn}(\omega)|= 0\}$, $\Omega_2 = \{\omega: \underset{n \rightarrow \infty}{\lim}\underset{1 \leq j \leq n}{\max}|B_{jn}(\omega) - D_{jn}(\omega)|= 0\}$, $\Omega_3 = \{\omega: |C_{jn}(\omega)| \leq B_1 \text{ for } n > N_0\}$ and $\Omega_4 = \{\omega: |D_{jn}(\omega)| \leq B_2 \text{ for } n > N_0\}$. Then $\Omega_0 = \cap_{j=1}^4\Omega_j$ is a set of probability $1$. Then for all $\omega \in \Omega_0$, $\underset{1 \leq j \leq n}{\max} |B_{jn}(\omega)| \leq B_2$ for large $n$. Therefore, for  $\omega \in \Omega_0$ and large $n$, we get the following that concludes the proof.
\begin{align*}
\underset{1 \leq j \leq n}{\max}|A_{jn}B_{jn} - C_{jn}D_{jn}| 
&\leq \underset{1 \leq j \leq n}{\max}|A_{jn} - C_{jn}||B_{jn}| + \underset{1 \leq j \leq n}{\max}|C_{jn}||B_{jn} - D_{jn}|\\
&\leq B_2 \underset{1 \leq j \leq n}{\max}|A_{jn} - C_{jn}| + B_1 \underset{1 \leq j \leq n}{\max}|B_{jn} - D_{jn}| \xrightarrow{a.s.} 0.
\end{align*}
\end{proof}

\begin{lemma}\label{lA.4}
     Let $\{X_{jn}, Y_{jn}: 1 \leq j \leq n\}_{n=1}^\infty$ be triangular arrays of random variables such that $\underset{1 \leq j \leq n}{\max}|X_{jn} - Y_{jn}| \xrightarrow{a.s.} 0$. Then $|\frac{1}{n}\sum_{j=1}^n (X_{jn} - Y_{jn})| \xrightarrow{a.s.} 0$.
\end{lemma}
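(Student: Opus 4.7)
The plan is to bound the absolute value of the average by the maximum of the summands in absolute value, which is essentially the triangle inequality followed by the trivial estimate that the mean of $n$ nonnegative numbers is at most their maximum. Concretely, for every $\omega$ in the sample space and every $n$, we have
\begin{align*}
\left|\frac{1}{n}\sum_{j=1}^n (X_{jn}(\omega) - Y_{jn}(\omega))\right|
&\leq \frac{1}{n}\sum_{j=1}^n |X_{jn}(\omega) - Y_{jn}(\omega)|
\leq \max_{1 \leq j \leq n}|X_{jn}(\omega) - Y_{jn}(\omega)|.
\end{align*}

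The next step is to identify the almost sure event on which the right hand side vanishes. Let $\Omega_0 := \{\omega : \lim_{n \to \infty} \max_{1 \leq j \leq n}|X_{jn}(\omega) - Y_{jn}(\omega)| = 0\}$. By assumption $\mathbb{P}(\Omega_0) = 1$. For every $\omega \in \Omega_0$, the display above forces $\left|\frac{1}{n}\sum_{j=1}^n (X_{jn}(\omega) - Y_{jn}(\omega))\right| \to 0$, so the convergence holds on a set of full probability, which is exactly the almost sure convergence claimed.

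There is no genuine obstacle here: the lemma is a direct corollary of the elementary estimate (mean $\leq$ maximum in absolute value) applied pointwise on the almost sure event on which the maximum vanishes. The only care needed is to emphasise that the almost sure event on which we work is the same one provided by the hypothesis, so no issue of nonmeasurability or exchanging limits arises.
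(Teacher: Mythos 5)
Your proof is correct and follows essentially the same route as the paper's: bound the average by the maximum via the triangle inequality, then observe that the maximum tends to $0$ on the full-measure event supplied by the hypothesis. The only cosmetic difference is that the paper phrases the conclusion with an arbitrary $\epsilon>0$ while you pass directly to the pointwise limit; both are equivalent.
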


\begin{proof}
    Let $M_n := \underset{1 \leq j \leq n}{\max}|X_{jn} - Y_{jn}|$. We have $|\frac{1}{n}\sum_{j=1}^n (X_{jn} - Y_{jn})| \leq \frac{1}{n}\sum_{j=1}^n |X_{jn} - Y_{jn}| \leq M_n$. Let $\epsilon > 0$ be arbitrary. Then there exists $\Omega_0 \subset \Omega$ such that $\mathbb{P}(\Omega_0) = 1$ and for all $\omega\in \Omega_0$, we have $M_n(\omega) < \epsilon$ for sufficiently large $n \in \mathbb{N}$. Then, $\mathbb{P}(\{\omega: |\frac{1}{n}\sum_{j=1}^n (X_{jn} - Y_{jn})| < \epsilon\}) = 1$. Since $\epsilon > 0$ is arbitrary, the result follows.
\end{proof}

We state the following result (Lemma B.26 of \cite{BaiSilv09}) without proof.
\begin{lemma}\label{lA.5}
Let $A = (a_{ij})$ be an $n \times n$ non-random matrix and $x = (x_1,\ldots, x_n)^T$ be a vector of independent entries. Suppose $\mathbb{E}x_i = 0, \mathbb{E}|x_i|^2 = 1$, and $\mathbb{E}|x_i|^l \leq \nu_l$. Then for $k \geq 1$, $\exists$ $C_k > 0$ independent of $n$ such that 
\begin{align*}
  \mathbb{E}|x^*Ax - \operatorname{trace}(A)|^k \leq C_k\bigg((\nu_4\operatorname{trace}(AA^*))^\frac{k}{2} + \nu_{2k}\operatorname{trace}\{(AA^*)^\frac{k}{2}\}\bigg).
\end{align*}
\end{lemma}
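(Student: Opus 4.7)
The plan is to prove this quadratic-form concentration bound via a martingale-difference decomposition followed by Burkholder's inequality, treating the diagonal and off-diagonal contributions together through a single filtration. Let $\mathcal{F}_0$ be trivial and $\mathcal{F}_j := \sigma(x_1,\ldots,x_j)$, and write
\begin{equation*}
x^*Ax - \operatorname{trace}(A) \;=\; \sum_{j=1}^n d_j, \qquad d_j := \mathbb{E}[x^*Ax \mid \mathcal{F}_j] - \mathbb{E}[x^*Ax \mid \mathcal{F}_{j-1}].
\end{equation*}
A direct computation, using $\mathbb{E}x_i = 0$ and $\mathbb{E}|x_i|^2 = 1$ to collapse any term involving a single unconditioned $x_i$ with $i > j$, yields the explicit expression
\begin{equation*}
d_j \;=\; a_{jj}\bigl(|x_j|^2 - 1\bigr) \;+\; \bar x_j \sum_{i < j} a_{ji}\, x_i \;+\; x_j \sum_{i < j} a_{ij}\, \bar x_i,
\end{equation*}
which is manifestly $\mathcal{F}_j$-measurable and mean-zero given $\mathcal{F}_{j-1}$.

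Next, I would apply Burkholder's inequality for martingale differences (Lemma 2.13 of Hall--Heyde, or the version used throughout Appendix B of Bai--Silverstein): for $k \ge 2$,
\begin{equation*}
\mathbb{E}\Bigl|\sum_{j=1}^n d_j\Bigr|^k \;\le\; C_k\left\{\mathbb{E}\Bigl(\sum_{j=1}^n \mathbb{E}[|d_j|^2 \mid \mathcal{F}_{j-1}]\Bigr)^{k/2} + \sum_{j=1}^n \mathbb{E}|d_j|^k\right\}.
\end{equation*}
For the first term, the conditional variance $\mathbb{E}[|d_j|^2 \mid \mathcal{F}_{j-1}]$ splits into a contribution from $a_{jj}(|x_j|^2-1)$ bounded by $|a_{jj}|^2(\nu_4+1)$, and contributions from the two cross-sums which, after taking $\mathbb{E}|x_j|^2 = 1$, reduce to $\sum_{i<j}|a_{ji}|^2|x_i|^2$ and $\sum_{i<j}|a_{ij}|^2|x_i|^2$. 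Taking the outer expectation and summing in $j$ yields an upper bound of order $\nu_4\,\operatorname{trace}(AA^*)$ for $\sum_j\mathbb{E}|d_j|^2$; monotonicity of $L^{k/2}$-norms (via Jensen's inequality) then gives the desired $(\nu_4\operatorname{trace}(AA^*))^{k/2}$ contribution.

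For the second term, I would bound $\mathbb{E}|d_j|^k$ by a $c_r$-inequality split into three pieces and then apply Burkholder once more to each of the inner sums $\sum_{i<j} a_{ji} x_i$ and $\sum_{i<j} a_{ij}\bar x_i$ (these are themselves martingales in $i$) to obtain, after taking the conditional expectation in $x_j$ using $\mathbb{E}|x_j|^k \le \nu_{2k}$,
\begin{equation*}
\mathbb{E}|d_j|^k \;\le\; C_k\Bigl(\nu_{2k}|a_{jj}|^k + \nu_{2k}\bigl[\textstyle\sum_{i<j}(|a_{ji}|^2 + |a_{ij}|^2)\bigr]^{k/2} + \nu_{2k}\sum_{i<j}(|a_{ji}|^k+|a_{ij}|^k)\Bigr).
\end{equation*}
Summing over $j$ and recognizing $\sum_{i,j}|a_{ij}|^2$ as $\operatorname{trace}(AA^*)$ while $\sum_{j}\bigl(\sum_i |a_{ij}|^2\bigr)^{k/2}$ and the analogous row-sum quantity are both dominated by $\operatorname{trace}\{(AA^*)^{k/2}\}$ (a consequence of the trace inequality $\sum_j \|\mathrm{col}_j(A)\|^k \le \operatorname{trace}((AA^*)^{k/2})$ for $k \ge 2$), the second Burkholder term is absorbed into $\nu_{2k}\operatorname{trace}\{(AA^*)^{k/2}\}$. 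Finally, for $1 \le k < 2$, Lyapunov's inequality $\mathbb{E}|Y|^k \le (\mathbb{E}|Y|^2)^{k/2}$ reduces the claim to the case $k = 2$, which the above argument already covers.

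The main obstacle is the second Burkholder term: reconciling the combinatorial structure of $\mathbb{E}|d_j|^k$ with the Schatten-type quantity $\operatorname{trace}\{(AA^*)^{k/2}\}$. The cleanest route is to apply Burkholder \emph{inside} the martingale differences --- i.e., an iterated martingale argument on the inner sums $\sum_{i<j} a_{ji}x_i$ --- rather than trying to expand moments combinatorially; the required column-sum trace inequality then follows from $\|M\|_k^k = \operatorname{trace}(M^*M)^{k/2}$ applied to $M = A$ together with monotonicity of Schatten norms, which is precisely the bookkeeping step where one must verify the constants are independent of $n$.
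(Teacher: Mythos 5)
The paper states this result (Lemma B.26 of Bai--Silverstein) without proof, so there is no in-paper argument to compare against; the question is whether your reconstruction stands on its own. Your martingale decomposition is correct, the explicit formula for $d_j$ is right, and your treatment of the second Burkholder term (iterated Burkholder on $\alpha_j = \sum_{i<j}a_{ji}x_i$, then the Schur--Horn majorization $\sum_j (A^*A)_{jj}^{k/2}\le\operatorname{trace}\{(A^*A)^{k/2}\}$) is essentially the standard argument. But there is a genuine gap in how you handle the first Burkholder term, $\mathbb{E}\big(\sum_j\mathbb{E}[|d_j|^2\mid\mathcal{F}_{j-1}]\big)^{k/2}$.

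Two problems. First, the conditional variance is not what you wrote: since $\alpha_j$ is $\mathcal{F}_{j-1}$-measurable, $\mathbb{E}[|\bar x_j\alpha_j|^2\mid\mathcal{F}_{j-1}]=|\alpha_j|^2 = \big|\sum_{i<j}a_{ji}x_i\big|^2$, which carries off-diagonal cross terms in $x_1,\dots,x_{j-1}$ — it does \emph{not} reduce to $\sum_{i<j}|a_{ji}|^2|x_i|^2$. Second, and more seriously, the passage from $\sum_j\mathbb{E}|d_j|^2\lesssim\nu_4\operatorname{trace}(AA^*)$ to $\mathbb{E}\big(\sum_j\mathbb{E}[|d_j|^2\mid\mathcal{F}_{j-1}]\big)^{k/2}\lesssim(\nu_4\operatorname{trace}(AA^*))^{k/2}$ via ``monotonicity of $L^{k/2}$-norms / Jensen'' is false for $k>2$: with $Y:=\sum_j\mathbb{E}[|d_j|^2\mid\mathcal{F}_{j-1}]\ge 0$, Jensen gives $(\mathbb{E}Y)^{k/2}\le\mathbb{E}Y^{k/2}$, which is a lower bound on the quantity you need, not an upper bound. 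You cannot control the $(k/2)$-th moment of a nonnegative random variable by its mean alone.

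The standard fix is to recognize $\sum_j|\alpha_j|^2 = x^*\tilde A^*\tilde A x$, where $\tilde A$ is the strictly lower-triangular part of $A$, so that it is itself a quadratic form to which the lemma can be applied inductively with exponent $k/2$ (splitting off $\operatorname{trace}(\tilde A^*\tilde A)\le\operatorname{trace}(AA^*)$ and bounding the centered remainder). Closing this recursion requires controlling $\operatorname{trace}\{(\tilde A^*\tilde A)^{k/2}\}$ by $\operatorname{trace}\{(AA^*)^{k/2}\}$; this is the Gohberg--Krein/Macaev bound $\|\tilde A\|_{S^k}\le C_k\|A\|_{S^k}$ for triangular truncation, $1<k<\infty$, an ingredient your sketch omits and which does not follow from Schur--Horn. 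With that result in hand the induction terminates in $O(\log_2 k)$ steps and yields a constant $C_k$ independent of $n$, completing the proof; without it the argument does not close.
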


\textbf{Simplification}: For deterministic matrix $A$ with $||A||_{op} < \infty$, let $B = \frac{A}{||A||_{op}}$. Then, $||B||_{op} = 1$ and by (\ref{R6}), we have $\operatorname{trace}(BB^*) \leq n||B||_{op}^2 = n$ and $\operatorname{trace}\{(BB^*)^\frac{k}{2}\} \leq n||B||_{op}^k = n$. By Lemma \ref{lA.5}, we have

\begin{align}\label{I1}
    &\mathbb{E}|x^*Bx - \operatorname{trace}(B)|^k 
    \leq C_k\bigg((\nu_4\operatorname{trace}(BB^*))^\frac{k}{2} + \nu_{2k}\operatorname{trace}\{(BB^*)^\frac{k}{2}\}\bigg)\\ \notag
    \implies & \frac{\mathbb{E}|x^*Ax - \operatorname{trace}(A)|^k }{||A||_{op}^k} \leq C_k[(n\nu_4)^\frac{k}{2} + n\nu_{2k}]\\ \notag
    \implies & \mathbb{E}|x^*Ax - \operatorname{trace}(A)|^k  \leq C_k||A||_{op}^k[(n\nu_4)^\frac{k}{2} + n\nu_{2k}].
\end{align}

We will be using this form of the inequality going forward.
\begin{lemma}\label{quadraticForm}
     Let $\{x_{jn}: 1 \leq j \leq n\}_{n =1}^\infty$ be a triangular array of complex valued random vectors in $\mathbb{C}^p$ with independent entries. For $1 \leq r \leq n$, denote the $r^{th}$ element of $x_{jn}$ as $x_{jn}^{(r)}$. Suppose $\mathbb{E}x_{jn}^{(r)} = 0, \mathbb{E}|x_{jn}^{(r)}|^2 = 1$ and for $k \geq 1$ and  $|x_{jn}| \leq n^b$ for some $0<b < \frac{1}{2}$. Suppose $A_j \in \mathbb{C}^{p \times p}$ is independent of $x_{jn}$ and $||A_j||_{op} \leq B$ a.s. for some $B > 0$. Then, $$\underset{1\leq j \leq n}{\max} \absmod{\dfrac{1}{n}x_{jn}^*Ax_{jn} - \dfrac{1}{n}\operatorname{trace}(A_j)} \xrightarrow{a.s} 0.$$
\end{lemma}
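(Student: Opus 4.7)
The plan is to apply the moment inequality of Lemma \ref{lA.5} (in the simplified form (\ref{I1})) together with a Markov/union-bound/Borel--Cantelli argument. Because $A_j$ is independent of $x_{jn}$, I would first condition on the $\sigma$-algebra generated by $\{A_j\}_{j=1}^n$, apply (\ref{I1}) pointwise in $A_j$ using the uniform bound $\|A_j\|_{op} \leq B$, and then take outer expectation. For any integer $k \geq 1$ this yields
\begin{align*}
\mathbb{E}\bigl|x_{jn}^{\ast} A_j x_{jn} - \operatorname{trace}(A_j)\bigr|^k
\;\leq\; C_k\, B^k \bigl[(n\nu_4)^{k/2} + n\nu_{2k}\bigr].
\end{align*}

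Next I would insert the moment bounds $\nu_4 \leq n^{4a}$ and $\nu_{2k} \leq n^{2ak}$ hypothesized in the statement, giving
\begin{align*}
\mathbb{E}\bigl|x_{jn}^{\ast} A_j x_{jn} - \operatorname{trace}(A_j)\bigr|^k
\;\leq\; C_k\, B^k \bigl[n^{k/2+2ak} + n^{1+2ak}\bigr]
\;\leq\; 2 C_k B^k\, n^{k/2 + 2ak}
\end{align*}
for all $k \geq 3$ (so that $k/2 \geq 1$). Then Markov's inequality combined with a union bound over $j=1,\ldots,n$ produces, for any $\varepsilon > 0$,
\begin{align*}
\mathbb{P}\!\left(\max_{1\leq j\leq n}\absmod{\tfrac{1}{n}x_{jn}^{\ast} A_j x_{jn} - \tfrac{1}{n}\operatorname{trace}(A_j)} > \varepsilon\right)
\;\leq\; \frac{n \cdot 2 C_k B^k n^{k/2+2ak}}{(n\varepsilon)^k}
\;=\; \frac{2 C_k B^k}{\varepsilon^k}\, n^{1 - k/2 + 2ak}.
\end{align*}

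The only real ``choice'' is to pick $k$ large enough for the exponent on $n$ to be strictly less than $-1$, so that the tail sum over $n$ converges and Borel--Cantelli applies. Since the hypothesis $a < 1/4$ gives $1/2 - 2a > 0$, this is always possible: any integer $k$ with $k > 4/(1-4a)$ satisfies $1 - k/2 + 2ak < -1$. With such a $k$, the bound is summable in $n$, Borel--Cantelli gives the a.s.\ convergence to zero, and since $\varepsilon > 0$ was arbitrary, the maximum tends to zero almost surely. I do not anticipate a genuine obstacle; the only subtlety is the conditioning step (to avoid an erroneous dependence of the randomness of $A_j$ on $x_{jn}$), and keeping track of the exponent algebra so that the threshold $a < 1/4$ is used in exactly the right place.
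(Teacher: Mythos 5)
Your proposal is correct and follows essentially the same route as the paper: union bound over $j$, Markov's inequality with the $k$-th moment, conditioning on $A_j$ to apply (\ref{I1}) with $\|A_j\|_{op}\leq B$, inserting $\nu_4\leq n^{4a}$ and $\nu_{2k}\leq n^{2ak}$, choosing $k$ large enough so the resulting power of $n$ is summable, and then Borel--Cantelli. The only cosmetic difference is that you absorb the two terms $n^{k/2+2ak}$ and $n^{1+2ak}$ into one (valid for $k\geq 2$) before choosing $k$, whereas the paper keeps them separate and requires $\min\{k(\tfrac12-2a)-1,\,k(1-2a)-2\}>1$; your threshold $k>4/(1-4a)$ dominates both and is equivalent in effect.
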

\begin{proof}
   Note that \begin{enumerate}
       \item $\nu_4 := \underset{j;n}{\sup} \mathbb{E}|x_{jn}|^4 \leq \sup n^{2b}\mathbb{E}|x_{jn}|^2 = n^{2b}$.
       \item In general, when $k \geq 2$, we similarly deduce that $\nu_{2k}= \underset{j;n}{\sup} \mathbb{E}|x_{jn}|^{2k} \leq n^{2b(k-1)}$.
   \end{enumerate}
For arbitrary $\delta > 0$ and $k \geq 1$, we have
\begin{align*}
p_n:= &\mathbb{P}\bigg(\underset{1\leq j \leq n}{\max} \left|\dfrac{1}{n}x_{jn}^*A_jx_{jn} - \dfrac{1}{n}\operatorname{trace}(A_j)\right| > \delta\bigg)\\
 &\leq \sum_{j=1}^{n} \mathbb{P}\bigg(\left|\dfrac{1}{n}x_{jn}^*A_jx_{jn} - \dfrac{1}{n}\operatorname{trace}(A_j)\right| > \delta\bigg) \text{, by union bound}\\
&\leq \sum_{j=1}^{n} \dfrac{\mathbb{E}\left|\dfrac{1}{n}x_{jn}^*A_jx_{jn} - \dfrac{1}{n}\operatorname{trace}(A_j)\right|^{k}}{\delta^{k}} \text{, for any } k \in \mathbb{N}\\
&= \sum_{j=1}^{n} \dfrac{\mathbb{E} \bigg(\mathbb{E} \bigg[|\dfrac{1}{n}x_{jn}^*A_jx_{jn} - \dfrac{1}{n}\operatorname{trace}(A_j)|^k\bigg|A_j\bigg]\bigg)}{\delta^{k}}\\
&\leq \sum_{j=1}^{n} \dfrac{\mathbb{E}||A_j||_{op}^{k}C_{k}((n\nu_4)^\frac{k}{2} + n\nu_{2k})}{n^{k}\delta^{k}}  \text{ by } (\ref{I1})\\
&\leq \sum_{j=1}^{n} \dfrac{K[(n^{1+2b})^\frac{k}{2} + n^{1+2b(k-1)}]}{n^{k}} \text{, where } K = C_k\bigg(\frac{B}{\delta}\bigg)^k,\\
&= \frac{K}{n^{k(\frac{1}{2}-b) - 1}} + \frac{K}{n^{k(1 - 2b) + 2b-2}}.
\end{align*}
Since $b < 0.5$ and the above inequality holds for arbitrary $k \in \mathbb{N}$, we can choose $k \in \mathbb{N}$ large enough so that $\min\{k(0.5-b) - 1, k(1-2b)+2b-2\} > 1$ to ensure that $\sum_{n=1}^\infty p_n$ converges. Therefore, by Borel Cantelli lemma, we have the result.
\end{proof}

\begin{corollary}\label{quadraticFormCorollary}
     Let $\{x_{jn}: 1 \leq j \leq n\}_{n =1}^\infty$ be as in Lemma \ref{quadraticForm}. Suppose all conditions in the lemma are satisfied, except $\mathbb{E}|x_{jn}^{(r)}|^2 = 1$. However, $\mathbb{E}|x_{jn}^{(r)}|^2$ converge uniformly to $1$. Then, $$\underset{1\leq j \leq n}{\max} \absmod{\dfrac{1}{n}x_{jn}^*A_jx_{jn} - \dfrac{1}{n}\operatorname{trace}(A_j)} \xrightarrow{a.s} 0.$$
\end{corollary}
\begin{proof}
    Let $a_{ii}^{(j)}$ be the $i^{th}$ diagonal element of $A_j$ and $\sigma_{j,i}^2 := \mathbb{E}|x_{jn}^{(i)}|^2$. Then, $\mathbb{E}x_{jn}^*A_jx_{jn} = \sum_{i=1}^p\sigma_{i,j}^2a_{ii}^{(j)}$.
    
    The proof follows upon observing that
\begin{align*}
    \frac{1}{n}\bigg|x_{jn}^*A_jx_{jn} - \operatorname{trace}(A_j)\bigg| \leq \frac{1}{n}\bigg|x_{jn}^*A_jx_{jn} - \sum_{i=1}^p\sigma_{i,j}^2a_{ii}^{(j)}\bigg| + \frac{1}{n}\bigg|\sum_{i=1}^p \sigma_{i,j}^2a_{ii}^{(j)} -\operatorname{trace}(A_j)\bigg|.
\end{align*}
The first term converges to 0 almost surely by Lemma \ref{quadraticForm}. The second term goes to $0$ deterministically due to the uniform convergence of $\sigma_{i,j}^2$ to 1.
\end{proof}

\color{black}
\begin{corollary}\label{quadraticForm_xy}
Let $\{u_{jn},v_{jn}:1 \leq j \leq n\}_{n=1}^\infty$ be triangular arrays and $A_j$ be complex matrices as in Lemma \ref{quadraticForm}/ Corollary \ref{quadraticFormCorollary} with $u_{jn}$ and $v_{jn}$ independent of each other. Then,  
$$\underset{1 \leq j \leq n}{\max}\absmod{\frac{1}{n}u_{jn}^*A_jv_{jn}}\xrightarrow{ a.s.} 0.$$
\end{corollary}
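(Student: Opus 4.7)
The plan is to reduce this bilinear form statement to the already-proved quadratic form case in Lemma \ref{quadraticForm} via a complex polarization identity. The key algebraic observation is
\[
u_{jn}^* A_j v_{jn} \;=\; \frac{1}{4}\sum_{k=0}^{3} i^{-k}\,(u_{jn}+i^k v_{jn})^* A_j (u_{jn}+i^k v_{jn}),
\]
which is verified by expanding each term and using $\sum_{k=0}^{3} i^k = \sum_{k=0}^{3} i^{-2k} = 0$. This expresses the bilinear form as a linear combination of four quadratic forms in the vectors $w_{k,j} := u_{jn} + i^k v_{jn}$, $k=0,1,2,3$.

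Next I would verify that each normalized vector $\tilde w_{k,j} := w_{k,j}/\sqrt{2}$ meets the hypotheses of Lemma \ref{quadraticForm}. Since $u_{jn}$ and $v_{jn}$ have independent entries among themselves and are independent of each other, $w_{k,j}$ has independent entries with mean zero and $\mathbb{E}|w_{k,j}^{(r)}|^2 = 1+1 = 2$, so $\tilde w_{k,j}^{(r)}$ has mean zero and unit variance. The higher moments satisfy
\[
\mathbb{E}|\tilde w_{k,j}^{(r)}|^{\ell} \;\leq\; 2^{\ell/2}\bigl(\mathbb{E}|u_{jn}^{(r)}|^{\ell} + \mathbb{E}|v_{jn}^{(r)}|^{\ell}\bigr) \;\leq\; C^{\ell}\,n^{a\ell},
\]
for an absolute constant $C$; the proof of Lemma \ref{quadraticForm} goes through verbatim with such a constant absorbed into the coefficient $K$, since only the exponents of $n$ control convergence.

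Applying Lemma \ref{quadraticForm} to each of the four triangular arrays $\{\tilde w_{k,j}\}$ against $\{A_j\}$ gives, for every $k \in \{0,1,2,3\}$,
\[
\max_{1\leq j \leq n}\absmod{\frac{1}{n}\tilde w_{k,j}^* A_j \tilde w_{k,j} - \frac{1}{n}\operatorname{trace}(A_j)} \xrightarrow{a.s.} 0,
\]
which is equivalent to
\[
\max_{1\leq j \leq n}\absmod{\frac{1}{n}w_{k,j}^* A_j w_{k,j} - \frac{2}{n}\operatorname{trace}(A_j)} \xrightarrow{a.s.} 0.
\]
Using $\sum_{k=0}^3 i^{-k} = 0$, the polarization identity rewrites as
\[
\frac{1}{n}u_{jn}^* A_j v_{jn} \;=\; \frac{1}{4}\sum_{k=0}^{3} i^{-k}\Bigl[\frac{1}{n}w_{k,j}^* A_j w_{k,j} - \frac{2}{n}\operatorname{trace}(A_j)\Bigr],
\]
so by the triangle inequality and Lemma \ref{lA.2} the maximum over $j$ of the left-hand side converges to $0$ almost surely.

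The only genuine work is the polarization bookkeeping (sign conventions in the complex conjugate) and the minor observation that Lemma \ref{quadraticForm} tolerates an extra constant factor in the moment bound $\nu_\ell \leq C^\ell n^{a\ell}$; neither presents a real obstacle, since the exponent $a < 1/4$ still controls the Borel--Cantelli tail sum.
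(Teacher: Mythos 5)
Your proof is correct, and the underlying idea — reduce the bilinear form to quadratic forms by polarization and then invoke Lemma \ref{quadraticForm} — is the same as the paper's. The difference is in how much of the polarization you carry out. The paper applies Lemma \ref{quadraticForm} to only two test vectors, $x_{jn}=\tfrac{1}{\sqrt 2}(u_{jn}+v_{jn})$ and $x_{jn}=\tfrac{1}{\sqrt 2}(u_{jn}+\mathbbm{i}v_{jn})$, obtaining two convergence statements that still contain the diagonal contributions $\tfrac12(Q_j(u,u)-T_j)$ and $\tfrac12(Q_j(v,v)-T_j)$; isolating $Q_j(u,v)$ from those two facts alone requires one additional (omitted but immediate) step, namely that the diagonal quadratic forms themselves concentrate around $T_j$ by a direct application of Lemma \ref{quadraticForm} to $u_{jn}$ and $v_{jn}$. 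Your full four-term identity $u^*Av=\tfrac14\sum_{k=0}^3 i^{-k}(u+i^kv)^*A(u+i^kv)$ side-steps this entirely, because the coefficient sums $\sum_k i^{-k}$ and $\sum_k i^{-2k}$ vanish, so both the diagonal terms and the centering constant $\tfrac{2}{n}\operatorname{trace}(A_j)$ cancel for free and you are left with $Q_j(u,v)$ directly. Your observation that the hypothesis $\nu_\ell\le n^{a\ell}$ in Lemma \ref{quadraticForm} can be relaxed to $\nu_\ell\le C^\ell n^{a\ell}$ is correct and necessary (the combined vector $(u+i^kv)/\sqrt 2$ has a slightly larger $\ell$-th moment than either factor); note that the same observation is implicitly needed in the paper's proof for the choices $\tfrac{1}{\sqrt 2}(u\pm v)$, $\tfrac{1}{\sqrt 2}(u\pm\mathbbm{i}v)$, so this is not an extra cost of your route. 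In short: same strategy, slightly cleaner bookkeeping on your side.
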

\begin{proof}
    Let $Q_j(u,v) := \frac{1}{n}u_{jn}^*A_jv_{jn}$. Define $Q_j(v,v), Q_j(u,u), Q_j(v,u)$ similarly. Let $x_{jn} = \frac{1}{\sqrt{2}}(u_{jn} + v_{jn})$. Now applying Lemma \ref{quadraticForm}/ Corollary \ref{quadraticFormCorollary}, we get
\begin{align}\label{sum}
    &\underset{1 \leq j \leq n}{\max}\absmod{\frac{1}{n}x_{jn}^*A_jx_{jn} - \frac{1}{n}\operatorname{trace}(A_j)} \xrightarrow{a.s.} 0\\ \notag
    \implies& \underset{1 \leq j \leq n}{\max}\absmod{\frac{1}{2}(Q_j(u,u) - T_j) + \frac{1}{2}(Q_j(v,v) - T_j) + \frac{1}{2}(Q_j(u,v) + Q_j(v,u))} \xrightarrow{a.s.} 0,
\end{align}
where $T_j := \frac{1}{n}\operatorname{trace}(A_j)$. Now setting $x_{jn} = \frac{1}{\sqrt{2}}(u_{jn} + \mathbbm{i}v_{jn})$ and applying Lemma \ref{quadraticForm}/ Corollary \ref{quadraticFormCorollary}, we get
\begin{align}\label{antisum}
    &\underset{1 \leq j \leq n}{\max}\absmod{\frac{1}{n}x_{jn}^*A_jx_{jn} - \frac{1}{n}\operatorname{trace}(A_j)} \xrightarrow{a.s.} 0\\\notag
    \implies& \underset{1 \leq j \leq n}{\max}\absmod{\frac{1}{2}(Q_j(u,u) - T_j) + \frac{1}{2}(Q_j(v,v) - T_j) + \frac{1}{2}\mathbbm{i}(Q_j(u,v) - Q_j(v,u))} \xrightarrow{a.s.} 0.
\end{align}
Using Lemma \ref{lA.2} on (\ref{sum}) and (\ref{antisum}), we get $\underset{1 \leq j \leq n}{\max}|Q_j(u,v)| \xrightarrow{a.s.} 0$.
\end{proof}

\begin{lemma} \label{Rank2Woodbury}
\textbf{2-rank perturbation equality:}
Let $B \in \mathbb{C}^{p \times p}$ be of the form $B = A - zI_p$ for some skew-Hermitian matrix $A$ and $z \in \mathbb{C}_L$. For vectors $u, v \in \mathbb{C}^p$, define $\langle u, v\rangle := u^*B^{-1}v$. Then,
\begin{description}
    \item[1] $(B+ uv^{*} - vu^{*})^{-1}u = B^{-1}(\alpha_1 u + \beta_1 v)$; $\alpha_1 = (1 - \langle u,v\rangle)D(u,v)$; 
     $\beta_1 = \langle u,u\rangle D(u,v),$
    \item[2] $(B+ uv^{*} - vu^{*})^{-1}v = B^{-1}(\alpha_2 v + \beta_2 u)$; $\alpha_2 = (1 + \langle v,u\rangle) D(u,v)$; $\beta_2 = -\langle v, v\rangle D(u,v),$
\end{description}
 where $D(u,v) =  \bigg((1 - \langle u,v\rangle)(1 + \langle v,u\rangle) +  \langle u,u\rangle\langle v,v\rangle\bigg)^{-1}.$
\end{lemma}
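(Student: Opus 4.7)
The plan is to treat the skew-Hermitian perturbation $uv^* - vu^*$ as a rank-$2$ update to $B$ and invoke the Sherman--Morrison--Woodbury identity. Concretely, I set $U = [u\ v] \in \mathbb{C}^{p\times 2}$ and $V = [v\ -u] \in \mathbb{C}^{p\times 2}$, so that $UV^* = uv^* - vu^*$. Both $B$ and $B + UV^*$ are invertible because $A$ and $A + uv^* - vu^*$ are skew-Hermitian, hence have spectrum on the imaginary axis, so every eigenvalue of either shifted matrix has real part $-\Re(z) > 0$. Woodbury then gives
\[
(B+UV^*)^{-1} = B^{-1} - B^{-1}U\,(I_2 + V^*B^{-1}U)^{-1}\,V^*B^{-1}.
\]

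The bulk of the work is computing the $2\times 2$ object $V^*B^{-1}U$ in bracket notation. A direct block calculation yields
\[
I_2 + V^*B^{-1}U = \begin{pmatrix} 1 + \langle v, u\rangle & \langle v, v\rangle \\ -\langle u, u\rangle & 1 - \langle u, v\rangle \end{pmatrix},
\]
whose determinant is precisely $1/D(u,v)$ by inspection. Inverting via the standard $2\times 2$ cofactor formula puts the Woodbury identity into a fully explicit form.

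To extract the claim of part 1, I right-multiply by $u$, using $V^*B^{-1}u = (\langle v,u\rangle, -\langle u,u\rangle)^T$. Feeding this through $(I_2 + V^*B^{-1}U)^{-1}$ and then premultiplying by $U = [u\ v]$ produces a linear combination $c_1 u + c_2 v$; applying $B^{-1}$ converts $(B+UV^*)^{-1}u$ into $B^{-1}(\alpha_1 u + \beta_1 v)$. The coefficient $\beta_1 = D(u,v)\langle u,u\rangle$ drops out immediately. For $\alpha_1$ the only nontrivial algebra is recognising
\[
1 - D(u,v)\bigl[(1-\langle u,v\rangle)\langle v,u\rangle + \langle u,u\rangle\langle v,v\rangle\bigr] = (1-\langle u,v\rangle)\,D(u,v),
\]
which follows by expanding $1/D(u,v) = (1-\langle u,v\rangle)(1+\langle v,u\rangle) + \langle u,u\rangle\langle v,v\rangle$ and cancelling the terms involving $\langle v,u\rangle$. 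Part 2 is handled identically after replacing $V^*B^{-1}u$ by $V^*B^{-1}v = (\langle v,v\rangle, -\langle u,v\rangle)^T$; the sign flip then produces the $+\langle v,u\rangle$ in $\alpha_2$ and the minus in $\beta_2$.

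The main obstacle is purely bookkeeping — keeping track of the minus signs coming from $V = [v,-u]$ and performing the scalar cancellation displayed above; no analytic content is needed beyond Woodbury's identity and the fact that the two shifted matrices are invertible.
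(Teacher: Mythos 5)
Your proposal is correct and follows essentially the same route as the paper: both apply Woodbury's identity with the factorization $uv^* - vu^* = UV^*$, $U=[u\ v]$, $V=[v\ -u]$, compute the same $2\times 2$ matrix $I_2 + V^*B^{-1}U$ (identical to the paper's $I_2 + R^*P^{-1}Q$), identify its determinant as $1/D(u,v)$, and invert by cofactors. The only cosmetic difference is that the paper computes $(B+uv^*-vu^*)^{-1}[u:v]$ in one block step while you right-multiply by $u$ and $v$ separately; the underlying calculation and the scalar cancellation that yields $\alpha_1 = (1-\langle u,v\rangle)D(u,v)$ are the same.
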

\begin{proof}
Clearly, $B$ cannot have zero as eigenvalue. So $\langle u,v \rangle$ is well-defined. For $P \in \mathbb{C}^{p\times p}, Q,R \in  \mathbb{C}^{p\times n}$ with $P + QR^*$ and $P$ being invertible, we use Woodbury's formula to get the following:
\begin{align}\label{ShermanMorrison}
  (P + QR^*)^{-1} &= P^{-1} - P^{-1}Q(I_n + R^*P^{-1}Q)^{-1}R^*P^{-1}\\ \notag
  \implies (P + QR^*)^{-1}Q &= P^{-1}Q - P^{-1}Q(I_n + R^*P^{-1}Q)^{-1}R^*P^{-1}Q\\ \notag
  &= P^{-1}Q\bigg(I_n - (I_n + R^*P^{-1}Q)^{-1}R^*P^{-1}Q\bigg).
\end{align}

Let $P = B$, $Q=[u:v]$ and $R=[v: -u]$. Note that $\operatorname{det}(I_2 + R^*P^{-1}Q)^{-1} = D(u,v)$. So, $D(u,v)$ is well-defined. Finally, observing that $B + uv^*-vu^* = P + QR^*$, we use (\ref{ShermanMorrison}) to get:
\begin{align*}
    &(B + uv^*-vu^*)^{-1}[u:v]\\
    =& B^{-1}[u:v]\bigg(I_2 - (I_2 + R^*P^{-1}Q)^{-1}R^*P^{-1}Q\bigg)\\
    =& B^{-1}[u:v] \bigg(I_2 -{\begin{bmatrix}
        1+\langle v,u \rangle & \langle v,v \rangle\\
        -\langle u,u \rangle & 1-\langle u,v \rangle
    \end{bmatrix}}^{-1}\begin{bmatrix}
        \langle v,u \rangle &  \langle v,v \rangle\\
        -\langle u,u \rangle &  -\langle u,v \rangle\\        
    \end{bmatrix}\bigg)\\
    =&  B^{-1}[u:v] \bigg(I_2 - D(u,v)\begin{bmatrix}
        1-\langle u,v \rangle & -\langle v,v \rangle\\
        \langle u,u \rangle & 1+\langle v,u \rangle
    \end{bmatrix}\begin{bmatrix}
        \langle v,u \rangle &  \langle v,v \rangle\\
        -\langle u,u \rangle &  -\langle u,v \rangle\\        
    \end{bmatrix}\bigg)\\
    =& D(u,v)B^{-1}[u:v] \begin{bmatrix}
        \alpha_1 & \beta_2\\
        \beta_1 & \alpha_2
    \end{bmatrix}.
\end{align*}
\end{proof}

\section{Intermediate Results}
\subsection{Results related to proof of uniqueness in (\ref{h_main_eqn})}
\begin{definition}\label{defining_Irs}
For $\textbf{h} = (h_1, h_2) \in \mathbb{C}_R^2$ and $r,s \in \mathbb{N}\cup \{0\}$, we define:
$$I_{r,s}(\textbf{h}(z),H) := \displaystyle \int \frac{\lambda_1^r\lambda_2^sdH(\boldsymbol{\lambda})}{|-z + \boldsymbol{\lambda}^T\boldsymbol{\rho}(c\textbf{h})|^2}.$$    
\end{definition}
Fix $z = -u+\mathbbm{i}v \in \mathbb{C}_L$ with $u >0$. Suppose $\textbf{h} = (h_1, h_2)$ satisfy (\ref{h_main_eqn}). With the above definition, we observe that,
\begin{align}
    \Re(h_1) &= \int \frac{\lambda_1\Re(\overline{-z+\boldsymbol{\lambda}^T\boldsymbol{\rho}(c\textbf{h}))}}{|-z+\boldsymbol{\lambda}^T\boldsymbol{\rho}(c\textbf{h})|^2}dH(\boldsymbol{\lambda})\notag\\ 
    &= u\int \frac{\lambda_1dH(\boldsymbol{\lambda})}{|-z+\boldsymbol{\lambda}^T\boldsymbol{\rho}(c\textbf{h})|^2} + \Re(\rho_1(c\textbf{h}))\int \frac{\lambda_1^2dH(\boldsymbol{\lambda})}{|-z+\boldsymbol{\lambda}^T\boldsymbol{\rho}(c\textbf{h})|^2} + \Re(\rho_2(c\textbf{h}))\int \frac{\lambda_1\lambda_2dH(\boldsymbol{\lambda})}{|-z+\boldsymbol{\lambda}^T\boldsymbol{\rho}(c\textbf{h})|^2}\notag\\
    &= uI_{1,0}(\textbf{h}, H) + \Re(\rho_1(c\textbf{h}))I_{2,0}(\textbf{h}, H)+ \Re(\rho_2(c\textbf{h}))I_{1,1}(\textbf{h}, H).\label{real_of_h1}
\end{align}

Similarly, we get
\begin{align}
    \Re(h_2) &= uI_{0,1}(\textbf{h},H) + \Re(\rho_1(ch))I_{1,1}(\textbf{h},H) + \Re(\rho_2(ch))I_{0,2}(\textbf{h},H),\\ \notag
    \Im(h_1) &= uI_{1,0}(\textbf{h},H) - \Im(\rho_1(ch))I_{2,0}(\textbf{h},H) - \Im(\rho_2(ch))I_{1,1}(\textbf{h},H), \text{ and}\\ \notag
    \Im(h_2) &= uI_{0,1}(\textbf{h},H) + \Im(\rho_1(ch))I_{1,1}(\textbf{h},H) - \Im(\rho_2(ch))I_{0,2}(\textbf{h},H).
\end{align}

\begin{lemma}\label{Lipschitz}
    \textbf{(Lipschitz within an isosceles sector)}: Recall the definition of $\mathcal{S}(b)$ from (\ref{defining_sector}). For $0 < b$, the functions $\rho_1, \rho_2$ are Lipschitz continuous on $\mathcal{S}(b)^2 = \mathcal{S}(b) \times \mathcal{S}(b)$.
\end{lemma}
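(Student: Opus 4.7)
The plan is to reduce the assertion to two standard ingredients: (i) a lower bound on the denominator $1+z_1 z_2$ that keeps it away from zero on $\mathcal{S}(b)^2$, and (ii) boundedness of the partial derivatives of $\rho_1, \rho_2$, combined with convexity of $\mathcal{S}(b)^2$ so that an integrated mean value estimate along line segments yields a Lipschitz constant.

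The geometric heart of the argument is the lower bound $|1+z_1 z_2| \geq 1$ for all $(z_1,z_2) \in \mathcal{S}(b)^2$. I would argue as follows: since $z_j \in \mathcal{S}(b)$ implies $\Re(z_j) > 0$ and $|\Im(z_j)| \leq \Re(z_j)$, we have $\arg(z_j) \in [-\pi/4, \pi/4]$, so $\arg(z_1 z_2) \in [-\pi/2, \pi/2]$. Consequently $\Re(z_1 z_2) \geq 0$, which gives $\Re(1 + z_1 z_2) \geq 1$ and hence $|1+z_1 z_2| \geq 1$. This is the only place where the ``isosceles'' feature of the sector is genuinely used.

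Given this, a direct computation of partial derivatives yields
\[
\partial_{z_1}\rho_1 = \frac{-z_2^2}{(1+z_1 z_2)^2}, \qquad \partial_{z_2}\rho_1 = \frac{1}{(1+z_1 z_2)^2},
\]
and the analogous formulas for $\rho_2$ (by the symmetry $\rho_2(z_1,z_2) = \rho_1(z_2,z_1)$). Using $|z_j| \leq b$ and the lower bound from the previous step, each of these four derivatives is bounded in modulus by $\max(1,b^2)$ on $\mathcal{S}(b)^2$.

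Finally, $\mathcal{S}(b)$ is convex as the intersection of the convex sector $\{|\Im(z)| \leq \Re(z)\}$ with the convex disk $\{|z|\leq b\}$ (the excluded imaginary axis causes no issue since any convex combination of points with $\Re > 0$ still has $\Re > 0$), and therefore $\mathcal{S}(b)^2$ is convex too. For $(z_1,z_2), (z_1',z_2') \in \mathcal{S}(b)^2$ I would split
\[
\rho_j(z_1,z_2) - \rho_j(z_1',z_2') = \bigl[\rho_j(z_1,z_2)-\rho_j(z_1',z_2)\bigr] + \bigl[\rho_j(z_1',z_2)-\rho_j(z_1',z_2')\bigr],
\]
and estimate each difference by integrating the corresponding partial derivative along the line segment (which lies in $\mathcal{S}(b)$ by convexity) and using the bound from the previous paragraph. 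The triangle inequality then gives a Lipschitz constant depending only on $b$. There is no substantive obstacle here; the only point worth flagging is the geometric lemma $|1+z_1 z_2| \geq 1$, which is what makes the specific aperture $\pi/4$ of the sector $\mathcal{S}(b)$ relevant.
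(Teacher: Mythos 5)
Your proof is correct, and it reaches the same Lipschitz constant $\max(1,b^2)$ as the paper, but the route is genuinely different after the shared key step. Both arguments hinge on the denominator bound $|1+z_1 z_2|\geq 1$ on $\mathcal{S}(b)^2$, which rests on $\Re(z_1 z_2)\geq 0$; the paper simply asserts this as ``clear'', whereas you spell out the reason (arguments of $z_1,z_2$ lie in $[-\pi/4,\pi/4]$, so their product has argument in $[-\pi/2,\pi/2]$), which is a small improvement in exposition. After that the paths diverge: the paper manipulates the explicit algebraic difference
\[
\rho_1(\textbf{h})-\rho_1(\textbf{g})=\frac{(h_2-g_2)+h_2 g_2\,(g_1-h_1)}{(1+h_1 h_2)(1+g_1 g_2)}
\]
and bounds each term directly using $|\rho_1|\leq b$ and the denominator bound, while you compute the holomorphic partials $\partial_{z_1}\rho_1$, $\partial_{z_2}\rho_1$, bound them by $b^2$ and $1$ respectively, verify convexity of $\mathcal{S}(b)$ (intersection of two half-planes with a disk, and the excluded vertex $0$ cannot lie on a chord between interior points), and integrate along line segments. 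Your approach is the more standard calculus argument and generalizes readily, but requires the extra convexity observation and an appeal to differentiation under the integral; the paper's algebraic identity is more self-contained and avoids any geometric regularity of the domain beyond the single bound on the denominator. Both are valid.
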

\begin{proof}
Let $\textbf{h}=(h_1, h_2), \textbf{g} = (g_1, g_2) \in \mathcal{S}(b)^2$. First, we establish a bound for $|1 + h_1h_2|^{-1}$ and $|1 + g_1g_2|^{-1}$. Clearly $\Re(h_1h_2) \geq 0$ and therefore,
\begin{align}
    \frac{1}{|1+ h_1h_2|} = \frac{1}{\sqrt{(1+\Re(h_1h_2))^2 + \Im^2(h_1h_2)}} \leq 1.
\end{align}
The same bound works for $|1 + g_1g_2|^{-1}$ as well. We have,
\begin{align}
|\rho_1(\textbf{h})| = |\rho_1(h_1, h_2)| = \absmod{\frac{h_2}{1+h_1h_2}} \leq b.
\end{align}
Therefore, we observe that
    \begin{align*}
        \absmod{\rho_1(\textbf{h}) - \rho_1(\textbf{g})}
        &= \absmod{\frac{h_2}{1+h_{1}h_{2}} - \frac{g_2}{1+g_{1}g_{2}}}\\
        &= \absmod{\frac{(h_2 - g_2) + h_2g_2(g_1-h_1)}{(1+h_1h_2)(1+g_1g_2)}}\\
        &\leq \absmod{\frac{h_2 - g_2}{(1+h_1h_2)(1+g_1g_2)}} + |\rho_1(\textbf{h})||\rho_1(\textbf{g})|g_1-h_1|\\
        &\leq |h_2-g_2| + b^2|h_1-g_1|\\
        &\leq K_0||\textbf{h}-\textbf{g}||_1 \text{, where } K_0:= \max\{1, b^2\}.
    \end{align*}
\end{proof}

\subsubsection{\textbf{Proof of Lemma \ref{location_of_solution_General}}}\label{sec:ProofOflocation_of_solution_General}

\begin{proof}
Note that $\boldsymbol{\rho}(c\textbf{h}) \in \mathbb{C}_R^2$ follows from Remark \ref{remark_real_of_rho}. Therefore, for $\boldsymbol{\lambda} \in \mathbb{R}_+^2$, we have
\begin{align}
    |-z + \boldsymbol{\lambda}^T\boldsymbol{\rho}(c\textbf{h})| \geq |u + \lambda_1\Re(\rho_1(c\textbf{h})) + \lambda_2\Re(\rho_2(c\textbf{h}))| \geq u.
\end{align}

Therefore, using (\ref{defining_C0}), we have
\begin{align}
    |h_k(z)| \leq \int \absmod{\frac{\lambda_k}{-z + \boldsymbol{\lambda}^T\boldsymbol{\rho}(c\textbf{h})}}dH(\boldsymbol{\lambda}) \leq \int \frac{\lambda_kdH(\boldsymbol{\lambda})}{u} \leq \frac{C_0}{u}.
\end{align}

 For arbitrary $\epsilon > 0$, there exist $\delta(\epsilon)>0$ such that $|\theta_1|,|\theta_2| < \delta(\epsilon) \implies |\rho_k(\theta_1, \theta_2)| < \epsilon$. Without loss of any generality, we can choose $\delta(\epsilon) < 1$. By choosing $u > cC_0/\delta(\epsilon)$, we can ensure that $|ch_k(z)| < \delta(\epsilon)$. Then for such $z$ and $k=1,2$, we have 
\begin{align}\label{rho_bound}
   |\rho_k(c\textbf{h})|= |\rho_k(ch_1, ch_2)| < \epsilon.
\end{align}

Now by (\ref{real_of_h1}), we have
\begin{align}
  \Re(h_1(z)) &=\, uI_{1,0}(\textbf{h}, H) + \Re(\rho_1(c\textbf{h}))I_{2,0}(\textbf{h}, H)+ \Re(\rho_2(c\textbf{h}))I_{1,1}(\textbf{h}, H) \label{real_h1}, \text{ and}\\
  \Im(h_1(z)) &=\, vI_{1,0}(\textbf{h}, H) - \Im(\rho_1(c\textbf{h}))I_{2,0}(\textbf{h}, H)- \Im(\rho_2(c\textbf{h}))I_{1,1}(\textbf{h}, H). \label{im_h1}
\end{align}

Now,  note that $I_{1,1}(\textbf{h},H) \leq \sqrt{I_{2,0}(\textbf{h},H)I_{0,2}(\textbf{h},H)}$ is immediate from the Cauchy-Schwarz inequality. Using $\boldsymbol{T}_5$ of Theorem \ref{mainTheorem}, we observe that \begin{align}
    I_{2,0}(\textbf{h},H) = \int \frac{\lambda_1^2dH(\boldsymbol{\lambda})}{|-z + \boldsymbol{\lambda}^T\boldsymbol{\rho(c\textbf{h})}|^2} \leq \frac{1}{u^2}\int \lambda_1^2dH(\boldsymbol{\lambda}) \leq \frac{D_0}{u^2}.
\end{align}
Similarly, $I_{0,2}(\textbf{h},H) \leq D_0/u^2$ and therefore, $I_{1,1}(\textbf{h},H) \leq D_0/u^2$.
From (\ref{rho_bound}), we have 
$$\max \,\{|\Re(\rho_1(c\textbf{h})), |\Re(\rho_2(c\textbf{h}))|, |\Im(\rho_1(c\textbf{h})), |\Im(\rho_2(c\textbf{h}))|\} < \epsilon.$$

By similar arguments, we also have $I_{1,0}(\textbf{h}, H)| \, \leq C_0/u^2$. Then it turns out that
\begin{align}
|\Im(h_1(z))| &\leq \, I_{1,0} + |\Im(\rho_1(c\textbf{h}))\,|I_{2,0} + |\Im(\rho_2(c\textbf{h}))\,|I_{1,1} \leq \frac{C_0}{u^2} + \frac{2D_0\epsilon}{u^2}.
\end{align}
By (\ref{real_h1}), we observe that for arbitrary $M > 0$, we have
\begin{align}\label{lower_bound_of_real_of_h1}
    \Re(h_1(z)) &\geq \, uI_{(1,0)}(\textbf{h},H)\\ \notag
    &= \int_{\mathbb{R}_+^2} \frac{u\lambda_1dH(\boldsymbol{\lambda})}{|-z + \boldsymbol{\lambda}^T\boldsymbol{\rho}(c\textbf{h})|^2}\\ \notag
    &\geq \int_{[0, M]^2} \frac{u\lambda_1dH(\boldsymbol{\lambda})}{|-z + \boldsymbol{\lambda}^T\boldsymbol{\rho}(c\textbf{h})|^2}\\ \notag    
    &\geq \int_{[0, M]^2} \frac{u\lambda_1 dH(\boldsymbol{\lambda})}{(|z| + |\boldsymbol{\lambda}^T\boldsymbol{\rho}(c\textbf{h})|)^2}\\\notag
    &\geq \,  \frac{u\int_{[0, M]^2} \lambda_1dH(\boldsymbol{\lambda})}{(|z|+2M\epsilon)^2}\notag,
\end{align}
where, we used the fact that $|\boldsymbol{\lambda}^T\boldsymbol{\rho}(c\textbf{h})| \leq ||\boldsymbol{\lambda}||_2 ||\boldsymbol{\rho}(c\textbf{h})||_2 \leq 2M\epsilon$ by the Cauchy-Schwarz inequality. 

To produce a positive lower bound, we define the following quantity
\begin{align}\label{defining_C0_min}
    E_0 := \underset{k=1,2}{\min} \int \lambda_kdH(\boldsymbol{\lambda}),
\end{align}
and, by our choice of $H$ in Theorem \ref{mainTheorem}, we have $E_0 > 0$. $M=M_H>0$ will be chosen, depending on $H$ such that
$$\int_{[0, M]^2} \lambda_1dH(\boldsymbol{\lambda}) \geq \frac{1}{2}\int_{\mathbb{R}_+^2} \lambda_1dH(\boldsymbol{\lambda}).$$ 

Now, we derive some precise bounds for the numerator and the denominator of the RHS of (\ref{lower_bound_of_real_of_h1}). Since $|v| \leq u$,  choosing $z = -u + \mathbbm{i}v$ with $u \geq 2$ and, $\epsilon=1/M$ gives us 
\begin{align}\label{denominator_bound}
    (|z| + 2)^2 = |z|^2 + 4|z| + 4 \leq |z|^2 + 4|z|^2 + |z|^2 \leq 6|z|^2 \leq 6(u^2 + u^2) = 12u^2.
\end{align}

Therefore, we get
\begin{align}
    \Re(h_1(z)) \geq \frac{u\int_{[0, M]^2} \lambda_1dH(\boldsymbol{\lambda})}{(|z|+2)^2} \geq \frac{u\int_{\mathbb{R}_+^2}\lambda_1dH(\boldsymbol{\lambda})/2}{12u^2} \geq \frac{uE_0}{24u^2} = \frac{E_0}{24u}.
\end{align}
Denoting $\epsilon_H = 1/M_H$, we define the following quantity:
\begin{align}\label{cutoff_for_u}
 U_0 := \max\bigg\{2, 
\dfrac{cC_0}{\delta(\epsilon_H)}, \dfrac{24(C_0 + 2D_0\epsilon_H)}{E_0}\bigg\}.
\end{align}
Combining everything we conclude that when $u > U_0$ and $|v| < u$, then for $z = -u + \mathbbm{i}v$ and $k=1,2$, we must have $|\Im(h_k(z))| \leq \Re(h_k(z))$. We emphasize the fact that $\epsilon > 0$ in (\ref{cutoff_for_u}) depends on $H$.
\end{proof}

\begin{remark}\label{remark_lipschitz_constant_1}
    If $u > U_0$, we have 
\begin{align*}
    \frac{cC_0}{u} \leq \frac{cC_0}{U_0} \leq \frac{cC_0}{cC_0/\delta(\epsilon_H)} =\delta(\epsilon_H) < 1,
\end{align*}
since, we chose $\delta < 1$ without loss of generality. Then, setting $b = cC_0/u$ in Lemma \ref{Lipschitz}, we conclude that the Lipschitz constant for $\rho_k(\cdot, \cdot)$ in the region $\mathcal{S}(b) \times \mathcal{S}(b)$ must be equal to $K_0= \max\{1,b^2\} = 1$. 
\end{remark}

\subsubsection{Proof of Theorem \ref{Uniqueness}}\label{sec:ProofOfUniqueness}
\begin{proof}
Suppose there exists two distinct analytic solutions $\textbf{h} = (h_1, h_2)$ and $\textbf{g}= (g_1, g_2)$ to (\ref{h_main_eqn}) and they both map $\mathbb{C}_L$ to $\mathbb{C}_R^2$. We start with a sketch of the proof.
\begin{itemize}
    \item[1] Define the quantity 
    \begin{align}\label{defining_R0}
        R_0 := \max\{U_0, 2\sqrt{cD_0}\},
    \end{align}
    where $U_0$ was defined in (\ref{cutoff_for_u}) in the proof of Lemma \ref{location_of_solution_General}. Let $z = -u+\mathbbm{i}v \in \mathbb{C}_L$ with $|v| < u$ and $u > R_0$. By the same Lemma, any solution of (\ref{h_main_eqn}) lies in $\mathcal{S}(C_0/u)^2 = \mathcal{S}(C_0/u) \times \mathcal{S}(C_0/u)$.
    \item[2] In particular, $c\textbf{h}, c\textbf{g} \in \mathcal{S}(cC_0/u)^2$. By Remark \ref{remark_lipschitz_constant_1}, $\rho_1, \rho_2$ are Lipschitz continuous on $\mathcal{S}(cC_0/u)^2$ with Lipschitz constant equal to unity.
    \item[3] We will first show that $g_k(z) = h_k(z);k=1,2$ for $z$ as defined in item 1.
    \item[4] By the uniqueness of analytic extensions, we must have $g_k(z) = h_k(z)$ for all $z \in \mathbb{C}_L$.
\end{itemize}

To show item 3, note that
\begin{align*}
    g_1 - h_1 &= \int \frac{\lambda_1dH(\boldsymbol{\lambda})}{-z+\boldsymbol{\lambda}^T\boldsymbol{\rho}(c\textbf{g})} - \int \frac{\lambda_1dH(\boldsymbol{\lambda})}{-z+\boldsymbol{\lambda}^T\boldsymbol{\rho}(c\textbf{h})}\\
    &= \int \frac{\lambda_1\boldsymbol{\lambda}^T(\boldsymbol{\rho}(c\textbf{h}) - \boldsymbol{\rho}(c\textbf{g})))dH(\boldsymbol{\lambda})}{[-z+\boldsymbol{\lambda}^T\boldsymbol{\rho}(c\textbf{g})][-z+\boldsymbol{\lambda}^T\boldsymbol{\rho}(c\textbf{h})]}\\
    &=\int \frac{\bigg(\lambda_1^2(\rho_1(c\textbf{h})-\rho_1(c\textbf{g})) + \lambda_1\lambda_2(\rho_2(c\textbf{h})-\rho_2(c\textbf{g}))\bigg)dH(\boldsymbol{\lambda})}{[-z+\boldsymbol{\lambda}^T\boldsymbol{\rho}(c\textbf{g})][-z+\boldsymbol{\lambda}^T\boldsymbol{\rho}(c\textbf{h})]}.
\end{align*}

We have $cg_1,cg_2,ch_1,ch_2 \in \mathcal{S}(cC_0/u)$ and $\rho_1, \rho_2$ are Lipschitz continuous with constant $K_0=1$. Now using $\Ddot{H}$older's Inequality, we get
\begin{align*}
    |g_1-h_1|
    &\leq \int \frac{\bigg(\lambda_1^2|\rho_1(c\textbf{h})-\rho_1(c\textbf{g})| + \lambda_1\lambda_2|\rho_2(c\textbf{h})-\rho_2(c\textbf{g})|\bigg)dH(\boldsymbol{\lambda})}{|-z+\boldsymbol{\lambda}^T\boldsymbol{\rho}(c\textbf{g})| \times |-z+\boldsymbol{\lambda}^T\boldsymbol{\rho}(c\textbf{h})|}\\
    &\leq K_0||c\textbf{h}-c\textbf{g}||_1 \int \frac{(\lambda_1^2+\lambda_1\lambda_2)dH(\boldsymbol{\lambda})}{|-z+\boldsymbol{\lambda}^T\boldsymbol{\rho}(c\textbf{g})| \times |-z+\boldsymbol{\lambda}^T\boldsymbol{\rho}(c\textbf{h})|} \\
    &\leq cK_0||\textbf{h}-\textbf{g}||_1\bigg(\sqrt{I_{2,0}(\textbf{g}, H)I_{2,0}(\textbf{h},H)} +\sqrt{I_{2,0}(\textbf{g},H)I_{0,2}(\textbf{h},H)}\bigg).
\end{align*}

Similarly, we get 
\begin{align*}
    |g_2 - h_2| \leq cK_0||\textbf{h}-\textbf{g}||_1\bigg(\sqrt{I_{0,2}(\textbf{g},H)I_{2,0}(\textbf{h},H)} + \sqrt{I_{0,2}(\textbf{g},H)I_{0,2}(\textbf{h},H)}\bigg).
\end{align*}

Then, using the inequality $\sqrt{wx} + \sqrt{yz} \leq \sqrt{w+y}\sqrt{x+z}$ for $w,x,y,z, \geq 0$, we have
\begin{align}\label{the_great_equation}
    ||\textbf{h}-\textbf{g}||_1 \leq 2cK_0||\textbf{h}-\textbf{g}||_1\underbrace{\bigg(\sqrt{I_{2,0}(\textbf{g},H)+I_{0,2}(\textbf{g},H)}\sqrt{I_{2,0}(\textbf{h},H)+I_{0,2}(\textbf{h},H)}\bigg)}_{:=P_0}.
\end{align}

Now note that with $D_0$ as specified in (\ref{secondSpectralBound}), we have
\begin{align}\label{I_20_bound}
    &I_{2,0}(\textbf{h},H) = \int \frac{\lambda_1^2dH(\boldsymbol{\lambda})}{|-z + \boldsymbol{\lambda}^T\boldsymbol{\rho}(c\textbf{h})|^2} \leq \frac{D_0}{u^2} \text{ and}\\\notag
    &I_{0,2}(\textbf{h},H) = \int\frac{\lambda_2^2dH(\boldsymbol{\lambda})}{|-z + \boldsymbol{\lambda}^T\boldsymbol{\rho}(c\textbf{h})|^2} \leq \frac{D_0}{u^2}\\\notag
     \implies& P_0 \leq \frac{2D_0}{u^2}.
\end{align}
Therefore we have,
\begin{align}
 2cK_0P_0 \leq \frac{4cD_0}{u^2} < 1 \text{, when } u > R_0.
\end{align}
Now (\ref{the_great_equation}) implies that 
$||\textbf{h}-\textbf{g}||_1 < ||\textbf{h}-\textbf{g}||_1$ which is a contradiction. Therefore, for $z \in \mathbb{C}_L$ with (absolute value of) real part larger than $R_0$, we have established uniqueness of the solution to (\ref{h_main_eqn}).

So for $u = |\Re(z)| > R_0$ and $|v|<u$, we have $\textbf{h}(z) = \textbf{g}(z)$. Now observe that $h_1, h_2, g_1, g_2$ are all analytic functions on $\mathbb{C}_L$. For $k=1,2$, $h_k$ and $g_k$ agree whenever $|\Re(z)| > M_0$ and in particular over an open subset of $\mathbb{C}_L$. This implies that $h_k = g_k$ over all of $\mathbb{C}_L$ by the Identity Theorem. Thus $\textbf{h}(z)=\textbf{g}(z), \forall\, z \in \mathbb{C}_L$.
\end{proof}

\subsection{\textbf{Results related to proof of existence in (\ref{h_main_eqn})}}

\subsubsection{Proof of Lemma \ref{tightness}}\label{sec:proofOfTightness}
\begin{proof}
For $n \in \mathbb{N}$, define the following:
\begin{align*}
 A_n := \frac{1}{\sqrt{n}}\begin{bmatrix}
    Z_{1n} & 0\\
    0 & Z_{2n}
\end{bmatrix};\quad B_n := \frac{1}{\sqrt{n}}\begin{bmatrix}
    Z_{2n}^* & 0\\
    0 & -Z_{1n}^*
\end{bmatrix};\quad P_n := [\Sigma_{1n}^\frac{1}{2}:\Sigma_{2n}^\frac{1}{2}];\quad Q_n := \begin{bmatrix}
    (\Sigma_{2n}^\frac{1}{2})^*\\
    (\Sigma_{1n}^\frac{1}{2})^*
\end{bmatrix}.
\end{align*}
Then, $S_n =  P_nA_nB_nQ_n$. Also note that $F^{A_nA_n^*} = F^{B_n^*B_n}$ and $F^{P_nP_n^*} = F^{Q_n^*Q_n}$. Note that while the support of $F^{S_n}$ is purely imaginary, those of $F^{\sqrt{A_nA_n^*}}$, $F^{P_nP_n^*}$ are purely real.

For arbitrary $K_1, K_2> 0$, let $K = K_1^2K_2^2$. Using (\ref{ESD_sksym_equal_sym}) and Lemma 2.3 of \cite{BaiSilv95}, we have
\begin{align}\label{tight_equation}
&F^{S_n}\{(-\infty, -\mathbbm{i}K)\cup (\mathbbm{i}K, \infty)\} \\\notag
&= F^{\sqrt{S_nS_n^*}}\{(K, \infty)\} \notag\\
&\leq F^{\sqrt{A_nA_n^*}}\{(K_1, \infty)\} + F^{\sqrt{P_nP_n^*}}\{(K_2, \infty)\} + F^{\sqrt{Q_nQ_n^*}}\{(K_2, \infty)\} + F^{\sqrt{B_nB_n^*}}\{(K_1, \infty)\} \notag\\
&= 2F^{A_nA_n^*}\{(K_1^2, \infty)\}  + 2 F^{P_nP_n^*}\{(K_2, \infty)\}. \notag
\end{align}
In the second term of the last equality, we used the fact that the sets of non-zero eigenvalues of $Q_nQ_n^*$ and of $Q_n^*Q_n$ coincide and the sets of non-zero eigenvalues of $A_nA_n^*$ and of $B_n^*B_n$ coincide. 

Note that  $\{F^{P_nP_n^*}\}_{n=1}^\infty$ and $\{F^{A_nA_n^*}\}_{n=1}^\infty$ are tight sequences. We have $P_nP_n^* = \Sigma_{1n} + \Sigma_{2n}$. Since $\{F^{\Sigma_{kn}}\}_{n=1}^\infty$ is tight for $k=1,2$ and $\Sigma_{1n}$ and $\Sigma_{2n}$ commute, tightness of $\{F^{P_nP_n^*}\}_{n=1}^\infty$ is immediate. The fact that $\{F^{\frac{1}{n}Z_{kn}Z_{kn}^*}\}_{n=1}^\infty$, $k=1,2$ are tight sequences automatically imply that $\{F^{A_nA_n^*}\}_{n=1}^\infty$ is tight.

Now we prove the first result. Suppose $H_n \xrightarrow{d} H=\delta_{(0,0)}$ a.s. Choose $\epsilon, K_1>0$ arbitrarily and set $K_2 = \sqrt{\epsilon/K_1}$. Then, $\{F^{\Sigma_{1n} + \Sigma_{2n}}\}_{n=1}^\infty$ converges weakly to $\delta_0$ for $k =1,2$, we have
\begin{align*}
  \underset{n \rightarrow \infty}{\limsup}F^{P_nP_n^*}\{(K_2, \infty)\} = 0.
\end{align*}

Now letting $K_1 \rightarrow \infty$ in (\ref{tight_equation}), we see that
$$\underset{n \rightarrow \infty}{\limsup}F^{S_n}\{(-\infty, -\mathbbm{i}\epsilon)\cup (\mathbbm{i}\epsilon, \infty)\} \leq \underset{K_1 \rightarrow \infty}{\limsup}F^{A_nA_n^*}\{(K_1^2, \infty)\} = 0.$$ 

Since $\epsilon > 0$ was chosen arbitrarily, we conclude that $F^{S_n} \xrightarrow{d} \delta_0$ a.s. This justifies why we exclusively stick to the case where $H \neq \delta_{(0,0)}$ in Theorem \ref{mainTheorem}.

Now suppose $H_n \xrightarrow{d} H \neq \delta_{(0,0)}$ a.s. The tightness of $\{F^{S_n}\}_{n=1}^\infty$ is immediate from (\ref{tight_equation}) upon utilizing the tightness of $\{F^{P_nP_n^*}\}_{n=1}^\infty$ and $\{F^{A_nA_n^*}\}_{n=1}^\infty$.
\end{proof}

\begin{lemma}\label{Rank2Perturbation}
Let $M_n \in \mathbb{C}^{p \times p}$ be a sequence of deterministic matrices with bounded operator norm, i.e. $||M_n||_{op} \leq B$ for some $B \geq 0$. Under Assumptions \ref{A12}, for $1 \leq j \leq n$, $z \in \mathbb{C}_L$ and sufficiently large $n$, we have $$\underset{1 \leq j \leq n}{\max}|\operatorname{trace}\{M_nQ(z)\} - \operatorname{trace}\{M_nQ_{-j}(z)\}| \leq \dfrac{4cC_0B}{\Re^2 (z)} \text{ a.s.}$$
Consequently, $\underset{1 \leq j \leq n}{\max}|\frac{1}{p}\operatorname{trace}\{M_n(Q(z) - Q_{-j}(z))\}| \xrightarrow{a.s.} 0$.
\end{lemma}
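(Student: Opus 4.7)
The plan is to exploit the fact that $S_n - S_{nj} = \frac{1}{n}(X_{1j}X_{2j}^* - X_{2j}X_{1j}^*)$ is a rank-(at most-)$2$ perturbation, and push this through the resolvent identity $R_0$ of \ref{R123}. Writing
\[
Q(z) - Q_{-j}(z) = -\tfrac{1}{n}\,Q(z)\bigl(X_{1j}X_{2j}^* - X_{2j}X_{1j}^*\bigr)Q_{-j}(z),
\]
the cyclic property of trace gives
\[
\operatorname{trace}\bigl\{M_n(Q(z)-Q_{-j}(z))\bigr\} = -\tfrac{1}{n}\Bigl[X_{2j}^*Q_{-j}(z)M_nQ(z)X_{1j} - X_{1j}^*Q_{-j}(z)M_nQ(z)X_{2j}\Bigr].
\]
By $R_4$ of \ref{R123} (Cauchy–Schwarz) together with Remark \ref{remark_norm_of_resolvent} and the hypothesis $\|M_n\|_{op}\le B$, each bilinear form is bounded in modulus by $B/\Re^2(z)$ times $\|X_{1j}\|\,\|X_{2j}\|$. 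Thus
\[
\bigl|\operatorname{trace}\{M_n(Q(z)-Q_{-j}(z))\}\bigr| \le \frac{B}{\Re^2(z)}\cdot \frac{\|X_{1j}\|^2 + \|X_{2j}\|^2}{n}.
\]

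The remaining task is to control $\max_{1\le j\le n}\|X_{kj}\|^2/n$ uniformly. Since $X_{kj} = \Sigma_{kn}^{1/2}Z_{kj}$, we have $\|X_{kj}\|^2 = Z_{kj}^*\Sigma_{kn}Z_{kj}$. Under Assumption $A_1$ the matrices $\Sigma_{kn}$ have bounded operator norm, so Lemma \ref{quadraticForm} (applied with $A_j = \Sigma_{kn}$, which is independent of $Z_{kj}$) yields
\[
\max_{1\le j\le n}\Bigl|\tfrac{1}{n}Z_{kj}^*\Sigma_{kn}Z_{kj} - \tfrac{1}{n}\operatorname{trace}(\Sigma_{kn})\Bigr| \xrightarrow{a.s.} 0,\qquad k=1,2.
\]
Combined with $\tfrac{1}{n}\operatorname{trace}(\Sigma_{kn}) = c_n\cdot\tfrac{1}{p}\operatorname{trace}(\Sigma_{kn}) \le cC_0(1+o(1))$ from (\ref{firstSpectralBound}) and $T_1$, this gives $\max_j \|X_{kj}\|^2/n \le cC_0(1+o(1))$ a.s.\ for each $k$. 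Plugging into the display above, for $n$ sufficiently large we obtain
\[
\max_{1\le j\le n}\bigl|\operatorname{trace}\{M_n(Q(z)-Q_{-j}(z))\}\bigr| \le \frac{B}{\Re^2(z)}\cdot 4cC_0 = \frac{4cC_0 B}{\Re^2(z)}\quad \text{a.s.},
\]
which is the first claim. The second claim follows immediately by dividing by $p$ and noting that the right-hand side is independent of $p$, so it tends to $0$ as $p\to\infty$.

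The main obstacle is the uniform-in-$j$ control of the quadratic forms $Z_{kj}^*\Sigma_{kn}Z_{kj}/n$; this is exactly where Assumption $A_2$ (truncation at $B_n = n^a$ with $a<1/4$) is indispensable, since Lemma \ref{quadraticForm} relies on the moment bound $\nu_k \le n^{ak}$ to obtain the Borel–Cantelli summability. Everything else is deterministic operator-norm bookkeeping through the resolvent identity.
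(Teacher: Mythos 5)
Your proof is correct and follows essentially the same route as the paper: apply the resolvent identity to expose the rank-$2$ perturbation, use the cyclic trace identity and Cauchy--Schwarz with $\|Q_{-j}M_nQ\|_{op}\le B/\Re^2(z)$, and control $\max_j\|X_{kj}\|^2/n$ uniformly via Lemma \ref{quadraticForm} together with (\ref{firstSpectralBound}). The only cosmetic difference is that you bound $2\|X_{1j}\|\|X_{2j}\|$ by $\|X_{1j}\|^2+\|X_{2j}\|^2$ via AM--GM before invoking the quadratic-form control, whereas the paper keeps the geometric mean; both give the identical constant $4cC_0B/\Re^2(z)$.
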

\begin{proof}
Fix $z \in \mathbb{C}_L$ and denote $Q(z)$ as Q. By $\boldsymbol{R}_0$ and (\ref{R4}), for any $1\leq j \leq n$, we have
\begin{align}\label{B.2}
        & |\operatorname{trace}\{M_nQ\} - \operatorname{trace}\{M_nQ_{-j}\}|\\ \notag
        =&|\operatorname{trace}\{M_n(S_n - zI_p)^{-1}\} - \operatorname{trace}\{M_n(S_{nj}-zI_p)^{-1}\}|\\ \notag
        =& |\operatorname{trace}\{M_nQ(\dfrac{1}{n}X_{1j}X_{2j}^* - \dfrac{1}{n}X_{2j}X_{1j}^*)Q_{-j}\}|\\ \notag
        =&\frac{1}{n}|X_{2j}^*Q_{-j}M_nQ X_{1j} - X_{1j}^*Q_{-j} M_nQ X_{2j}|\\ \notag
        \leq& \dfrac{1}{n}|X_{2j}^*Q_{-j}MQ X_{1j}| +\dfrac{1}{n}|X_{1j}^*Q_{-j} MQ X_{2j}|\\ \notag
        \leq& ||Q_{-j}M_nQ||_{op}\bigg(\sqrt{\frac{1}{n}X_{2j}^*X_{2j}}\sqrt{\frac{1}{n}X_{1j}^*X_{1j}} + \sqrt{\frac{1}{n}X_{1j}^*X_{1j}}\sqrt{\frac{1}{n}X_{2j}^*X_{2j}}\bigg).
\end{align}

Note that, we have
$$||Q_{-j}M_nQ||_{op} < B/\Re ^2(z) \text{ since } ||Q_{-j}||_{op},||Q||_{op} \leq 1/|\Re(z)|, ||M_n||_{op} < B.$$

For a fixed $k=1,2$, we have $X_{kj}^*X_{kj} = Z_{kj}^*\Sigma_{kn}Z_{kj}$ where $\Sigma_{kn}$ satisfies \textbf{A1} and $Z_{1}, Z_{2}$ satisfy \textbf{A2} respectively of Assumptions \ref{A12}. Setting $x_{jn} = Z_{kj}$ and $A_j = \Sigma_{kn}$ for $1 \leq j \leq n$ and applying Corollary \ref{quadraticFormCorollary}, we have 
$$\underset{1 \leq j \leq n}{\max}\bigg|\frac{1}{n}X_{kj}^*X_{kj} - \frac{1}{n}\operatorname{trace}(\Sigma_{kn})\bigg| \xrightarrow{a.s.} 0.$$

From $\boldsymbol{T}_1$ of Theorem \ref{mainTheorem} and (\ref{firstSpectralBound}), for sufficiently large $n$, we have 
$$\dfrac{1}{n}\operatorname{trace}(\Sigma_{kn}) = c_n\bigg(\dfrac{1}{p}\operatorname{trace}(\Sigma_{kn})\bigg) < 2cC_0.$$ This implies that for large $n$,
$$\underset{1 \leq j \leq n}{\max}\bigg|\frac{1}{n}X_{kj}^*X_{kj}\bigg| < 2cC_0 \text{ a.s.}$$

Combining everything with (\ref{B.2}), for large $n$, we must have
\begin{align*}
    \underset{1 \leq j \leq n}{\max}|\operatorname{trace}\{M_nQ\} - \operatorname{trace}\{M_nQ_{-j}\}|
    \leq \frac{B}{\Re^2(z)}(2cC_0 + 2cC_0) = \frac{4cC_0 B}{\Re ^2(z)} \text{ a.s.}
\end{align*}

For $z \in \mathbb{C}_L$, it is clear that for arbitrary $\epsilon > 0$,
$\underset{1 \leq j \leq n}{\max}|\frac{1}{p}\operatorname{trace}\{M(Q - Q_{-j})\}| < \epsilon$ a.s. for large $n$. Therefore, $\underset{1 \leq j \leq n}{\max}|\frac{1}{p}\operatorname{trace}\{M(Q - Q_{-j})\}| \xrightarrow{a.s.} 0$.
\end{proof}

\begin{lemma}\label{ConcetrationLemma}
    Under Assumptions \ref{A12}, for $z \in \mathbb{C}_L$ and $k=1,2$, we have $|h_{kn}(z) - \mathbb{E}h_{kn}(z)| \xrightarrow{a.s.} 0$.
\end{lemma}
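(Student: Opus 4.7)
The plan is to apply a standard martingale difference decomposition indexed by the columns $j=1,\ldots,n$. Let $E_j$ denote conditional expectation given $\{X_{1r}, X_{2r}: 1 \le r \le j\}$, so that
\begin{equation*}
h_{kn}(z) - \mathbb{E}h_{kn}(z) \;=\; \sum_{j=1}^n \gamma_j, \qquad \gamma_j := (E_j - E_{j-1})\, h_{kn}(z).
\end{equation*}
Since $Q_{-j}$ depends only on $\{X_{1r}, X_{2r}: r \neq j\}$ and is therefore independent of $(X_{1j}, X_{2j})$, both $E_j$ and $E_{j-1}$ assign the same value to $\operatorname{trace}(\Sigma_{kn} Q_{-j})$. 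Hence $\gamma_j = (E_j - E_{j-1})\beta_j$, where $\beta_j := \tfrac{1}{p}\operatorname{trace}\bigl(\Sigma_{kn}(Q(z) - Q_{-j}(z))\bigr)$.

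The next step is to obtain a moment bound on each $\gamma_j$. By the resolvent identity, $Q - Q_{-j} = -\tfrac{1}{n}Q(X_{1j}X_{2j}^* - X_{2j}X_{1j}^*)Q_{-j}$, and the cyclic property of trace yields
\begin{equation*}
\beta_j \;=\; -\frac{1}{pn}\Bigl(X_{2j}^* Q_{-j}\Sigma_{kn} Q\, X_{1j} \;-\; X_{1j}^* Q_{-j}\Sigma_{kn} Q\, X_{2j}\Bigr).
\end{equation*}
Cauchy-Schwarz combined with $||Q||_{op}, ||Q_{-j}||_{op} \le 1/|\Re(z)|$ (Remark \ref{remark_norm_of_resolvent}) and $||\Sigma_{kn}||_{op} \le \tau$ from $A_1$ then gives the pointwise bound $|\beta_j| \le \tfrac{2\tau}{pn\,\Re^2(z)}\,||X_{1j}||\,||X_{2j}||$. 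Under the truncation $|z^{(k)}_{ij}| \le B_n = n^a$ with $a < 1/4$ from $A_2$ and the bounded $(4+\eta_0)$-th moment assumption $T_2$, a Rosenthal-type estimate applied to $||Z_{kj}||^2 = \sum_i |z^{(k)}_{ij}|^2$ delivers $\mathbb{E}||Z_{kj}||^{2r} = O(p^r)$ for every fixed $r \ge 2$, whence $\mathbb{E}||X_{kj}||^{r} = O(n^{r/2})$ via $||X_{kj}||^2 \le \tau\,||Z_{kj}||^2$. Since $X_{1j}$ and $X_{2j}$ are independent (by $T_2$),
\begin{equation*}
\mathbb{E}|\gamma_j|^r \;\le\; 2^r\, \mathbb{E}|\beta_j|^r \;\le\; \frac{C_r}{(pn)^r}\,\mathbb{E}||X_{1j}||^r\,\mathbb{E}||X_{2j}||^r \;=\; O\bigl(p^{-r}\bigr).
\end{equation*}

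The conclusion now follows from Burkholder's inequality for martingale differences combined with H\"older's inequality: for $r \ge 2$,
\begin{equation*}
\mathbb{E}\bigl|h_{kn}(z) - \mathbb{E}h_{kn}(z)\bigr|^r \;\le\; C_r\, \mathbb{E}\Bigl(\sum_{j=1}^n |\gamma_j|^2\Bigr)^{r/2} \;\le\; C_r\, n^{r/2-1}\sum_{j=1}^n \mathbb{E}|\gamma_j|^r \;=\; O\bigl(n^{-r/2}\bigr).
\end{equation*}
Choosing $r = 4$ gives $\mathbb{E}|h_{kn}(z) - \mathbb{E}h_{kn}(z)|^4 = O(n^{-2})$, which is summable; Markov's inequality and the Borel-Cantelli lemma then deliver the claimed almost sure convergence. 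The main technical obstacle is obtaining the moment estimate $\mathbb{E}||X_{kj}||^r = O(n^{r/2})$ under the truncated model $A_2$: the restriction $a < 1/4$ is precisely the threshold that keeps the ``large-entry'' contribution in the Rosenthal bound from exceeding $p^{r/2}$, which is what makes the fourth-moment bound on $\gamma_j$ strong enough for Borel-Cantelli to apply.
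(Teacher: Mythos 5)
Your proof is correct and follows essentially the same route as the paper: a martingale difference decomposition indexed by columns (you condition on a forward filtration, the paper on the tail filtration — equivalent), a resolvent-identity plus Cauchy--Schwarz bound on the rank-two perturbation term, a Burkholder-type fourth-moment estimate (the paper invokes Lemma 2.12 of Bai--Silverstein), and Borel--Cantelli. The only cosmetic differences are that you keep the product $\|X_{1j}\|\,\|X_{2j}\|$ where the paper passes to $W_{nj}$ via AM--GM, and you invoke Rosenthal where the paper expands the moments directly; both yield the same $O(n^{-2})$ fourth-moment bound.
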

\begin{proof}
Define $\mathcal{F}_j = \sigma(\{X_{1r}, X_{2r}:j+1 \leq r \leq n\})$ and for a measurable function $f$, we denote $\mathbb{E}_{j}f(X) := \mathbb{E}(f(X)|\mathcal{F}_j)$ for $0 \leq j \leq n-1$ and $\mathbb{E}_{n}f(X) := \mathbb{E}f(X)$. For $k=1,2$, we observe that
\begin{align*}
    h_{kn}(z) - \mathbb{E}h_{kn}(z)
    &= \dfrac{1}{p}\operatorname{trace}(\Sigma_{kn}Q(z)) - \mathbb{E}\bigg(\dfrac{1}{p}\operatorname{trace}(\Sigma_{kn}Q(z))\bigg)\\
    &= \dfrac{1}{p}\sum_{j=1}^{n}(\mathbb{E}_{j-1} - \mathbb{E}_{j})\operatorname{trace}(\Sigma_{kn}Q(z))\\    
    &= \dfrac{1}{p}\sum_{j=1}^{n}(\mathbb{E}_{j-1} - \mathbb{E}_{j})\bigg(\underbrace{\operatorname{trace}(\Sigma_{kn}Q(z)) - \operatorname{trace}(\Sigma_{kn}Q_{-j}(z))}_{:= Y_j}\bigg)\\
    &= \dfrac{1}{p}\sum_{j=1}^{n}\underbrace{(\mathbb{E}_{j-1} - \mathbb{E}_{j}) Y_j}_{:= D_j}
    = \frac{1}{p}\sum_{j=1}^{n}D_j.
\end{align*}
Denote $Q(z)$ as $Q$ and $Q_{-j}(z)$ as $Q_{-j}$. From (\ref{B.2}), we have
\begin{align*}
    |Y_j| = |\operatorname{trace}\{\Sigma_{kn}Q\} - \operatorname{trace}\{\Sigma_{kn}Q_{-j}\}| \leq \dfrac{\tau}{\Re^2(z)}W_{nj} \text{, where } W_{nj} := \frac{1}{n}(||X_{1j}||^2 + ||X_{2j}||^2).
\end{align*}
So, we have  $\absmod{D_j} \leq \dfrac{2\tau}{\Re^2(z)}W_{nj}$. By Lemma 2.12 of \cite{BaiSilv09}, there exists $K_4$ depending only on $z \in \mathbb{C}_L$ such that
\begin{align}\label{concInequality}
   \mathbb{E}\absmod{h_{kn}(z) - \mathbb{E}h_{kn}(z)}^4 = \mathbb{E}\absmod{\frac{1}{p}\sum_{j=1}^n D_j}^4 &\leq \frac{K_4}{p^4} \mathbb{E}\bigg(\sum_{j=1}^n |D_j|^2\bigg)^2 \leq \frac{16K_4\tau^4}{p^4\Re^8(z)} \mathbb{E}\bigg(\sum_{j=1}^n |W_{nj}|^2\bigg)^2 \\ \notag
   &= \frac{K_0}{p^4}\bigg(\sum_{j=1}^n\mathbb{E}|W_{nk}|^4+\sum_{j\neq l}\mathbb{E}|W_{nj}|^2\mathbb{E}|W_{nl}|^2\bigg).
\end{align}

We have the following inequalities.
\begin{enumerate}
    \item $||X_{kj}||^m = (Z_{kj}^*\Sigma_{kn}Z_{kj})^\frac{m}{2} \leq (||\Sigma_{kn}||_{op}||Z_{kj}||^2)^\frac{m}{2} \leq \tau^\frac{m}{2}||Z_{kj}||^m$ for $m \geq 1$.
    \item $|W_{nj}|^2 \leq  \dfrac{2}{n^2}(||X_{1j}||^4 + ||X_{2j}||^4) \leq \dfrac{2\tau^2}{n^2}(||Z_{1j}||^4 + ||Z_{2j}||^4)$.
    \item $|W_{nj}|^4 \leq  \dfrac{8}{n^4}(||X_{1j}||^8 + ||X_{2j}||^8) \leq \dfrac{8\tau^4}{n^4}(||Z_{1j}||^8 + ||Z_{2j}||^8)$.
\end{enumerate}

Recall that $Z_{kj}$ is the $j^{th}$ column of $Z_k;k=1,2$ and $z_{rj}^{(k)}$ represents the $r^{th}$ element of $Z_{kj}$. By Assumptions \ref{A12}, we have the following bounds.
\begin{enumerate}
    \item For $1 \leq t \leq 2$, there exists $M_t<\infty$ depending on t such that  $$\mathbb{E}|z_{rj}^{(k)}|^m \leq M_t < \infty.$$
    \item For $2< t$, we have 
    $$\mathbb{E}|z_{rj}^{(k)}|^t \leq n^{b(t-2)}.$$
\end{enumerate}

So, we have
\begin{align}\label{Bound_norm_4}
    \mathbb{E}||Z_{1j}||^4 = \mathbb{E} \bigg(\sum_{r=1}^p|z_{rj}^{(1)}|^2\bigg)^2 &= \mathbb{E} \bigg(\sum_{r=1}^p|z_{rj}^{(1)}|^4 + \sum_{r \neq s}|z_{rj}^{(1)}|^2|z_{sj}^{(1)}|^2\bigg) \\ \notag
    &\leq pn^{2b} + p(p-1) = O(n^2), \text{ and}
\end{align}
\begin{align}\label{Bound_norm_8}
    \mathbb{E}||Z_{1j}||^8 = \mathbb{E} \bigg(\sum_{r=1}^p|z_{rj}^{(1)}|^2\bigg)^4 
    &= \mathbb{E} \bigg(\sum_{r=1}^p|z_{rj}^{(1)}|^8 + \sum_{r \neq s}|z_{rj}^{(1)}|^6|z_{sj}^{(1)}|^2 + \sum_{r \neq s}|z_{rj}^{(1)}|^4|z_{sj}^{(1)}|^4 \bigg)\\ \notag
    &\leq pn^{6b}
     + p(p-1)[n^{4b} + (n^{2b})^2]\\\notag
     &= O(\max\{n^{1+6b}, n^{2+4b}\}) = O(n^{2+4b}).
\end{align}
Therefore, combining everything, we get
\begin{align*}
    \mathbb{E}|W_{nj}|^2 \leq 
    \frac{4\tau^2}{n^2}K_1n^2 = 4K_1\tau^2; \quad 
    \mathbb{E}|W_{nj}|^4 \leq \frac{16\tau^4}{n^4} K_2n^{2+4b} = 16K_2\tau^4n^{4b-2}.
\end{align*}
Using these in (\ref{concInequality}), we get
\begin{align*}
    \mathbb{E}\absmod{h_{kn}(z) - \mathbb{E}h_{kn}(z)}^4 = \mathbb{E}\absmod{\frac{1}{p}\sum_{j=1}^n D_j}^4 &\leq \frac{K_0}{p^4}\bigg(\sum_{j=1}^n\mathbb{E}|W_{nk}|^4+\sum_{j\neq l}\mathbb{E}|W_{nj}|^2\mathbb{E}|W_{nl}|^2\bigg)\\
    &\leq \frac{K_0}{p^4}\bigg(n\frac{16K_2\tau^4}{n^{2-4b}} + n^2(4K_1\tau^2)^2\bigg) = O\bigg(\frac{1}{n^2}\bigg).
\end{align*}

Finally, by Borel Cantelli Lemma, we have $|h_{kn}(z) - \mathbb{E}h_{kn}(z)| \xrightarrow{a.s.} 0$. The other result follows similarly.
\end{proof}

\color{black}
\begin{definition}
    Let $\mathcal{H}_{s,t}$ denote the region $\mathcal{H}_{s,t} := \{h \in \mathbb{C}_R: \Re(h) \geq s, |h| \leq t\}$ for $0 < s \leq t$.
\end{definition}

\begin{lemma}\label{boundedAwayfromZero}
Let $z \in \mathbb{C}_L$. Then there exists $s,t$ independent of $n$ such that $0 < s \leq t$ and for sufficiently large $n$ and under \textbf{A1} of Assumptions \ref{A12}, we have 
    \begin{enumerate}
        \item[(1)] $c_n\textbf{h}_n(z) = (c_nh_{1n}(z), c_nh_{2n}(z)) \in \mathcal{H}_{s,t}^2$,
        \item[(2)] $c_n\mathbb{E}\textbf{h}_n(z) = (c_n\mathbb{E}h_{1n}(z), c_n\mathbb{E}h_{2n}(z)) \in \mathcal{H}_{s,t}^2$,
        \item[(3)] $c_n\tilde{\textbf{h}}_n(z) = (c_n\tilde{h}_{1n}(z), c_n\tilde{h}_{2n}(z)) \in \mathcal{H}_{s,t}^2$.
    \end{enumerate}
\end{lemma}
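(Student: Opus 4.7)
The plan is, for fixed $z = -u + \mathbbm{i}v \in \mathbb{C}_L$, to establish a common magnitude upper bound $t$ and a common lower bound $s$ on real parts for all three quantities, with $s,t$ depending on $z$ but not on $n$. The magnitude bounds come first: by $R_6$ of \ref{R123} and Remark \ref{remark_norm_of_resolvent} under \textbf{A1}, $|h_{kn}(z)| = \tfrac{1}{p}|\operatorname{trace}(\Sigma_{kn}Q(z))|\leq \tau\|Q(z)\|_{op}\leq \tau/u$, and Jensen's inequality extends the same bound to $|\mathbb{E}h_{kn}(z)|$. For $\tilde{h}_{kn}(z)$, I use part (2) (to be established first) so that $c_n\mathbb{E}\textbf{h}_n(z)\in\mathbb{C}_R^2$; Remark \ref{remark_real_of_rho} then yields $\Re(\rho_k(c_n\mathbb{E}\textbf{h}_n(z)))\geq 0$. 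Simultaneously diagonalizing $\Sigma_{1n},\Sigma_{2n}$ (permitted by $T_3$), the eigenvalues $\mu_j := -z + \rho_1\lambda_{1j} + \rho_2\lambda_{2j}$ of $\overline{Q}(z)^{-1}$ then satisfy $\Re(\mu_j)\geq u$, so $\|\overline{Q}(z)\|_{op}\leq 1/u$ and $|\tilde{h}_{kn}(z)|\leq \tau/u$. Setting $t := 3c\tau/u$ handles all three magnitude bounds for large $n$.

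For the lower bound on $\Re(c_n h_{kn}(z))$, I rely on the skew-Hermitian spectral decomposition $S_n = P\Lambda P^*$ with $\Lambda = \operatorname{diag}(\mathbbm{i}\lambda_1,\ldots,\mathbbm{i}\lambda_p)$, and $A^{(k)} := P^*\Sigma_{kn}P$ with diagonal entries $a_{jj}^{(k)}\geq 0$:
\[
\Re(h_{kn}(z)) = \frac{1}{p}\sum_{j=1}^p \frac{a_{jj}^{(k)}u}{u^2 + (\lambda_j-v)^2}\geq \frac{u}{u^2 + (\|S_n\|_{op}+|v|)^2}\cdot\frac{\operatorname{trace}(\Sigma_{kn})}{p}.
\]
Under \textbf{A1}--\textbf{A2}, a Bai--Yin-type estimate produces a deterministic $M_0 = M_0(\tau,c)$ with $\|S_n\|_{op}\leq M_0$ a.s.\ for large $n$, using $\|S_n\|_{op}\leq (2\tau/n)\|Z_1\|_{op}\|Z_2\|_{op}$ together with $\|Z_k\|_{op}/\sqrt{n}\to 1+\sqrt{c}$ a.s. The assumption $T_4$ that the support of $H$ is not contained in a coordinate axis, combined with the weak convergence $H_n\xrightarrow{d}H$, forces $\operatorname{trace}(\Sigma_{kn})/p\geq\varepsilon_0 > 0$ for large $n$, so $\Re(h_{kn}(z))\geq s_0 > 0$ a.s. For $\Re(\mathbb{E}h_{kn}(z))$, I restrict the expectation to the event $\{\|S_n\|_{op}\leq M_0\}$ of probability $\to 1$ and use $\Re(h_{kn})\geq 0$ on the complement to obtain $\Re(\mathbb{E}h_{kn}(z))\geq s_0/2$.

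For $\Re(c_n\tilde{h}_{kn}(z))$, the same simultaneous diagonalization gives
\[
\Re(\tilde{h}_{kn}(z))\geq \frac{u}{p}\sum_j \frac{\lambda_{kj}}{|\mu_j|^2},
\]
so I need an upper bound on $|\mu_j|\leq |z| + \tau(|\rho_1|+|\rho_2|)$, which reduces to an upper bound on $|\rho_k(c_n\mathbb{E}\textbf{h}_n(z))|$; the main obstacle is then a uniform-in-$n$ lower bound on the denominator $|1+c_n^2\mathbb{E}h_{1n}\mathbb{E}h_{2n}|$. My resolution is a compactness argument: the already-established bounds place $c_n\mathbb{E}\textbf{h}_n(z)$ in the compact set $\mathcal{H}_{s_0/2,\,t}^2\subset\mathbb{C}_R^2$, and the equation $1+z_1z_2 = 0$ forces $z_1 = -1/z_2$, which has negative real part whenever $z_2\in\mathbb{C}_R$; hence $1+z_1z_2$ never vanishes on $\mathbb{C}_R^2$, and by continuity $|1+z_1z_2|$ attains a positive minimum $K = K(s_0/2,t) > 0$ on the compact set $\mathcal{H}_{s_0/2,\,t}^2$. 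This yields $|\rho_k|\leq t/K$, a uniform bound on $|\mu_j|$, and ultimately $\Re(\tilde{h}_{kn}(z))\geq s' > 0$ upon combining with $\operatorname{trace}(\Sigma_{kn})/p\geq\varepsilon_0$. Taking $s := (c/2)\min(s_0/2,\,s')$ then delivers all three lower bounds simultaneously, completing the proof.
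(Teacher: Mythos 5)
Your proposal is correct and, for item (1), runs essentially along the paper's line: diagonalize $S_n = P\Lambda P^*$, bound the denominators using an a.s.\ Bai--Yin bound on $\|S_n\|_{op}$, and use that $\tfrac1p\operatorname{trace}(\Sigma_{kn})$ is bounded away from $0$ (from weak convergence $H_n \to H$ and the non-degeneracy of $H$) to get the lower bound on $\Re(c_nh_{kn}(z))$. The magnitude bound $t$ is also the same in spirit; you use $R_6$ with $\|\Sigma_{kn}\|_{op}\leq\tau$ where the paper uses $R_5$ with $\tfrac1p\operatorname{trace}(\Sigma_{kn})\leq C_0$.

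Where you genuinely diverge is in items (2) and (3). The paper disposes of them in one sentence by citing Lemma~\ref{ConcetrationLemma} (concentration $|h_{kn}-\mathbb{E}h_{kn}|\to 0$ a.s.) and (\ref{Consequence_DetEqv}) ($|h_{kn}-\tilde h_{kn}|\to 0$ a.s.), respectively. You instead give direct, self-contained arguments: Jensen plus a restricted-expectation trick over $\{\|S_n\|_{op}\leq M_0\}$ for (2), and for (3) an explicit spectral computation of $\tilde h_{kn}$ using $\Re(\mu_j)\geq u$ and an upper bound on $|\mu_j|$ coming from a compactness lower bound on $|1+z_1z_2|$ over $\mathcal{H}_{s',t}^2\subset\mathbb{C}_R^2$. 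This buys you something real: (\ref{Consequence_DetEqv}) is a corollary of Theorem~\ref{DeterministicEquivalent}, whose proof (via Lemma~\ref{normBound}, Remark~\ref{remark_vn_bound}, Lemma~\ref{uniformConvergence}) uses items (1)--(2) of the very lemma being proved — so the paper's citation for (3) demands a careful check of the dependency order, whereas your direct proof sidesteps that entirely. Your compactness observation that $1+z_1z_2=0$ forces $z_1 = -\bar z_2/|z_2|^2 \in \mathbb{C}_L$, hence $1+z_1z_2\neq 0$ on $\mathbb{C}_R^2$, is also a clean, elementary substitute for the paper's angular argument in Lemma~\ref{bound_for_denominator}. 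Two trivial slips worth fixing: the compact set containing $c_n\mathbb{E}\textbf{h}_n(z)$ should have lower threshold on the order of $cs_0/4$ (you must multiply your $s_0/2$ bound by a lower bound on $c_n$), and the lower bound in (1) holds "eventually a.s." rather than unconditionally a.s.\ — both are consistent with the lemma's "for sufficiently large $n$" clause.
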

\begin{proof}
Under \textbf{A1} of Assumptions \ref{A12}, we have $||\Sigma_{1n}||_{op}, ||\Sigma_{2n}||_{op}\leq \tau$. Since $H_n$ and $H$ are compactly supported on (a subset of) $[0, \tau]^2$ and $H_n \xrightarrow{d} H$ a.s., we get
\begin{align}
    \int_0^\tau\lambda_k dH_n(\boldsymbol{\lambda}) \xrightarrow{} \int_0^\tau\lambda_k dH(\boldsymbol{\lambda}) \quad k=1,2.
\end{align}
Moreover, this limit must be positive since $H$ is not supported entirely on the real or the imaginary axis. Therefore,
\begin{align}\label{trace_limit}
\frac{1}{n}\operatorname{trace}(\Sigma_{1n}) = c_n\int_0^\tau\lambda_1 dH_n(\boldsymbol{\lambda}) \xrightarrow{} c\int_0^\tau\lambda_1 dH(\boldsymbol{\lambda}) > 0.
\end{align}

Let $z = -u + \mathbbm{i}v$ with $u > 0$. Denoting $a_{ij}$ as the $ij^{th}$ element of $A:= P^*\Sigma_{1n}P$ where $S_n = P\Lambda P^*$ and $\Lambda = \operatorname{diag}(\{\mathbbm{i}\lambda_j\}_{j=1}^p)$ is a diagonal matrix containing the purely imaginary (or zero) eigenvalues of $S_n$. Then,
    \begin{align*}
        c_nh_{1n}(z) = \frac{p}{n}\frac{1}{p}\operatorname{trace}\{\Sigma_{1n}Q(z)\}
        = \frac{1}{n}\operatorname{trace}\{P^*\Sigma_{1n}P(\Lambda - zI_p)^{-1}\}
         =\frac{1}{n}\sum_{j=1}^p \frac{a_{jj}}{\mathbbm{i}\lambda_j  - z}.
    \end{align*}

For any $\delta > 0$, we have
\begin{align}
||S_n||_{op} &= ||\frac{1}{n}X_1X_2^* - \frac{1}{n}X_2X_1^*||_{op}\\\notag
&\leq \,2\sqrt{||\frac{1}{n}X_1X_1^*||_{op}}\sqrt{||\frac{1}{n}X_2X_2^*||_{op}}\\ \notag
&\leq \,||\Sigma_{1n}||_{op}(1+\sqrt{p/n})^2 + \delta/2 + ||\Sigma_{2n}||_{op}(1+\sqrt{p/n})^2 + \delta/2\\\notag
&\leq \, 2\tau(1+\sqrt{c_n})^2 + \delta.
\end{align}

Let $B = 4\tau(1+\sqrt{c})^2$. Then $\mathbb{P}(|\lambda_j| > B \hspace{2mm} i.o.) = 0$.

\footnote{$\operatorname{sgn}(x)$ is the sign function}
Define $B^* :=\left\{\begin{matrix}
    -B\operatorname{sgn}(v) & \text{, if } v \neq 0, \\
    B &  \text{, if } v = 0.
\end{matrix} \right.$

Then $(\lambda_j - v)^2 \leq (B^* - v)^2$. Therefore,
\begin{align*}
    \Re(c_nh_{1n}(z)) &= \frac{1}{n}\sum_{j=1}^p\frac{a_{jj}u}{(\lambda_j - v)^2 + u^2}\\
    &\geq \frac{1}{n}\sum_{j=1}^p \frac{a_{jj}u}{(B^* - v)^2 + u^2}\\
    &= \frac{u}{(B^* - v)^2 + u^2} \bigg(\frac{1}{n}\sum_{j=1}^p a_{jj}\bigg)\\
    &= \frac{u}{(B^* - v)^2 + u^2} \bigg(\frac{1}{n}\operatorname{trace}(\Sigma_{1n})\bigg) \text{, as } \operatorname{trace}(A) = \operatorname{trace}(\Sigma_{1n})\\
    &\longrightarrow \frac{u}{(B^* - v)^2 + u^2}\bigg(c\int_0^\tau\lambda_1 dH(\boldsymbol{\lambda})\bigg) := K_1
    > 0 \text{ from } (\ref{trace_limit}).
\end{align*} 

Similarly, we define $$K_2 := \frac{u}{(B^* - v)^2 + u^2}\bigg(c\int_0^\tau\lambda_2 dH(\boldsymbol{\lambda})\bigg),$$ and let $K_x(c,z,\tau, H^\tau) := \min(K_1, K_2) > 0$. For $k=1,2$ and sufficiently large $n$, using (\ref{h_nBound}), we have $$K_x \leq \Re(c_nh_{kn}(z)) \leq |c_nh_{kn}(z)| \leq 2cC_0/u.$$
So let $s = K_x$ and $t = 2cC_0/u$. This establishes the first item. In conjunction with Lemma \ref{ConcetrationLemma} and (\ref{Consequence_DetEqv}), the second and third items respectively are immediate.
\end{proof}

\begin{lemma}\label{bound_for_denominator}
    Let $h_1, h_2 \in \mathcal{H}_{s,t}$. Then the quantity $|1+h_1h_2|^{-1}$ is upper bounded.
\end{lemma}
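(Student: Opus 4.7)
The plan is to show that the product $h_1 h_2$ lies in a sector of the complex plane that avoids an open neighborhood of $-1$, which will immediately yield a positive lower bound for $|1+h_1 h_2|$. The key geometric fact is that the angular constraint $\Re(h_k) \geq s > 0$, combined with $|h_k| \leq t$, forces the argument of each $h_k$ to lie in a strict sub-interval of $(-\pi/2,\pi/2)$.

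First I would parametrize $h_k = r_k e^{\mathbbm{i}\theta_k}$ where $r_k = |h_k|$ and $\theta_k \in (-\pi/2,\pi/2)$. Since $\Re(h_k) = r_k \cos\theta_k \geq s$ and $r_k \leq t$, we get $\cos\theta_k \geq s/t$, so $|\theta_k| \leq \alpha_0 := \arccos(s/t) < \pi/2$. Moreover $r_k \in [s,t]$ because $|h_k| \geq \Re(h_k) \geq s$. Setting $\phi := \theta_1+\theta_2$, we then have $|\phi| \leq 2\alpha_0 < \pi$, so $h_1 h_2 = r_1 r_2 e^{\mathbbm{i}\phi}$ lies in the sector $\{re^{\mathbbm{i}\psi}:\, r \geq 0,\, |\psi| \leq 2\alpha_0\}$, which does not contain any point on the ray $(-\infty,0)$.

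The main calculation is then the algebraic identity
\begin{equation*}
|1+h_1 h_2|^2 = 1 + 2 r_1 r_2 \cos\phi + (r_1 r_2)^2 = (r_1 r_2 + \cos\phi)^2 + \sin^2\phi.
\end{equation*}
If $\cos\phi \geq 0$, the first form gives $|1+h_1 h_2|^2 \geq 1$ directly. If $\cos\phi < 0$ (which can only happen when $2\alpha_0 > \pi/2$), the second form gives $|1+h_1 h_2|^2 \geq \sin^2\phi$; and since in this regime $|\phi| \in (\pi/2, 2\alpha_0]$ with $2\alpha_0 < \pi$, and $\sin^2$ is decreasing on $[\pi/2,\pi]$, we conclude $\sin^2\phi \geq \sin^2(2\alpha_0) = 4(s/t)^2(1-(s/t)^2) > 0$.

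Combining the two cases,
\begin{equation*}
|1+h_1 h_2| \;\geq\; \min\!\left\{1,\; \tfrac{2s}{t^2}\sqrt{t^2-s^2}\right\} \;=:\; C(s,t) \;>\; 0,
\end{equation*}
a constant depending only on $s$ and $t$, hence $|1+h_1 h_2|^{-1} \leq 1/C(s,t)$ throughout $\mathcal{H}_{s,t}^2$. There is no substantial obstacle here; the only mild subtlety is the case distinction on the sign of $\cos\phi$, which the square-completion identity packages cleanly.
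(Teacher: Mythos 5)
Your proof is correct and takes essentially the same route as the paper: both split on the sign of $\cos(\arg h_1 + \arg h_2)$ (equivalently the sign of $\Re(h_1 h_2)$), dispose of the nonnegative case immediately, and in the other case exploit the angular constraint $|\arg h_k| \le \arccos(s/t) < \pi/2$ to keep $\arg(h_1h_2)$ bounded away from $\pm\pi$. The only cosmetic difference is that you lower-bound $|1+h_1h_2|$ via the square-completion identity $(r_1r_2 + \cos\phi)^2 + \sin^2\phi$ to obtain $|1+h_1h_2| \ge |\sin\phi| \ge \sin(2\alpha_0)$, whereas the paper uses $|1+h_1h_2| \ge |\Im(h_1h_2)| \ge s^2\sin(2\theta_0)$, yielding a different but equally valid explicit constant.
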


\begin{proof}
       Let $h_1, h_2 \in \mathcal{H}_{s,t}$. First, we establish a bound for $|1 + h_1h_2|^{-1}$. 
    
\textbf{Case1:} $\Re(h_1h_2) \geq 0$. In this case,
\begin{align}
    \frac{1}{|1+ h_1h_2|} = \frac{1}{\sqrt{(1+\Re(h_1h_2))^2 + \Im^2(h_1h_2)}} \leq 1.
\end{align}

\textbf{Case2:} $\Re(h_1h_2) < 0$. Then,  we define $\theta_0 := \cos^{-1}(s/t)$ and $\theta_k := \arg(h_k), k =1,2$.
Clearly,
\begin{align}
  \max\{|\theta_1|, |\theta_2| \leq \theta_0\}.
\end{align}

Since $\Re(h_1h_2) < 0$, this implies that either $\pi/2 < \theta_1 + \theta_2 \leq 2\theta_0$ or $-\pi/2 > \theta_1 + \theta_2 \geq -2\theta_0$ depending on whether $\Im(h_1h_2)$ is positive or negative. Irrespective of the sign of $\Im(h_1h_2)$, we observe that 
\begin{align}
    |\sin(\theta_1+ \theta_2)| \geq \sin(2\theta_0).
\end{align}
Since $|h_k| \geq \Re(h_k) \geq s > 0$ and $\theta_0 \neq 0$, we observe that 
\begin{align}
    |\Im(h_1h_2)| = |h_1h_2\Im(e^{\mathbbm{i}(\theta_1 + \theta_2)})| \geq |\Re(h_1)|\,|\Re(h_2)|\,|\sin(2\theta_0)| = s^2|\sin(2\theta_0)| := L_0 > 0.
\end{align}
Thus, we have
\begin{align}
    \frac{1}{|1+h_1h_2|} \leq \frac{1}{|\Im(h_1h_2)|} \leq \frac{1}{L_0}.
\end{align}
Combining both cases, we conclude that $|1+h_1h_2|^{-1} \leq M_0 := \max\{1, 1/L_0\}$.
\end{proof}

\begin{lemma}\label{Lipschitz2}
    \textbf{Lipschitz within a hemisphere:} For $0 < s \leq t$, the functions $\rho_k(\cdot, \cdot)$, $k=1,2$ are Lipschitz continuous on $\mathcal{H}_{s,t}^2$. 
\end{lemma}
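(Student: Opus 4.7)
The plan is to mirror the proof of Lemma \ref{Lipschitz} almost verbatim, the only genuinely new ingredient being the uniform bound on $|1+h_1h_2|^{-1}$ that has already been established in Lemma \ref{bound_for_denominator}. Within the sector $\mathcal{S}(b)^2$ one had the automatic bound $|1+h_1h_2|^{-1}\le 1$ because $\Re(h_1h_2)\ge 0$, but this positivity can fail on $\mathcal{H}_{s,t}^2$; Lemma \ref{bound_for_denominator} plugs precisely this hole by giving a finite constant $M_0=M_0(s,t)$ with $|1+h_1h_2|^{-1}\le M_0$ whenever $h_1,h_2\in \mathcal{H}_{s,t}$.

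First I would fix $\textbf{h}=(h_1,h_2)$, $\textbf{g}=(g_1,g_2)\in \mathcal{H}_{s,t}^2$ and write, exactly as in the proof of Lemma \ref{Lipschitz},
\[
\rho_1(\textbf{h})-\rho_1(\textbf{g})=\frac{(h_2-g_2)+h_2 g_2(g_1-h_1)}{(1+h_1h_2)(1+g_1g_2)}
=\frac{h_2-g_2}{(1+h_1h_2)(1+g_1g_2)}+\rho_1(\textbf{h})\rho_1(\textbf{g})(g_1-h_1).
\]
Applying the triangle inequality and Lemma \ref{bound_for_denominator} to each factor of the denominator yields $|(1+h_1h_2)(1+g_1g_2)|^{-1}\le M_0^2$, while the definition of $\mathcal{H}_{s,t}$ gives $|h_k|,|g_k|\le t$ and hence $|\rho_1(\textbf{h})|,|\rho_1(\textbf{g})|\le tM_0$. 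Combining,
\[
|\rho_1(\textbf{h})-\rho_1(\textbf{g})|\le M_0^2|h_2-g_2|+t^2 M_0^2|h_1-g_1|\le K_0\,\|\textbf{h}-\textbf{g}\|_1,
\]
where $K_0:=M_0^2\max\{1,t^2\}$ depends only on $s$ and $t$. By symmetry in the roles of $h_1,h_2$, the same bound (with the same constant $K_0$) holds for $\rho_2$.

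I do not anticipate a genuine obstacle here: once Lemma \ref{bound_for_denominator} is in hand, the rest of the argument is elementary algebra identical to Lemma \ref{Lipschitz}. The only subtlety worth flagging is that the Lipschitz constant now depends on both $s$ and $t$ (through $M_0$), not merely on an upper bound $b$ for the modulus, so downstream applications must be aware that the constant degrades as $s\downarrow 0$ through the behaviour of $L_0=s^2|\sin(2\theta_0)|$ appearing in Lemma \ref{bound_for_denominator}.
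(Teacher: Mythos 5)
Your proposal is correct and follows essentially the same argument as the paper's proof: the same decomposition of $\rho_1(\textbf{h})-\rho_1(\textbf{g})$ borrowed from Lemma \ref{Lipschitz}, the bound $|1+h_1h_2|^{-1}\le M_0$ from Lemma \ref{bound_for_denominator} applied to each denominator factor, and the same final Lipschitz constant $K_0=\max\{M_0^2,M_0^2 t^2\}$. Your closing remark about the constant degrading as $s\downarrow 0$ (through $L_0=s^2|\sin(2\theta_0)|$) is a correct and useful observation that the paper does not make explicit.
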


\begin{proof}
Let $M_0$ be as defined in Lemma \ref{bound_for_denominator}. For $\textbf{h}=(h_1,h_2), \textbf{g}=(g_1,g_2) \in \mathcal{H}_{s,t}^2$, we observe that
\begin{align*}
        \absmod{\rho_1(\textbf{h}) - \rho_1(\textbf{g})}
        &= \absmod{\frac{h_2}{1+h_{1}h_{2}} - \frac{g_2}{1+g_{1}g_{2}}}\\
        &= \absmod{\frac{(h_2 - g_2) + h_2g_2(g_1-h_1)}{(1+h_1h_2)(1+g_1g_2)}}\\
        &\leq \absmod{\frac{h_2 - g_2}{(1+h_1h_2)(1+g_1g_2)}} + \frac{|h_2|}{|1+h_1h_2|}\frac{|g_2|}{|1+g_1g_2|} |g_1-h_1|\\
        &\leq |h_2-g_2|M_0^2 + (tM_0)^2|h_1-g_1|\\
        &\leq K_0||\textbf{h}-\textbf{g}||_1 \text{, where } K_0:= \max\{M_0^2, M_0^2t^2\}.
    \end{align*}
The same Lipschitz constant also works for $\rho_2(\cdot, \cdot)$.    
\end{proof}

\begin{lemma}\label{rho_imp_result}
Under Assumptions \ref{A12}, for $k = 1,2$, we have the following results for $z \in \mathbb{C}_L$:
\begin{description}
    \item[1] $|\rho_k(c_n\textbf{h}_{n}(z)) - \rho_k(c_n\mathbb{E}\textbf{h}_{n}(z))| \xrightarrow{a.s.} 0$,\text{ and}
    \item[2] $|\rho_k(c_n\Tilde{\textbf{h}}_{n}(z)) - \rho_k(c_n\mathbb{E}\textbf{h}_{n}(z))| \xrightarrow{} 0$.
\end{description}
\end{lemma}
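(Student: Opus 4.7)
The plan is to combine three ingredients that are essentially already in place: a Lipschitz bound for $\rho_k$ on a suitable region, a localization of the arguments into that region, and consistency/concentration of the vector $\textbf{h}_n$ around both $\mathbb{E}\textbf{h}_n$ and $\tilde{\textbf{h}}_n$. Concretely, by Lemma \ref{boundedAwayfromZero}, for each fixed $z \in \mathbb{C}_L$ there exist $0 < s \leq t$ (depending on $z$ but not on $n$) such that for all sufficiently large $n$, the three vectors $c_n\textbf{h}_n(z)$, $c_n\mathbb{E}\textbf{h}_n(z)$ and $c_n\tilde{\textbf{h}}_n(z)$ all lie in $\mathcal{H}_{s,t}^2$ (the first almost surely). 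By Lemma \ref{Lipschitz2}, $\rho_k$ is Lipschitz on $\mathcal{H}_{s,t}^2$ with some constant $K_0 = K_0(s,t)$.

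For item (1), I would simply apply the Lipschitz estimate to the pair $c_n\textbf{h}_n(z)$ and $c_n\mathbb{E}\textbf{h}_n(z)$:
\begin{align*}
|\rho_k(c_n\textbf{h}_n(z)) - \rho_k(c_n\mathbb{E}\textbf{h}_n(z))| \leq K_0 c_n \bigl(|h_{1n}(z) - \mathbb{E}h_{1n}(z)| + |h_{2n}(z) - \mathbb{E}h_{2n}(z)|\bigr).
\end{align*}
Since $c_n \to c$ and each $|h_{jn}(z) - \mathbb{E}h_{jn}(z)| \xrightarrow{a.s.} 0$ by Lemma \ref{ConcetrationLemma}, the right-hand side goes to $0$ almost surely, giving the claim.

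For item (2), the same Lipschitz inequality applied to the pair $c_n\tilde{\textbf{h}}_n(z)$ and $c_n\mathbb{E}\textbf{h}_n(z)$ reduces the problem to showing $|\tilde{h}_{kn}(z) - \mathbb{E}h_{kn}(z)| \to 0$ for $k=1,2$. I would obtain this via the triangle inequality
\begin{align*}
|\tilde{h}_{kn}(z) - \mathbb{E}h_{kn}(z)| \leq |\tilde{h}_{kn}(z) - h_{kn}(z)| + |h_{kn}(z) - \mathbb{E}h_{kn}(z)|,
\end{align*}
where the first term on the right tends to $0$ almost surely by (\ref{Consequence_DetEqv}) (a consequence of the deterministic equivalent of Theorem \ref{DeterministicEquivalent}) and the second again by Lemma \ref{ConcetrationLemma}. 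Since both $\tilde{h}_{kn}(z)$ and $\mathbb{E}h_{kn}(z)$ are deterministic, almost sure convergence of their difference to $0$ is in fact deterministic convergence, yielding $|\rho_k(c_n\tilde{\textbf{h}}_n(z)) - \rho_k(c_n\mathbb{E}\textbf{h}_n(z))| \to 0$ as claimed.

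There is no serious obstacle here: all the heavy lifting (the Lipschitz property of $\rho_k$, the localization of $\textbf{h}_n,\mathbb{E}\textbf{h}_n,\tilde{\textbf{h}}_n$ away from singularities of $\rho_k$, the concentration, and the deterministic-equivalent estimate) has been established earlier. The only mild subtlety is remembering that Lemma \ref{Lipschitz2} requires \emph{both} arguments to sit in $\mathcal{H}_{s,t}^2$, which is exactly why Lemma \ref{boundedAwayfromZero} was proved for all three of $c_n\textbf{h}_n$, $c_n\mathbb{E}\textbf{h}_n$ and $c_n\tilde{\textbf{h}}_n$ simultaneously, with a common pair $(s,t)$.
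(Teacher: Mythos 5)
Your proof is correct and matches the paper's argument: localize the three vectors $c_n\textbf{h}_n$, $c_n\mathbb{E}\textbf{h}_n$, $c_n\tilde{\textbf{h}}_n$ in a common hemisphere $\mathcal{H}_{s,t}^2$ via Lemma \ref{boundedAwayfromZero}, apply the Lipschitz estimate of Lemma \ref{Lipschitz2}, and feed in Lemma \ref{ConcetrationLemma} for item (1) and (\ref{Consequence_DetEqv}) for item (2). Your triangle-inequality step through $h_{kn}(z)$ in item (2) makes explicit the use of Lemma \ref{ConcetrationLemma} that the paper's terse proof leaves implicit, but the route is the same.
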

\begin{proof}
The first result follows from Lemma \ref{ConcetrationLemma}, Lemma \ref{boundedAwayfromZero} and Lemma \ref{Lipschitz2}. The second result follows from (\ref{Consequence_DetEqv}) and Lemma \ref{Lipschitz2}.
\end{proof}

\begin{lemma}\label{normBound}
    Under Assumptions \ref{A12}, the operator norms of the matrices $\Bar{Q}(z), \Bar{\Bar{Q}}(z)$ defined in Theorem \ref{DeterministicEquivalent} and (\ref{defining_QBarBar}) respectively are bounded by $1/|\Re(z)|$ for $z \in \mathbb{C}_L$.
\end{lemma}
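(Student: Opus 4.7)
The plan is to exploit the commutativity of $\Sigma_{1n}$ and $\Sigma_{2n}$ (Assumption $T_3$) to diagonalize the matrices inside the two inverses explicitly, and to invoke Remark \ref{remark_real_of_rho} to conclude that the relevant $\rho_k$ coefficients have non-negative real part. Once these two facts are in place, the operator norm bound follows from normality.

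Since $\Sigma_{1n}, \Sigma_{2n}$ are Hermitian and commute, there exists a unitary $U$ simultaneously diagonalizing both; let $\{(\lambda_{1j},\lambda_{2j})\}_{j=1}^p \subset \mathbb{R}_+^2$ be the joint eigenvalues. Writing $\rho_k$ for either $\rho_k(c_n\mathbb{E}\textbf{h}_n(z))$ (the case of $\bar Q(z)$) or $\rho_k(c_n\tilde{\textbf{h}}_n(z))$ (the case of $\bar{\bar Q}(z)$), the matrix $A := -zI_p + \rho_1\Sigma_{1n} + \rho_2\Sigma_{2n}$ is normal with eigenvalues $\mu_j := -z + \rho_1\lambda_{1j} + \rho_2\lambda_{2j}$. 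For $z = -u + \mathbbm{i}v \in \mathbb{C}_L$ with $u > 0$, provided $\Re(\rho_k) \geq 0$ for $k=1,2$,
\[
|\mu_j| \;\geq\; \Re(\mu_j) \;=\; u + \Re(\rho_1)\lambda_{1j} + \Re(\rho_2)\lambda_{2j} \;\geq\; u \;=\; |\Re(z)|,
\]
which forces $A$ to be invertible and, by normality, yields $\|A^{-1}\|_{op} = 1/\min_j|\mu_j| \leq 1/|\Re(z)|$.

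It remains to justify $\Re(\rho_k)\ge 0$ in both cases. For $\bar Q(z)$: each $h_{kn}(z)$ is the Stieltjes transform of the positive discrete measure $\mu_{kn}$ (with mass $a_{jj}^{(k)}/p \geq 0$ at $\mathbbm{i}\lambda_j$), so by Property (\ref{Property1}) we have $h_{kn}(z) \in \mathbb{C}_R$ almost surely; taking expectations preserves the non-negativity of the real part, placing $c_n\mathbb{E}\textbf{h}_n(z)$ in the (closure of) $\mathbb{C}_R^2$, and the explicit formulas (\ref{real_of_rho1})--(\ref{real_of_rho2}) then give $\Re(\rho_k(c_n\mathbb{E}\textbf{h}_n(z))) \geq 0$. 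Once this handles $\bar Q(z)$, I diagonalize $\bar Q(z)$ by the same unitary $U$ to obtain the explicit representation $\tilde{h}_{kn}(z) = \tfrac{1}{p}\sum_j \lambda_{kj}/\mu_j$, and observe that the estimate $\Re(\mu_j) \geq u > 0$ just established yields $\Re(\tilde{h}_{kn}(z)) \geq 0$. Hence $c_n\tilde{\textbf{h}}_n(z)$ also lies in (the closure of) $\mathbb{C}_R^2$, and the same remark supplies $\Re(\rho_k(c_n\tilde{\textbf{h}}_n(z))) \geq 0$, closing the argument for $\bar{\bar Q}(z)$. The only subtlety is this sequencing of the two cases---the bound for $\bar Q(z)$ must be in hand before one can certify that the scalars driving $\bar{\bar Q}(z)$ have non-negative real part---and no further technical obstacles arise.
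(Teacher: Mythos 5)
Your proof is correct, and it shares the paper's overall architecture (simultaneous diagonalization plus the observation that $\Re(\rho_k) \geq 0$ forces the eigenvalues to have real part at least $u = |\Re(z)|$, so normality gives the operator-norm bound). For $\bar Q(z)$ the paper instead cites Lemma \ref{boundedAwayfromZero}, which establishes the stronger statement that $\Re(c_n\mathbb{E}h_{kn}(z))$ is bounded away from zero for large $n$; your elementary route — $h_{kn}(z)$ is the Stieltjes transform of a nonnegative discrete measure, hence $\Re(h_{kn}(z)) \geq 0$ for $z\in\mathbb{C}_L$, and expectation preserves this sign — is cleaner, more self-contained, and dispenses with the ``for sufficiently large $n$'' qualifier. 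Your treatment of $\bar{\bar Q}(z)$ is genuinely different from the paper's: you bootstrap off the just-established spectral form of $\bar Q(z)$ to compute $\tilde h_{kn}(z) = \frac{1}{p}\sum_j \lambda_{kj}/\mu_j$ explicitly and read off $\Re(\tilde h_{kn}(z)) \geq 0$ from $\Re(1/\mu_j) = \Re(\mu_j)/|\mu_j|^2 > 0$, whereas the paper argues indirectly by citing Lemma \ref{rho_imp_result} (that $\rho_k(c_n\tilde{\textbf h}_n(z))$ is asymptotically close to $\rho_k(c_n\mathbb{E}\textbf h_n(z))$) together with Lipschitz continuity of $\rho_k$. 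Your direct computation is more elementary and, importantly, makes the logical dependency explicit — the $\bar Q$ bound must be in hand before $\tilde h_{kn}$ can be assessed — which the paper leaves opaque. The one point both proofs pass over silently is the well-definedness of the $\rho_k$'s themselves (that $1 + z_1 z_2 \neq 0$ for the arguments appearing here); the paper addresses this elsewhere via Lemmas \ref{boundedAwayfromZero} and \ref{bound_for_denominator}, and it is reasonable to treat it as already baked into the definitions of $\bar Q$ and $\bar{\bar Q}$.
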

\begin{proof}
        Since $\Sigma_{1n}$ and $\Sigma_{2n}$ commute, there exists a common unitary matrix $P$ such that $\Sigma_{kn} = P\Lambda_k P^*$ where $\Lambda_k = \operatorname{diag}(\{\lambda_{kj}\}_{j=1}^p)$ with $\lambda_{kj} \geq 0$ for $k =1,2$. Therefore, 
\begin{align}\label{QBarz_expansion}
     \Bar{Q}(z)
    &= \bigg(-zPP^* + \rho_1(\mathbb{E}c_n\textbf{h}_n)P\Lambda_1 P^* + \rho_2(\mathbb{E}c_n\textbf{h}_n)P\Lambda_2 P^*\bigg)^{-1} \notag\\
    &= P\bigg(-zI_p + \rho_1(\mathbb{E}c_n\textbf{h}_n)\Lambda_1 + \rho_2(\mathbb{E}c_n\textbf{h}_n)\Lambda_2\bigg)^{-1}P^*.
\end{align}

For sufficiently large $n$, we have $\Re(c_nh_{kn}(z)) > 0$ from Lemma \ref{boundedAwayfromZero}. Since $\rho_k(\mathbb{C}_R^2) \subset \mathbb{C}_R$, we observe that for any $1 \leq j \leq p$, the following holds:
\begin{align}\label{Real_of_eigen_QBarz}
\Re(-z +  \rho_1(\mathbb{E}c_n\textbf{h}_n)\lambda_{1j}+ \rho_2(\mathbb{E}c_n\textbf{h}_n)\lambda_{2j}) \geq \Re(-z) > 0.
\end{align}
Using (\ref{QBarz_expansion}) and (\ref{Real_of_eigen_QBarz}), we have $||\Bar{Q}(z)||_{op} \leq 1/|\Re(z)|$. For the other result, note that,
\begin{align*}
    \Bar{\Bar{Q}}(z) = P\bigg(-zI_p + \rho_1(c_n\Tilde{h}_{1n}, c_n\Tilde{h}_{2n})\Lambda_1 + \rho_2(c_n\Tilde{h}_{1n}, c_n\Tilde{h}_{2n})\Lambda_2\bigg)^{-1}P^*.
\end{align*}

Using Lemma \ref{rho_imp_result} and Lemma \ref{Lipschitz}, we conclude that $||\Bar{\Bar{Q}}(z)||_{op} \leq {1}/{|\Re(z)|}$.
\end{proof}

\begin{lemma}\label{hn_tilde_tilde2}
Under Assumptions \ref{A12}, $z \in \mathbb{C}_L$ and $k=1,2$, we have $|\Tilde{h}_{kn}(z) - \Tilde{\Tilde{h}}_{kn}(z)| \rightarrow 0$.
\end{lemma}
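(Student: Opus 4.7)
The plan is to observe that $\bar{Q}(z)$ and $\bar{\bar{Q}}(z)$ differ only in the arguments fed into $\rho_1,\rho_2$ (namely $c_n\mathbb{E}\textbf{h}_n(z)$ versus $c_n\tilde{\textbf{h}}_n(z)$), and then use a resolvent identity plus Lipschitz continuity of $\rho_k$ to transfer closeness of these arguments to closeness of the two traces. Specifically, I would write
$$\tilde{h}_{kn}(z) - \tilde{\tilde{h}}_{kn}(z) = \frac{1}{p}\operatorname{trace}\bigl(\Sigma_{kn}(\bar{Q}(z) - \bar{\bar{Q}}(z))\bigr)$$
and expand the matrix difference via the resolvent identity $\bar{Q} - \bar{\bar{Q}} = \bar{Q}(\bar{\bar{Q}}^{-1} - \bar{Q}^{-1})\bar{\bar{Q}}$, where by construction
$$\bar{\bar{Q}}(z)^{-1} - \bar{Q}(z)^{-1} = \sum_{\ell=1}^{2}\bigl(\rho_\ell(c_n\tilde{\textbf{h}}_n(z)) - \rho_\ell(c_n\mathbb{E}\textbf{h}_n(z))\bigr)\Sigma_{\ell n}.$$

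The first key step is to establish that $|\tilde{h}_{kn}(z) - \mathbb{E}h_{kn}(z)| \to 0$ for $k=1,2$. This is immediate from the triangle inequality applied to (\ref{Consequence_DetEqv}), which gives $|h_{kn}-\tilde{h}_{kn}| \xrightarrow{a.s.} 0$, and Lemma \ref{ConcetrationLemma}, which gives $|h_{kn}-\mathbb{E}h_{kn}| \xrightarrow{a.s.} 0$. Since both $\tilde{h}_{kn}$ and $\mathbb{E}h_{kn}$ are deterministic (we are treating $\{\Sigma_{kn}\}$ as non-random), almost sure convergence of their difference to zero collapses to ordinary convergence.

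Next, I would invoke Lemma \ref{boundedAwayfromZero} to ensure that for sufficiently large $n$, both $c_n\mathbb{E}\textbf{h}_n(z)$ and $c_n\tilde{\textbf{h}}_n(z)$ lie in a common hemispherical region $\mathcal{H}_{s,t}^2$ with $s,t$ depending only on $z$ and $H$. Lemma \ref{Lipschitz2} then provides a Lipschitz constant $K_0$ for $\rho_1,\rho_2$ on this region, so that
$$|\rho_\ell(c_n\tilde{\textbf{h}}_n(z)) - \rho_\ell(c_n\mathbb{E}\textbf{h}_n(z))| \le K_0\, c_n\, \|\tilde{\textbf{h}}_n(z) - \mathbb{E}\textbf{h}_n(z)\|_1 \longrightarrow 0$$
for $\ell=1,2$ by the previous step.

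To close the argument, I combine this with $\|\bar{Q}(z)\|_{op},\|\bar{\bar{Q}}(z)\|_{op} \le 1/|\Re(z)|$ from Lemma \ref{normBound}, $\|\Sigma_{\ell n}\|_{op}\le\tau$ from Assumption A1, and the trace inequality $R_5$ applied to the PSD matrix $\Sigma_{kn}$ to obtain, for sufficiently large $n$,
$$|\tilde{h}_{kn}(z) - \tilde{\tilde{h}}_{kn}(z)| \le \frac{1}{p}\operatorname{trace}(\Sigma_{kn}) \cdot \|\bar{Q}\|_{op}\|\bar{\bar{Q}}^{-1}-\bar{Q}^{-1}\|_{op}\|\bar{\bar{Q}}\|_{op} \le \frac{\tau C_0}{|\Re(z)|^{2}}\sum_{\ell=1}^{2}|\rho_\ell(c_n\tilde{\textbf{h}}_n) - \rho_\ell(c_n\mathbb{E}\textbf{h}_n)|,$$
where $p^{-1}\operatorname{trace}(\Sigma_{kn}) \le C_0$ by (\ref{firstSpectralBound}). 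The right-hand side tends to zero by the preceding paragraph, giving the result. There is no serious obstacle here: all of the analytic heavy lifting (concentration, deterministic equivalent, Lipschitz control, and location of iterates inside a hemisphere) has already been carried out in earlier lemmas, so this lemma reduces to a clean bookkeeping step that stitches them together.
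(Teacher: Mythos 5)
Your proof follows the same route as the paper's: express the difference as $\frac1p\operatorname{trace}(\Sigma_{kn}(\bar Q - \bar{\bar Q}))$, apply the resolvent identity, bound via $R_5$ together with the operator-norm control of Lemma \ref{normBound} and Assumption A1, and show $\lvert\rho_\ell(c_n\tilde{\textbf{h}}_n)-\rho_\ell(c_n\mathbb{E}\textbf{h}_n)\rvert\to 0$ via Lemmas \ref{boundedAwayfromZero} and \ref{Lipschitz2}. The paper simply packages the last step as Lemma \ref{rho_imp_result}, whereas you unfold it inline with an explicit triangle inequality combining (\ref{Consequence_DetEqv}) and Lemma \ref{ConcetrationLemma} --- a harmless and arguably clearer presentation of the same argument.
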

\begin{proof}
    Following definitions (\ref{defining_hn_tilde}) and (\ref{defining_hn_tilde2}), we observe that 
\begin{align*}
    |\Tilde{h}_{kn}(z) - \Tilde{\Tilde{h}}_{kn}(z)|
    &= \frac{1}{p}|\operatorname{trace}\{\Sigma_{kn} (\Bar{Q} - \Bar{\Bar{Q}})\}|\\
    &= \frac{1}{p}\absmod{\operatorname{trace}\{\Sigma_{kn} \Bar{Q} \bigg(\sum_{k=1}^2[\rho_k(c_n\Tilde{\textbf{h}}_{n}) - \rho_k(c_n\mathbb{E}\textbf{h}_{n})\Sigma_{kn}]\bigg) \Bar{\Bar{Q}}\}} \text{, by } (\ref{R0})\\
    &\leq \bigg(\frac{1}{p}\operatorname{trace}(\Sigma_{kn})\bigg) \sum_{k=1}^2\absmod{\rho_k(c_n\Tilde{\textbf{h}}_{n})-\rho_k(c_n\mathbb{E}\textbf{h}_{n})\Sigma_{kn}} \times ||\Bar{Q}\Sigma_{kn} \Bar{\Bar{Q}}||_{op} \text{, by } (\ref{R5})\\
    &\leq \sum_{k=1}^2 C_0 |\rho_k(c_n\Tilde{\textbf{h}}_{n}) - \rho_k(c_n\mathbb{E}\textbf{h}_{n}))| \frac{\tau}{\Re^2 (z)} \text{, for large } n \text{ and using Lemma \ref{normBound}}\\ 
    &= \frac{C_0\tau}{\Re^2 (z)}\,||\boldsymbol{\rho}(c_n\Tilde{\textbf{h}}_{n}) - \boldsymbol{\rho}(c_n\mathbb{E}\textbf{h}_{n})||_1.
\end{align*}
Now we use Lemma \ref{rho_imp_result} to conclude the result.
\end{proof}

\begin{remark}\label{column_index_notation}
We will be using $X_{rj}$ (resp. $Z_{rj}$) to denote the $j^{th}$ column of $X_r$ (resp. $Z_r$) for $r=1,2$ and $1\leq j \leq n$. With this notation, we introduce a few quantities.
\end{remark}
\begin{definition}\label{defining_Ejrs}
    $E_j(r,s) := \frac{1}{n}X_{rj}^*Q_{-j}X_{sj} = \frac{1}{n}Z_{rj}^*\Sigma_{rn}^\frac{1}{2}Q_{-j}\Sigma_{sn}^\frac{1}{2}Z_{sj} \text{ for } r,s \in \{1,2\}, 1\leq j \leq n$.
\end{definition}
\begin{definition}\label{defining_Fjrs}
        $F_j(r,s) := \frac{1}{n}X_{rj}^*\Bar{Q}M_nQ_{-j}X_{sj} \text{ for } r,s \in \{1,2\}, 1\leq j \leq n$.
\end{definition}
\begin{definition}\label{defining_mkn}
    $m_{rn}(z) := \frac{1}{n}\operatorname{trace}\{\Sigma_{rn}\Bar{Q}M_nQ\} \text{ for } r \in \{1,2\}$.
\end{definition}
\begin{definition}\label{defining_vn}
    $v_n(z) := \dfrac{1}{1 + c_n^2h_{1n}(z)h_{2n}(z)}$.
\end{definition}

\begin{remark}\label{remark_vn_bound}
    For a fixed $z \in \mathbb{C}_L$, $|v_n(z)|$ is bounded above by a quantity independent of $n$ by Lemma \ref{boundedAwayfromZero} and Lemma \ref{bound_for_denominator}.
\end{remark}

\begin{lemma}\label{uniformConvergence}
Under Assumptions \ref{A12}, the quantities $c_{1j},c_{2j}, d_{1j}, d_{2j}, v_n$ and $F_j(r,s), m_{rn}$ for $r,s =1,2$ as defined throughout the proof of Theorem \ref{DeterministicEquivalent} satisfy the following results.
\begin{align*}
    &\underset{1\leq j\leq n}{\max }|c_{1j} - v_n| \xrightarrow{a.s.} 0; \hspace{18mm} \underset{1\leq j\leq n}{\max }|d_{1j} - v_n|\xrightarrow{a.s.} 0;\\
    &\underset{1\leq j\leq n}{\max }|c_{2j} - c_nv_nh_{1n}|\xrightarrow{a.s.} 0; \hspace{10mm} \underset{1\leq j\leq n}{\max }|d_{2j} - c_nv_nh_{2n}|\xrightarrow{a.s.} 0;\\
    & \underset{1\leq j\leq n}{\max }|F_j(r,r) - m_{rn}| \xrightarrow{a.s.} 0, r \in \{1,2\};\\
    & \underset{1\leq j\leq n}{\max }|F_j(r,s)| \xrightarrow{a.s.} 0  \text{, where } r \neq s, r,s \in \{1,2\}.
\end{align*}  
\end{lemma}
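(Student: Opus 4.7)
The plan is to unpack the six assertions by recognizing that the coefficients $c_{1j},c_{2j},d_{1j},d_{2j}$ are precisely the scalars produced by Lemma~\ref{Rank2Woodbury} applied to the rank-two update $S_n=S_{nj}+\tfrac{1}{n}(X_{1j}X_{2j}^{*}-X_{2j}X_{1j}^{*})$ with $u=X_{1j}/\sqrt{n}$, $v=X_{2j}/\sqrt{n}$, $B=S_{nj}-zI_p$. Under this identification, each $c_{rj},d_{rj}$ is a rational expression in the quadratic forms $E_j(r,s)=\tfrac{1}{n}X_{rj}^{*}Q_{-j}X_{sj}$, with a common denominator $D(u,v)=\bigl((1-E_j(1,2))(1+E_j(2,1))+E_j(1,1)E_j(2,2)\bigr)^{-1}$. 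Modulo sign conventions, $c_{1j}=(1-E_j(1,2))D$, $c_{2j}=E_j(1,1)D$, $d_{1j}=(1+E_j(2,1))D$, $d_{2j}=E_j(2,2)D$. So the first four statements reduce to controlling $E_j(r,s)$ uniformly in $j$ and showing that $D$ concentrates on $v_n$.

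First I would prove the uniform approximation of the $E_j(r,s)$'s. Under Assumptions \ref{A12}, $\Sigma_{kn}^{1/2}Q_{-j}\Sigma_{kn}^{1/2}$ is independent of $Z_{kj}$, deterministic-conditional-on-$\Sigma$, and has operator norm bounded by $\tau/|\Re(z)|$ via Remark~\ref{remark_norm_of_resolvent}. Applying Lemma~\ref{quadraticForm} for the diagonal case $r=s$ yields
\[
\max_{j}\Bigl|E_j(r,r)-\tfrac{1}{n}\operatorname{trace}(\Sigma_{rn}Q_{-j})\Bigr|\xrightarrow{a.s.}0,
\]
and Lemma~\ref{Rank2Perturbation} (with $M_n=\Sigma_{rn}$) then lets me replace $Q_{-j}$ by $Q$, giving $\max_j|E_j(r,r)-c_nh_{rn}|\xrightarrow{a.s.}0$. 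For the off-diagonal case $r\neq s$, the independence of $Z_{1j}$ and $Z_{2j}$ lets me invoke Corollary~\ref{quadraticForm_xy} to conclude $\max_j|E_j(r,s)|\xrightarrow{a.s.}0$.

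Next I would handle $D$. Its limiting denominator is $1+c_n^2h_{1n}h_{2n}$, whose modulus is bounded \emph{away} from zero because Lemma~\ref{boundedAwayfromZero} places $c_n\textbf{h}_n(z)$ in a fixed hemisphere $\mathcal{H}_{s,t}^2$ for large $n$, and Lemma~\ref{bound_for_denominator} supplies the uniform bound $|1+c_n^2h_{1n}h_{2n}|^{-1}\leq M_0$. Using Lemma~\ref{lA.2} to combine the convergences of $E_j(1,2),E_j(2,1),E_j(1,1)E_j(2,2)$, and the elementary identity
\[
\frac{1}{x}-\frac{1}{y}=\frac{y-x}{xy}
\]
together with the uniform bound on $|v_n|$ from Remark~\ref{remark_vn_bound}, I obtain $\max_j|D(u,v)-v_n|\xrightarrow{a.s.}0$. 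Multiplying $D$ by the numerators $1-E_j(1,2)$, $E_j(1,1)$, $1+E_j(2,1)$, $E_j(2,2)$ via Lemma~\ref{lA.3} (all factors bounded eventually) yields the four claims on $c_{1j},c_{2j},d_{1j},d_{2j}$.

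For the $F_j(r,s)$ assertions, the argument is parallel. The matrix $A_j^{rs}:=\Sigma_{rn}^{1/2}\bar{Q}M_nQ_{-j}\Sigma_{sn}^{1/2}$ is independent of $Z_{rj},Z_{sj}$ and its operator norm is uniformly bounded by $\tau\|M_n\|_{\mathrm{op}}/|\Re(z)|^2$ using Lemma~\ref{normBound}, Remark~\ref{remark_norm_of_resolvent}, and \textbf{A1}. So Lemma~\ref{quadraticForm} gives $\max_j|F_j(r,r)-\tfrac{1}{n}\operatorname{trace}(\Sigma_{rn}\bar{Q}M_nQ_{-j})|\xrightarrow{a.s.}0$, after which Lemma~\ref{Rank2Perturbation} (applied now to the bounded matrix $\Sigma_{rn}\bar{Q}M_n$) swaps $Q_{-j}$ for $Q$ and produces $m_{rn}$. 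For $r\neq s$, Corollary~\ref{quadraticForm_xy} is applied directly to $A_j^{rs}$.

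I expect the main technical obstacle to be the uniform lower bound on $|1+c_n^2h_{1n}h_{2n}|$: since $h_{1n},h_{2n}$ are random, one needs Lemma~\ref{boundedAwayfromZero} to quarantine the whole pair inside $\mathcal{H}_{s,t}^{2}$ eventually, and then Lemma~\ref{bound_for_denominator} to rule out cancellation. The rest of the argument is an orchestration of Lemmas \ref{lA.2}--\ref{lA.3} to pass from entrywise a.s.\ smallness to smallness of products and quotients uniformly in $j$.
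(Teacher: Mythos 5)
Your plan is correct and follows essentially the same route as the paper: Lemma~\ref{quadraticForm} and Lemma~\ref{Rank2Perturbation} for the diagonal $E_j$'s, Corollary~\ref{quadraticForm_xy} for the off-diagonal ones, the analogous argument with $Q_{-j}$ replaced by $\bar{Q}M_nQ_{-j}$ for the $F_j$'s, and then Lemma~\ref{lA.3} together with the bound on $|v_n|$ from Remark~\ref{remark_vn_bound} (via Lemmas~\ref{boundedAwayfromZero} and \ref{bound_for_denominator}) to pass to $c_{1j},c_{2j},d_{1j},d_{2j}$. If anything you are slightly more explicit than the paper in isolating the step $\max_j|Den(j)-v_n|\xrightarrow{a.s.}0$ as its own target, using the reciprocal identity and the eventual boundedness of $|Den(j)|$; the paper treats this as an implicit hypothesis check for Lemma~\ref{lA.3}, so your version is a cleaner exposition of the same argument.
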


\begin{proof}
Recall the definition of $E_j(r,s)$ from (\ref{defining_Ejrs}). We will first establish a few results related to $E_j(r,s)$. For a fixed $r = 1,2$, let $A_{j} := \Sigma_{rn}^\frac{1}{2}Q_{-j}\Sigma_{rn}^\frac{1}{2}$ and $x_{jn} = Z_{rj}, 1 \leq j \leq n$. We have $||A_{j}||_{op} \leq {\tau}/{|\Re(z)|}$. Then $\{x_{jn}: 1 \leq j \leq n\}_{n=1}^\infty$ and $A_j$ satisfy the conditions of Corollary \ref{quadraticFormCorollary}. Thus, we have 
\begin{align}\label{B.8}
 \underset{1\leq j \leq n}{\max} \absmod{\frac{1}{n}x_{jn}^*A_jx_{jn} - \dfrac{1}{n}\operatorname{trace}(A_{j})} = \underset{1\leq j \leq n}{\max} \left|E_j(r,r) - \dfrac{1}{n}\operatorname{trace}\{\Sigma_{rn} Q_{-j}\}\right| \xrightarrow{a.s.} 0.
\end{align}

From Lemma \ref{Rank2Perturbation}, $|\frac{1}{n}\operatorname{trace}\{\Sigma_{rn}(Q- Q_{-j})\}| \xrightarrow{a.s.} 0$. Observing that $c_nh_{rn} = \dfrac{1}{n}\operatorname{trace}\{\Sigma_{rn}Q\}$ we get
\begin{align}\label{E_j_rr}
    \underset{1\leq j \leq n}{\max} \left|E_j(r,r) - c_nh_{rn}\right| \xrightarrow{a.s.} 0.
\end{align}

From Corollary \ref{quadraticForm_xy}, we also get 
\begin{align}\label{E_j_rs}
\underset{1\leq j \leq n}{\max}|E_j(1,2)|\xrightarrow{a.s.}0 \hspace{6mm}\text{, and } \hspace{6mm} \underset{1\leq j \leq n}{\max}|E_j(2,1)| \xrightarrow{a.s.} 0.
\end{align}

Note that by Lemma \ref{normBound}, 
\begin{align}\label{QbarMQ_bound}
||\Bar{Q}M_nQ_{-j}||_{op} \leq ||\Bar{Q}||_{op}||M_n||_{op}||Q_{-j}||_{op} \leq  \frac{B}{\Re ^2(z)}.
\end{align}

Therefore, repeating the same arguments presented through (\ref{E_j_rr})-(\ref{E_j_rs}) (replacing $Q_{-j}$ with $\Bar{Q}M_nQ_{-j}$ throughout), we get the following uniform almost sure convergence results:
\begin{itemize}
    \item $\underset{1\leq j\leq n}{\max }|F_j(r,r) - m_{rn}| \xrightarrow{a.s.} 0, r \in \{1,2\}$, \text{ and}
    \item $\underset{1\leq j\leq n}{\max }|F_j(r,s)| \xrightarrow{a.s.} 0  \text{ where } r \neq s, r,s \in \{1,2\}$.
\end{itemize}

We now prove the result related to $c_{1j}$ defined in (\ref{defining_cj}). To show $\underset{1 \leq j \leq n}{\max}|c_{1j} - v_n| \xrightarrow{a.s.} 0$,  define for $1 \leq j \leq n$: 
\begin{description}
    \item[1] $A_{jn} = 1 - E_j(1,2)$,
    \item[2] $B_{jn} = Den(j)$ (\text{defined in} (\ref{defining_cj})),
    \item[3] $C_{jn} = 1$ and $D_{jn} = v_n$.
\end{description}
By Remark \ref{remark_vn_bound}, we see that $A_{jn}, B_{jn}, C_{jn}, D_{jn}$ satisfy the conditions of Lemma \ref{lA.3}. Therefore, we have the result associated with $c_{1j}$. The results for $c_{2j}, d_{1j}, d_{2j}$ follow from similar arguments. 
\end{proof}

\section{Proof of Theorem \ref{DeterministicEquivalent}}\label{sec:ProofDeterministicEquivalent}

\begin{proof}
Let $z \in \mathbb{C}_L$. Define $F(z) := \bigg(\Bar{Q}(z)\bigg)^{-1}$. Refer to the notation defined in \ref{column_index_notation}. Using (\ref{R0}), we have
\begin{align}\label{resolventIdentity}
Q - \Bar{Q} = Q \bigg(F + zI_p - \dfrac{1}{n}\sum_{j=1}^{n}(X_{1j}X_{2j}^{*} - X_{2j}X_{1j}^{*})\bigg) \Bar{Q}.
\end{align}

Using the above, we get
\begin{align}\label{Q_minus_QBar}
& \frac{1}{p}\operatorname{trace}\{(Q - \Bar{Q})M_n\}\\
=&\frac{1}{p}\operatorname{trace} \{Q(F + zI_p)\Bar{Q}M_n \} - \frac{1}{p} \operatorname{trace} \{Q \bigg(\sum_{j=1}^{n}\dfrac{1}{n} (X_{1j}X_{2j}^{*} - X_{2j}X_{1j}^{*})\bigg) \Bar{Q} M_n \} \notag \\
=&\frac{1}{p}\operatorname{trace} \{(F + zI_p)\Bar{Q}M_nQ \} - \frac{1}{p} \operatorname{trace} \{\bigg(\sum_{j=1}^{n}\dfrac{1}{n} (X_{1j}X_{2j}^{*} - X_{2j}X_{1j}^{*})\bigg) \Bar{Q} M_n Q\}\notag \\
= & \underbrace{\frac{1}{p}\operatorname{trace} \{(F + zI_p)\Bar{Q}M_nQ \}}_{{Term}_1} - \underbrace{\frac{1}{p} \sum_{j=1}^{n}\dfrac{1}{n} (X_{2j}^{*}\Bar{Q}M_nQX_{1j} - X_{1j}^{*}\Bar{Q}M_nQX_{2j})}_{{Term}_2} \notag.
\end{align}

Simplifying ${Term}_2$ using Lemma \ref{Rank2Woodbury}, with $A = Q_{-j}(z)$ (see (\ref{defining_Qzj})), $u = \frac{1}{\sqrt{n}}X_{1j}$ and $v = \frac{1}{\sqrt{n}}X_{2j}$, we get

\begin{align}\label{defining_cj}
        \frac{1}{\sqrt{n}} QX_{1j} &= Q_{-j} \bigg(\frac{1}{\sqrt{n}}X_{1j} c_{1j} + \frac{1}{\sqrt{n}}X_{2j} c_{2j}\bigg),\\ 
        \text{where } c_{1j} &= (1 - E_j(1,2))Den(j); \hspace{5mm} c_{2j} =E_j(1,1)Den(j)\notag \text{ and}\\
        Den(j) &= \bigg((1 - E_j(1,2))(1 + E_j(2,1)) + E_j(1,1)E_j(2,2)\bigg)^{-1} \notag
\end{align}
and,
\begin{align}\label{defining_dj}
        \frac{1}{\sqrt{n}} QX_{2j} &= Q_{-j} \bigg(\frac{1}{\sqrt{n}}X_{2j} d_{1j} - \frac{1}{\sqrt{n}}X_{1j} d_{2j}\bigg),\\ 
        \text{where } d_{1j} &= (1 + E_j(2,1))Den(j);\hspace{5mm} d_{2j} = E_j(2,2)Den(j)\notag.
\end{align}

Using (\ref{defining_cj}) and (\ref{defining_dj}), $Term_2$ of (\ref{Q_minus_QBar}) can be simplified as follows.
\begin{align}\label{T2_simplification}
&{Term}_2
        =\frac{1}{p}\sum_{j=1}^{n}\dfrac{1}{n} (X_{2j}^{*}\Bar{Q}M_nQX_{1j} - X_{1j}^{*}\Bar{Q}M_nQX_{2j}) \\
        &=\sum_{j=1}^{n}\dfrac{1}{p\sqrt{n}} X_{2j}^*\Bar{Q}M_n \bigg(\frac{1}{\sqrt{n}} Q X_{1j}\bigg) -  \sum_{j=1}^{n}\dfrac{1}{p\sqrt{n}} X_{1j}^*\Bar{Q}M_n \bigg( \frac{1}{\sqrt{n}}QX_{2j}\bigg) \notag\\
        &= \sum_{j=1}^{n}\dfrac{1}{p\sqrt{n}} X_{2j}^*\Bar{Q}M_n Q_{-j} \bigg(\frac{X_{1j}c_{1j} + X_{2j}c_{2j}}{\sqrt{n}}\bigg) - \sum_{j=1}^{n}\dfrac{1}{p\sqrt{n}} X_{1j}^*\Bar{Q}M_n Q_{-j}\bigg( \frac{X_{2j}d_{1j} - X_{1j}d_{2j}}{\sqrt{n}}\bigg) \notag\\
        &= \frac{1}{p}\sum_{j=1}^{n}\Bigg[\bigg(c_{1j}F_j(2,1) + c_{2j}F_j(2,2)\bigg) - \bigg(d_{1j}F_j(1,2) - d_{2j}F_j(1,1)\bigg)\Bigg]  \text{, using } (\ref{defining_Fjrs}). \notag
\end{align}

To proceed further, we need the limiting behavior of $c_{1j}, c_{2j}, d_{1j}, d_{2j}, F_j(r,s), r,s \in \{1,2\}$ for $1 \leq j \leq n$. This is established in Lemma \ref{uniformConvergence} and the summary of results is given below:
\begin{equation}\label{uniformResults}
\left\{ \begin{aligned} 
    &\underset{1\leq j\leq n}{\max }|c_{1j} - v_n| \xrightarrow{a.s.} 0; \hspace{5mm}
    \underset{1\leq j\leq n}{\max }|d_{1j} - v_n|\xrightarrow{a.s.} 0;\\
    &\underset{1\leq j\leq n}{\max }|c_{2j} - c_nh_{1n}v_n|\xrightarrow{a.s.} 0; \hspace{5mm}
    \underset{1\leq j\leq n}{\max }|d_{2j} - c_nh_{2n}v_n|\xrightarrow{a.s.} 0;\\
    & \underset{1\leq j\leq n}{\max }|F_j(r,r) - m_{rn}| \xrightarrow{a.s.} 0;\\
    & \underset{1\leq j\leq n}{\max }|F_j(r,s)| \xrightarrow{a.s.} 0  \text{, where } r \neq s. 
\end{aligned} \right.
\end{equation}

For sufficiently large $n$ and $k=1,2$, we have the following observations.
\begin{enumerate}
    \item Remark (\ref{remark_vn_bound}) established an upper bound for $|v_n|$.
    \item $|h_{kn}|$ is bounded above by (\ref{h_nBound}).
    \item $|m_{kn}|$ is bounded above using (\ref{R5}) and Lemma \ref{normBound} as shown below:
\begin{align}\label{mn_bound}
|m_{kn}| = \bigg|\frac{1}{n}\operatorname{trace}\{\Sigma_{kn}\Bar{Q}M_nQ\}\bigg| \leq \bigg(\frac{1}{n}\operatorname{trace}(\Sigma_{kn})\bigg) ||\Bar{Q}M_nQ||_{op} \leq \frac{BC_0}{\Re ^2(z)}.
\end{align}
\end{enumerate}

Using the above bounds with (\ref{uniformResults}) and applying Lemma \ref{lA.3}, we get the following results:
\begin{enumerate}
    \item $\underset{1 \leq j \leq n}{\max}|c_{1j}F_j(2,1)| \xrightarrow{a.s.} 0$; \hspace{5mm} $\underset{1 \leq j \leq n}{\max}|d_{1j}F_j(1,2)| \xrightarrow{a.s.} 0$ \text{, and}
    \item $\underset{1 \leq j \leq n}{\max}|c_{2j}F_j(2,2) - c_nv_nh_{1n}m_{2n}| \xrightarrow{a.s.} 0$; \hspace{5mm} $\underset{1 \leq j \leq n}{\max}|d_{2j}F_j(1,1) - c_nv_nh_{2n}m_{1n}| \xrightarrow{a.s.} 0$.
\end{enumerate}

With the above results and applying Lemma \ref{lA.4} on (\ref{T2_simplification}) gives 
\begin{align}\label{Term2_simplification2}
|Term_2 - v_nh_{1n}m_{1n}-v_nh_{2n}m_{2n}| \xrightarrow{a.s.} 0.
\end{align}

Now note that 
\begin{align*}
    v_nh_{1n}m_{2n} &= \frac{n}{p} \dfrac{c_nh_{1n}}{1 + c_n^2h_{1n}h_{2n}}\frac{1}{n}\operatorname{trace}\{\Sigma_{2n} \Bar{Q}M_nQ\} \text{, by definitions } (\ref{defining_vn}), (\ref{defining_mkn}) \\
    &= \frac{1}{p} \rho_2(c_n\textbf{h}n)\operatorname{trace}\{\Sigma_{2n}\Bar{Q}M_nQ\}\\
    &=\frac{1}{p} \operatorname{trace}\{\rho_2(c_n\textbf{h}n)\Sigma_{2n}\Bar{Q}M_nQ\},
\end{align*}
where, the last equality follows from definition (\ref{defining_rho}). Similarly, we have $$v_nh_{2n}m_{1n} = \frac{1}{p} \operatorname{trace}\{\rho_1(c_n\textbf{h}_n)\Sigma_{1n}\Bar{Q}M_nQ\}.$$
Finally from Lemma \ref{rho_imp_result} and (\ref{mn_bound}), we get
\begin{align}\label{rho1_rho2}
&\bigg|\frac{1}{p} \operatorname{trace}\{\rho_2(c_n\textbf{h}_n)\Sigma_{2n}\Bar{Q}M_nQ\} - \frac{1}{p} \operatorname{trace}\{\rho_2(\mathbb{E}c_n\textbf{h}_n)\Sigma_{2n}\Bar{Q}M_nQ\}\bigg| \xrightarrow{a.s.} 0 \text{, and}\\
&\bigg|\frac{1}{p} \operatorname{trace}\{\rho_1(c_n\textbf{h}_n)\Sigma_{1n}\Bar{Q}M_nQ\} - \frac{1}{p} \operatorname{trace}\{\rho_1(\mathbb{E}c_n\textbf{h}_n)\Sigma_{1n}\Bar{Q}M_nQ\}\bigg| \xrightarrow{a.s.} 0\notag.
\end{align}

Combining (\ref{T2_simplification}), (\ref{Term2_simplification2}) and (\ref{rho1_rho2}), we get
\begin{align*}
    &\absmod{Term_2 - \frac{1}{p} \operatorname{trace}\{\bigg(\rho_1(\mathbb{E}c_n\textbf{h}_n)\Sigma_{1n} + \rho_2(\mathbb{E}c_n\textbf{h}_n)\Sigma_{2n}\bigg)\Bar{Q}M_nQ\}} \xrightarrow{a.s.} 0\\
    \implies & \absmod{Term_2 - \frac{1}{p}\operatorname{trace}\{\bigg(zI_p -zI_p + \rho_1(\mathbb{E}c_n\textbf{h}_n)\Sigma_{1n}+\rho_2(\mathbb{E}c_n\textbf{h}_n)\Sigma_{2n}\bigg)\Bar{Q}M_nQ\}} \xrightarrow{a.s.} 0\\
    \implies & |Term_2 - \frac{1}{p} \operatorname{trace}\{(F(z) + zI_p)\Bar{Q}M_nQ\}| \xrightarrow{a.s.} 0\\
    \implies& |Term_2 - Term_1| \xrightarrow{a.s.} 0.
\end{align*}
This concludes the proof.
\end{proof}

\section{Proof of Theorem \ref{Existence_A12}}\label{sec:ProofExistence_A12}

\begin{proof}
By Theorem \ref{CompactConvergence}, every sub-sequence of $\{\textbf{h}_n(\cdot)\}_{n=1}^\infty$ has a further sub-sequence that converges uniformly in each compact subset of $\mathbb{C}_L$. Let $\textbf{h}^\infty(\cdot) = (h_1^\infty(\cdot), h_2^\infty(\cdot))$ be one such subsequential limit corresponding to the sub-sequence $\{\textbf{h}_{n_m}(\cdot)\}_{m=1}^\infty$. Additionally, due to (\ref{totalVariation}) and (\ref{firstSpectralBound}), $\{\textbf{h}_{n_m}(\cdot)\}_{m=1}^\infty$ satisfies the conditions of Theorem \ref{Grommer}. Therefore, it turns out that $h_k^\infty(\cdot);k=1,2$ are themselves Stieltjes Transforms of some measures on the imaginary axis. By (\ref{Property1}), for any $z \in \mathbb{C}_L$, we have
\begin{align}\label{min_real_positive1}
  \underset{k=1,2}{\min}\Re(h_k^\infty(z)) > 0.
\end{align}

Fix $z \in \mathbb{C}_L$. Consider the subsequences of $\textbf{h}_n$ (see \ref{defining_hn}), $\tilde{\textbf{h}}_n$ (see \ref{defining_hn_tilde}), $\tilde{\tilde{\textbf{h}}}_n$ (see \ref{defining_hn_tilde2}), $c_n = p/n$ and $H_n$ (see \ref{defining_Hn}) along the subsequence $\{n_m\}_{m=1}^\infty$. For simplicity, we denote them as follows:
\begin{enumerate}
    \item $\textbf{g}_{m} = (g_{1m}, g_{2m}) = \textbf{h}_{n_m} = (h_{1,n_m}, h_{2,n_m})$,
    \item $\Tilde{\textbf{g}}_{m} = (\tilde{g}_{1m}, \tilde{g}_{2m})= \Tilde{\textbf{h}}_{n_m} = (\tilde{h}_{1,n_m}, \tilde{h}_{2,n_m})$,
    \item $\Tilde{\Tilde{\textbf{g}}}_{m} = (\tilde{\tilde{g}}_{1m}, \tilde{\tilde{g}}_{2m}) = \Tilde{\Tilde{\textbf{h}}}_{n_m} = (\tilde{\Tilde{h}}_{1,n_m},\tilde{\tilde{h}}_{2,n_m})$,
    \item $d_m = c_{n_m}$, \text{ and}
    \item $G_m = H_{n_m} = JESD(\Sigma_{1,n_m}, \Sigma_{2,n_m})$.
\end{enumerate}

With the above definitions, for $k=1,2,$ we have  $g_{km}(z) \xrightarrow{a.s.} h_k^\infty(z)$ since, $\textbf{h}^\infty(\cdot)$ is a subsequential limit. Therefore, using (\ref{Consequence_DetEqv}), we have  
$$|\Tilde{g}_{km}(z) - h_k^\infty(z)| \leq |\Tilde{g}_{km}(z) - g_{km}(z)| + |g_{km}(z) - h_k^\infty(z)| \rightarrow 0.$$
In other words, we have
\begin{align}\label{LHS_limit}
    \Tilde{g}_{k,m}(z) \rightarrow h_k^\infty(z).
\end{align}

From Lemma \ref{hn_tilde_tilde2} and (\ref{simplifying_hn_tilde}), we have
\begin{align}\label{gm_tilde_tilde2}
&\Tilde{g}_{k,m}(z) - \Tilde{\Tilde{g}}_{k,m}(z) \rightarrow 0 \notag\\
\implies &\displaystyle \Tilde{g}_{k,m}(z)- \int\dfrac{\lambda_k dG_m(\boldsymbol{\lambda})}{-z + \boldsymbol{\lambda}^T\boldsymbol{\rho}(d_m\Tilde{\textbf{g}}_{m}(z))} \longrightarrow 0 \notag\\
\implies &\Tilde{g}_{k,m}(z) -\int\dfrac{\lambda_k d\{G_m(\boldsymbol{\lambda})-H(\boldsymbol{\lambda})\}}{-z +\boldsymbol{\lambda}^T\boldsymbol{\rho}(d_m\Tilde{\textbf{g}}_{m}(z))} -
\int\dfrac{\lambda_k dH(\boldsymbol{\lambda})}{-z + \boldsymbol{\lambda}^T\boldsymbol{\rho}(d_m\Tilde{\textbf{g}}_{m}(z))} \longrightarrow 0.
\end{align}

For large $m$, the common integrand in the second and third terms of (\ref{gm_tilde_tilde2}) can be bounded above as follows:
\begin{align}\label{integrandBound}
    &\bigg|\frac{\lambda_1}{-z + \boldsymbol{\lambda}^T\boldsymbol{\rho}(d_m\Tilde{\textbf{g}}_{m})}\bigg|
    \leq \frac{|\lambda_1|}{|\Re(-z + \boldsymbol{\lambda}^T\boldsymbol{\rho}(d_m\Tilde{\textbf{g}}_{m}))|}
    \leq \frac{|\lambda_1|}{|\Re(\lambda_1\rho_1(d_m\Tilde{\textbf{g}}_{m}))|} = \frac{1}{\Re(\rho_1(d_m\Tilde{\textbf{g}}_{m}))
    } \xrightarrow{} \frac{1}{\Re(\rho_1(c\textbf{h}^\infty))}.
\end{align}

The limit in (\ref{integrandBound}) follows upon observing that $\Re(\rho_1(c\textbf{h}^\infty)) > 0$ because of (\ref{min_real_positive1}) and (\ref{real_of_rho1}). Next note that $d_m\Tilde{g}_{k,m} = c_{k_m}\Tilde{h}_{k,n_m} \rightarrow ch_k$. By continuity of $\rho_1(\cdot)$ at $c\textbf{h}^\infty$, we have $\rho_1(d_m\Tilde{\textbf{g}}_{m}) \rightarrow \rho_1(c\textbf{h}^\infty)$.

Similarly, we also have
\begin{align}\label{integrandBound5}
    &\bigg|\frac{\lambda_2}{-z + \boldsymbol{\lambda}^T\boldsymbol{\rho}(d_m\Tilde{\textbf{g}}_{m})}\bigg|
    \leq \frac{|\lambda_2|}{|\Re(-z + \boldsymbol{\lambda}^T\boldsymbol{\rho}(d_m\Tilde{\textbf{g}}_{m}))|}
    \leq \frac{|\lambda_2|}{|\Re(\lambda_2\rho_2(d_m\Tilde{\textbf{g}}_{m}))|} = \frac{1}{\Re(\rho_2(d_m\Tilde{\textbf{g}}_{m}))
    } \xrightarrow{} \frac{1}{\Re(\rho_2(c\textbf{h}^\infty))}.
\end{align}

So the second term of (\ref{gm_tilde_tilde2}) can be made arbitrarily small as $G_m \xrightarrow{d} H$. Applying D.C.T. in the third term of (\ref{gm_tilde_tilde2}) and using (\ref{LHS_limit}), we get 
\begin{align}\label{subsequentialLimit}
h_k^\infty(z) = \displaystyle\int\dfrac{\lambda_k dH(\boldsymbol{\lambda})}{-z + \boldsymbol{\lambda}^T\boldsymbol{\rho}(c\textbf{h}^\infty(z))}.
\end{align}

Thus any subsequential limit ($h_k^\infty(z) \in \mathbb{C}_R$) satisfies (\ref{h_main_eqn}). By Theorem \ref{Uniqueness}, all these subsequential limits must coincide, which we will denote as $\textbf{h}^\infty=(h_1^{\infty},h_2^\infty)$ going forward. In particular, we have shown that 
\begin{align}\label{}
    h_{kn}(z) \xrightarrow{} h_k^\infty(z).
\end{align}
and $h_{k}(\cdot)$ are Stieltjes Transforms of measures on the imaginary axis.

We now show that $s_n(z) \xrightarrow{a.s.} s_{F}(z)$ where $s_{F}(z)$ is defined in (\ref{s_main_eqn}). From Theorem \ref{DeterministicEquivalent}, we have 
$$|s_n(z) - \frac{1}{p}\operatorname{trace}(\bar{Q}(z))| \xrightarrow{a.s.} 0.$$ 
Therefore, all that remains is to show that 
$$\displaystyle \absmod{\frac{1}{p}\operatorname{trace}(\bar{Q}(z)) - \int \frac{dH(\boldsymbol{\lambda})}{-z + \boldsymbol{\lambda}^T\boldsymbol{\rho}(c\textbf{h}^\infty(z))}} \rightarrow 0.$$
By $\boldsymbol{T}_3$ of Theorem \ref{mainTheorem}, we have
\begin{align}
    \frac{1}{p}\operatorname{trace}(\bar{Q}(z)) &= \int \frac{dH_n(\boldsymbol{\lambda})}{-z + \boldsymbol{\lambda}^T\boldsymbol {\rho}(c_n\mathbb{E}\textbf{h}_n(z))}
    = \int \frac{d\{H_n(\boldsymbol{\lambda})-H(\boldsymbol{\lambda})\}}{-z + \boldsymbol{\lambda}^T\boldsymbol {\rho}(c_n\mathbb{E}\textbf{h}_n(z))} + \int \frac{dH(\boldsymbol{\lambda})}{-z + \boldsymbol{\lambda}^T\boldsymbol {\rho}(c_n\mathbb{E}\textbf{h}_n(z))}.
\end{align}
The common integrand in both the terms is bounded by $1/|\Re(z)|$. Since $H_n \xrightarrow{d} H$, the second term goes to $0$. Applying D.C.T. in the second term and using Lemma \ref{ConcetrationLemma}, we get
\begin{align}
    \underset{n \rightarrow \infty}{\lim} \int \frac{dH(\boldsymbol{\lambda})}{-z + \boldsymbol{\lambda}^T\boldsymbol {\rho}(c_n\mathbb{E}\textbf{h}_n(z))}  = \int \frac{dH(\boldsymbol{\lambda})}{-z + \boldsymbol{\lambda}^T\boldsymbol {\rho}(c\textbf{h}^\infty(z))} = s_F(z).
\end{align}
Therefore, $s_n(z) \xrightarrow{a.s.} s_{F}(z)$. This establishes the equivalence between $(\ref{s_main_eqn})$ and (\ref{s_main_eqn_1}). From (\ref{h_nBound}), for sufficiently large $n$, we have $|h_{kn}(z)| \leq {C_0}/{|\Re(z)|}$. Thus for $y > 0$, $|h_k^{\infty}(-y)| \leq {C_0}/{|y|}$ and $\underset{y \rightarrow \infty}{\lim}h_k^{\infty}(-y) = 0$. This implies that 
\begin{align}
\underset{y \rightarrow +\infty}{\lim}ys_{F}(-y) = 1 - \frac{2}{c} + \underset{y \rightarrow \infty}{\lim}\frac{2}{c}\bigg(\frac{1}{1+c^2h_1^{\infty}(-y)h_2^{\infty}(-y)}\bigg) = 1.
\end{align}

Since $s_{F}(.)$ satisfies the necessary and sufficient condition from Proposition \ref{GeroHill}, it is the Stieltjes transform of some probability distribution. By Proposition \ref{GeroHill}, this underlying measure $F$ is the LSD of $F^{S_n}$. This completes the proof of Theorem \ref{mainTheorem} under Assumptions \ref{A12}.
\end{proof}

\section{Proof of Theorem \ref{Existence_General}}

\subsection{Proof of Step8 and Step9}\label{Step_8_9}
\begin{proof}    
Since Theorem \ref{mainTheorem} holds for $\Tilde{U}_n$, we have $F^{\Tilde{U}_n} \xrightarrow{d} F^{\tau}$ for some LSD $F^{\tau}$ and for $z \in \mathbb{C}_L$, there exists functions $s^\tau(z)$ and $\textbf{h}^\tau(z)$ satisfying (\ref{s_main_eqn}) and (\ref{h_main_eqn}) with $H^\tau$ replacing $H$ and mapping $\mathbb{C}_L$ to $\mathbb{C}_R$ and analytic on $\mathbb{C}_L$. We have to show existence of analogous quantities for the sequence $\{F^{S_n}\}_{n=1}^\infty$.

 First, assume that H has a bounded support. If $\tau_0 > 0$ is such that $H(\tau_0, \tau_0) = 1$, then $H^{\tau}(s,t) = H(s,t)$ for all $\tau \geq \tau_0$. By Theorem \ref{Existence_A12}, $\textbf{h}^{\tau}(z)=(h_1^\tau(z),h_2^\tau(z))$ must be the same for all large $\tau$. Hence $s^{\tau}(z)$ and in turn $F^{\tau}(.)$ must also be the same for all large $\tau$.  Denote this common LSD by $F$ and the common value of $\textbf{h}^\tau$ and $s^\tau$ by $\textbf{h}^\infty$ and $s_F$ respectively. This proves Theorem \ref{mainTheorem} when $H$ has a bounded support. 

Now we analyze the case where $H$ has unbounded support. We need to show there exist functions $\textbf{h}^\infty$, $s_F$ that satisfy equations (\ref{s_main_eqn}) and (\ref{h_main_eqn}) and an LSD $F$ serving as the almost sure weak limit of the ESDs of $\{S_n\}_{n=1}^\infty$.

 We will show that for $k \in \{1,2\}$, $\mathcal{H}_k = \{h_k^\tau: \tau > 0\}$ forms a normal family. Following arguments similar to those used in Theorem \ref{CompactConvergence}, let $K \subset \mathbb{C}_L$ be an arbitrary compact subset. Then $u_0 > 0$ where $u_0 := \inf\{|\Re(z)|: z \in K\}$. For arbitrary $z \in K$, using (\ref{R5}) and (\ref{firstSpectralBound}), for sufficiently large $n$, we have
    \begin{align}\label{hn_tau_Bound}
        |h_{kn}^\tau(z)| &= \frac{1}{p}|\operatorname{trace}\{\Sigma_{kn}^\tau Q\}| \leq \bigg(\frac{1}{p}\operatorname{trace}(\Sigma_{kn}^\tau)\bigg) ||Q||_{op} 
        \leq \frac{C_0}{|\Re(z)|} \leq \frac{C_0}{u_0}.
    \end{align}

By Theorem \ref{Existence_A12}, for any $\tau > 0$, $h_k^\tau(z)$ is the uniform limit of $h_{kn}^\tau(z) := \frac{1}{n}\operatorname{trace}\{\Sigma_{kn}^\tau Q(z)\}$. Therefore, for $z \in K$,    
\begin{align}\label{h_tauBound}
|h_k^\tau(z)|\leq \frac{C_0}{|\Re(z)|} \leq \frac{C_0}{u_0}.
\end{align}
Therefore as a consequence of \textit{Montel's theorem}, any subsequence of $\mathcal{H}_k$ has a further convergent subsequence that converges uniformly on compact subsets of $\mathbb{C}_L$. 

Let $\{\textbf{h}^{\tau_m}(\cdot)\}_{m=1}^\infty= \{h_1^{\tau_m}(\cdot), h_2^{\tau_m}(\cdot)\}_{m=1}^\infty$ be a convergent subsequence with $\textbf{h}^\infty(z)=(h_1^\infty(\cdot), h_2^\infty(z))$ as the subsequential limit, where $\tau_m \rightarrow \infty$ as $m \rightarrow \infty$. By Theorem \ref{Existence_A12}, for any $\tau>0$, $h_k^\tau;k=1,2$ are Stieltjes transforms of measures on the imaginary axis. Moreover, the underlying measures of these transforms have uniformly bounded total variation due to (\ref{firstSpectralBound}). Therefore, by Theorem \ref{Grommer}, we deduce that $h_k^\infty(\cdot);k=1,2$ themselves must also be Stieltjes transforms of measures on the imaginary axis. By (\ref{Property1}), for all $z \in \mathbb{C}_L$, we must have 
\begin{align}\label{min_real_positive2}
  \min\{\Re(h_{1}^\infty(z)), \Re(h_{2}^\infty(z))\} > 0.
\end{align}

Now fix $z \in \mathbb{C}_L$. By (\ref{real_of_rho1}), (\ref{real_of_rho2}) and the fact that $\Re(h_{k}^\infty(z)) > 0$, we have $\Re(\boldsymbol{\rho}_k(c\textbf{h}^\infty)) > 0$ for $k=1,2$. Therefore, by continuity of $\boldsymbol{\rho}(\cdot,\cdot)$ at $c\textbf{h}^\infty$, 
\begin{align}\label{integrandBound3}
\bigg|\frac{\lambda_1}{-z + \boldsymbol{\lambda}^T\boldsymbol{\rho}(c\textbf{h}^{\tau_m})}\bigg|
    \leq \frac{|\lambda_1|}{|\Re(-z + \boldsymbol{\lambda}^T\boldsymbol{\rho}(c\textbf{h}^{\tau_m}))|}
    \leq \frac{|\lambda_1|}{|\Re(\lambda_1\rho_1(c\textbf{h}^{\tau_m}))|} = \frac{1}{\Re(\rho_1(c\textbf{h}^{\tau_m}))} \xrightarrow{} \frac{1}{\Re(\rho_1(c\textbf{h}^\infty))}  < \infty.
\end{align}
as $m \rightarrow \infty$. Now, by Theorem \ref{Existence_A12}, $(\textbf{h}^{\tau_m}, H^{\tau_m})$ satisfy the below equation.

\begin{align*}
   \textbf{h}^{\tau_m}(z) &= \int \dfrac{\boldsymbol{\lambda} dH^{\tau_m}(\boldsymbol{\lambda})}{-z + \boldsymbol{\lambda}^T\boldsymbol{\rho}(c\textbf{h}^{\tau_m})}
  = \int \dfrac{\boldsymbol{\lambda} d\{H^{\tau_m}(\boldsymbol{\lambda}) - H(\boldsymbol{\lambda})\}}{-z + \boldsymbol{\lambda}^T\boldsymbol{\rho}(c\textbf{h}^{\tau_m})} + \int \dfrac{\boldsymbol{\lambda} dH(\boldsymbol{\lambda})}{-z +\boldsymbol{\lambda}^T\boldsymbol{\rho}(c\textbf{h}^{\tau_m})}.
\end{align*}
Note that, the first term of the last expression can be made arbitrarily small since the integrand is bounded by (\ref{integrandBound3}) and $H^{\tau_m} \xrightarrow{d} H$. The same bound on the integrand also allows us to apply D.C.T. in the second term, thus giving us
\begin{align}
  &\underset{m \rightarrow \infty}{\lim} \textbf{h}^{\tau_m}(z) = \underset{m \rightarrow \infty}{\lim} \int \dfrac{\boldsymbol{\lambda} dH(\boldsymbol{\lambda})}{-z +\boldsymbol{\lambda}^T\boldsymbol{\rho}(c\textbf{h}^{\tau_m})} \notag\\
  \implies &  \textbf{h}^\infty(z) = \int \dfrac{\boldsymbol{\lambda}dH(\boldsymbol{\lambda})}{-z +\boldsymbol{\lambda}^T\boldsymbol{\rho}(c\textbf{h}^\infty(z))}.\label{subsequentialLimit2}
\end{align}

Now $\{\tau_m\}_{m=1}^\infty$ is a further subsequence of an arbitrary subsequence and $\{\textbf{h}^{\tau_m}(z)\}$ converges to $\textbf{h}^\infty(z) \in \mathbb{C}_R$ that satisfies (\ref{h_main_eqn}). By Theorem \ref{Uniqueness}, all these subsequential limits coincide, which we will denote by $\textbf{h}^\infty(z) = (h_1^\infty(z), h_2^\infty(z))$. 

Now we will show that $s^\tau(z) \rightarrow s_F(z)$ as $\tau \rightarrow \infty$ where $s_F(\cdot)$ is given by (\ref{s_main_eqn}). Note that,
\begin{align}\label{s_tau_limit}
    &|s^\tau(z) - s_F(z)|\\ \notag
    &= \absmod{\int \dfrac{dH^{\tau}(\boldsymbol{\lambda})}{-z +\boldsymbol{\lambda}^T\boldsymbol{\rho}(c\textbf{h}^{\tau}(z))} - \int \dfrac{dH(\boldsymbol{\lambda})}{-z +\boldsymbol{\lambda}^T\boldsymbol{\rho}(c\textbf{h}^\infty(z))}}\\ \notag
    &\leq \absmod{\int \dfrac{d\{H^{\tau}(\boldsymbol{\lambda}) - dH(\boldsymbol{\lambda})\}}{-z +\boldsymbol{\lambda}^T\boldsymbol{\rho}(c\textbf{h}^{\tau}(z))}} + \int \absmod{\dfrac{1}{-z +\boldsymbol{\lambda}^T\boldsymbol{\rho}(c\textbf{h}^{\tau}(z))} - \dfrac{1}{-z +\boldsymbol{\lambda}^T\boldsymbol{\rho}(c\textbf{h}^{\infty}(z))}}dH(\boldsymbol{\lambda}).
\end{align}

Note that $\textbf{h}^\infty(z),\textbf{h}^\tau(z) \in \mathbb{C}_R^2$. In particular, this implies that the integrands of the first and second terms in (\ref{s_tau_limit}) are bounded by $1/|\Re(z)|$ and, $2/|\Re(z)|$ respectively. The first term can be made arbitrarily small by choosing $\tau$ to be very large, since $H^\tau \xrightarrow{d} H$. Note that $\textbf{h}^\tau(z) \rightarrow \textbf{h}^\infty(z)$ and $\boldsymbol{\rho}$ is analytic at $c\textbf{h}^\infty(z) \in \mathbb{C}_R^2$. Thus, applying D.C.T., we get
\begin{align}
    \underset{\tau \rightarrow \infty}{\lim} \int \absmod{\dfrac{1}{-z +\boldsymbol{\lambda}^T\boldsymbol{\rho}(c\textbf{h}^{\tau}(z))} - \dfrac{1}{-z +\boldsymbol{\lambda}^T\boldsymbol{\rho}(c\textbf{h}^{\infty}(z))}}dH(\boldsymbol{\lambda}) = 0.
\end{align}
Thus, we have proved that $s^\tau(z) \xrightarrow{} s_F(z)$ and we have established the equivalence between $(\ref{s_main_eqn})$ and (\ref{s_main_eqn_1}).

From (\ref{h_tauBound}), $|h_k^\tau(z)| \leq {C_0}/{|\Re(z)|}$. Thus, $|h_k^\infty(z)| \leq {C_0}/{|\Re(z)|}$ implying that $\underset{y \rightarrow \infty}{\lim}h_k^\infty(-y) = 0$. Therefore, 
\begin{align*}
    \underset{y \rightarrow +\infty}{\lim}ys_F(-y) = \bigg(1 - \frac{2}{c}\bigg) + \underset{y \rightarrow \infty}{\lim}\frac{2}{c}\bigg(\frac{1}{1 +c^2h_1^\infty(-y)h_2^\infty(-y)}\bigg)= 1.
\end{align*}
To conclude, we have \begin{itemize}
    \item $\textbf{h}^\tau \rightarrow \textbf{h}^\infty$ and $s^\tau \rightarrow s_F$,
    \item $h_1^\infty, h_2^\infty$ satisfy (\ref{h_main_eqn}) and is a Stieltjes transform of a measure over the imaginary axis, and
    \item $s_F$ satisfies the conditions of Proposition \ref{GeroHill} for a Stieltjes Transform of a probability measure on the imaginary axis.
\end{itemize}
\end{proof}

\subsection{Proof of Step10}\label{Step10}
\begin{proof}
\textbf{Impact of spectral truncation of $\Sigma$ matrices:}

Let $A = \frac{1}{n}Z_1Z_2^*,B = \frac{1}{n}Z_2Z_1^*, P = \Lambda_{1n}, Q = \Lambda_{2n}, R = \Lambda_{1n}^\tau, S = \Lambda_{2n}^\tau$. We have
\begin{itemize}
    \item $S_n = \frac{1}{n}(\Lambda_{1n} Z_1Z_2^*\Lambda_{2n} - \Lambda_{2n}Z_2Z_1^*\Lambda_{1n}) = PAQ - QBP$, and
    \item $T_n = \frac{1}{n}(\Lambda_{1n}^\tau Z_1Z_2^*\Lambda_{2n}^\tau - \Lambda_{2n}^\tau Z_2Z_1^*\Lambda_{1n}^\tau)= RAS - SBR$.
\end{itemize}

Finally, using (\ref{R1}), (\ref{R3}), we observe that,
\begin{align*}
  ||F^{S_n} - F^{T_n}||_{im} &\leq \dfrac{1}{p}\operatorname{rank}(S_n - T_n)\\
  &\leq \frac{1}{p}\operatorname{rank}(PAQ - RAS) + \operatorname{rank}(QBP-SBR)\\
  &\leq \frac{2}{p}\bigg(\operatorname{rank}(\Lambda_{1n}-\Lambda_{1n}^\tau) + \operatorname{rank}(\Lambda_{2n}-\Lambda_{2n}^\tau)\bigg)\\
  &= 2(1 - F^{\Sigma_{1n}}(\tau))+2(1 - F^{\Sigma_{2n}}(\tau))\\
  &\longrightarrow \quad 2(1-H_1(\tau)) + 2(1-H_2(\tau)) \xrightarrow{\text{as } \tau \rightarrow \infty} 0,
\end{align*}
where $H_1$ and $H_2$ are the marginal distributions of $H$. Here we used the fact that $\tau > 0$ was chosen such that $(\tau, \tau)$ is a continuity point of $H$.

\textbf{Impact of truncation of the innovation entries:}

Now we will show that $||F^{T_n} - F^{U_n}||_{im} \xrightarrow{a.s.} 0$. We have $T_n = \frac{1}{n}(\Lambda_{1n}^\tau Z_1Z_2^*\Lambda_{2n}^\tau - \Lambda_{2n}^\tau Z_2Z_1^*\Lambda_{1n}^\tau)$ and $U_n = \frac{1}{n}(\Lambda_{1n}^\tau \hat{Z}_1\hat{Z}_2^*\Lambda_{2n}^\tau - \Lambda_{2n}^\tau \hat{Z}_2\hat{Z}_1^*\Lambda_{1n}^\tau)$.

Using (\ref{R1}), (\ref{R3}), we have
\begin{align}\label{Tn_Un}
    ||F^{T_n} - F^{U_n}||_{im} &\leq \frac{1}{p}\operatorname{rank}(T_n - U_n) \\
    &=\frac{1}{p}\operatorname{rank}\bigg(\frac{1}{n}\Lambda_{1n}^\tau(Z_1Z_2^* - \hat{Z}_1\hat{Z}_2^*)\Lambda_{2n}^\tau - \frac{1}{n}\Lambda_{2n}^\tau(Z_2Z_1^* - \hat{Z}_2\hat{Z}_1^*)\Lambda_{1n}^\tau \bigg) \notag\\
    &\leq \frac{1}{p}\operatorname{rank}(Z_1Z_2^* - \hat{Z}_1\hat{Z}_2^*) + \frac{1}{p}\operatorname{rank}(Z_2Z_1^* - \hat{Z}_2\hat{Z}_1^*) \notag\\
    &= \frac{2}{p}\operatorname{rank}(Z_1Z_2^* - \hat{Z}_1\hat{Z}_2^*) \notag\\
    &\leq \frac{2}{p}\bigg(\operatorname{rank}(Z_1 - \hat{Z}_1) + \operatorname{rank}(Z_2 - \hat{Z}_2)\bigg).
\end{align}

For $k=1,2$, define $I_{ij}^{(k)} := \mathbbm{1}_{\{z_{ij}^{(k)} \neq \Hat{z}_{ij}^{(k)}\}} = \mathbbm{1}_{\{|z_{ij}^{(k)}| > n^b\}}$ where $b$ is defined in Assumption \ref{A12}. Using (\ref{R2}), we have 
$$\operatorname{rank}(Z_k - \Hat{Z}_k) \leq \sum_{ij} I_{ij}^{(k)}.$$ 
Note that
\begin{align*}
 &\mathbb{P}(I_{ij}^{(k)} = 1)
=\mathbb{P}(|z_{ij}^{(k)}| > n^b)
\leq \dfrac{\mathbb{E}|z_{ij}^{(k)}|^{2+\eta_0}}{n^{b(2+\eta_0)}}
\leq \dfrac{M_{2+\eta_0}}{n^{b(2+\eta_0)}}.
\end{align*}
Since $\frac{1}{2+\eta_0}<b<\frac{1}{2}$, we have 
\begin{align*}
\dfrac{1}{p}\sum_{i,j}\mathbb{P}(I_{ij}^{(k)} = 1)
\leq \frac{npM_{2+\eta_0}}{pn^{b(2+\eta_0)}}
     \longrightarrow 0.
\end{align*}
\color{black}
Also, we have $\operatorname{Var}I_{ij}^{(k)} \leq \mathbb{P}(I_{ij}^{(k)}=1)$. For arbitrary $\epsilon > 0$, we must  have $\sum_{i,j}\operatorname{Var}I_{ij}^{(k)} \leq p\epsilon/2$ for large enough $n$. Finally, we use Bernstein's Inequality to get the following bound:
\begin{align*}
    \mathbb{P}\bigg(\dfrac{1}{p}\sum_{i,j}I_{ij}^{(k)} > \epsilon\bigg)
    &\leq \hspace{1mm} \mathbb{P}\bigg(\sum_{i,j}(I_{ij}^{(k)} - \mathbb{P}(I_{ij}^{(k)} = 1)) > \dfrac{p\epsilon}{2}\bigg)\\
    &\leq \hspace{1mm} 2\exp{\bigg(-\dfrac{p^2\epsilon^2/4}{2(p\epsilon/2 + \sum_{i,j} \operatorname{Var}I_{ij}^{(k)})}\bigg)}\\
    &\leq \hspace{1mm} 2\exp{\bigg(-\dfrac{p^2\epsilon^2/4}{2(p\epsilon/2 + p\epsilon/2)}\bigg)} = 2\exp{\bigg(-\dfrac{p\epsilon}{8}\bigg)}.
\end{align*}
By Borel Cantelli lemma, $\frac{1}{p}\sum_{ij}I_{ij}^{(k)} \xrightarrow{a.s.}0$ and thus $\frac{1}{p}\operatorname{rank}(Z_k-\hat{Z}_k) \xrightarrow{a.s.} 0$. Combining this with (\ref{Tn_Un}), we have $||F^{T_n} - F^{U_n}||_{im} \xrightarrow{a.s.} 0$.

\textbf{Impact of centering of entries of $\hat{Z}$ matrices:}

The last result to be proved is $||F^{U_n}- F^{\Tilde{U}_n}||_{im} \xrightarrow{a.s.} 0$. Define $\check{Z}_k = (\check{z}^{(k)}_{ij}) := (z_{ij}^{(k)}I_{ij}^{(k)})$ for $k \in \{1,2\}$. Then, 

\begin{align}\label{rank_convergence}
\dfrac{1}{p}\operatorname{rank}\check{Z}_k = \dfrac{1}{p}\operatorname{rank}(Z_k -\hat{Z}_k)\leq \dfrac{1}{p}\sum_{i,j}\mathbb{P}(I_{ij}^{(k)} = 1) \xrightarrow{a.s.} 0.
\end{align}

Finally, from (\ref{R1}), (\ref{R3}), we have
\begin{align}
    ||F^{U_n} - F^{\Tilde{U}_n}||_{im}
    &\leq \frac{1}{p} \operatorname{rank}(U_n - \Tilde{U}_n)\\
    &\leq \frac{1}{p}\operatorname{rank}\bigg(\frac{1}{n}\Lambda_{1n}^\tau(\Hat{Z}_1\Hat{Z}_2^* - \Tilde{Z}_1\Tilde{Z}_2^*)\Lambda_{2n}^\tau - \frac{1}{n}\Lambda_{2n}^\tau(\Hat{Z}_2\Hat{Z}_1^* - \Tilde{Z}_2\Tilde{Z}_1^*)\Lambda_{1n}^\tau\bigg) \notag\\
    &\leq \frac{1}{p}\operatorname{rank}(\Hat{Z}_1\Hat{Z}_2^* - \Tilde{Z}_1\Tilde{Z}_2^*) + \frac{1}{p}\operatorname{rank}(\Hat{Z}_2\Hat{Z}_1^* - \Tilde{Z}_2\Tilde{Z}_1^*) \notag\\
    &\leq \frac{2}{p}\operatorname{rank}(\hat{Z}_1 - \Tilde{Z}_1) + \frac{2}{p}\operatorname{rank}(\Hat{Z}_2 - \Tilde{Z}_2) \notag\\
    &= \frac{2}{p}\operatorname{rank}(\mathbb{E}\Hat{Z}_1) + \frac{2}{p}\operatorname{rank}(\mathbb{E}\Hat{Z}_2) \notag\\
    &= \frac{2}{p}\operatorname{rank}(\mathbb{E}\check{Z}_1) + \frac{2}{p}\operatorname{rank}(\mathbb{E}\Check{Z}_2) \text{, since } \textbf{0} = \mathbb{E}Z_k = \mathbb{E}\hat{Z}_k + \mathbb{E}\check{Z}_k \notag\\
    &\longrightarrow 0 \text{, using } (\ref{rank_convergence}). \notag
\end{align}
\end{proof}

\subsection{Proof of Theorem \ref{pointMass}}\label{sec:ProofOfPointMass}

\begin{proof}
Note that for any $\epsilon > 0$ and $k=1,2$, we have $h_k(-\epsilon) = \overline{h_k(\epsilon)}$ implying that, $h_k(-\epsilon) \in \mathbb{R}$. Also, since $h_k(\mathbb{C}_L) \subset \mathbb{C}_R$, we must have $h_k(-\epsilon) > 0$. 

We will first show that when $c\geq 2/\beta$, we must have $\underset{\epsilon \downarrow 0}{\lim}\hspace{1mm}h_k(-\epsilon) = \infty$. If we assume the contrary, then there exists some $M > 0$ such that for all sufficiently small $\epsilon$, we have  
    \begin{align}\label{h_epsilon_bound}
        h_k(-\epsilon) < M.
    \end{align}
Then, for any sequence $\{\epsilon_n\}_{n=1}^\infty$ with $\epsilon_n \downarrow 0$, we have $|h_k(-\epsilon_n)| < M$ for sufficiently large $n$. So there exists a subsequence $\{n_m\}_{m=1}^\infty$ such that $$\underset{m \rightarrow \infty}{\lim} \textbf{h}(-\epsilon_{n_m}) =  (\underset{m \rightarrow \infty}{\lim} h_1(-\epsilon_{n_m}),\underset{m \rightarrow \infty}{\lim} h_2(-\epsilon_{n_m})) = (\theta_1, \theta_2)$$ 
where, $\theta_k \geq 0$ for $k=1,2$. By Fatou's Lemma, we observe the following inequality:
\begin{align}\label{Fatou}
         &h_k(-\epsilon_{n_m}) = \int \frac{\lambda_k dH(\boldsymbol{\lambda})}{\epsilon_{n_m} + \boldsymbol{\lambda}^T \boldsymbol{\rho}(c\textbf{h}(-\epsilon_{n_m}))}\\
      \implies& \theta_k = \underset{m \rightarrow \infty}{\liminf}\, h_k(-\epsilon_{n_m}) \geq \int \underset{m \rightarrow \infty}{\liminf} \, \frac{\lambda_k dH(\boldsymbol{\lambda})}{\epsilon_{n_m} + \boldsymbol{\lambda}^T \boldsymbol{\rho}(c\textbf{h}(-\epsilon_{n_m}))}.\notag
\end{align}

\textbf{Case1:} \textbf{$\theta_1 = 0 = \theta_2$}: In this case, we get $0 \geq \infty$ from (\ref{Fatou}). 

\textbf{Case2:} \textbf{Exactly one of $\theta_1$ and $\theta_2$ is $0$}: Without loss of generality, let $\theta_1 > 0$ and $\theta_2 = 0$. Then from (\ref{Fatou}), we observe that 
\begin{align}
 \theta_2 = 0 \geq \beta\int \frac{\lambda_2dH_1(\boldsymbol{\lambda})}{\lambda_1\rho_1(c\theta_1, 0) + \lambda_2\rho_2(c\theta_1, 0)} = \frac{\beta}{c\theta_1}\int \frac{\lambda_2dH_1(\boldsymbol{\lambda})}{\lambda_1}.
\end{align}
The expression on the right is either a positive real number or infinity, both of which leads to a contradiction.

\textbf{Case3:} $\theta_1, \theta_2 \in (0, \infty)$:

In this case, for large $m \in \mathbb{N}$, we have 
\begin{align*}
    &\absmod{\frac{\lambda_1}{\epsilon_{n_m} + \lambda_1\rho_1(c\textbf{h}(-\epsilon_{n_m})) + \lambda_2 \rho_2(c\textbf{h}(-\epsilon_{n_m}))}}\\
    \leq& \frac{1}{\rho_1(c\textbf{h}(-\epsilon_{n_m}))}
    = \frac{1 + c^2h_1(-\epsilon_{n_m})h_2(-\epsilon_{n_m})}{ch_2(-\epsilon_{n_m})}
    \leq \frac{2(1+c^2M^2)}{c\theta_2} < \infty.
\end{align*}

Similarly, we have
\begin{align*}
    \absmod{\frac{\lambda_2}{\epsilon_{n_m} + \lambda_1\rho_1(c\textbf{h}(-\epsilon_{n_m})) + \lambda_2 \rho_2(c\textbf{h}(-\epsilon_{n_m}))}}
    \leq \frac{2(1+c^2M^2)}{c\theta_1} < \infty.
\end{align*}

This allows us to use D.C.T. in (\ref{Fatou}) thus leading to:
\begin{align}\label{finite_limit}
    \theta_k &= \beta\int \frac{\lambda_k dH_1(\boldsymbol{\lambda})}{\lambda_1 \rho_2(c\theta_1, c\theta_2) + \lambda_2 \rho_1(c\theta_1, c\theta_2)} \text{ for } k = 1,2\notag\\
    \implies& \theta_1 \rho_2(c\theta_1, c\theta_2) + \theta_2\rho_1(c\theta_1, c\theta_2) = \beta\int \frac{\lambda_1 \rho_2(c\beta_1, cz_2) + \lambda_2\rho_1(c\theta_1, c\theta_2)dH_1(\boldsymbol{\lambda})}{\lambda_1 \rho_2(c\beta_1, c\beta_2) + \lambda_2 \rho_1(c\theta_1, c\theta_2)} \notag\\
    \implies& z_1\rho_1(c\theta_1, c\theta_2) + z_2\rho_2(c\theta_1, c\theta_2) = \beta \notag\\
    \implies& 2c\theta_1\theta_2 = \beta(1 + c^2\theta_1\theta_2)
    \implies c(2-c\beta)\theta_1\theta_2 = \beta.
\end{align}

When $c \geq 2/\beta$, we have a contradiction as the LHS is non-positive but the RHS is positive. Therefore, $\underset{\epsilon \downarrow 0}{\lim}\hspace{1mm}h_k(-\epsilon) = \infty$. Finally, using (\ref{s_main_eqn_1}) and (\ref{inversion_pointMass}), we get
     \begin{align}\label{point_mass_result1}
         \underset{\epsilon \rightarrow 0}{\lim}\epsilon s_{F}(-\epsilon) = 1 - \frac{2}{c} + \underset{\epsilon \rightarrow 0}{\lim}\frac{2}{c}\frac{1}{1+c^2h_1(-\epsilon)h_2(-\epsilon)} = 1 - \frac{2}{c}.
     \end{align}
\begin{remark}\label{remark_E16}
    One implication of this is that the existence of a bound ($M > 0$) for some $c < 2/\beta$ is sufficient to imply that any subsequential limit $\theta=(\theta_1, \theta_2)$ must satisfy $\theta_1\theta_2 = \dfrac{\beta}{c(2-c\beta)}$. 
\end{remark}

Now we show that for $0 < c < 2/\beta$, we have $\underset{\epsilon \downarrow 0}{\lim}\hspace{1mm} h_k(-\epsilon) = \theta_k$, where 
\begin{align}\label{defining_theta_k}
    \theta_k = \frac{1+c^2\theta_1\theta_2}{c}\int \frac{\lambda_k dH(\boldsymbol{\lambda})}{\lambda_1\theta_2 + \lambda_2\theta_1}.
\end{align}
Then, it is clear that $\theta_1, \theta_2$ satisfy
\begin{align}\label{theta_equation}
    c(2 - c\beta)\theta_1\theta_2 = \beta.  
\end{align}

Note that in light of Remark \ref{remark_E16}, all we need is show that $h_k(-\epsilon)$ is bounded. For $k=1,2$ and $\epsilon>0, \textbf{t}=(x, y) \in \mathbb{R}_+^2$, define the functions $G_k(\epsilon, \textbf{t}): \mathbb{R}_+^3 \rightarrow \mathbb{R}_+$ as follows:
\begin{align}\label{defining_G_k}
G_k(\epsilon, \textbf{t}) := \int \frac{\lambda_k dH(\boldsymbol{\lambda})}{\epsilon + \lambda_1 \rho_2(c\textbf{t}) + \lambda_2\rho_1(c\textbf{t})} = \int \frac{\beta \lambda_k dH_1(\boldsymbol{\lambda})}{\epsilon + \boldsymbol{\lambda}^T \boldsymbol{\rho}(c\textbf{t})}.
\end{align} 

By D.C.T, we have $\underset{\epsilon \downarrow 0}{\lim}\hspace{1mm}G_k( \boldsymbol{\theta},\epsilon) = \theta_k$ for $k=1, 2$. This is clear from the arguments presented in (\ref{finite_limit}). The following chain of arguments establishes an upper bound for $h_k(-\epsilon)$ as $\epsilon > 0$ goes to $0$ for $k=1,2$.

\begin{enumerate}
    \item[1] We employ a geometric approach to find the fixed points for the functions $G_1, G_2$. We project the surface of $G_1(z=\epsilon,x,y)$ to the $x-z$ plane to get a curve. The (unique) point (on the $x-$ axis) where this projected curve meets the diagonal $x=z$ is the first coordinate of the fixed point. For the other coordinate, we project $G_2(z=\epsilon, x, y)$ to the $y-z$ plane and find the (unique) point (on the $y-$axis) where the projected curve meets the line $y=z$. 
    \item[2] So, $G_k(\epsilon, \theta_1, \theta_2)$ increases to $\theta_k$ as $\epsilon > 0$ goes to $0$.
    \item[3] Let $\boldsymbol{C_H} =(C_1, C_2)= \int \boldsymbol{\lambda} dH(\boldsymbol{\lambda})  = (\beta \int \lambda_1 dH(\boldsymbol{\lambda}), \beta \int \lambda_2 dH(\boldsymbol{\lambda}))$. Clearly, $0 < C_1, C_2 < \infty$. So, for any $\epsilon>0$, $G_k(\epsilon,0,0) = C_k/\epsilon > 0$.    
    \item[4] We see that at a left neighborhood of (0,0), $G_k$ is above the diagonal and at a right neighborhood of $(\theta_1, \theta_2)$, $G_k$ is below the diagonal.
    \item[5] By continuity of $G_k$, it is clear that $h_k(-\epsilon) < \theta_k$. Therefore, $\theta_k$ is an upper bound for $h_k(-\epsilon)$ for any $0 < c < 2/\beta$.
\end{enumerate}
Now, using the arguments presented in (\ref{finite_limit}) and the subsequent remark, we get (\ref{theta_equation}).  Finally, using (\ref{s_main_eqn_1}) and (\ref{inversion_pointMass}), we get
     \begin{align}
         \underset{\epsilon \rightarrow 0}{\lim}\epsilon s_{F}(-\epsilon) = 1 - \frac{2}{c} + \underset{\epsilon \rightarrow 0}{\lim}\frac{2}{c}\frac{1}{1+c^2h_1(-\epsilon)h_2(-\epsilon)} = 1 - \beta.
     \end{align}

\end{proof}

\subsection{Proof of Theorem \ref{ContinuityGeneral}}\label{sec:ProofOfContinuityGeneral}
\begin{proof}
\textbf{Step1:} First, we prove the continuity of $\textbf{h}(z, H)$ as a function of $H$ for fixed $z \in \mathbb{C}_L$ with $|\Re(z)| > R_0 $ where $R_0$ was defined in Theorem \ref{Uniqueness}.

\textbf{Step2:} Let $H_n \xrightarrow{d} H_\infty$ and denote $\textbf{h}_n(z) = \textbf{h}(z, H_n)$ and $\textbf{h}_\infty(z) = \textbf{h}(z, H_\infty)$. Then, $\textbf{g}_n(z) = \textbf{h}_n(z) - \textbf{h}_\infty(z)$ is analytic over $\mathbb{C}_L$ and from Step1, $\underset{n \rightarrow \infty}{\lim}\textbf{g}_n(z) = \textbf{0}$ for all $z$ with large real component. It is easy to see that $\textbf{g}_n$ are uniformly locally bounded due to (\ref{TechnicalCondition_continuity}). In particular, $\{\textbf{g}_n\}_{n=1}^\infty$ satisfy the conditions of Theorem \ref{VitaliPorter}. So $\{\textbf{g}_n\}_{n=1}^\infty$ converges to an analytic function which is equal to $0$ for all $z \in \mathbb{C}_L$ with large real component. By Identity Theorem, $\underset{n \rightarrow \infty}{\lim}\textbf{g}_n(z) = 0$ for all $z \in \mathbb{C}_L$.

So, all that remains is to prove \textbf{Step1}. Fix $z = -u + \mathbbm{i}v \in \mathbb{C}_L$ such that $u > R_0$. For bi-variate probability distributions $G$ and $H$ on $\mathbb{R}_+^2$, let $\textbf{h}(z)= \textbf{h}(z, H)=(h_1, h_2)$ and $\textbf{g}(z) = \textbf{h}(z, G)=(g_1, g_2)$. Choose $\epsilon > 0$ arbitrarily. We have
\begin{align}
    &|h_1 - g_1| = \absmod{\int \frac{\lambda_1 dH(\boldsymbol{\lambda})}{-z + 
    \boldsymbol{\lambda}^T\boldsymbol{\rho}(c\textbf{h})} - \int \frac{\lambda_1 dG(\boldsymbol{\lambda})}{-z + 
    \boldsymbol{\lambda}^T\boldsymbol{\rho}(c\textbf{g})}}\\
    &\leq \absmod{\int \frac{\lambda_1dH(\boldsymbol{\lambda})}{-z + 
    \boldsymbol{\lambda}^T\boldsymbol{\rho}(c\textbf{h})} - \int \frac{\lambda_1 dG(\boldsymbol{\lambda})}{-z + 
    \boldsymbol{\lambda}^T\boldsymbol{\rho}(c\textbf{h})}} + \absmod{\int \frac{\lambda_1 dG(\boldsymbol{\lambda})}{-z + 
    \boldsymbol{\lambda}^T\boldsymbol{\rho}(c\textbf{h})} - \int \frac{\lambda_1 dG(\boldsymbol{\lambda})}{-z + 
    \boldsymbol{\lambda}^T\boldsymbol{\rho}(c\textbf{g})}} \notag\\
    &= \underbrace{\absmod{\frac{\lambda_1d\{H(\boldsymbol{\lambda})-G(\boldsymbol{\lambda})\}}{-z +\boldsymbol{\lambda}^T\boldsymbol{\rho}(c\textbf{h})}}}_{T_1} + \underbrace{\absmod{\int\frac{\lambda_1 \boldsymbol{\lambda}^T(\boldsymbol{\rho}(c\textbf{g})-\boldsymbol{\rho}(c\textbf{h})) dG(\boldsymbol{\lambda})}{(-z + 
    \boldsymbol{\lambda}^T\boldsymbol{\rho}(c\textbf{g}))(-z + 
    \boldsymbol{\lambda}^T\boldsymbol{\rho}(c\textbf{h}))}}}_{T_2}.\notag 
\end{align}

Similarly,
\begin{align}
    &|h_2 - g_2| 
    \leq  \underbrace{\absmod{\int\frac{\lambda_2 d\{H(\boldsymbol{\lambda})-G(\boldsymbol{\lambda})\}}{-z +\boldsymbol{\lambda}^T\boldsymbol{\rho}(c\textbf{h})}}}_{T_3} + \underbrace{\absmod{\int\frac{\lambda_2 \boldsymbol{\lambda}^T(\boldsymbol{\rho}(c\textbf{g})-\boldsymbol{\rho}(c\textbf{h})) dG(\boldsymbol{\lambda})}{(-z + 
    \boldsymbol{\lambda}^T\boldsymbol{\rho}(c\textbf{g}))(-z + 
    \boldsymbol{\lambda}^T\boldsymbol{\rho}(c\textbf{h}))}}}_{T_4}.
\end{align}

The integrand in $T_1$ is bounded by $1/\Re(\rho_1(c\textbf{h}))$ and that in $T_3$ is bounded by $1/\Re(\rho_2(c\textbf{h}))$. So by choosing G sufficiently close to $H$ (i.e. the Levy distance $L(H, G)$ is close to $0$), we can make $T_1$ and $T_3$ arbitrarily small. Now let's look at $T_2$. We have $cg_1,cg_2,ch_1,ch_2 \in \mathcal{S}(cC_0/u)$ and, due to Remark \ref{remark_lipschitz_constant_1}, $\rho_1, \rho_2$ are Lipschitz continuous with constant $K_0=1$. Using H$\Ddot{o}$lder's Inequality, we have

\begin{align}
    T_2 &= \absmod{\int\frac{\lambda_1^2 (\rho_1(c\textbf{g})-\rho_1(c\textbf{h})) + \lambda_1\lambda_2(\rho_2(c\textbf{g})-\rho_2(c\textbf{h}))}{(-z + 
    \boldsymbol{\lambda}^T\boldsymbol{\rho}(c\textbf{g}))(-z + 
    \boldsymbol{\lambda}^T\boldsymbol{\rho}(c\textbf{h}))}dG(\boldsymbol{\lambda})}\\
    &\leq K_0||c\textbf{g}-c\textbf{h}||_1 \bigg(\sqrt{I_{2,0}(g,G)}\sqrt{I_{2,0}(h,G)} + \sqrt{I_{2,0}(g,G)I_{0,2}(h,G)}\bigg). \notag
\end{align}

Repeating arguments from (\ref{I_20_bound}), we have
\begin{align}\label{use_of_T5}
   \max\{I_{2,0}(g,G), I_{2,0}(h,G), I_{0,2}(h,G)\} \leq \frac{D_0}{u^2}.
\end{align}

Therefore, $T_2 \leq \dfrac{2cK_0D_0}{u^2}||\textbf{g}-\textbf{h}||_1$. Similarly, it can be shown that $T_4 \leq \dfrac{2cK_0D_0}{u^2}||\textbf{g}-\textbf{h}||_1$.

So to summarize,
\begin{align}
    ||\textbf{g} - \textbf{h}||_1 \leq T_1 + T_3 + T_2 + T_4 \leq T_1 + T_3 + \frac{4cD_0}{u^2}||\textbf{g} - \textbf{h}||_1 \text{, since } K_0=1.
\end{align}

By making $L(H, G)$ close to $0$, we can make $T_1 + T_3$ arbitrarily small. We have $4cD_0/u^2 < 1$ since $u > R_0$. So, this establishes the continuity of $\textbf{h}(z, H)$ as a function of $H$.
\end{proof}

\section{Proofs related to Section \ref{sec:equal_covariance}}
\subsection{Proof of Theorem \ref{Uniqueness_Special}}\label{sec:ProofOfUniquenessSpecial}
\begin{proof}
    Suppose for some $z = -u + \mathbbm{i}v \in \mathbb{C}_L$, $\exists$ $h_1, h_2 \in \mathbb{C}_R$ such that for $j \in \{1,2\}$, we have $$h_j = \displaystyle \int \dfrac{\lambda dH(\lambda)}{-z + \lambda \sigma(ch_j)}.$$
    Further let $\Re(h_j) = h_{j1}, \Im(h_j) = h_{j2}$ where $h_{j1} > 0$ by assumption for $j \in \{1,2\}$. Using (\ref{realOfSigma}), we have 
\begin{align}\label{realOf_h}
        & h_{j1} = \Re(h_j)
         = \displaystyle\int \dfrac{\lambda\Re(\overline{-z + \lambda\sigma(ch_j)})dH(\lambda)}{|-z + \lambda \sigma(ch_j)|^2} = \int\dfrac{u\lambda + \lambda^2 [\sigma_2(ch_j)\Re(ch_j)]}{|-z + \lambda\sigma(ch_j)|^2}dH(\lambda)\\ \notag
\implies & h_{j1} = uI_1(h_j, H) + ch_{j1}\sigma_2(ch_j)I_2(h_j, H)\\           
\text {where, } I_k(h_j, H) :&= \int \dfrac{\lambda^k dH(\lambda)}{|-z + \lambda \sigma(ch_j)|^2} \text{ for } k \in \{1,2\}. \notag
\end{align}

Note that $I_k(h_j, H) > 0, k \in \{1,2\}$ due to the conditions on H. Since $h_{j1} > 0$ and $u > 0$, using (\ref{realOf_h}), we must have 
\begin{align}\label{lessThanOne}
 c\sigma_2(ch_j)I_2(h_j, H) < 1.
\end{align}
Then we have
\begin{align*}
         h_1 - h_2
        &= \displaystyle\int \dfrac{(\sigma(ch_2) - \sigma(ch_1))\lambda^2}{[-z + \lambda\sigma(ch_1)][-z + \lambda\sigma(ch_2)]}dH(\lambda)\\
        &= (h_1-h_2)\displaystyle\int \dfrac{\dfrac{c\lambda^2}{(\mathbbm{i} +ch_1)(\mathbbm{i} +ch_2)} + \dfrac{c\lambda^2}{(-\mathbbm{i} +ch_1)(-\mathbbm{i} +ch_2)}}{[-z + \lambda\sigma(ch_1)][-z + \lambda\sigma(ch_2)]}dH(\lambda).
\end{align*}

By $\Ddot{H}$older's inequality, we have $|h_1-h_2| \leq |h_1-h_2|(T_1 + T_2)$ where 

\begin{align*}
T_1 &= \displaystyle\sqrt{\int\dfrac{c|\mathbbm{i} +ch_1|^{-2}\lambda^2dH(\lambda)}{|-z + \lambda\sigma(ch_1)|^2}}\sqrt{\int\dfrac{c|\mathbbm{i} +ch_2|^{-2}\lambda^2dH(\lambda)}{|-z + \lambda\sigma(ch_2)|^2}}\\    
&= \sqrt{c|\mathbbm{i} +ch_1|^{-2}I_2(h_1, H)}\sqrt{c|\mathbbm{i} +ch_2|^{-2}I_2(h_2, H)},
\end{align*}
and
\begin{align*}
T_2 &= \displaystyle\sqrt{\int\dfrac{c|-\mathbbm{i} +ch_1|^{-2}\lambda^2dH(\lambda)}{|-z + \lambda\sigma(ch_1)|^2}}\sqrt{\int\dfrac{c|-\mathbbm{i} +ch_2|^{-2}\lambda^2dH(\lambda)}{|-z +\lambda\sigma(ch_2)|^2}}\\
&= \sqrt{c|-\mathbbm{i} +ch_1|^{-2}I_2(h_1, H)}\sqrt{c|-\mathbbm{i} +ch_2|^{-2}I_2(h_2, H)}.
\end{align*}

Then, using the inequality $\sqrt{wx} + \sqrt{yz} \leq \sqrt{w+y}\sqrt{x+z}$ for $w,x,y,z, \geq 0$, we get
\begin{align*}
    &T_1 + T_2\\
    =& \sqrt{c|\mathbbm{i} +ch_1|^{-2}I_2(h_1, H)}\sqrt{c|\mathbbm{i} +ch_2|^{-2}I_2(h_2, H)} +  \sqrt{c|-\mathbbm{i} +ch_1|^{-2}I_2(h_1, H)}\sqrt{c|-\mathbbm{i} +ch_2|^{-2}I_2(h_2, H)}\\
    \leq& \sqrt{(c|\mathbbm{i} +ch_1|^{-2} + c|-\mathbbm{i} +ch_1|^{-2})I_2(h_1, H)}\sqrt{(c|\mathbbm{i} +ch_2|^{-2} + c|-\mathbbm{i} +ch_2|^{-2})I_2(h_2, H)}\\
    =& \sqrt{c\sigma_2(ch_1)I_2(h_1, H)}\sqrt{c\sigma_2(ch_2)I_2(h_2, H)}
    <  1 \text{, using } (\ref{lessThanOne}).
\end{align*}
This implies that $|h_1 - h_2|< |h_1 - h_2|$ which is a contradiction, thus proving the uniqueness of $h(z) \in \mathbb{C}_R$.
\end{proof}

\subsection{Proof of Theorem \ref{Continuity_Special}}\label{sec:ProofOfContinuitySpecial}
\begin{proof}
 For a fixed $c > 0$ and $z \in \mathbb{C}_L$, let $h, \ubar{h}$ be the unique numbers in $\mathbb{C}_R$ corresponding to distribution functions $H$ and $\ubar{H}$ respectively that satisfy (\ref{h_main_eqn1}). Following \cite{PaulSilverstein2009}, we have 
\begin{align*}
    h - \ubar{h}
    = & \int\dfrac{\lambda dH(\lambda)}{-z + \lambda\sigma(ch)} - \int\dfrac{\lambda d\ubar{H}(\lambda)}{-z + \lambda\sigma(c\ubar{h})}\\
    =& \underbrace{\int\dfrac{\lambda d\{H(\lambda) - \ubar{H}(\lambda)\}}{-z +\lambda\sigma(ch)}}_{:=T_1} + \int\dfrac{\lambda d\ubar{H}(\lambda)}{-z +\lambda\sigma(ch)} - \int\dfrac{\lambda d\ubar{H}(\lambda)}{-z + \lambda\sigma(c\ubar{h})}\\
    =& T_1 + \int\dfrac{\lambda^2(\sigma(c\ubar{h})- \sigma(ch))}{(-z + \lambda\sigma(ch))(-z + \lambda\sigma(c\ubar{h}))}d\ubar{H}(\lambda)\\
    =& T_1 + \int\dfrac{\dfrac{\lambda^2c(h-\ubar{h})}{(\mathbbm{i} +ch)(\mathbbm{i} +c\ubar{h})} + \dfrac{\lambda^2c(h-\ubar{h})}{(-\mathbbm{i} +ch)(-\mathbbm{i} +c\ubar{h})}}{(-z + \lambda\sigma(ch))(-z + \lambda\sigma(c\ubar{h}))}d\ubar{H}(\lambda)\\
    =& T_1 + (h-\ubar{h}) \underbrace{\int\dfrac{\dfrac{\lambda^2c}{(\mathbbm{i} +ch)(\mathbbm{i} +c\ubar{h})} + \dfrac{\lambda^2c}{(-\mathbbm{i} +ch)(-\mathbbm{i} +c\ubar{h})}}{(-z + \lambda\sigma(ch))(-z + \lambda\sigma(c\ubar{h}))}d\ubar{H}(\lambda)}_{:=\gamma}\\
    =& T_1 + (h-\ubar{h})\gamma.
\end{align*}
Note that, $\Re(\sigma(ch)) = \sigma_2(ch)\Re(ch) > 0$ and the integrand in $T_1$ is bounded by $1/\Re(\sigma(ch))$. So by making $\ubar{H}$ closer to $H$, $T_1$ can be made arbitrarily small. Now, if we can show that $|\gamma| < 1$, this will essentially prove the continuous dependence of the solution to (\ref{h_main_eqn1}) on H. 

\begin{align*}
    \gamma &= \underbrace{\int\dfrac{\dfrac{\lambda^2c}{(\mathbbm{i} +ch)(\mathbbm{i} +c\ubar{h})}}{(-z + \lambda\sigma(ch))(-z + \lambda\sigma(c\ubar{h}))}d\ubar{H}(\lambda)}_{:=G_1} + \underbrace{\int\dfrac{ \dfrac{\lambda^2c}{(-\mathbbm{i} +ch)(-\mathbbm{i} +c\ubar{h})}}{(-z + \lambda\sigma(ch))(-z \lambda\sigma(c\ubar{h}))}d\ubar{H}(\lambda)}_{:=G_2}\\
    &= G_1 + G_2.
\end{align*}                                     

By $\Ddot{H}$older's Inequality we have,
\begin{align*}
    |G_1| &\leq \displaystyle \sqrt{\underbrace{\int\dfrac{c\lambda^2|\mathbbm{i} +ch|^{-2}d\ubar{H}(\lambda)}{|-z + \lambda\sigma(ch)|^2}}_{:= P_1}}
    \sqrt{\underbrace{\int\dfrac{c\lambda^2|\mathbbm{i} +c\ubar{h}|^{-2}d\ubar{H}(\lambda)}{|-z + \lambda\sigma(c\ubar{h})|^2}}_{:=P_2}} = \sqrt{P_1 \times P_2}.
\end{align*}
From the definitions used in (\ref{realOf_h}), we have $|P_2| = c|\mathbbm{i} +c\ubar{h}|^{-2}I_2(\ubar{h}, \ubar{H})$
and 
\begin{align*}
    |P_1| &= c|\mathbbm{i} +ch|^{-2}\int\dfrac{\lambda^2 d\ubar{H}(\lambda)}{|-z + \lambda\sigma(ch)|^2}\\
    &= c|\mathbbm{i} +ch|^{-2}\bigg(\underbrace{\int\dfrac{\lambda^2 d\{\ubar{H}(\lambda)-H(\lambda)\}}{|-z + \lambda\sigma(ch)|^2}}_{:=K_1} + \int\dfrac{\lambda^2 dH(\lambda)}{|-z + \lambda\sigma(ch)|^2}\bigg)\\
    &= c|\mathbbm{i} +ch|^{-2}K_1 + c|\mathbbm{i} +ch|^{-2}I_2(h, H)\\
    & < \epsilon + c|\mathbbm{i} +ch|^{-2}I_2(h, H).
\end{align*}

for some arbitrarily small $\epsilon > 0$. The last inequality follows since the integrand in $K_1$ is bounded by ${|\Re(\sigma(ch))|^{-2}}$, we can arbitrarily control the first term by taking $\ubar{H}$ sufficiently close to $H$ in the Levy metric. The argument for bounding $|G_2|$ is exactly the same. 

Therefore, we have $$|G_1| < \sqrt{\epsilon + c|\mathbbm{i} +ch|^{-2}I_2(h, H)}\sqrt{c|\mathbbm{i} +c\ubar{h}|^{-2}I_2(\ubar{h}, \ubar{H})}.$$
Similarly, we also get 
$$|G_2| < \sqrt{\epsilon + c|-\mathbbm{i} +ch|^{-2}I_2(h, H)}\sqrt{c|-\mathbbm{i} +c\ubar{h}|^{-2}I_2(\ubar{h}, \ubar{H})}.$$

Using the inequality $\sqrt{wx} + \sqrt{yz} \leq \sqrt{w+y}\sqrt{x+z}$ for $w,x,y,z, \geq 0$, we have \begin{align*}
    &|G_1| + |G_2|\\
    <& \sqrt{\epsilon + c|\mathbbm{i} +ch|^{-2}I_2(h, H)}\sqrt{c|\mathbbm{i} +c\ubar{h}|^{-2}I_2(\ubar{h}, \ubar{H})} +\\
    &\sqrt{\epsilon + c|-\mathbbm{i} +ch|^{-2}I_2(h, H)}\sqrt{c|-\mathbbm{i} +c\ubar{h}|^{-2}I_2(\ubar{h}, \ubar{H})}\\
    \leq& \sqrt{2\epsilon + (c|\mathbbm{i} +ch|^{-2} + c|-\mathbbm{i} +ch|^{-2})I_2(h, H)}\sqrt{(c|\mathbbm{i} +c\ubar{h}|^{-2} + c|-\mathbbm{i} +c\ubar{h}|^{-2})I_2(\ubar{h}, \ubar{H})}\\
    =& \sqrt{2\epsilon + c\sigma_2(ch)I_2(h,H)}\sqrt{c\sigma_2(c\ubar{h})I_2(\ubar{h}, \ubar{H})}.
\end{align*}

From (\ref{lessThanOne}), we have $c\sigma_2(ch)I_2(h, H) < 1$ and $c\sigma_2(c\ubar{h})I_2(\ubar{h}, \ubar{H}) < 1$. By choosing $\epsilon > 0$ arbitrarily small, we finally have $|\gamma| = |G_1 + G_2| \leq |G_1| + |G_2| < 1$ for $\ubar{H}$ sufficiently close to H. This completes the proof.
\end{proof}

\section{Proofs related to Section \ref{sec:identity_covariance}}
\subsection{Results related to the density of the LSD in Section \ref{sec:identity_covariance}}
\begin{lemma}\label{SilvChoiResult2}
Let $s_{F}$ be as derived in Section \ref{sec:identity_covariance}. If a certain sequence $\{z_n\}_{n=1}^\infty \subset \mathbb{C}_L$ with $z_n \rightarrow \mathbbm{i}x$ satisfies $\underset{n \rightarrow \infty}{\lim} s_{F}(z_n) = s_0 \in \mathbb{C}_R$, then 
$s_{F}^0(x) := \underset{\mathbb{C}_L \ni z \rightarrow \mathbbm{i}x}{\lim} s_{F}(z)$ is well-defined, and equals $s_0$.
\end{lemma}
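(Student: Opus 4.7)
The Stieltjes transform $s_F$ satisfies the cubic equation (\ref{5B}), i.e.\ $P(z, m) := c^2 z m^3 + (c^2 - 2c) m^2 + z m + 1 = 0$, for every $z \in \mathbb{C}_L$. By joint continuity of $P$ and continuity of $s_F$ on $\mathbb{C}_L$, passing to the limit along $\{z_n\}$ forces $P(\mathbbm{i}x, s_0) = 0$, so $s_0$ is a root of $P(\mathbbm{i}x, \cdot)$.

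The key step is to verify that $s_0$ is a \emph{simple} root of $P(\mathbbm{i}x, \cdot)$. The equation $\partial_m P(\mathbbm{i}x, s_0) = 3c^2 \mathbbm{i}x\, s_0^2 + 2(c^2 - 2c)\, s_0 + \mathbbm{i}x = 0$ is a quadratic in $s_0$ whose leading and constant coefficients are purely imaginary while the middle coefficient $2(c^2-2c)$ is real; a direct application of the quadratic formula (whose discriminant $(c^2-2c)^2 + 3c^2 x^2$ is nonnegative) shows that every solution is purely imaginary when $x \ne 0$, contradicting $\Re(s_0) > 0$. For the boundary case $x = 0$ one checks that $\partial_m P(0, s_0) = 0$ forces either $s_0 = 0$ (ruled out by $s_0 \in \mathbb{C}_R$) or $c = 2$ (which makes $P(0, \cdot) \equiv 1$ and renders the hypothesis vacuous). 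Hence $\partial_m P(\mathbbm{i}x, s_0) \ne 0$ under the hypothesis of the lemma.

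By the implicit function theorem there exist an open ball $U \subset \mathbb{C}$ around $\mathbbm{i}x$ and an analytic $\phi : U \to \mathbb{C}$ with $\phi(\mathbbm{i}x) = s_0$ and $P(z, \phi(z)) \equiv 0$, uniquely characterized by being the root of $P(z, \cdot)$ lying in some small disk around $s_0$. Let $a, b$ denote the other two roots of $P(\mathbbm{i}x, \cdot)$, possibly coincident. Choose $\rho > 0$ small enough that $B(s_0, 2\rho)$ is disjoint from $B(a, \rho) \cup B(b, \rho)$, and shrink $U$ so that, for every $z \in U$, $\phi(z)$ is the unique root of $P(z, \cdot)$ in $B(s_0, 2\rho)$ while the remaining two roots lie in $B(a, \rho) \cup B(b, \rho)$. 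Being the intersection of an open ball with the half-plane $\mathbb{C}_L$, the set $U \cap \mathbb{C}_L$ is connected.

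Define $A_1 := \{z \in U \cap \mathbb{C}_L : s_F(z) \in B(s_0, 2\rho)\}$ and $A_2 := \{z \in U \cap \mathbb{C}_L : s_F(z) \in B(a, \rho) \cup B(b, \rho)\}$. Both are open in $U \cap \mathbb{C}_L$ by continuity of $s_F$, they are disjoint by the choice of $\rho$, and their union is all of $U \cap \mathbb{C}_L$ since $s_F(z)$ is always a root of $P(z, \cdot)$. For $n$ large, $z_n \in A_1$, so connectedness forces $A_1 = U \cap \mathbb{C}_L$; the uniqueness of $\phi$ inside $B(s_0, 2\rho)$ then yields $s_F \equiv \phi$ on $U \cap \mathbb{C}_L$. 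Since $\phi$ is analytic at $\mathbbm{i}x$ with $\phi(\mathbbm{i}x) = s_0$, we obtain $s_F(w) \to s_0$ for every sequence $w \to \mathbbm{i}x$ in $\mathbb{C}_L$, proving the lemma. The main potential obstacle is the simplicity verification, but the fortunate algebraic structure of (\ref{5B})---which confines every multiple root at $z = \mathbbm{i}x$ to the imaginary axis---is precisely what makes the argument go through.
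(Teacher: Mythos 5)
Your argument is correct in its essentials but takes a genuinely different route from the paper. The paper works with the explicit inverse map $z_F(s) = -1/s + 2cs/(1+c^2 s^2)$, which is analytic on $\mathbb{C}\setminus\{0,\pm\mathbbm{i}/c\}$ (a set that automatically contains $s_0\in\mathbb{C}_R$), and invokes the Open Mapping Theorem: $z_F(B(s_0;\epsilon))$ is an open neighbourhood of $\mathbbm{i}x$, so for every $z$ near $\mathbbm{i}x$ there is $s\in B(s_0;\epsilon)\subset\mathbb{C}_R$ with $z_F(s)=z$, and Theorem~\ref{Uniqueness_Special} forces $s_F(z)=s$. The key advantage of that route is that it needs no simplicity verification: the Open Mapping Theorem applies even at critical points of $z_F$, and the case $x=0$ is handled uniformly. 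Your route instead works directly with the polynomial $P(z,m)$, verifies that any root of $\partial_m P(\mathbbm{i}x,\cdot)$ is purely imaginary (a nice structural observation, and equivalent to $z_F'(s_0)\neq 0$ since $z_F'(s)=-\partial_m P(z_F(s),s)/(c^2 s^3+s)$), applies the holomorphic implicit function theorem to obtain the local branch $\phi$, and then closes with a self-contained connectedness argument in place of invoking Theorem~\ref{Uniqueness_Special}.

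There is one genuine, though fixable, gap: your root-partitioning step is stated as if $P(\mathbbm{i}x,\cdot)$ always has three finite roots $s_0,a,b$. When $x=0$ and $c\neq 2$ the leading coefficient $c^2 z$ vanishes, so $P(0,\cdot)=(c^2-2c)m^2+1$ is quadratic and there is no ``$b$''; as $z\to 0$ through $\mathbb{C}_L$ one of the three roots of $P(z,\cdot)$ escapes to infinity. Consequently the claim ``the remaining two roots lie in $B(a,\rho)\cup B(b,\rho)$'' fails in this case, and with it the covering $A_1\cup A_2=U\cap\mathbb{C}_L$. The fix is straightforward: replace $B(b,\rho)$ by a region $\{\,|m|>M\,\}$ (with $M$ large enough to be disjoint from $B(s_0,2\rho)\cup B(a,\rho)$ and such that the escaping root has modulus $>M$ throughout a shrunken $U$), and redefine $A_2:=\{z\in U\cap\mathbb{C}_L : s_F(z)\in B(a,\rho)\ \text{or}\ |s_F(z)|>M\}$. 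With that adjustment the connectedness argument goes through unchanged. You should also state explicitly that for $x\neq 0$ the degree of $P(z,\cdot)$ is locally constant near $\mathbbm{i}x$, which is what underlies the continuity-of-roots step.
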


\begin{proof}

Consider the tuple $(z, s_{F}(z))$ for $z \in \mathbb{C}_L$. Define the function as follows:
    $$z_{F}:s_{F}(\mathbb{C}_L) \rightarrow \mathbb{C}_L \hspace{5mm}  z_{F}(s) := \frac{1}{s}\bigg(\frac{2}{c}-1\bigg) + \frac{1}{\mathbbm{i}cs}\bigg(\frac{1}{\mathbbm{i}+cs} - \frac{1}{-\mathbbm{i}+cs}\bigg).$$
We can extend the domain of $z_{F}$ to the set $\mathbb{C}\backslash\{0, \pm {\mathbbm{i}}/{c}\}$ where it is analytic. Note that on $s_{F}(\mathbb{C}_L)$, $z_{F}$ coincides with the inverse mapping of $s_{F}$. Clearly $z_{F}$ is continuous at $s_0$ as $s_0 \in \mathbb{C}_R$ and hence, $s_0\not \in \{0, \pm \mathbbm{i}/c\}$. Therefore, $z_{F}(s_0) = z_{F}(\underset{n \rightarrow \infty}{\lim}s_{F}(z_n)) = \underset{n \rightarrow \infty}{\lim}z_{F}(s_{F}(z_n)) = \underset{n \rightarrow \infty}{\lim}z_n = \mathbbm{i}x$.

Let $\{z_{1n}\}_{n=1}^\infty \subset \mathbb{C}_L$ be any another sequence such that $z_{1n} \rightarrow \mathbbm{i}x$. Since $s_0 \in \mathbb{C}_R$, we can choose an arbitrarily small $\epsilon$ such that $0 < \epsilon < \Re(s_0)$ and define $B := B(s_0; \epsilon)$\footnote{$B(x;r)$ indicates the open ball of radius $r$ centered at $x \in \mathbb{C}$}. $z_{F}$ being analytic and non-constant,  $z_{F}(B)$ is open by the Open Mapping Theorem and $\mathbbm{i}x \in z_{F}(B)$. So, for large $n$, $z_{1n} \in z_{F}(B)$. For these $z_{1n}$, there exists $s_{1n} \in B$ such that $z_{F}(s_{1n}) = z_{1n}$. By Theorem \ref{Uniqueness_Special}, we must have $s_{F}(z_{1n}) = s_{1n} \in B$. Since $\epsilon > 0$ is arbitrary, the result follows.
\end{proof}

\begin{lemma}\label{DistributionParameters}
    For the quantities defined in (\ref{supportParameters}) and $\Tilde{r},\Tilde{q},\Tilde{d}$ defined in (\ref{RQD}), the following results hold:
    \begin{description}
        \item[1] $L_c < U_c$ \vspace{-2mm},
        \item[2] $d(x) < 0$ on $S_c$ and $d(x) \geq 0$ on $S_c^c\backslash\{0\}$,
        \item[3] For $x\neq 0$, $r(x) = \mathbbm{i}\operatorname{sgn}(x)\bigg(-\dfrac{r_1}{|x|}  + \dfrac{r_3}{|x|^3}\bigg)$ and $q(x) = q_0 - \dfrac{q_2}{x^2}$, and
        \item[4] For $x\neq 0$, $d(x) = r^2(x) + q^3(x)$.
    \end{description}
\end{lemma}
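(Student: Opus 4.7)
The plan is to establish the four items in the natural order, grouping Parts 1--2 (which concern the structure of the set $S_c$) and Parts 3--4 (which set up the closed-form relation $d(x) = r^2(x) + q^3(x)$). Throughout I rely on the explicit expressions from \eqref{RQD} and on the fact that $R_\pm$ are, by definition, the roots (in $u$) of $d_0 u^2 - d_2 u + d_4 = 0$.

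For Part 1, the crucial algebraic step is the identity
\begin{align*}
(27 c^6)^2 (d_2^2 - 4 d_0 d_4) = (2c^2 + 10c - 1)^2 - 4 c (c-2)^3 = 64 c^3 + 48 c^2 + 12 c + 1 = (4c + 1)^3,
\end{align*}
which is strictly positive for $c > 0$, so $R_\pm$ are real and $R_\pm = \tfrac{1}{2}\bigl((2c^2 + 10c - 1) \pm (4c+1)^{3/2}\bigr)$. In particular $R_+ \geq \tfrac{1}{2}(4c+1)^{3/2} > 0$, hence $U_c > 0$. The inequality $L_c < U_c$ then splits into the case $R_- \leq 0$ (where $L_c = 0 < U_c$) and $R_- > 0$ (where $L_c = \sqrt{R_-} < \sqrt{R_+} = U_c$ since $R_- < R_+$). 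For Part 2, I would factor $x^4 d(x) = d_0 (x^2 - R_-)(x^2 - R_+)$, with $d_0 > 0$; for $x \neq 0$ the sign of $d(x)$ therefore matches that of this quadratic in $x^2$, which is negative precisely on $\{x \neq 0 : R_- \mathbbm{1}_{\{R_- > 0\}} < x^2 < R_+\} = S_c$ and non-negative on the complement.

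For Part 3, I would substitute $z = -\epsilon + \mathbbm{i}x$ with $x \neq 0$ into $Q(z) = q_0 + q_2/z^2$ and $R(z) = r_1/z + r_3/z^3$ and let $\epsilon \downarrow 0$. Continuity away from $z = 0$ together with the identities $1/(\mathbbm{i}x) = -\mathbbm{i}/x$ and $1/(\mathbbm{i}x)^3 = \mathbbm{i}/x^3$ yield $q(x) = q_0 - q_2/x^2$ and $r(x) = \mathbbm{i}(-r_1/x + r_3/x^3) = \mathbbm{i}\operatorname{sgn}(x)(-r_1/|x| + r_3/|x|^3)$. For Part 4, using Part 3 and the fact that $r(x)$ is purely imaginary,
\begin{align*}
r^2(x) + q^3(x) = q_0^3 + \frac{-r_1^2 - 3 q_0^2 q_2}{x^2} + \frac{2 r_1 r_3 + 3 q_0 q_2^2}{x^4} + \frac{-r_3^2 - q_2^3}{x^6}.
\end{align*}
The $1/x^6$ coefficient vanishes since $r_3^2 = (c-2)^6/(729 c^6) = -q_2^3$, and the coefficients of $x^0$, $1/x^2$, $1/x^4$ reduce to the elementary identities $q_0^3 = d_0$, $-3(c+1)^2 + (c-2)^2 = -(2c^2 + 10c - 1)$, and $(c-2)^3\bigl[2(c+1)+(c-2)\bigr] = 3c(c-2)^3$, respectively, matching $d_0, -d_2, d_4$.

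The only non-mechanical step is recognizing the perfect-cube factorization $(2c^2+10c-1)^2 - 4c(c-2)^3 = (4c+1)^3$ used in Part 1; without this, the real-valuedness and sign analysis of $R_\pm$ have to be handled in an ad hoc way. Everything else is elementary polynomial algebra or a routine boundary-limit computation.
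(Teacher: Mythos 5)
Your strategy mirrors the paper's proof closely: compute the discriminant and observe that it is a perfect cube $(4c+1)^3$ up to scaling, write $R_\pm$ explicitly, analyze the sign of the quartic $x^4 d(x)$ as a convex parabola in $x^2$, take boundary limits from $\mathbb{C}_L$ to obtain $r(x)$ and $q(x)$, and verify $d = r^2 + q^3$ by matching the coefficients of $x^0, x^{-2}, x^{-4}, x^{-6}$ using exactly the four identities $q_0^3 = d_0$, $3q_0^2q_2 + r_1^2 = d_2$, $3q_0q_2^2 + 2r_1r_3 = d_4$, $q_2^3 + r_3^2 = 0$. Parts 2, 3, and 4 are correct and essentially identical to the paper's argument; the factorization $x^4 d(x) = d_0(x^2 - R_-)(x^2 - R_+)$ is a slightly more explicit version of the paper's convexity reasoning and is fine.

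Part 1, however, contains a small but genuine error. The inequality $R_+ \geq \tfrac{1}{2}(4c+1)^{3/2}$ is equivalent to $2c^2 + 10c - 1 \geq 0$, which fails for $0 < c < (3\sqrt{3}-5)/2 \approx 0.098$; in that range your lower bound is reversed, so it does not establish $R_+ > 0$. You do need $R_+ > 0$ to make sense of $U_c = \sqrt{R_+}$ (in particular in the case $R_- \leq 0$), so the step must be repaired. A correct argument is short: for all $c > 0$, $(4c+1)^{3/2} > 4c + 1 > 1 - 10c - 2c^2$ (the first inequality since $\sqrt{4c+1} > 1$, the second since $14c + 2c^2 > 0$), hence $(2c^2+10c-1) + (4c+1)^{3/2} > 0$ and $R_+ > 0$. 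The paper itself merely asserts $R_+ > 0$ without detail, so trying to supply a bound was reasonable — it just needs to be the right one.
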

\begin{proof}
Consider the polynomial $g(x) = d_0x^4 - d_2x^2 + d_4$. Reparametrizing $y = x^2$, the two roots in $y$ are given by $R_{\pm}$ ((1) of \ref{supportParameters}). We start with the fact for any $c \in (0, \infty)$, the discriminant term is positive since 
\begin{align}\label{discr_positive}
    d_2^2 - 4d_0d_4 = \bigg(\frac{4c+1}{9c^4}\bigg)^3 > 0.   
\end{align}
    
Now note that for all $c \in (0, \infty)$, $R_{+}$ is positive for all values of c. In fact, we have $$R_{+} = \frac{d_2 +\sqrt{d_2^2 - 4 d_0 d_4}}{2d_0} = \frac{1}{2}\bigg((2c^2+10c-1) + (4c+1)^{\frac{3}{2}}\bigg) > 0.$$ 
    
However, $R_{-}$ is positive depending on the value of c. Note that
    \begin{align*}
            &R_{-} =\dfrac{d_2 - \sqrt{d_2^2 - 4 d_0 d_4}}{2d_0} > 0 \\
             \iff& d_2 > \sqrt{d_2^2 - 4d_0d_4} > 0 \text{, since } d_0 = 1/27c^2 > 0 \\
             \iff& 4d_0d_4 > 0 \iff d_4 > 0 \iff 1 - 2/c > 0 \iff c > 2.
    \end{align*}
For $0 < c \leq 2$, $R_{-} \leq 0 < R_{+} \implies L_c < U_c$. For $c > 2$, we have $d_2 > 0$ and using (\ref{discr_positive}), we get
\begin{align*}
\sqrt{d_2^2 - 4d_0d_4} < d_2 \implies \frac{d_2 - \sqrt{d_2^2 - 4d_0d_4}}{2d_0} < \frac{d_2 + \sqrt{d_2^2 - 4d_0d_4}}{2d_0}
\implies & R_- < R_+ \implies L_c < U_c.
\end{align*}

Therefore for all $c > 0$, $(L_c, U_c)$ is a valid interval in $\mathbb{R}$. This proves the first result.

Since $d_0 = 1/(27c^6) > 0$ for all $c > 0$, the polynomial $g(x)$ is a parabola (in $x^2$) with a convex shape. When $c > 2$, we have $0 < R_- < R_+$. In this case, $g(x) = 0$ when $x^2 = R_\pm$ and $g(x) < 0$ when $x^2 \in (R_{-}, R_{+})$. Thus for all $x \in (-\sqrt{R_{+}}, -\sqrt{R_{-}}) \cup (\sqrt{R_{-}}, \sqrt{R_{+}})=S_c$, we have $g(x) < 0$. Similarly, for $0 < c \leq 2$, $g(x) < 0$ for all $x \in (-\sqrt{R_{+}}, 0)\cup(0, \sqrt{R_{+}}) = S_c$. Therefore, for any $c > 0$, we have $g(x) < 0$ on the set $S_c$. By the convexity of $g(\cdot)$ in $x^2$, $g(x) \geq 0$ on $S_c^c \backslash\{0\}$ is immediate. This establishes the second result.

Let $x \neq 0$ and $\epsilon > 0$. Consider $z = -\epsilon + \mathbbm{i}x$. 
Using the definition of $R(z)$, $Q(z)$ from (\ref{RQD}), we have
 \begin{align}\label{value_of_rx}
    &r(x) = \underset{\epsilon \downarrow 0}{\lim}\hspace{1mm}R(-\epsilon + \mathbbm{i}x) = \underset{\epsilon \downarrow 0}{\lim}\hspace{1mm} \dfrac{r_1}{-\epsilon + \mathbbm{i}x} + \dfrac{r_3}{(-\epsilon + \mathbbm{i}x)^3}
    = \frac{r_1}{\mathbbm{i}x} + \frac{r_3}{(\mathbbm{i}x)^3}
    = \mathbbm{i}\operatorname{sgn}(x)\bigg(-\frac{r_1}{|x|} + \frac{r_3}{|x|^3}\bigg), \text{ and} \\
    & q(x) = \underset{\epsilon \downarrow 0}{\lim}\hspace{1mm}Q(-\epsilon + \mathbbm{i}x) = \underset{\epsilon \downarrow 0}{\lim}\hspace{1mm} \bigg(q_0 + \frac{q_2}{(-\epsilon + \mathbbm{i}x)^2}\bigg) = q_0 - \frac{q_2}{x^2}.
\end{align}

This proves the third result. For the final result, note that $q_0^3 = d_0$, $d_2 = 3q_0^2q_2 + r_1^2$, $d_4 = 3q_0q_2^2 + 2r_1r_3$ and $q_2^3 + r_3^2 = 0$. Therefore for $x \neq 0$, we have
\begin{align*}
    r^2(x)+q^3(x)
    &= -\bigg(-\frac{r_1}{|x|} + \frac{r_3}{|x|^3}\bigg)^2 + \bigg(q_0 - \frac{q_2}{x^2}\bigg)^3\\
    &= q_0^3 + \frac{-3q_0^2q_2 - r_1^2}{x^2} + \frac{3q_0q_2^2 + 2r_1r_3}{x^4} + \frac{q_2^3 +r_3^2}{x^6}\\
    &= d_0 - \frac{d_2}{x^2} + \frac{d_4}{x^4} = d(x).
\end{align*}
\end{proof}

We state the following result (Theorem 2.2 of \cite{SilvChoi}) without proof. This result will be used to establish the continuity of the density function.
\begin{lemma}\label{continuousOnClosure}
    Let X be an open and bounded subset of $\mathbb{R}^n$, let Y be an open and bounded subset of $\mathbb{R}^m$, and let $f:\overline{X} \rightarrow Y$ be a function continuous on X. If, for all $x_0 \in \partial X$, $\underset{x \in X \rightarrow x_0}{\lim}f(x) = f(x_0)$, then f is continuous on all of $\overline{X}$.
\end{lemma}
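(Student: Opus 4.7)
The plan is to verify sequential continuity at every point $x_0 \in \overline{X}$. At interior points of $X$ this is given, so the real content is at boundary points $x_0 \in \partial X$. The hypothesis gives continuity along sequences that approach $x_0$ \emph{from inside} $X$; what must be upgraded is continuity along arbitrary sequences in $\overline{X}$, which may mix interior and boundary points. I would prove this by a standard diagonal approximation argument that replaces each boundary point in the sequence by a nearby interior point without distorting the $f$-value too much.

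First I would fix $x_0 \in \partial X$ and an arbitrary sequence $\{z_n\} \subset \overline{X}$ with $z_n \to x_0$; the goal is to show $f(z_n) \to f(x_0)$. For each $n$, I split into cases: if $z_n \in X$, set $w_n := z_n$; if instead $z_n \in \partial X$, then by the hypothesis applied \emph{at the boundary point} $z_n$, the limit $\lim_{x \in X \to z_n} f(x) = f(z_n)$ exists, so I can select $w_n \in X$ satisfying $\|w_n - z_n\| < 1/n$ and $\|f(w_n) - f(z_n)\| < 1/n$. By construction $w_n \in X$ for every $n$, and $\|w_n - x_0\| \leq \|w_n - z_n\| + \|z_n - x_0\| \to 0$, so $w_n \to x_0$ through $X$.

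Now I would apply the hypothesis at $x_0$ itself: since $w_n \in X$ and $w_n \to x_0$, we get $f(w_n) \to f(x_0)$. The triangle inequality then yields
\begin{equation*}
\|f(z_n) - f(x_0)\| \leq \|f(z_n) - f(w_n)\| + \|f(w_n) - f(x_0)\| \leq \tfrac{1}{n} + \|f(w_n) - f(x_0)\| \longrightarrow 0,
\end{equation*}
which is the required sequential continuity at $x_0$. Combined with continuity on the open set $X$, this gives continuity on all of $\overline{X}$.

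The main obstacle is purely the bookkeeping for mixed sequences: a priori $\{z_n\}$ may alternate between $X$ and $\partial X$ in an uncontrolled way, so one cannot directly invoke the given limit either at $x_0$ or at any single $z_n$. The diagonal substitution $z_n \leadsto w_n$ is what finesses this, and the only data needed is that the hypothesis holds \emph{uniformly at every boundary point} (not merely at $x_0$) — a point I would emphasize explicitly. Boundedness of $X$ and $Y$ and the specific Euclidean structure play no role beyond guaranteeing that sequences and distances make sense, so the argument actually extends verbatim to $\overline{X}$ sitting inside any metric (or even first countable) space.
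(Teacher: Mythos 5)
The paper states this result as Theorem 2.2 of Silverstein and Choi and explicitly omits the proof, so there is no in-paper argument to compare against. Your argument is correct and is of the expected form: the essential step is to replace an arbitrary mixed sequence $\{z_n\}\subset\overline{X}$ converging to $x_0$ by a nearby interior sequence $\{w_n\}\subset X$ (using the hypothesis at each boundary term $z_n$ to control $\|f(w_n)-f(z_n)\|$), and then invoke the hypothesis once more at $x_0$ to conclude $f(w_n)\to f(x_0)$, after which the triangle inequality finishes the job. Two small points worth making explicit if you were to write this up formally: first, since $X$ is open, $\partial X = \overline{X}\setminus X$ and every point of $\partial X$ is a limit point of $X$, which is what guarantees that the approximants $w_n\in X$ with $\|w_n-z_n\|<1/n$ exist at all; second, the case split is unnecessary bookkeeping if one observes that the hypothesis holds trivially at interior points (with $w_n=z_n$). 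Your closing observation is also correct: boundedness of $X$ and $Y$, openness of $Y$, and the Euclidean structure of $\mathbb{R}^n$ and $\mathbb{R}^m$ play no role in the argument, so the lemma (and your proof of it) carries over verbatim to general metric spaces.
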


\subsection{Proof of Theorem \ref{DensityDerivation}}\label{ProofDensityDerivation}
\begin{proof}
To check for existence (and consequently derive the value), we employ the following strategy. We first show that $\underset{\epsilon \downarrow 0}{\lim}\hspace{1mm}\Re(s_{F}(-\epsilon + \mathbbm{i}x))$ exists. Then by Lemma \ref{SilvChoiResult2}, the conditions of Proposition \ref{SilvChoiResult1} are satisfied, implying existence of density at $x_0$. The value of the density is then extracted by using the formula in (\ref{inversion_density}).

Recall the definition of $r(x)$ and $q(x)$ from (\ref{supportParameters}). We will first show that for $x \in S_c$,
\begin{align}\label{abs_of_rx}
-\frac{r_1}{|x|} + \frac{r_3}{|x|^3} > 0.
\end{align}

For $0 < c \leq 2$, we have $0 = L_c < U_c$ and from (\ref{RQD}),
\begin{align*}
  \frac{r_3}{r_1} = \frac{(c-2)^3}{9(c+1)} < 0.
\end{align*}
Thus $x \in S_c \implies x^2 > 0 > \dfrac{r_3}{r_1}$. 

For $c>2$, we have 
\begin{align*}
  0 < \frac{r_3}{r_1} = \frac{(c-2)^3}{9(c+1)} < \frac{1}{2}((2c^2 + 10c - 1) - (4c+1)^\frac{3}{2}) = L_c^2. 
\end{align*}

Thus, $0 < \dfrac{r_3}{r_1} < L_c^2 < U_c^2$. Therefore $x \in S_c \implies x^2 > \dfrac{r_3}{r_1}$. In either case, since $r_1<0$ we have 
\begin{align}\label{tempResults2}
    x^2 > \frac{r_3}{r_1} \implies r_1x^2 < r_3 \implies -\frac{r_1}{|x|} +\frac{r_3}{|x|^3} > 0 \implies |r(x)| = \mathbbm{i}\operatorname{sgn}(x)r(x),
\end{align}
where the last equality follows from (\ref{value_of_rx}).

Having established this, we are now in a position to derive the value of the density. Without loss of generality, choose $x \in S_c$ such that $x > 0$.  We can do this since the limiting distribution is symmetric about 0 from Proposition \ref{symmetry}. Consider $z = -\epsilon + \mathbbm{i}x$. The roots of (\ref{5B}) are given in (\ref{RootFormula}) in terms of quantities $S_0(z), T_0(z)$ that satisfy (\ref{S0T0}). Using (\ref{tempResults2}) and Lemma \ref{DistributionParameters}, we get
\begin{align}\label{tempResults3}
& |r(x)|^2 > (\mathbbm{i}\operatorname{sgn}(x)r(x))^2 - q^3(x) = -(r^2(x) +q^3(x)) =-d(x) > 0\\ \notag
\implies& |r(x)| > \sqrt{-d(x)}.
\end{align}

Therefore $V_+(x) > V_-(x) > 0$. Now, let $s_0 := \mathbbm{i}(V_+)^\frac{1}{3}$ and $t_0 := -q(x)/s_0$ (note that $s_0 \neq 0$). Since $q(x) =q_0 - {q_2}/{x^2} > 0$ as $q_0 > 0, q_2 < 0$, both $s_0$ and $t_0$ are purely imaginary. Observe that,
\begin{align*}
    V_+(x)V_-(x) = |r(x)|^2 - (\sqrt{-d(x)})^2=-r^2(x) + d(x) = q^3(x).
\end{align*}
Therefore, we get 
\begin{align*}
    t_0^3 = -\frac{q^3(x)}{s_0^3} = \frac{V_+(x)V_-(x)}{\mathbbm{i}V_+(x)} = -\mathbbm{i}V_-(x).
\end{align*}
Finally we observe that $s_0, t_0$ satisfy the below relationship:
\begin{itemize}
    \item $s_0^3+t_0^3 = 2r(x) = \underset{\epsilon \downarrow 0}{\lim}\hspace{1mm}2R(-\epsilon + \mathbbm{i}x) = \underset{\epsilon \downarrow 0}{\lim}\hspace{1mm}\bigg(S_0^3(-\epsilon + \mathbbm{i}x) + T_0^3(-\epsilon + \mathbbm{i}x)\bigg)$ and
    \item $s_0t_0 = -q(x) = -\underset{\epsilon \downarrow 0}{\lim}\hspace{1mm}Q(-\epsilon + \mathbbm{i}x) = \underset{\epsilon \downarrow 0}{\lim}\hspace{1mm}\bigg(S_0(-\epsilon + \mathbbm{i}x)T_0(-\epsilon + \mathbbm{i}x)\bigg)$.
\end{itemize}

From the above, it turns out that
\begin{align*}
\bigg\{\underset{\epsilon \downarrow 0}{\lim}\hspace{1mm}S_0^3(-\epsilon + \mathbbm{i}x), \underset{\epsilon \downarrow 0}{\lim}\hspace{1mm}T_0^3(-\epsilon + \mathbbm{i}x)\bigg\} =\{s_0^3, t_0^3\}.
\end{align*}

This leaves us with the following three possibilities.
 \begin{align*}
     \bigg\{\underset{\epsilon \downarrow 0}{\lim}\hspace{1mm}S_0(-\epsilon + \mathbbm{i}x), \underset{\epsilon \downarrow 0}{\lim}\hspace{1mm} T_0(-\epsilon + \mathbbm{i}x)\bigg\} =\{s_0, t_0\} \text{ or } \{\omega_1 s_0, \omega_2 t_0\} \text{ or } \{\omega_2 s_0, \omega_1 t_0\}.
 \end{align*}
 
Fortunately, the nature of (\ref{RootFormula}) is such that all three choices lead to the same set of roots, denoted by $\{m_j(-\epsilon + \mathbbm{i}x)\}_{j=1}^3$.
Using (\ref{RootFormula}) and shrinking $\epsilon$ to 0, we find in the limit
\begin{equation*}
    \left\{ \begin{aligned} 
M_1(x) := \underset{\epsilon \downarrow 0}{\lim}\hspace{1mm}m_1(-\epsilon + \mathbbm{i}x) &= -\dfrac{1-2/c}{3(-\epsilon + \mathbbm{i}x)} + s_0 + t_0,\\
M_2(x) := \underset{\epsilon \downarrow 0}{\lim}\hspace{1mm}m_2(-\epsilon + \mathbbm{i}x) &= -\dfrac{1-2/c}{3(-\epsilon + \mathbbm{i}x)} + \omega_1s_0 + \omega_2t_0, \text{ and}\\
M_3(x) := \underset{\epsilon \downarrow 0}{\lim}\hspace{1mm}m_3(-\epsilon + \mathbbm{i}x) &= -\dfrac{1-2/c}{3(-\epsilon + \mathbbm{i}x)} + \omega_2s_0 + \omega_1t_0.
\end{aligned} \right.
\end{equation*}

We have $\underset{\epsilon \downarrow 0}{\lim}\hspace{1mm} \Re  \bigg(\dfrac{2/c-1}{3(-\epsilon + \mathbbm{i}x)}\bigg)= 0$ and $\Re(s_0) = 0 = \Re(t_0)$. Therefore, $\Re(M_1(x)) = 0$.
Focusing on the second root,
\begin{align*}
     \Re(M_2(x))
    =\Re(\omega_1s_0 + \omega_2t_0)
    &= \Re \bigg(-\frac{s_0+t_0}{2} + \mathbbm{i}\frac{\sqrt{3}}{2}(s_0 - t_0)\bigg)\\
    &= \frac{\sqrt{3}}{2}\Im(t_0 - s_0) = \frac{\sqrt{3}}{2}\bigg((V_-(x))^\frac{1}{3} -  (V_+(x))^\frac{1}{3}\bigg) < 0,
\end{align*}
and similarly, 
\begin{align*}
     \Re(M_3(x))
    =\Re(\omega_2s_0 + \omega_1t_0)
    &= \Re \bigg(-\frac{s_0+t_0}{2} - \mathbbm{i}\frac{\sqrt{3}}{2}(s_0 - t_0)\bigg)\\
    &= \frac{\sqrt{3}}{2}\Im(s_0 - t_0) = \frac{\sqrt{3}}{2}\bigg((V_+(x))^\frac{1}{3} - (V_-(x))^\frac{1}{3}\bigg) > 0.
\end{align*}

To summarize, we evaluated the roots of (\ref{5B}) at a sequence of complex numbers $-\epsilon + \mathbbm{i}x$ in the left half of the complex plane close to the point $\mathbbm{i}x$ on the imaginary axis. This leads to three sequences of roots $\{m_j(-\epsilon + \mathbbm{i}x)\}_{j=1}^3$, of which only one has real part converging to a positive number. Therefore, for $x \in S_c \cap \mathbb{R}_+$, $s_{F}(-\epsilon + \mathbbm{i}x) \rightarrow M_3(x)$ as $\epsilon \downarrow 0$ by Theorem \ref{Uniqueness_Special}. So, from (\ref{inversion_density}) and the symmetry about 0, the density at $x \in S_c$ is
\begin{align*}
    f_c(x) &= \dfrac{1}{\pi}\underset{\epsilon \downarrow 0}{\lim}\hspace{1mm} \Re  (s_{F}(-\epsilon + \mathbbm{i}x)) = \frac{\sqrt{3}}{2\pi}\bigg((V_+(x))^\frac{1}{3} -(V_-(x))^\frac{1}{3}\bigg).
\end{align*}

Now we evaluate the density when $x \in S_c^c\backslash\{0\}$. Without loss of generality, let $x > 0$ since the distribution is symmetric about 0. From Lemma \ref{DistributionParameters}, $d(x) \geq 0$ in this case. Noting that $r(x) = -\mathbbm{i}|r(x)|$ from (\ref{tempResults2}), define $s_0 := (\sqrt{d(x)} -\mathbbm{i}|r(x)|)^\frac{1}{3}$ be any cube root and $t_0 := -{q(x)}/{s_0}$. Note that $s_0 \neq 0$ since $d(x) \geq 0$ and $|r(x)| > 0$.
Then, \begin{align*}
    t_0^3 = -\frac{q^3(x)}{s_0^3}=-\frac{d(x)-r^2(x)}{s_0^3} = -\frac{(\sqrt{d(x)} -\mathbbm{i}|r(x)|)(\sqrt{d(x)} + \mathbbm{i}|r(x)|)}{\sqrt{d(x)} - \mathbbm{i}|r(x)|} = -\sqrt{d(x)} - \mathbbm{i}|r(x)|.
\end{align*}
Therefore, we have 
\begin{align*}
 s_0^3+t_0^3 = 2r(x) = \underset{\epsilon \downarrow 0}{\lim}\hspace{1mm}2R(-\epsilon + \mathbbm{i}x); \hspace{5mm} s_0t_0 = -q(x) = -\underset{\epsilon \downarrow 0}{\lim}\hspace{1mm}Q(-\epsilon + \mathbbm{i}x).
\end{align*}

Therefore, using (\ref{RootFormula}) to find the three roots of (\ref{5B}) and shrinking $\epsilon > 0$ to 0, we get in the limit 

\begin{equation*}
    \left\{ \begin{aligned} 
M_1(x) := \underset{\epsilon \downarrow 0}{\lim}\hspace{1mm}m_1(-\epsilon + \mathbbm{i}x) &= -\dfrac{1-2/c}{3x} + s_0 + t_0,\\
M_2(x) := \underset{\epsilon \downarrow 0}{\lim}\hspace{1mm}m_2(-\epsilon + \mathbbm{i}x) &= -\dfrac{1-2/c}{3x} + \omega_1s_0 + \omega_2t_0, \text{ and} \\
M_3(x) := \underset{\epsilon \downarrow 0}{\lim}\hspace{1mm}m_3(-\epsilon + \mathbbm{i}x) &= -\dfrac{1-2/c}{3x} + \omega_2s_0 + \omega_1t_0.
\end{aligned} \right.
\end{equation*}

Observe that $\bigg|\sqrt{d(x)}-\mathbbm{i}|r(x)|\bigg|^2 = d(x) + |r(x)|^2 = d(x) - r^2(x) = q^3(x)$. Therefore, we have
$$t_0 = -\dfrac{q(x)}{s_0} = -\dfrac{q(x)\overline{s_0}}{|s_0|^2}
    = -\dfrac{q(x)\overline{s_0}}{|(\sqrt{d(x)} -\mathbbm{i} |r(x)|)|^\frac{2}{3}}
    = -\dfrac{q(x)\overline{s_0}}{|q(x)^3|^\frac{1}{3}}
    = -\dfrac{q(x)\overline{s_0}}{q(x)} = -\overline{s_0},$$
using the fact that $q(x) > 0$ for $x \neq 0$. Therefore, $\Re(s_0)=-\Re(t_0)$ and $\Im(s_0) =\Im(t_0)$. In particular, $s_0 + t_0 = 2\mathbbm{i}\Im(s_0)$ and $s_0 - t_0 = 2\Re(s_0)$.
This leads to the following observations:
\begin{align*}
\Re  (M_1(x)) &= \Re \{s_0+t_0\} = 0,\\
\Re  (M_2(x)) &= \Re  \{-\frac{1}{2}(s_0+t_0) + \mathbbm{i}\frac{\sqrt{3}}{2}(s_0 - t_0)\} = 0, \text{ and}\\
\Re  (M_3(x)) &= \Re  \{-\frac{1}{2}(s_0+t_0) - \mathbbm{i}\frac{\sqrt{3}}{2}(s_0 - t_0)\} = 0.  
\end{align*}

So when $x \in S_c^c\backslash\{0\}$, all three roots (in particular, the one that agrees with the Stieltjes transform) of (\ref{5B}) at $z = -\epsilon + \mathbbm{i}x$ have real component shrinking to 0 as $\epsilon \downarrow 0$. Therefore, by (\ref{inversion_density}) and the symmetry about $0$, we have
\begin{equation*}
    \begin{split}
    f_c(x) &= -\dfrac{1}{\pi}\underset{\epsilon \downarrow 0}{\lim}\hspace{1mm} \Re  (s_{F}(-\epsilon + \mathbbm{i}x)) = 0.\\
    \end{split}
\end{equation*}
So, the density is positive on $S_c$ and zero on $S_c^c\backslash\{0\}$. 

Finally, we check for existence of density at $x=0$ for $0 < c < 2$. For this we evaluate $L:= \underset{\epsilon \downarrow 0}{\lim}\hspace{1mm} \Re(s_{F}(-\epsilon))$ as follows:
\begin{align*}
    &\frac{1}{s_{F}(-\epsilon)} = -(-\epsilon) + \frac{1}{\mathbbm{i} +cs_{F}(-\epsilon)} + \frac{1}{-\mathbbm{i} +  s_{F}(-\epsilon)}\\
    \implies &\frac{1}{L} = \frac{1}{\mathbbm{i} +cL} + \frac{1}{-\mathbbm{i} +cL}  = \frac{2cL}{1 + c^2L^2}\\
    \implies &2cL^2 = 1 + c^2L^2\\
    \implies & \underset{\epsilon \downarrow 0}{\lim}\hspace{1mm} s_{F}(-\epsilon)= \frac{1}{\sqrt{2c-c^2}},
\end{align*}
where we considered the positive root since $s_F$ is a Stieltjes Transform of a measure on the imaginary axis. Therefore, by (\ref{inversion_density}), when $0 < c < 2$, $$f_c(0) = \dfrac{1}{\pi\sqrt{2c-c^2}}.$$

Now we show the continuity of $f_c$. Consider the case $0 < c < 2$. We saw that $f_c(x) = 0$ for $x \in S_c^c$. So, we need to show the continuity of $f_c$ in $S_c$. When $0<c<2$, $\underset{\epsilon \downarrow 0}{\lim}\hspace{1mm}\Re(s_{F}(-\epsilon + \mathbbm{i}x))$ exists for all $x \in \mathbb{R}$. In particular, when $x \in S_c$, $\underset{\epsilon \downarrow 0}{\lim}\hspace{1mm}\Re(s_{F}(-\epsilon + \mathbbm{i}x)) > 0$. For an arbitrary $x_0 \in S_c$, take an open bounded set $E \subset \mathbb{C}_L$ and choose $K > 0$ such that
$$\mathbbm{i}x_0 \in (-\mathbbm{i}K, \mathbbm{i}K) \subset \partial E.$$

Then the function defined below
$$s_{F}^0: \overline{E} \rightarrow \mathbb{R}; \hspace{5mm} s_{F}^0(z_0) = \underset{E \ni z \rightarrow z_0}{\lim}\Re(s_{F}(z)),$$ 
is well-defined due to Lemma \ref{SilvChoiResult2}. It
is continuous on $E$ due to the continuity of $\Re(s_{F})$ on $\mathbb{C}_L$ and satisfies the conditions of Lemma \ref{continuousOnClosure} by construction. Hence, the continuity of $s_{F}^0$ and of $f_c$ at $x_0$ is immediate.

Now consider the case when $c \geq 2$. As before, we only need to show the continuity of $f_c$ at an arbitrary $x_0 \in S_c$. Note that $x_0$ cannot be 0 as $0 \not \in S_c$. We already proved that $\underset{\epsilon \downarrow 0}{\lim}\hspace{1mm}\Re(s_{F}(-\epsilon + \mathbbm{i}x_0)) > 0$. Construct an open bounded set $E \subset \mathbb{C}_L$ such that 
$$\bigg(-\frac{3\mathbbm{i}|x_0|}{2}, -\frac{\mathbbm{i}|x_0|}{2}\bigg) \cup \bigg(\frac{\mathbbm{i}|x_0|}{2}, \frac{3\mathbbm{i}|x_0|}{2}\bigg) \subset \partial E.$$
A similar argument establishes the continuity of $f_c$ at $x_0 \neq 0$.
\end{proof}

\end{document}